\def\F5{\bold F_5}
\def\Z{\bold Z}
\def\e2{{\eta_2}}
\def\no{\noindent}
\def\ti{\times}
\def\part{\partial}
\def\Cal{\mathcal} 
\def\frak{\mathfrak}
\def\Supp{\operatorname{Supp}}
\def\tr{\operatorname{tr}}
\def\<<{\langle } 
\def\>>{\rangle }
\def\nsubset{\subset\hspace{-0.7em}\not\hspace{0.7em}} 
\def\sha{\sqcup}
\numberwithin{equation}{section} 
\newtheorem{theorem}{Theorem}[section] 
\newtheorem{proposition}[theorem]{Proposition} 
\newtheorem{proposition-definition}[theorem]
{Proposition-Definition} 
\newtheorem{corollary}[theorem]{Corollary} 
\newtheorem{definition}[theorem]{Definition} 
\newtheorem{remark}[theorem]{Remark} 
\newtheorem{lemma}[theorem]{Lemma} 
\newtheorem{example}[theorem]{Example}
\def\Ker{\operatorname{Ker}}
\begin{document} 
 
\title{ 
Relative DGA and mixed elliptic motives
} 
 
\author{Kenichiro Kimura and Tomohide Terasoma} 
 
\maketitle 
\makeatletter 
\tableofcontents

\section{Introduction and convention}
\subsection{Introduction}

Let $K$ be a field.
In the paper \cite{V}, Voevodsky defined a triangulated category $D_{MM,K}$
of mixed motives. Under the assumption of Beilinson-Soule vanishing conjecture
for varieties over $K$, there exists a reasonable t-structure $\tau$ on
$D_{MM,K}$, and the abelian category $\Cal A_{MM,K}$ of mixed motives
is defined as the heart of $D_{MM,K}$ with respect to the t-structure $\tau$.

Let $E$ be an elliptic curve over $K$ and we assume Beilinson-Soule
vanishing conjectures for varieties over $K$.
The category of elliptic motives is defined as the smallest full
subcategory of $\Cal A_{MM,K}$ containing $h^1(E)$ closed under taking 
direct sums, direct summands,
and tensor products. The category of mixed elliptic motives is the
smallest full subcategory of $\Cal A_{MM,K}$ containing elliptic motives
and closed under extensions.
That is, an object $M$ in $\Cal A_{MM,K}$ is a mixed elliptic motif
if there exists a filtration $M=F^0M\supset F^1M \supset \cdots \supset F^nM=0$
such that $F^iM/F^{i+1}M$ are elliptic motives for $i=0, \dots, n-1$.
In other words, the category of mixed elliptic
motives is the relative completion of the category of mixed motives with 
respect to the category of elliptic motives (\cite{H}).
For example, the category of mixed Tate motives is the relative
completion of the category of mixed motives with respect to the category
of pure Tate motives, which is the smallest full subcategory of 
$\Cal A_{MM,K}$ containing Tate objects $\bold Q(i)$ closed under taking direct sums.

In the paper \cite{BK}, Bloch and Kriz construct an abelian category of mixed Tate motives
as the category of comodules over a Hopf algebra obtained by
the bar construction of the DGA of cycle complexes.
One advantage of their construction is that
Beilinson-Soule vanishing conjectures is not necessary for their construction.
In this paper, we construct a Hopf algebra $H^0(Bar(\Cal C_{VEM}))$ such that
the abelian category of $H^0(Bar(\Cal C_{VEM}))$-comodules is expected
to be that of mixed elliptic motives. 
More precisely, we construct a DG category $(VMEM)$ of DG complexes of elliptic motives
and prove that it is homotopy equivalent to the DG category of $Bar(\Cal C_{VEM})$-comodules. 
As a consequence, the homotopy category of the subcategory of
$(VMEM)$ consisting of objects concentrated at degree zero is equivalent
to the abelian category of $H^0(Bar(\Cal C_{VEM}))$-comodules. Another
generalization of
Bloch-Kriz construction in the context of elliptic curves is
also studied in the earlier work by Bloch \cite{B} and Patashnick \cite{P}.
Hain and Matsumoto (\cite{HM}) studies Hodge and $l$-adic counter part
of the mixed elliptic motives.

Before going into this subject, it will be helpful to recall a similar
construction for the category of local systems over a manifold $X$ after R. Hain.
Let $F$ a field of characteristic zero,
$x$ a point in $X$ and $V$ be a $F$-local system on $X$.
Let $G=\pi_1(X,x)$ and
$\rho: G \to Aut(V)$ be the monodromy representation and
$S$ be the Zariski closure of the image of $\rho$.
Assume that $S$ is a reductive group over $F$.
\begin{definition}
Let $W$ be a finite dimensional representation of $G$.
If there exists a finite filtration 
$W=W^0 \supset W^1 \supset \cdots \supset W^n=0$
such that $Gr^i(W)=W^i/W^{i+1}$ is isomorphic to
the pull back of an algebraic representation of $S$,
the representation $W$ is called a successive extension
of algebraic representations of $S$.
The full subcategory of $G$ consisting of successive extensions
of algebraic representations of $S$ is denoted as $(Rep_{G})^S$.
\end{definition}
Then objects in $(Rep_{G})^S$ are stable under taking duals, 
direct products and tensor products, and
$(Rep_G)^S$ becomes a Tannakian category.
When $F=\bold R$,
Hain constructed a Hopf algebra $H$, in \cite{H},
using differential forms on $X$
and the connection associated to the local system $V$
such that the category of
comodules over $H$ is equivalent to $(Rep_G)^S$.
In other words,  
the Tannaka fundamental group $\pi_1((Rep_G)^S)$ of $(Rep_G)^S$ is isomorphic to $Spec(H)$.
His construction is called the relative bar construction.

In this paper, we reformulate Hain's construction using relative DGA's
so that it can be
applied to the motivic context. 
We give a general homotopical framework of relative bar complexes and DG-categories
in \S \ref{section:relative DGA and relative bar}
and \S \ref{section:relative DGA and DG category}. 
In \S \ref{section:relative DGA and relative bar}
we define a relative DGA,  and introduce the relative bar complex $Bar(A/\Cal O,\epsilon)$
and the relative simplicial bar complex $Bar_{simp}(A/\Cal O,\epsilon)$ of
a relative DGA $A$ with a relative augmentation $\epsilon$. We show
that these two bar complexes are quasi-isomorphic. The simplicial 
bar complex $Bar_{simp}(A/\Cal O,\epsilon)$ is defined in order to establish the equivalence of the category
of bar comodules to the category of DG complexes in $\Cal C(A/\Cal O)$ 
defined in the next section.
In \S \ref{section:relative DGA and DG category}, 
we introduce a DG category $\Cal C(A/\Cal O)$ arising from a relative DGA
$A$ over $\Cal O$. In this section, we prove that
the DG category $K\Cal C(A/\Cal O)$ of DG complexes in $\Cal C(A/\Cal O)$
is homotopy equivalent to that of $Bar(A/\Cal O,\epsilon)$-comodules.
The main theorem in this section is Theorem \ref{main theorem genereal}.

\noindent In \S 3.4 
and \S \ref{subsection:relative completion associated to 
represenataion} we recover Hain's construction in our formulation. Although
this is not necessary to construct mixed elliptic motives, it will give
an evidence that our formulation of relative bar complex is a right one.
By the homomorphism 
$\rho:G\to S(\bold k)$, the coordinate ring $\Cal O_S$ of the algebraic
group $S$ becomes a bi-$G$ module. The comodule $\Cal O_S$ as a left
$G$-module is written as $L\Cal O_S$.
The $G$-cochain complex 
$A=\underline{Hom}_G(L\Cal O_S,L\Cal O_S)$ becomes a bi-$\Cal O_S$ comodule
and is equipped with a multiplication arising from Yoneda pairing.
The complex $A$ is called the relative DGA associated to the map $\rho$.
In \S \ref{subsection:relative completion associated to 
represenataion}, we prove that
the Hopf algebra $\Cal O(\pi_1((Rep)^S))$ is isomorphic to the $0$-th
homology of relative bar complex defined in \S \ref{subsec:relative bar complex}.
Our proof is based on DG categories and DG complexes introduced in
\cite{BoK}.
(See also \cite{T}.)

In \S 4 we construct a DG category $(MEM)$ of naive mixed elliptic motives as the category of
DG complexes of pure elliptic motives.
We expect that the homotopy category $H^0(MEM)$ of $(MEM)$ is
isomorphic to the full sub-triangulated tensor category of $D_{MM,K}$
generated by $h^1(E)$ and their tensors.
We define a relative quasi-DGA $A_{EM}$ from algebraic cycles and then 
apply the general result in \S \ref{section:relative DGA and relative bar}
and \S \ref{section:relative DGA and DG category}
 to show that the 
category $(MEM)$ is homotopy
equivalent to the category of $Bar(A_{EM})$-comodules.
As a consequence, an object concentrated at degree zero defines an
$H^0(Bar(A_{EM}))$-comodule.
The main theorem in this section is
Theorem \ref{main theorem for naive mixed elliptic motives}.

The product structure on $H^0(Bar(A_{EM}))$ is given 
by shuffle product. This comes from
a tensor structure on the category $(MEM)$. 
However, as is explained in \S \ref{reason why naive does not distributive},
the category of naive elliptic motives does not have a tensor structure with distributive property
which gives rise to a shuffle product structure on the
bar complex $Bar(A_{EM})$.
So we introduce a category of virtual mixed elliptic motives $(VMEM)$
which is homotopy equivalent to $(MEM)$ and equipped with 
a tensor structure with distributive property. To show the homotopy
equivalence of categories $(EM)$ and $(VEM)$, the
injectivity of linear Chow group 
(Proposition \ref{injective to cohomology}) is necessary.
Though there is a proof of this proposition also in \cite{BL},
we will give a more direct proof.
Using this properties, we construct a quasi-DG category $(VMEM)$ which
has distributive tensor structure, and is homotopy equivalent to $(MEM)$.
(Theorem \ref{main theorem for virtual mixed elliptic motives}.)
By the shuffle product induced by the tensor structure, the coalgebra
$H^0(Bar(A_{EM}))=H^0(Bar(\Cal C_{VEM}))$ becomes a Hopf algebra.
(Theorem \ref{shuffle product commutative associative})
The definition of the Tannakian category of mixed elliptic motives
is defined in Definition \ref{defintion algebraic group}.

In \S 5 we construct an elliptic polylog motif $Pl_n$
as an example of object concentrated at degree zero in $(MEM)$.
To define the polylog motif, we first recall the elliptic polylog class
$P_n$ as an element of a higher Chow group introduced in
\cite{BL}.
Using the elliptic polylog class, we construct an elliptic polylog motif $Pl_n$.
Using the bijection of objects of $(MEM)$ and $Bar(A_{EM})$,
we write down the comodule structure on $Pl_n$ over
$H^0(Bar(A_{EM}))$. In this paper, we assume that
the elliptic curve $E$ has no complex multiplication.

\subsection{Convention}

The subset $\{i \in \bold Z\mid p \leq i \leq q\}$ of $\bold Z$ is denoted by
$[p,q]$.
Let $\bold k$ be a field of characteristic zero. The tensor products
mean those over the base field $\bold k$. 
\subsubsection{}
\label{def of cotensor product}
Let $\Cal O$ be a counitary coalgebra.
The counit $\Cal O \to \bold k$ is denoted by $u$
and the comultiplication of $\Cal O$ is denoted by
$\Delta_{\Cal O}:\Cal O \to \Cal O\otimes \Cal O$.
Let $M$ and $N$ be right and left
$\Cal O$ comodules. We define the cotensor product $M\otimes_{\Cal O}N$
by
$$
\Ker(M\otimes N \overset{d}\to M \otimes \Cal O \otimes N),
$$
where the map $d$ is defined by 
$\Delta_M \otimes id_N - id_M \otimes \Delta_N$.
Let $V_1, V_2$ be left $\Cal O$-comodules.
We define $Hom_{\Cal O}(V_1, V_2)$
by
$$
\Ker(Hom_k(V_1,V_2) \overset{d}\to Hom_k(V_1, \Cal O\otimes V_2))
$$
where the map $d$ is the difference of
$$
\Delta_{V_2*}:Hom_k(V_1, V_2) \to Hom_k(V_1, \Cal O\otimes V_2)
$$
and the composite
$$
Hom_k(V_1, V_2) \to Hom_k(\Cal O\otimes V_1, \Cal O\otimes
V_2)\overset{\Delta_{V_1}^*}\to
Hom_k(V_1, \Cal O\otimes V_2).
$$
Let $A$ be a bimodule over the coalgebra $\Cal O$,
and $\varphi\in Hom_{\Cal O}(V_1, V_2)$. Then
$1_A\otimes \varphi$ becomes an element in
$Hom_{\Cal O}(A\otimes_{\Cal O}V_1, A\otimes_{\Cal O}V_2)$.

\subsubsection{}
Let $M$ be an $\Cal O$-comodule. Then we have 
the following complex
$$
0\to M \overset{\Delta_M}{\to} \Cal O\otimes M 
\overset{\Delta_{\Cal O}\otimes 1 -1\otimes \Delta_M}
\longrightarrow \Cal O\otimes \Cal O \otimes M
$$
by the coassociativity of $M$.
This is exact since the maps
\begin{align*}
& u\otimes 1_M:\Cal O\otimes M \to M, \\
& u\otimes 1_{\Cal O}\otimes 1_M-
1_{\Cal O}\otimes u \otimes 1_M:
\Cal O\otimes \Cal O\otimes M \to \Cal O\otimes M 
\end{align*}
give homotopy.
As a consequence, the natural map $M\to \Cal O\otimes_{\Cal O}M$ 
is an isomorphism.

\subsubsection{}
\label{many copy of group its elements, etc}
Let $S$ be an algebraic group and $\Cal O_S$ the coordinate ring.
Then $\Cal O_S$ becomes a Hopf algebra whose coproduct $\Cal O_S\to \Cal
O_S\otimes \Cal O_S$ is obtained by
$$
S\times S\to S:(g,h)\mapsto hg.
$$
There is natural one to one
correspondence between left algebraic representations of $S$
and left comodules over $\Cal O_S$.
The correspondence is given as follows. Let $g$ be an element in
$S(\bold k)$. Then the evaluation at $g$ defines an algebra homomorphism
$ev_g:\Cal O_S\to \bold k$. Let $V$ be a left $\Cal O_S$-comodule and
$\Delta_V:V\to \Cal O_S\otimes V$ be the comodule structure on $V$.
The action of $g$ on $V$ is given by the composite of
$$
V\to \Cal O_S\otimes V\xrightarrow{ev_g} \bold k\otimes V=V.
$$
The left $S$ module $\Cal O_S$ is written by $L\Cal O_S$.
A reductive group $S$ is said to be split if
all irreducible representations
over $\bold k$ are absolutely irreducible. Let $Irr(S)$ be the set of isomorphism
classes of irreducible representations. If $S$ is split, then we have
$$
\Cal O_S=\oplus_{\alpha\in Irr(S)}V^{\alpha}\otimes {}^{\alpha}V,
$$
where $\alpha$ runs through the isomorphism classes of
irreducible representations of left $S$-modules
and $V^{\alpha}$ be the corresponding left $S$-module.
The dual vector space ${}^{\alpha}V$ of $V^{\alpha}$
becomes a right $S$-module.
The function $\varphi$ corresponding to
$(v\otimes v^*)$ for
$v\in V^{\alpha},v^* \in{}^{\alpha} V$
is defined by $\varphi(g)=(v^*,gv)=(v^*g,v)$.
Therefore the dual
$Hom_{\bold k}(\Cal O_S,\bold k)$ of $\Cal O_S$ is naturally isomorphic
to $\Cal O_S$. 
Here for an $S$ representation $W$, $Hom_{\bold k}(*,*)$ is defined by
$$
Hom_{\bold k}(W_1,W_2)=\oplus_{\alpha\in Irr(S)}
Hom_{\bold k}(V^{\alpha}\otimes Hom_{\Cal O_S}(V^{\alpha},W_1), 
W_2).
$$
The natural pairing is given by
$$
(v\otimes v^*)\otimes (w\otimes w^*)=(w^*,v)(v^*,w)
$$
for $v,w\in V^{\alpha},w^*,w^* \in \ ^{\alpha} V$.
This isomorphism does not depend on the choice
of representative of the isomorphism classes.
By descent theory, there exists a natural isomorphism between
$\Cal O_S$ and $Hom(\Cal O_S,\bold k)$.
\subsubsection{}
>From now on, we assume that $S$ is split. We set $\Cal O=\Cal O_S$
The multiplication $\Cal O\otimes \Cal O\to\Cal O$
on $\Cal O$ is defined by the diagonal embedding $S \to S \times S$.
If functions $\varphi$ and $\psi$ correspond to
elements $v\otimes v^*$ and $w\otimes w^*$, then
the product function $\varphi\psi$
is the function
$g\mapsto (v^*g,v)\cdot (w^*g,w)$.

This map is described by the duality of intertwining spaces.
Let $V$ be an irreducible representation of $S$
and $I_{V}$ and $I^*_{V}$ be 
covariant and contravariant intertwining spaces defined by
\begin{align*}
I_{V}(W)=Hom_{S}(V,W),\quad
I_{V}^*(W)=Hom_{S}(W,V).
\end{align*}
Then the composite $g\circ f\in Hom_S(V,V)$
of $f\in I_V(W)$ and $g\in I_V^*(W)$
is regarded as an element of $\bold k$ via the isomorphism
$Hom_G(V,V)=\bold k$.
It is called the contraction and is denoted as $con(f,g):I_V(W)\otimes I_V^*(W)\to \bold k$.
The multiplication map is the sum of the composite 
\begin{align*}
con_{\alpha,\beta}^\gamma:(V^{\alpha}\otimes{}^{\alpha} V)\otimes
(V^{\beta}\otimes{}^{\beta} V) &\simeq
(V^{\gamma}\otimes I_{V^{\gamma}}(V^{\alpha}\otimes V^{\beta}))
\otimes
(^{\gamma}V\otimes I^*_{V^{\gamma}}(V^{\alpha}\otimes V^{\beta})) \\
&\xrightarrow{con} V^{\gamma}\otimes{}^{\gamma}V
\end{align*}

\subsubsection{}
\label{convention of copies of groups}
We introduce copies of $S$ indexed by some
set $X$. For an element $x\in X$ the copy is denoted as $S_x$.
To distinguish the right and left actions of $S$, we use the notation
${}_xS_y$ for
the algebraic group $S$. On the group ${}_xS_y$, $S_x$ and $S_y$
acts from the left and the right. The coordinate ring of ${}_xS_y$
is denoted by ${}_x\Cal O_y$. 
For an element $\alpha\in Irr(S)$, the copy of $V^{\alpha}$
considered as a left $S_x$-module is denoted by
${}_xV^{\alpha}$.
The dual of ${}_xV^{\alpha}$ as a right $S_x$-module is denoted
by ${}^\alpha V_{x}$.
Thus we have an identity
$\displaystyle
{}_x\Cal O_y=\underset{\alpha}\oplus{}_xV^\alpha \otimes {}^{\alpha}V_y.
$
The direct sum is taken over the set $Irr(S)$.

\subsubsection{}
Let $M$ and $N$ be complexes of $\bold k$-vector spaces.
The set of homogeneous maps of degree $n$ from $M$ to $N$
is denoted by $\underline{Hom}^n_{Vect_{\bold k}}(M, N)$.
For an element $f$ in $\underline{Hom}^n_{Vect_{\bold k}}(M, N)$,
the differential  $\partial(f)$ of $f$ is defined by
$$
(\partial(f)(x)=d(f(x))-(-1)^{\deg(f)}f(dx).
$$
Let $e^n$ be the degree $(-n)$-element in the complex $\bold k[n]$
corresponding to $1$.
We define the degree $(-n)$ map $t^n$ in 
$\underline{Hom}^{-n}(\bold k, \bold k[n])$ by
$t(1)=e^n$. We set $t=t^1$ and $e=e^1$. The complex $K\otimes \bold k[n]$ is
denoted by $Ke^n$.

For two complexes $K$ and $L$, we define the complex $K\otimes L$ by 
usual sign rule,
$$
d_{L\otimes K}(x\otimes y)=dx\otimes y+(-1)^{\deg(x)}x\otimes dy.
$$
This rule is applied to a complex object in an abelian category with
tensor product.

For two homogeneous map of complexes $f_1\in \underline{Hom}^i(K_1,L_1)$ and
$f_2\in \underline{Hom}^j(K_2,L_2)$, we define the homogeneous map
$f_1\otimes f_2 \in \underline{Hom}^j(K_1\otimes K_2 ,L_1\otimes L_2)$ by
$$
(f_1\otimes f_2)(k_1\otimes
k_2)=(-1)^{\deg(f_2)\deg(k_1)}f_1(k_1)\otimes f_2(k_2)
$$
for $k_1\in K_1, k_2 \in K_2$.
Thus we have an isomorphism of complexes:
$$
\underline{Hom}^{\bullet}(K_1,L_1)\otimes \underline{Hom}^{\bullet}(K_2,L_2)
\to
\underline{Hom}^{\bullet}(K_1\otimes K_2 ,L_1\otimes L_2).
$$
For example, we have the following isomorphism
\begin{align}
\label{shift and homomorphism and sgn}
&\underline{Hom}^{\bullet+n-m}(K,L)\otimes 
\bold kt^{n-m} \\
\nonumber
=&
\underline{Hom}^{\bullet}(K,L)\otimes 
\underline{Hom}^{\bullet}(\bold
ke^m,\bold ke^n)\to
\underline{Hom}^{\bullet}(Ke^m,Le^n).
\end{align}
Using this rule, the differential of $K\otimes L$ is written as
$d\otimes 1 + 1\otimes d$.

A complex object 
$\{K^i=(K^i,\delta),d^i:K^i \to K^{i+1}\}$ in the category of 
complex is called a double
complex. Then $d^i\otimes t^{-1}:K^ie^{-i}\to K^{i+1}e^{-i-1}$
is a homogeneous map of degree one, and
the sum $\oplus_i K^ie^{-i}$ becomes a complex with the
differential $\delta\otimes 1+ d\otimes t^{-1}$, which is called
the associate simple complex and denoted as $s(K^{\bullet})$.
Using the above sign rules, we have a natural isomorphism
$$
s(K\otimes L)\simeq s(K)\otimes s(L):
(K^i\otimes L^j)e^{-i-j}\simeq K^ie^{-i} \otimes L^{-j} e^{-j}
$$
for two double complexes $K$ and $L$.
\subsubsection{} 
\label{convention of orientation}
Let $K$ and $L$ be complexes. Then we define an isomorphism of complex
$\sigma:K\otimes L \to L\otimes K$ by
\begin{equation}
\label{transposition}
\sigma(x\otimes y)=(-1)^{\deg(x)\deg(y)}y\otimes x.
\end{equation}
This is called a transposition homomorphism.
On a complex $K^{\otimes n}=\underset{n}{\underbrace{K\otimes \cdots \otimes K}}$, the
transposition of $i$-th and $(i+1)$-th component is denoted as
$\sigma_{i,i+1}$. It is easy to show that this action can be extended to 
the action of $\frak S_{n}$. As a consequence, $K^{\otimes n}$ becomes a
$\bold k[\frak S_n]$-module. If $char(\bold k)>n$, then
the symmetric part is the image of
$
\frac{1}{n!}\sum_{\sigma\in \frak S_n}\sigma.
$

\subsubsection{}
\label{orientation is introduced here}
We assume that $char(\bold k)=0$.
Let $A$ be a finite set and $n=\# A$. For each element $a \in A$, we prepare element
$e_a$ of degree $-1$ and we consider a complex
$K=\oplus_{a\in A}\bold ke_a$. The symmetric part $\Lambda(A)$ of
$K^{\otimes n}$ 
under the action of $\frak S_n$ is
isomorphic to $\bold k[n]$ by choosing an ``orientation of $A$''.
If $A=[1,n]$, then $\bold k[n]$ is generated by $e_1\wedge\dots\wedge e_n$.
The group of bijection of $A$ is denoted as $\frak S[A]$.
On the complex $\Lambda(A)$, the group $\frak S[A]$ acts via the sign.
Let $K^*=\oplus_{a\in A}\bold kf_a$ be a complex generated by degree
$1$-element
$f_a$ indexed by $A$. The symmetric part of $(K^*)^{\otimes n}$ is
denoted as $\Lambda^*(A)$. We have the canonical isomorphism
$\Lambda(A\coprod B)=\Lambda(A)\otimes \Lambda(B)$.
The complex $\Lambda([1,n])$ is denoted as $\Lambda_n$.

\section{Relative DGA and relative bar construction}
\label{section:relative DGA and relative bar}
In this section, we define a relative DGA and
relative bar constructions. For a relative DGA $A$ and its
relative augmentation $\epsilon$, we define two bar complexes
$Bar(A/\Cal O,\epsilon)$ and $Bar(A/\Cal O,\epsilon)_{simp}$.
The latter one is called the simplicial relative bar complex.
In \S \ref{comparison of two complexes}, we prove that they are quasi-isomorphic.
In the last subsection, we define coproduct structures on bar complexes.

\subsection{Relative DGA}
\subsubsection{}
Let $S$ be a reductive group over $\bold k$ and $\Cal O_S$ be
the coordinate ring of $S$. The coproduct structure on
$\Cal O_S$ is denoted by 
$\Delta_S:\Cal O_S \to\Cal O_S \otimes \Cal O_S$. 
We define a relative DGA's over $\Cal O_S$ as follows.
\begin{definition}
\label{definition of relative DGA}
Let $A$ be a complex. 
A data of 5-ple $(A, \Delta_A^l,
\Delta_{A}^r, i, \mu_A)$
consisting of
\begin{enumerate}
\item
(the coactions)
a homomorphisms of complexes
$\Delta_A^l:A \to \Cal O_S \otimes A$
and $\Delta_{A}^r:A \to A \otimes \Cal O_S$,
\item
\label{O-structure of A}
a homomorphism of complexes
compatible with the bi-$\Cal O_S$ comodule structures
$i:\Cal O_S\to A$ , and
\item
(multiplication)
a homomorphism
of complexes
$\mu_A:A\otimes_{\Cal O_S}A \to A$ 
\end{enumerate}
is called a relative DGA over $\Cal O_S$, if the following conditions
 are satisfied.

\begin{enumerate}
\item the multiplication $\mu_A$ is associative, and
\item
the left and the right coactions of $\Cal O_S$ are coassociative and counitary.
\end{enumerate}
\end{definition}
\subsubsection{}
We set $\Cal O=\Cal O_S$.
We use notations $V^\alpha$ for irreducible representations
as in \S \ref{many copy of group its elements, etc}.
We set $A^{\alpha\beta}=Hom_{\Cal O}
(V^{\alpha},A\otimes_{\Cal O} V^{\beta})$.
Since $S$ is split, we have 
$$
A=\oplus_{\alpha,\beta}V^{\alpha}\otimes A^{\alpha\beta}\otimes
{}^{\beta}V.
$$
Under this decomposition, we have
$$
A\otimes_{\Cal O}A\simeq 
\oplus_{\alpha,\beta,\gamma}V^{\alpha}
\otimes A^{\alpha\beta}
\otimes A^{\beta\gamma}
\otimes {}^{\gamma}V
$$
and the multiplication map
is obtained by 
\begin{equation}
\label{from rev comp to multiplication}
V^{\alpha}\otimes A^{\alpha\beta}\otimes 
A^{\beta\gamma}\otimes V^{\gamma}
\xrightarrow{1\otimes \eta\otimes 1} 
V^{\alpha}\otimes A^{\alpha\gamma}
\otimes V^{\gamma}
\end{equation}
where $\eta:A^{\alpha,\beta}\otimes A^{\beta,\gamma}\to
A^{\alpha,\gamma}$ 
is the map defined by
\begin{align*}
&Hom_{\Cal O}
(V^{\alpha},A\otimes_{\Cal O} V^{\beta})\otimes
Hom_{\Cal O}
(V^{\beta},A\otimes_{\Cal O} V^{\gamma}) \\
\xrightarrow{\sigma} &Hom_{\Cal O}
(V^{\beta},A\otimes_{\Cal O} V^{\gamma})\otimes 
Hom_{\Cal O}(V^{\alpha},A\otimes_{\Cal O} V^{\beta})
\\
\to &
Hom_{\Cal O}
(A\otimes_{\Cal O}V^{\beta},A\otimes_{\Cal O}A\otimes_{\Cal O} V^{\gamma})\otimes 
Hom_{\Cal O}(V^{\alpha},A\otimes_{\Cal O} V^{\beta})
\\
\to &Hom_{\Cal O}
(V^{\alpha},A\otimes_{\Cal O}A\otimes_{\Cal O} V^{\gamma}) \\
\to &Hom_{\Cal O}
(V^{\alpha},A\otimes_{\Cal O} V^{\gamma}). 
\end{align*}
Here $\sigma$ is the transposition (\ref{transposition}).
The left $\Cal O$-structure $\Delta^l$
is the direct sum of the map
$$
A^{\alpha\beta}\to {}^{\alpha}V\otimes V^{\alpha}\otimes 
A^{\alpha\beta}:x\mapsto 
\delta_{\alpha}\otimes x,
$$
where 
$\{b_i\}$ and $\{b_i^*\}$ are dual bases of 
${}^{\alpha}V$ and $V^{\alpha}$,
and
$\delta_{\alpha}=\sum_ib_i\otimes b_i^*$.
The map $\Delta^r$
can be written similarly.
The natural map $A\otimes_{\Cal O} A \to A\otimes A$
is identified with
\begin{align}
\label{natural map for cotensor}
V^{\alpha}
\otimes A^{\alpha\beta}
\otimes A^{\beta\gamma}
\otimes {}^{\gamma}V  &\to
V^{\alpha}
\otimes A^{\alpha\beta}
\otimes {}^{\beta}V
\otimes V^{\beta}
\otimes A^{\beta\gamma}
\otimes {}^{\gamma}V  \\
\nonumber
x_1\otimes x_2\otimes y_1\otimes y_2 & \mapsto
x_1\otimes x_2\otimes \delta_{\beta} \otimes y_1\otimes y_2.
\end{align}
This natural map is written as 
$\big[x\otimes y\big]\mapsto \big[x]\otimes\big[y]$.

\subsubsection{Example. Relative DGA associated to $\rho:G\to S(\bold k)$}
\label{example of relative DGA associated to group}
Let $G$ be a group, $S$ a
reductive group over a field $\bold k$ and
$\rho:G\to S(\bold k)$ a group homomorphism whose image is Zariski dense.
We give an example of relative DGA associated to $\rho$.
Let $V_1, V_2$ be algebraic representations of $S$.
The following complex is called the canonical cochain complex: 
\begin{align*}
\underline{Hom}_G(V_1,V_2):0\to Hom_{\bold k}(V_1, V_2) & \to 
Hom_{\bold k}(\bold k[G]\otimes V_1, V_2)  \\
& \to
Hom_{\bold k}(\bold k[G]\otimes \bold k[G]\otimes V_1, V_2) \to
\end{align*}
where the differential is given by
\begin{align*}
& d(\varphi)(g_1\otimes \cdots \otimes g_{p+1} \otimes v_1) \\
=&g_1\varphi(g_2\otimes \cdots \otimes g_{p+1}\otimes v_1)-
\varphi(g_1g_2\otimes \cdots \otimes g_{p+1}\otimes v_1)\\
&+
\varphi(g_1\otimes g_2g_3\otimes \cdots \otimes g_{p+1}\otimes v_1)-\cdots \\
&\pm \varphi(g_1\otimes \cdots \otimes g_pg_{p+1}\otimes v_1)
\mp \varphi(g_1\otimes \cdots \otimes g_p\otimes g_{p+1}v_1).
\end{align*}

Then the extension group
$Ext^i_G(V_1,V_2)$ of $V_1$ and $V_2$ 
as $G$-modules
is the $i$-th cohomology group of
$\underline{Hom}_G(V_1,V_2)$. We define the multiplication map
$$
\underline{Hom}_G(V_1,V_2)\otimes \underline{Hom}_G(V_2, V_3) \to
\underline{Hom}_G(V_1, V_3).
$$
by
\begin{align}
\label{Yoneda pair complex level}
\varphi\otimes \psi \mapsto 
&\bigg[g_1\otimes \cdots \otimes g_j\otimes g_{j+1}
\otimes \cdots \otimes g_{i+j}\otimes v_1 \\
\nonumber
& \mapsto
(-1)^{\deg(\varphi)\deg(\psi)}
\psi(g_1\otimes \cdots \otimes g_j\otimes
\varphi(g_{j+1}\otimes \cdots \otimes g_{i+j} \otimes v_1))\bigg]
\end{align}

Let $\Cal O_S$ be the coordinate ring of $S$. 
The module $\Cal O_S$ as a left $S$-module is written as $L\Cal O_S$.
We set $A=\underline{Hom}_G(L\Cal O_S, L\Cal O_S)$.
Using the right $\Cal O_S$-comodule structure of $L\Cal O_S$,
the complex $A$ becomes a bi-$\Cal O_S$-comodule.
The multiplication map $A\otimes A\to A$ induces a map
$\eta:A\otimes_{\Cal O}A \to A$, which is also called the 
multiplication map of $A$. Then $A$ becomes a relative DGA over $\Cal O$.
The natural map $\epsilon:A\to Hom_{\bold k}(L\Cal O,L\Cal O)$
is called the relative augmentation of $A$.

\subsection{Relative bar complex and simplicial relative bar complex}
\label{subsec:relative bar complex}
\subsubsection{}
Let $A$ be a relative DGA over $\Cal O$.
We use notations for copies of a reductive group in \S \ref{convention of copies of groups}.
A homomorphism of complexes 
$$
{}_xA_{y}\overset{\epsilon}\to 
Hom_{\bold k}({}_z\Cal O^{(1)}_{x},{}_z\Cal O^{(2)}_{y})
$$
is called a relative augmentation if 
\begin{enumerate}
\item
it is a homomorphism
of bi-$\Cal O$ comodules, and
\item
the multiplication
$A\otimes_{\Cal O} A \to A$
is compatible with the multiplication homomorphism 
of $Hom_{\bold k}(\Cal O,\Cal O)$
which is defined
by the composite of
\begin{align*}
&Hom_{\bold k}(\Cal O,\Cal O)\otimes_{\Cal O}
Hom_{\bold k}(\Cal O,\Cal O)  \\
\overset{\sigma}\to
&Hom_{\bold k}(\Cal O,\Cal O)\otimes
Hom_{\bold k}(\Cal O,\Cal O) \to
Hom_{\bold k}(\Cal O,\Cal O),
\end{align*}
where $\sigma$ is the transposition
defined in (\ref{transposition}).
\end{enumerate}
Using the relative augmentations,
we have two homomorphisms
$\epsilon^l$ and $\epsilon^r$ by
\begin{align*}
&\epsilon^l:{}_xA_{y}\to 
 {}_xA_{y} \otimes{}_y\Cal O_y^{*(2)}\overset{\epsilon}\to 
Hom_k({}_y\Cal O_x^{(1)}, {}_y\Cal O_y^{(2)})
\otimes {}_y\Cal O_y^{*(2)}
\overset{pair}\to {}_y\Cal O_x^{*(1)}\simeq
{}_x\Cal O_y^{(1)} \\
&\epsilon^r:{}_xA_{y}\to {}_x\Cal O_x^{(1)}\otimes {}_xA_{y} \overset{\epsilon}\to 
{}_x\Cal O_x^{(1)}\otimes Hom_k({}_x\Cal O_x^{(1)}, {}_x\Cal O_y^{(2)})
\overset{ev}\to {}_x\Cal O_y^{(2)}.
\end{align*}
The map $\epsilon^l$ and $\epsilon^r$ are called the left and right
augmentations.
\subsubsection{}
Since $S$ is split, we have
${}_xA_y=\underset{\alpha,\beta}\oplus({}_xV^{\alpha}
\otimes A^{\alpha\beta}\otimes
{}^{\beta}V_y)$.
Then the relative augmentation induces a 
family of homomorphisms
\begin{align}
\label{pre-augmentation componentwise}
A^{\alpha\beta}\to 
&Hom_{\Cal O}(V^{\alpha}, Hom_{\bold k}(\Cal O,\Cal O)\otimes_{\Cal O}V^{\beta}) \\
%
\nonumber
&=Hom_{\bold k}(V^{\alpha},V^{\beta}),
\end{align}
indexed by $\alpha,\beta$.
Via these homomorphisms, the map 
$\eta:A^{\alpha,\beta}\otimes A^{\beta,\gamma}\to A^{\alpha,\gamma}$
is compatible with the map
$$
Hom_{\bold k}(V^{\alpha},V^{\beta})\otimes 
Hom_{\bold k}(V^{\beta},V^{\gamma})\to
Hom_{\bold k}(V^{\alpha},V^{\gamma}):
x\otimes y \mapsto y\circ x.
$$
The map $\epsilon^l$ is written as follows.
\begin{align*}
\epsilon^l:
{}_xV^\alpha\otimes A^{\alpha\beta}
\otimes {}^{\beta}V_{y} 
&\to
{}_xV^\alpha\otimes 
{}^{\alpha}V_{y}\otimes {}_yV^{\beta}
\otimes {}^{\beta}V_y \\ 
&\to
{}_xV^\alpha\otimes 
{}^{\alpha}V_{y}.
\end{align*}

\subsubsection{}
We introduce a relative bar complex 
$Bar(A/\Cal O, \epsilon)$.
For an integer $n\geq 1$, we define 
$Bar_{n}=Bar(A/\Cal O, \epsilon)_{n}$ by
$$
Bar(A/\Cal O, \epsilon)_{n}=
\underset{n}{\underbrace{
A
\otimes_{\Cal O}\cdots
\otimes_{\Cal O}A}}.
$$
and $Bar_0=\Cal O$.
We have a sequence of homomorphisms
\begin{equation}
\label{relative bar double complex}
\bold{Bar}(A/\Cal O, \epsilon):
\cdots \to Bar_{n} \to \cdots \to Bar_{1} \to Bar_0 \to 0. 
\end{equation}
given by
\begin{align*}
x_1\otimes\cdots \otimes x_n \mapsto 
& \epsilon^r(x_1)\otimes x_2 \otimes \cdots \otimes x_n+
\sum_{i=1}^{n-1}(-1)^i
x_1\otimes\cdots \otimes x_ix_{i+1} \otimes\cdots \otimes
x_n \\
& +(-1)^n x_1\otimes\cdots \otimes \epsilon^l(x_n).
\end{align*}
Here the multiplication map $\mu:A\otimes_{\Cal O} A \to A$ 
is denoted by $x\otimes y \mapsto xy$.
\begin{proposition}
\label{complex for relative bar construction}
The sequence (\ref{relative bar double complex}) is a complex.
\end{proposition}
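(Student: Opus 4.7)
The plan is to verify $d^2=0$ by the standard bar-complex argument adapted to the relative setting, where the key inputs are the associativity of $\mu_A$, the compatibility of the augmentation $\epsilon$ with multiplication (the second condition in the definition of a relative augmentation), and the bi-comodule structure that makes the cotensor products well-defined.

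First I would fix notation: write the differential as $d=\sum_{i=0}^{n}(-1)^i d_i$ on $Bar_n$, where $d_0$ applies $\epsilon^r$ to the first slot (identifying $\Cal O\otimes_{\Cal O}A\otimes_{\Cal O}\cdots\otimes_{\Cal O}A$ with $Bar_{n-1}$ via the canonical isomorphism $M\to\Cal O\otimes_{\Cal O}M$ recalled in the convention section), $d_i$ for $1\le i\le n-1$ multiplies the $i$-th and $(i{+}1)$-st entries using $\mu_A$, and $d_n$ applies $\epsilon^l$ to the last slot. Then I would compute $d\circ d(x_1\otimes\cdots\otimes x_n)=\sum_{i,j}(-1)^{i+j}d_j d_i$, and group the summands into four types: (i) two interior multiplications at non-adjacent positions, (ii) two interior multiplications at adjacent positions, (iii) one interior multiplication together with an endpoint augmentation $\epsilon^r$ or $\epsilon^l$ applied to the same endpoint, and (iv) both endpoint augmentations together with the opposite endpoint.

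For type (i), the two multiplications act on disjoint pairs of factors and the operations commute, so the contributions from $(i,j)$ and $(j-1,i)$ cancel because the signs $(-1)^{i+j}$ and $(-1)^{(j-1)+i}$ differ. For type (ii), the terms at $(i,i+1)$ involve either $(x_ix_{i+1})x_{i+2}$ or $x_i(x_{i+1}x_{i+2})$, and these cancel after assigning signs by the associativity of $\mu_A$. For type (iii), the critical cancellation at the left end uses that applying $\epsilon^r$ after $\mu_A$ equals the appropriate composition involving $\epsilon^r\otimes\epsilon^r$ modulo the $\Cal O$-coaction; this is precisely the content of the compatibility of $\epsilon$ with multiplication, translated into the $\epsilon^r$ picture, and similarly at the right end for $\epsilon^l$. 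Finally, in type (iv) the two augmentations act on disjoint endpoints and simply commute, giving matched pairs with opposite signs.

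The main technical obstacle is bookkeeping of the cotensor-product identifications in type (iii): one has to check that the isomorphism $\Cal O\otimes_{\Cal O}A\cong A$ intertwines the action of $\epsilon^r$ with the left $\Cal O$-coaction on $A$ appropriately, so that the composite $d_0\circ d_1$ really equals $d_0\circ d_0$ under the relabeling. Once one unpacks the definitions of $\epsilon^l,\epsilon^r$ in terms of the bi-comodule structure and uses the description (\ref{pre-augmentation componentwise}) of $\epsilon$ on the isotypic components $A^{\alpha\beta}$, the required identity reduces to the statement that $\epsilon$ is a homomorphism with respect to $\mu_A$ and the Yoneda-type composition on $Hom_{\bold k}(\Cal O,\Cal O)$, which is built into the definition of relative augmentation. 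Verifying the signs in the endpoint cancellations is where one must be careful; the interior cancellations are formally identical to the classical bar-complex calculation.
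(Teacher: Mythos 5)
Your proposal is correct and follows essentially the same route as the paper: the paper also reduces $d^2=0$ to the standard simplicial-face identities, citing associativity of $\mu_A$ for the interior cancellations and two commutative squares (precisely your ``type (iii)'' compatibilities of $\epsilon^r$ and $\epsilon^l$ with $\mu_A$) that are exactly the content of the augmentation axiom. The only cosmetic gap is that your four-type classification does not explicitly name the case of an endpoint augmentation composed with a non-adjacent interior multiplication, but that is covered by the same disjoint-slots commutation you invoke for type (i).
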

\begin{proof}
This is a consequence of the associativity of the multiplication of
$A$ and the following commutativity arising from the axiom (3).

$$
\begin{matrix}
\Cal O\otimes_{\Cal O}A\otimes_{\Cal O} A
&\overset{1\otimes \mu}\to &
\Cal O\otimes_{\Cal O}A \\
\epsilon^r\otimes 1\downarrow 
& & \downarrow\epsilon^r \\
\Cal O\otimes_{\Cal O}A
&\overset{\epsilon^r}\to &
\Cal O \\
\end{matrix}
\qquad
\begin{matrix}
A\otimes_{\Cal O}A\otimes_{\Cal O} \Cal O
&\overset{\mu\otimes 1}\to &
A\otimes_{\Cal O}\Cal O \\
1\otimes \epsilon^l\downarrow 
& & \downarrow\epsilon^l \\
A\otimes_{\Cal O}\Cal O
&\overset{\epsilon^l}\to &
\Cal O \\
\end{matrix}
$$
\end{proof}
The complex $Bar(A/\Cal O,\epsilon)$ is defined by
the associate simple complex of 
$\bold{Bar}(A/\Cal O,\epsilon)$.
\subsubsection{}
We define a relative simplicial bar complex 
$Bar_{simp}=Bar_{simp}(A/\Cal O,\epsilon)$.
For a sequence of integers 
$\alpha=(\alpha_0<\alpha_1<\cdots <\alpha_n)$,
we define a complex 
$Bar_{simp}^{\alpha}=
Bar_{simp}^{\alpha}(A/\Cal O,\epsilon)$ by
$\underset{n}{\underbrace{A\otimes_{\Cal O}
\cdots \otimes_{\Cal O}A}}$.
It will be denoted as 
$$
\Cal O \overset{\alpha_0}\otimes_{\Cal O}
A
\overset{\alpha_1}\otimes_{\Cal O}
\cdots 
\overset{\alpha_{n-1}}\otimes_{\Cal O}A
\overset{\alpha_{n}}\otimes_{\Cal O}\Cal O
$$
to distinguish the index $\alpha$.
If $\#\beta=\#\alpha+1$ and $\alpha$ is a subsequence of
$\beta$, we define a homomorphism of complexes
$
Bar_{simp}^{\beta} \to Bar_{simp}^{\alpha}
$
as follows.

(1) If 
$\beta=(\alpha_0< \cdots \alpha_i<b<\alpha_{i+1} \cdots <\alpha_n)$,
then the map $\partial_{\beta,i+1}$ is given by
\begin{align*}
&
y_0
\overset{\alpha_0}\otimes x_1
\overset{\alpha_1}\otimes \cdots x_i
\overset{\alpha_i}\otimes y
\overset{\beta}\otimes x_{i+1}
\overset{\alpha_{i+1}}\otimes
x_{i+2}
\cdots
\overset{\alpha_{n-1}}\otimes x_n  
\overset{\alpha_{n}}\otimes y_{n+1}.
\\
&\mapsto 
y_0
\overset{\alpha_0}\otimes
x_1
\overset{\alpha_1}\otimes \cdots x_i
\overset{\alpha_i}\otimes y
x_{i+1}
\overset{\alpha_{i+1}}\otimes
x_{i+2}
\cdots
\overset{\alpha_{n-1}}\otimes x_n
\overset{\alpha_{n}}\otimes y_{n+1}.
\end{align*}

(2) If
$\beta=(b<\alpha_0< \cdots  <\alpha_n)$
(resp. $\beta=(\alpha_0< \cdots  <\alpha_n<b)$), 
the map 
$\partial_{\beta,0}$ (resp. $\partial_{\beta,{n+1}}$)
is given by
\begin{align*}
&
y_0
\overset{\alpha_0}\otimes 
x_1
\overset{\alpha_1}\otimes 
\cdots
\overset{\alpha_{n-1}}\otimes x_n  
\overset{\alpha_{n}}\otimes y_{n+1}
\mapsto 
\epsilon^r(y_0\otimes x_1)
\overset{\alpha_1}\otimes \cdots 
\overset{\alpha_{n-1}}\otimes x_n 
\overset{\alpha_{n}}\otimes y_{n+1} \\
\text{(resp. } &
y_0
\overset{\alpha_0}\otimes 
x_1
\overset{\alpha_1}\otimes 
\cdots
\overset{\alpha_{n-1}}\otimes x_n  
\overset{\alpha_{n}}\otimes y_{n+1}
\mapsto 
y_0\otimes x_1
\overset{\alpha_1}\otimes \cdots 
\overset{\alpha_{n-1}}\otimes 
\epsilon^l(x_n 
\overset{\alpha_{n}}\otimes y_{n+1})
\text{)}.
\end{align*}
We define $\bold {Bar}_{simp,n}$ by
$$
\bold {Bar}_{simp,n}=\oplus_{\alpha=(\alpha_0<\cdots
<\alpha_n)}Bar^{\alpha}_{simp}
$$
and the map $\partial_{n}:\bold {Bar}_{simp,n}
\to \bold {Bar}_{simp,n-1}$
by
\begin{equation}
\label{outer differential for simplical bar complex}
\partial_n=\sum_{\alpha_0<\cdots <\alpha_n}
\sum_{i=0}^n(-1)^i\partial_{\alpha,i}.
\end{equation}
We can prove the following proposition as in
Proposition \ref{complex for relative bar construction}.
\begin{proposition}
The sequence
\begin{equation}
\bold {Bar}_{simp}:
\cdots \to \bold {Bar}_{simp,n} \to
\cdots \to \bold {Bar}_{simp,1} \to
\bold {Bar}_{simp,0} \to 0
\end{equation}
is a double complex.
\end{proposition}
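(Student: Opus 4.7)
The plan is to verify the simplicial identities for the face maps $\partial_{\beta,i}$ and then deduce $\partial_{n-1}\circ\partial_n=0$ by the usual cancellation argument used to prove that a semi-simplicial object gives rise to a chain complex. Concretely, for each pair $i<j$ and each strictly increasing sequence $\gamma$ of length $n+2$, I would show the identity
$$
\partial_{\gamma\setminus\{j\},\,i}\circ\partial_{\gamma,\,j}=\partial_{\gamma\setminus\{i\},\,j-1}\circ\partial_{\gamma,\,i},
$$
and then observe that on $\bold{Bar}_{simp,n}$,
$$
\partial_{n-1}\circ\partial_n=\sum_\gamma\sum_{i<j}(-1)^{i+j}\bigl(\partial_{\gamma\setminus\{j\},i}\partial_{\gamma,j}-\partial_{\gamma\setminus\{i\},j-1}\partial_{\gamma,i}\bigr),
$$
so that the identity above forces every term to cancel in pairs.

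The verification of the simplicial identity splits into three cases according to whether the positions $i,j$ in $\gamma$ correspond to inner or outer face maps. First, if both $\partial_{\gamma,i}$ and $\partial_{\gamma,j}$ are \emph{inner} (i.e.\ the inserted indices are interior to $\gamma$), the identity reduces to the associativity of the multiplication $\mu_A$ on non-adjacent triples; for $j=i+1$ it becomes precisely $\mu_A\circ(\mu_A\otimes_{\Cal O}1)=\mu_A\circ(1\otimes_{\Cal O}\mu_A)$, which holds by Definition~\ref{definition of relative DGA}(1). Second, if exactly one of the two maps is an \emph{outer} face map, the identity is either trivial (the two operations act on disjoint tensor factors) or it is precisely the commutativity of the two squares displayed in the proof of Proposition \ref{complex for relative bar construction}, expressing the compatibility of $\epsilon^l,\epsilon^r$ with $\mu_A$. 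Third, if both are outer face maps acting at opposite ends, they clearly commute, and the case where both act at the same end does not occur for a strictly increasing $\gamma$.

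Once these identities are in hand, the signed sum $\partial_n=\sum_\alpha\sum_i(-1)^i\partial_{\alpha,i}$ squares to zero by the standard argument: the composite $\partial_{n-1}\partial_n$ is the sum over triples $(\gamma,i,j)$ with $i<j$ of $(-1)^{i+j}\partial_{\gamma\setminus\{j\},i}\partial_{\gamma,j}$ plus the sum over triples $(\gamma,i',j')$ with $i'\leq j'$ of $(-1)^{i'+j'}\partial_{\gamma\setminus\{i'\},j'}\partial_{\gamma,i'}$, and after the reindexing $j'=j-1$ the simplicial identity makes these two sums cancel. Finally, because each $Bar^{\alpha}_{simp}$ is itself a complex with its internal differential $\delta$ inherited from the tensor product of complexes, and because every $\partial_{\beta,i}$ is a homomorphism of complexes (the multiplication $\mu_A$ and augmentations $\epsilon^l,\epsilon^r$ are morphisms of complexes by Definition~\ref{definition of relative DGA}), the pair $(\delta,\partial)$ equips $\bold{Bar}_{simp,\bullet}$ with the structure of a double complex.

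The main obstacle I expect is purely notational: keeping track of the labels $\alpha$ and the positions $i$ through the two re-indexings $\gamma\setminus\{j\}$ and $\gamma\setminus\{i\}$, and in particular being careful at the boundary cases where $i=0$ or $j=n+1$ so that the outer augmentations $\epsilon^r,\epsilon^l$ appear. Once the indexing is set up, the identities themselves reduce to the three elementary facts already packaged in Definition \ref{definition of relative DGA} and in the squares from the proof of Proposition \ref{complex for relative bar construction}.
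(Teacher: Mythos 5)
Your proof is correct and is in substance the same argument the paper invokes: the paper's proof is a one-line deferral to Proposition \ref{complex for relative bar construction}, whose content is precisely the associativity of $\mu_A$ together with the two commutative squares for $\epsilon^r$ and $\epsilon^l$, and your verification of the semi-simplicial identity $\partial_{\gamma\setminus\{j\},i}\circ\partial_{\gamma,j}=\partial_{\gamma\setminus\{i\},j-1}\circ\partial_{\gamma,i}$ reduces to exactly these three ingredients. Your organization via the classical face-map identities is a cleaner way to bookkeep the cancellation in $\partial^2=0$, but it does not depart from the paper's method.
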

\begin{definition}
The associate simple complex of 
$\bold {Bar}_{simp}(A/\Cal O,\epsilon)$
is called the relative simplicial bar complex and denoted as
$Bar_{simp}(A/\Cal O,\epsilon)$.
\end{definition}
\subsection{Comparison of two complexes}
\label{comparison of two complexes}
In this subsection, we compare the relative bar complex and
the relative simplicial bar complex for a relative DGA $A$ 
over $\Cal O$.
\subsubsection{}
We introduce two double complexes 
$\widetilde{\bold {Bar}}=\widetilde{\bold {Bar}}(A/\Cal O)$ 
and
$\widetilde{\bold {Bar}}^+=\widetilde{\bold {Bar}}^+(A/\Cal O)$
as follows. 
For $n\geq 0$, we define 
$$
\widetilde{{Bar}}_{n} =
\underset{n+2}{\underbrace{
A\otimes_\Cal O \cdots
\otimes_\Cal O A}}
$$
and $d_n:\widetilde{{Bar}}_{n}\to
\widetilde{{Bar}}_{n-1}$ for $n\geq 1$ by
\begin{align*}
d_n(x_0\otimes x_1\otimes\cdots \otimes x_n\otimes x_{n+1}) 
= 
\sum_{i=0}^{n}(-1)^i
x_0\otimes\cdots \otimes x_ix_{i+1} \otimes\cdots \otimes
x_{n+1}.
\end{align*}
The free bar complex $\widetilde{\bold {Bar}}$
is defined by
$$
\cdots 
\to 
\widetilde{{Bar}}_{n}\to\cdots
\to 
\widetilde{{Bar}}_{1}
\to 
\widetilde{{Bar}}_{0}\to 0.
$$
We define the augmentation morphism 
$d_0:\widetilde{Bar}_0=A\otimes_{\Cal O} A\to A$ 
by the multiplication map $\mu$.
We define an augmented free bar complex by
$$
\widetilde{\bold {Bar}}^+=
(\widetilde{\bold {Bar}} \overset{d_0}\to A).
$$
\begin{proposition}
The double complex $\widetilde{\bold {Bar}}^+$ is exact.
\end{proposition}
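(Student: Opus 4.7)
The plan is to construct an explicit contracting homotopy for $\widetilde{\bold{Bar}}^+$, mimicking the classical contracting homotopy of the bar resolution of an associative algebra, with the unit morphism $i:\Cal O\to A$ playing the role of the unit element. Since $i$ is compatible with the bi-$\Cal O$-comodule structures and is a homomorphism of complexes, the resulting homotopy will be $\Cal O$-bicolinear and commute with the internal differential, so it delivers the asserted exactness in the bar direction of the double complex.

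Concretely, using the natural isomorphism $\Cal O\otimes_{\Cal O}M\xrightarrow{\sim}M$ for any left $\Cal O$-comodule $M$ established in \S\ref{def of cotensor product}, I define
$$
s_n:\widetilde{Bar}_n=A^{\otimes_{\Cal O}(n+2)}
\xrightarrow{\sim}
\Cal O\otimes_{\Cal O}A^{\otimes_{\Cal O}(n+2)}
\xrightarrow{i\otimes_{\Cal O}\mathrm{id}}
A\otimes_{\Cal O}A^{\otimes_{\Cal O}(n+2)}=\widetilde{Bar}_{n+1}
$$
for $n\ge 0$, together with $s_{-1}:A\to\widetilde{Bar}_0=A\otimes_{\Cal O}A$ defined analogously from $A\xrightarrow{\sim}\Cal O\otimes_{\Cal O}A\xrightarrow{i\otimes_{\Cal O}\mathrm{id}}A\otimes_{\Cal O}A$. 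Informally these are the familiar maps $x_0\otimes\cdots\otimes x_{n+1}\mapsto i(1)\otimes x_0\otimes\cdots\otimes x_{n+1}$, now rendered meaningful inside the cotensor product.

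Next I verify the homotopy identity $d_{n+1}s_n+s_{n-1}d_n=\mathrm{id}$ for $n\ge 0$ together with $d_0s_{-1}=\mathrm{id}_A$. The key input is the unit property $\mu\circ(i\otimes_{\Cal O}\mathrm{id})=\mathrm{id}$ on $\Cal O\otimes_{\Cal O}A\simeq A$: it makes the $i=0$ summand of $d_{n+1}s_n(x_0\otimes\cdots\otimes x_{n+1})$ equal $x_0\otimes\cdots\otimes x_{n+1}$, while the remaining $n+1$ summands cancel those of $s_{n-1}d_n$ term by term via a one-step index shift, exactly as in the classical calculation. Because $s_n$ is built from natural isomorphisms and from the chain map $i$, it commutes with the internal differential of $A^{\otimes_{\Cal O}(n+2)}$, so the identity holds in each internal degree. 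This establishes row-exactness of $\widetilde{\bold{Bar}}^+$ and, a fortiori, acyclicity of the associated total complex.

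The only genuine subtlety is the unit property itself. The explicit axiomatic list in Definition \ref{definition of relative DGA} records only associativity of $\mu_A$ together with coassociativity/counitality of the coactions, but the role of $i:\Cal O\to A$ in a relative DGA is to supply the unit, so the compatibility $\mu\circ(i\otimes_{\Cal O}\mathrm{id}_A)=\mathrm{id}_A=\mu\circ(\mathrm{id}_A\otimes_{\Cal O}i)$ under $\Cal O\otimes_{\Cal O}A\simeq A\simeq A\otimes_{\Cal O}\Cal O$ must be read as part of the data; in the split decomposition $A=\oplus_{\alpha,\beta}V^\alpha\otimes A^{\alpha\beta}\otimes{}^\beta V$ this amounts to fixing maps $\bold k\to A^{\alpha\alpha}$ neutral for the contraction $\eta$. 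Once this is in hand, the contracting homotopy above and the sign cancellations are entirely routine and the proposition follows.
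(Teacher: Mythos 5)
Your proof is correct and takes essentially the same route as the paper: both construct the contracting homotopy by inserting a copy of $\Cal O$ on the left through $i$ (the paper's $\theta_n$ is exactly your $s_n$, and $\theta_{-1}$ is your $s_{-1}$), and both reduce the homotopy identity to the unit-type relation $\mu\circ(i\otimes_{\Cal O}\mathrm{id})=\mathrm{id}$, which the paper writes as $\mu\Delta^l=1_A$. Your observation that this unitality of $i$ is only implicit in Definition~\ref{definition of relative DGA} is a fair and careful remark; the paper invokes it via "$i\otimes 1=\Delta_A^l$ and counitarity," which rests on the same tacit assumption that $i$ is a two-sided unit for $\mu_A$.
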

\begin{proof}
We prove the proposition by constructing a homotopy.
We define 
\begin{align*}
\theta_n:\widetilde{Bar}_{n}=
\underset{n+2}{\underbrace{A\otimes_{\Cal O}\cdots
\otimes_{\Cal O}A}}
&=
\Cal O\otimes_{\Cal O}\underset{n+2}
{\underbrace{A\otimes_{\Cal O}\cdots
\otimes_{\Cal O}A}} \\
&
\xrightarrow[]{i\otimes 1_A\otimes \cdots \otimes 1_A}
\underset{n+3}{\underbrace{A\otimes_{\Cal O}\cdots
\otimes_{\Cal O}A}}=
\widetilde{Bar}_{n+1}
\end{align*}
for $n\geq 0$ and
$$
\theta_{-1}:A=\Cal O\otimes_{\Cal O} A
\xrightarrow[]{i\otimes 1_A}A\otimes_{\Cal O}A
= \widetilde{Bar}_{0}.
$$
Using the equality 
$i\otimes 1=\Delta_A^l:A\to A\otimes_{\Cal O}A$,
and the counitarity of the left coaction 
of $\Cal O$ on $A$,
we have $\mu\Delta^l=1_A$. As a consequence, we have
\begin{align}
\label{homotopy relation}
&\theta_{n-1}d_n+d_{n+1}\theta_{n}=1 \quad (\text{ for }n\geq 0), \\
\nonumber
&d_0\theta_{-1}=1.
\end{align}
Thus we have the proposition.
\end{proof}
\begin{corollary}
\label{bifree resolution}
The complex $\widetilde{\bold {Bar}(A/\Cal O)}
\xrightarrow[]{d_0} A$
is a free $A\otimes A^{\circ}$- resolution of $A$.
\end{corollary}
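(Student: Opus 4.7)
The plan is to reduce directly to the preceding proposition. The homotopy $\{\theta_n\}$ constructed there already shows that the augmented sequence
\begin{equation*}
\cdots \to \widetilde{Bar}_n \to \cdots \to \widetilde{Bar}_0 \xrightarrow{d_0} A \to 0
\end{equation*}
is exact, so what remains is to equip each $\widetilde{Bar}_n$ with an $A$-bimodule structure, verify that the $d_n$ and $d_0$ respect this structure, and exhibit each $\widetilde{Bar}_n$ as a free bimodule in the appropriate sense.

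First I would give $\widetilde{Bar}_n = A\otimes_{\Cal O}\cdots\otimes_{\Cal O}A$ ($n+2$ factors) the $A$-bimodule structure in which a left $A$ multiplies into the leftmost tensor factor via $\mu_A$ and a right $A$ multiplies into the rightmost factor. The differential $d_n$ contracts adjacent interior pairs via $\mu_A$, and the boundary summands at $i=0$ and $i=n$ (which involve the outermost factor) still land in the same outer position; hence $d_n$ and the augmentation $d_0 = \mu_A$ commute with the bimodule action. This is a direct consequence of the associativity of $\mu_A$ together with the compatibility of $\mu_A$ with the bi-$\Cal O$-comodule structure guaranteed by the definition of a relative DGA.

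Second, I would identify
\begin{equation*}
\widetilde{Bar}_n \;=\; A \otimes_{\Cal O} \bigl( \underbrace{A\otimes_{\Cal O}\cdots\otimes_{\Cal O}A}_{n\text{ factors}} \bigr) \otimes_{\Cal O} A
\end{equation*}
as the $A$-bimodule freely induced from the middle bi-$\Cal O$-comodule $M_n := A^{\otimes_{\Cal O} n}$, with the convention $M_0 = \Cal O$. This is the natural meaning of ``free $A\otimes A^{\circ}$-module'' in the relative setting over the coalgebra $\Cal O$: an induced bimodule of the form $A\otimes_{\Cal O}M\otimes_{\Cal O}A$ for a bi-$\Cal O$-comodule $M$, in parallel with the classical bar resolution recovered when $\Cal O = \bold k$.

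The main obstacle is not a calculation but a matter of convention. The phrase ``free $A\otimes A^{\circ}$-resolution'' must be read in the relative sense: the bimodule structure on each $\widetilde{Bar}_n$ uses cotensor products $\otimes_{\Cal O}$ rather than ordinary tensor products over $\bold k$, since $\mu_A$ is only defined on $A\otimes_{\Cal O}A$. Once this interpretation is in place, the freeness of each term together with the exactness established in the preceding proposition combine to yield the corollary.
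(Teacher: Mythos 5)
Your proposal is correct and follows the route the paper implicitly takes: the paper states the corollary without proof immediately after the homotopy argument, and the three ingredients you supply (exactness from the homotopy $\{\theta_n\}$; the $A$-bimodule structure on $\widetilde{Bar}_n$ via $\mu_A$ acting on the outer factors, with $d_n$ and $d_0$ being bimodule maps by associativity; and the reading of "free $A\otimes A^{\circ}$-module" as the relatively free, i.e.\ induced, object $A\otimes_{\Cal O}M_n\otimes_{\Cal O}A$) are exactly what the authors leave unstated. Your observation that the convention must be relative -- since $\mu_A$ lives only on $A\otimes_{\Cal O}A$, not on $A\otimes_{\bold k}A$ -- is the right clarification, and it is indeed the sense in which the corollary is used in \S\ref{comparizon for bar and simplicial bar} when $\Cal O\otimes_{\epsilon^r,A}(\cdot)\otimes_{\epsilon^l,A}\Cal O$ is applied to both free resolutions.
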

\subsubsection{}
We define a simplicial free bar complex 
$\widetilde{\bold {Bar}}_{simp}
=\widetilde{\bold {Bar}}_{simp}(A/\Cal O)$
and an augmented 
free simplicial bar complex
$\widetilde{\bold {Bar}}_{simp}^+=
\widetilde{\bold {Bar}}_{simp}^+(A/\Cal O)$.
Let $\alpha=(\alpha_0<\cdots <\alpha_n)$ be
a sequence of integers.
We define $\widetilde{Bar_{simp}}^{\alpha}$
by
$$
\widetilde{Bar_{simp}}^{\alpha}=
A
\overset{\alpha_0}\otimes_{\Cal O} \cdots
\overset{\alpha_n}\otimes_{\Cal O} A.
$$
Let $0\leq p\leq n$.
We define 
$
\partial_{\alpha}:\widetilde{Bar_{simp}}^{\alpha}\to
\widetilde{Bar_{simp}}
$ by
$$
\partial_{\alpha}(
x_0
\overset{\alpha_0}\otimes x_1 \cdots
x_n \overset{\alpha_n}\otimes 
x_{n+1})=\sum_{p=0}^n
(-1)^p
(x_0
\overset{\alpha_0}\otimes \cdots 
\overset{\alpha_{p-1}}\otimes
x_px_{p+1}
\overset{\alpha_{p+1}}\otimes
\cdots
\overset{\alpha_n}\otimes 
x_{n+1}).
$$
We set 
$$
\widetilde{\bold {Bar}_{simp,n}}=\oplus_{\alpha_0<\cdots <\alpha_n}
\widetilde{{Bar}_{simp}}^{\alpha}
$$
By taking the summation on $\alpha$, we have a sequence
$$
\widetilde{\bold {Bar}_{simp}}:
\cdots \to \widetilde{\bold {Bar}_{simp,n}}
\xrightarrow[]{d_n} 
\cdots \xrightarrow[]{d_2} 
 \widetilde{\bold {Bar}_{simp,1}}
\xrightarrow[]{d_1} 
\widetilde{\bold {Bar}_{simp,0}}\to 0.
$$
We can check that $\widetilde{\bold {Bar}}_{simp}$
becomes a complex.
We define the augmentation map 
$d_0:\widetilde{Bar}_{simp}^0 \to A$ by the sum of
the multiplication map 
$\mu:A\overset{\alpha_0}\otimes_{\Cal O}A \to A $.
We define the double complex
$$
\widetilde{\bold {Bar}_{simp}}^+=
(\widetilde{\bold {Bar}_{simp}}
\xrightarrow{d_0} A).
$$
\begin{proposition}
\label{acyclicity for simplicial bar complex}
The double complex $\widetilde{\bold {Bar}_{simp}}^+$
is exact.
\end{proposition}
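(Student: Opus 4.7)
The plan is to mimic the contracting-homotopy argument of the preceding proposition, but with a twist to accommodate the strictly increasing indices: rather than a single global homotopy, I would construct one contracting homotopy $\theta^N$ for each integer $N$, on the subcomplex $\widetilde{\bold{Bar}_{simp}}^{+,<N} \subset \widetilde{\bold{Bar}_{simp}}^+$ spanned by those summands $\widetilde{Bar_{simp}}^\alpha$ with $\alpha_n < N$. Since the simplicial boundary only drops indices, this is indeed a subcomplex, and these subcomplexes exhaust the entire complex as $N \to \infty$.

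For $x = x_0 \overset{\alpha_0}\otimes \cdots \overset{\alpha_n}\otimes x_{n+1} \in \widetilde{Bar_{simp}}^{(\alpha_0,\ldots,\alpha_n)}$ with $\alpha_n < N$, I would set
$$
\theta_n^N(x) \;=\; (-1)^{n+1}\, x_0 \overset{\alpha_0}\otimes \cdots \overset{\alpha_n}\otimes x_{n+1} \overset{N}\otimes i(1),
$$
extended by $\theta_{-1}^N(x) = x \overset{N}\otimes i(1)$ at the augmentation. Using the right-counital identity $\mu(x \otimes i(1)) = x$ (the symmetric counterpart of $\mu \Delta_A^l = 1_A$ used in the preceding proposition, here coming from $\Delta_A^r = 1 \otimes i$ and right counitarity of the $\Cal O$-coaction), the simplicial boundary of $x \overset{N}\otimes i(1)$ splits into a ``top'' term — multiplying $x_{n+1}$ by $i(1)$ and dropping $N$, giving $(-1)^{n+1} x$ — plus ``lower'' terms coming from multiplications at positions $q \leq n$, which together form $(dx)\overset{N}\otimes i(1)$. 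The prefactor $(-1)^{n+1}$ is chosen precisely so that these lower terms cancel $\theta^N_{n-1}(dx) = (-1)^n (dx)\overset{N}\otimes i(1)$, leaving $d\theta^N + \theta^N d = \mathrm{id}$ on $\widetilde{\bold{Bar}_{simp}}^{+,<N}$.

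The main obstacle, and the reason one cannot define a single global $\theta$, is that the direct analogue of the non-simplicial homotopy (prepending $i(1)$ with an index below $\alpha_0$) forces the new index to depend on $\alpha_0$ in a way not preserved by the boundary: the first index of $\partial_0 x$ is $\alpha_1$, not $\alpha_0$, so the terms of $\theta^N(dx)$ and $d\theta^N(x)$ end up in different summands and fail to cancel. Appending at a fixed ``large'' index $N$ sidesteps this incoherence at the price that $\theta^N$ lives only on a subcomplex, but since every cycle $z \in \widetilde{\bold{Bar}_{simp}}^+$ involves only finitely many indices, choosing $N$ larger than all of them writes $z = d\theta^N(z)$, giving exactness.
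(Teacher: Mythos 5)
Your proposal is correct and takes essentially the same approach as the paper: both filter by a fixed integer bound on the indices and build a contracting homotopy on each piece by adjoining a unit factor $i(1)$ at the new index $N$ via the $\Cal O$-coaction; you append it at the right end (with $\alpha_n < N$, passing to the colimit as $N\to+\infty$), whereas the paper prepends it at the left (with $N<\alpha_0$, colimit as $N\to-\infty$), and your extra sign $(-1)^{n+1}$ just reflects that the identity-producing face is $\partial_{n+1}$ rather than $\partial_0$. One small correction to your last paragraph: the obstacle you attribute to prepending does not actually arise, because the paper's $N$ is a \emph{fixed} integer chosen below every $\alpha_0$ in the subcomplex, not an index that moves with $\alpha_0$; with $N$ fixed, the $q$-th term of $\theta_{n-1}d_n(x)$ and the $(q+1)$-st term of $d_{n+1}\theta_n(x)$ land in the same summand $\widetilde{Bar}^{(N,\alpha_0,\dots,\widehat{\alpha_q},\dots,\alpha_n)}_{simp}$ and cancel, exactly mirroring your end-appended version, so the front-prepended homotopy is in fact the closer analogue of the non-simplicial one.
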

\begin{proof}
To prove the proposition, we define a
subcomplex $\widetilde{\bold {Bar}}_{N<,simp}$ by
$$
\widetilde{{Bar}}_{N<,simp,n}=
\oplus_{N<\alpha_0<\cdots<\alpha_n}
\widetilde{Bar}^{\alpha_0,\dots, \alpha_n}_{simp}.
$$
We define a map $\theta_n^{\alpha}$ by 
\begin{align*}
\theta_n^{\alpha}:&\widetilde{Bar}^{\alpha_0,\dots, \alpha_n}_{simp}
=
A\overset{\alpha_0}\otimes_{\Cal O} \cdots
\overset{\alpha_n}\otimes_{\Cal O} A= 
\Cal O\overset{N}\otimes_{\Cal O}
A\overset{\alpha_0}\otimes_{\Cal O} \cdots
\overset{\alpha_n}\otimes_{\Cal O} A \\
&
\xrightarrow[]{i\otimes 1\otimes \cdots \otimes 1}
A\overset{N}\otimes_{\Cal O}
A\overset{\alpha_0}\otimes_{\Cal O} \cdots
\overset{\alpha_n}\otimes_{\Cal O} A
=
\widetilde{Bar}^{N,\alpha_0,\dots, \alpha_n}_{simp}\subset
\widetilde{{Bar}}^{n+1}_{N-1<,simp}.
\end{align*}
By taking the summation on $\alpha$, we have
a map
$$
\theta_n:\widetilde{{Bar}}_{N<,simp,n}
\to\widetilde{{Bar}}_{N-1<,simp,n+1}
$$
We define a map 
$\theta_{-1}:A=\Cal O\otimes_{\Cal O} A \to 
A\overset{N}\otimes_{\Cal O} A$
by $i\otimes 1=\Delta^l$.
Then we have the homotopy relation (\ref{homotopy relation}).
Since 
$$
\widetilde{\bold {Bar}}_{simp}^+=
\underset{\xrightarrow[N]{}}{\lim}\ 
\widetilde{\bold {Bar}}_{N<,simp}^+,
$$
we proved the proposition.
\end{proof}
\begin{corollary}
\label{bifree resolution for simp}
The complex $\widetilde{\bold {Bar}}_{simp}(A/\Cal O)
\xrightarrow[]{d_0} A$
is a free $A\otimes A^{\circ}$- resolution of $A$.
\end{corollary}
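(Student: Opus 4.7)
The statement bundles together two claims: that the augmented complex $\widetilde{\bold{Bar}}^+_{simp}(A/\Cal O)$ is exact, and that each term $\widetilde{\bold{Bar}}_{simp,n}$ is free as an $A \otimes A^{\circ}$-module. The first claim is exactly the content of Proposition \ref{acyclicity for simplicial bar complex}, which has just been established using the contracting homotopy $\theta_{\bullet}$ built from $\Delta_A^l = i \otimes 1$. So the plan reduces to verifying the freeness assertion.

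Fix $n \geq 0$ and a strictly increasing sequence $\alpha = (\alpha_0 < \cdots < \alpha_n)$. The $A$-bimodule action on the summand
$$
\widetilde{Bar}_{simp}^{\alpha} = A \overset{\alpha_0}{\otimes}_{\Cal O} A \overset{\alpha_1}{\otimes}_{\Cal O} \cdots \overset{\alpha_n}{\otimes}_{\Cal O} A
$$
is by left multiplication on the leftmost factor (labelled $\alpha_0$) and right multiplication on the rightmost factor (labelled $\alpha_n$). These outermost factors also provide the $\Cal O$-coactions used to form the two outermost relative tensor products. I would therefore rewrite
$$
\widetilde{Bar}_{simp}^{\alpha} \simeq A \otimes_{\Cal O} M^{\alpha} \otimes_{\Cal O} A,
\qquad
M^{\alpha} := A \overset{\alpha_1}{\otimes}_{\Cal O} \cdots \overset{\alpha_{n-1}}{\otimes}_{\Cal O} A,
$$
viewing $M^{\alpha}$ as an $\Cal O$-bimodule via the left coaction on its leftmost and the right coaction on its rightmost factor.

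An $A$-bimodule of the induced form $A \otimes_{\Cal O} M \otimes_{\Cal O} A$ is free in the appropriate relative sense, because the functor $M \mapsto A \otimes_{\Cal O} M \otimes_{\Cal O} A$ is the left adjoint to the forgetful functor from $(A,A)$-bimodules to $(\Cal O,\Cal O)$-bimodules. Taking the direct sum over all sequences $\alpha$ preserves this freeness property, so $\widetilde{\bold{Bar}}_{simp,n} = \oplus_{\alpha} \widetilde{Bar}_{simp}^{\alpha}$ is a free $A \otimes A^{\circ}$-module. Combined with the exactness already proved, this yields the free $A \otimes A^{\circ}$-resolution claimed.

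The only point that requires any care is conceptual rather than computational: one must interpret "free $A \otimes A^{\circ}$-module" in the relative setting as "induced along $-\otimes_{\Cal O}-\otimes_{\Cal O}-$ from an $\Cal O$-bimodule", matching the convention already used in Corollary \ref{bifree resolution} for the non-simplicial bar complex. Once this convention is fixed, the proof is essentially a direct parallel of Corollary \ref{bifree resolution}, and no new technical ingredient is needed beyond Proposition \ref{acyclicity for simplicial bar complex}.
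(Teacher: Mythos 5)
Your proposal is correct and matches the paper's (implicit) reasoning: the paper treats this as an immediate corollary of Proposition \ref{acyclicity for simplicial bar complex}, with the relative freeness of each $\widetilde{Bar}_{simp}^{\alpha} \simeq A\otimes_{\Cal O}M^{\alpha}\otimes_{\Cal O}A$ taken as evident from the construction, exactly as for Corollary \ref{bifree resolution}. Your explicit identification of ``free $A\otimes A^{\circ}$-module'' with ``induced from an $\Cal O$-bicomodule'' is the right reading and is what the subsequent comparison argument in \S\ref{comparizon for bar and simplicial bar} in fact uses.
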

\subsubsection{}
\label{comparizon for bar and simplicial bar}
We define a homomorphism
$
\sigma:\widetilde{\bold {Bar}}_{simp}(A/\Cal O)\to
\widetilde{\bold {Bar}}(A/\Cal O)
$
of double complexes by
$$
\sigma(
x_0
\overset{\alpha_0}\otimes  \cdots
 \overset{\alpha_n}\otimes 
x_{n+1})=
x_0
\otimes  \cdots
\otimes 
x_{n+1}.
$$
Then the homomorphism $\sigma$ commutes with the augmentations
$d_0$.
By Corollary \ref{bifree resolution}
 and 
\ref{bifree resolution for simp}
, the homomorphism
$$
\Cal O\otimes_{\epsilon^r,A}\widetilde{\bold {Bar}}_{simp}
(A/\Cal O)\otimes_{\epsilon^l,A} \Cal O
\to
\Cal O\otimes_{\epsilon^r,A}
\widetilde{\bold {Bar}}(A/\Cal O)
\otimes_{\epsilon^l,A} \Cal O
$$
is a quasi-isomorphism.
Since
$$
\Cal O\otimes_{\epsilon^r,A}\widetilde{\bold {Bar}}_{simp}
(A/\Cal O)\otimes_{\epsilon^l,A} \Cal O \simeq
\bold {Bar}_{simp}
(A/\Cal O,\epsilon)
$$
and
$$
\Cal O\otimes_{\epsilon^r,A}\widetilde{\bold {Bar}}
(A/\Cal O)\otimes_{\epsilon^l,A} \Cal O \simeq
\bold {Bar}
(A/\Cal O,\epsilon),
$$
we have the following theorem
\begin{theorem}
The natural map
$$
\sigma:\bold {Bar}_{simp}
(A/\Cal O,\epsilon) \to
\bold {Bar}
(A/\Cal O,\epsilon)
$$
is a quasi-isomorphism.
\end{theorem}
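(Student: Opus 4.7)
The plan is to recognize the statement as an instance of the standard comparison theorem for projective resolutions. By Corollaries \ref{bifree resolution} and \ref{bifree resolution for simp}, both $\widetilde{\bold{Bar}}(A/\Cal O)$ and $\widetilde{\bold{Bar}}_{simp}(A/\Cal O)$ are free $A\otimes A^\circ$-resolutions of $A$ via the augmentation $d_0=\mu$. Therefore the functorial map $\sigma$, which forgets the simplicial indices, is a morphism of resolutions lifting the identity on $A$, and the first thing to check is that this is a well-defined map of double complexes compatible with $d_0$. This compatibility is essentially tautological: $\sigma$ sends each $Bar_{simp}^{\alpha}$ to the same underlying tensor product, and the signed sum in the simplicial differential \eqref{outer differential for simplical bar complex} collapses under $\sigma$ to the bar differential on $\widetilde{\bold{Bar}}$.

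Next, I would invoke the general principle that for a ring $R=A\otimes A^\circ$, any map between free (hence flat) $R$-resolutions of the same $R$-module $M$ induces quasi-isomorphisms after applying any tensor functor $N_1\otimes_R(-)\otimes_R N_2$, since both sides then compute $\operatorname{Tor}^R(N_1\otimes N_2, M)$. Applying this with $N_1=N_2=\Cal O$, viewed as an $A$-module through $\epsilon^r$ and $\epsilon^l$ respectively, gives that
$$
\Cal O\otimes_{\epsilon^r,A}\widetilde{\bold{Bar}}_{simp}(A/\Cal O)\otimes_{\epsilon^l,A}\Cal O
\to
\Cal O\otimes_{\epsilon^r,A}\widetilde{\bold{Bar}}(A/\Cal O)\otimes_{\epsilon^l,A}\Cal O
$$
is a quasi-isomorphism of double complexes.

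The remaining task is to make precise the two identifications asserted just before the theorem: namely that after the cotensor collapse along $\epsilon^r$ and $\epsilon^l$, the outer two copies of $A$ in $\widetilde{{Bar}}_n$ (and in each $\widetilde{Bar}^{\alpha}_{simp}$) get replaced by $\Cal O$, and the bounding multiplications $x_0 x_1$, $x_n x_{n+1}$ turn into the augmentation-twisted maps $\epsilon^r(x_1)$ and $\epsilon^l(x_n)$ appearing in the definitions of $\bold{Bar}$ and $\bold{Bar}_{simp}$. This is a direct unwinding of the definitions, using that the relative augmentation $\epsilon$ is an $\Cal O$-bicomodule map compatible with $\mu$, so that $\Cal O\otimes_{\epsilon^r,A}A\simeq \Cal O$ via $\epsilon^r$ and symmetrically on the right. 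Passing from the double complex quasi-isomorphism to the associated simple complexes yields the theorem.

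The main obstacle is really the bookkeeping in this last identification step: one must verify that under the cotensor product, the outermost face maps $d_0$ of the free complexes, which were plain multiplications, transform precisely into the augmentation terms appearing in \eqref{relative bar double complex} and in $\partial_{\beta,0}$, $\partial_{\beta,n+1}$ of the simplicial version. Once this is checked, the proof is formal: the homological comparison principle applied to two explicit free resolutions does all the remaining work.
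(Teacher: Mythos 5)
Your proposal is correct and takes essentially the same route as the paper: use Corollaries \ref{bifree resolution} and \ref{bifree resolution for simp} to see that the two free bar complexes are free $A\otimes A^\circ$-resolutions of $A$, note that $\sigma$ is compatible with the augmentations, apply the comparison theorem (tensoring with $\Cal O$ on both sides), and then identify the cotensored complexes with $\bold{Bar}_{simp}$ and $\bold{Bar}$ respectively. You make the Tor/chain-homotopy-equivalence mechanism more explicit than the paper does, but the logical skeleton is the same.
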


\subsection{Coproducts on bar complexes}
In this subsection, we introduce a coproduct structure on 
bar complexes.
\subsubsection{}
We define a homomorphism
$$
\underset{n+m}{\underbrace{A\otimes_{\Cal O}
\cdots \otimes_{\Cal O}A}}\to
(\underset{n}{\underbrace{A\otimes_{\Cal O} 
\cdots \otimes_{\Cal O}A}})
\otimes
(\underset{m}{\underbrace{A\otimes_{\Cal O} 
\cdots \otimes_{\Cal O}A}})
$$
by applying the natural map $A\otimes_{\Cal O}A \to A\otimes A$
for $n$-th and $n+1$-th tensor components.
This map is written as 
$$
\big[x_1\otimes \cdots \otimes x_{n+m}\big] \mapsto
\big[x_1\otimes \cdots \otimes x_{n}\big] \otimes
\big[x_{n+1}\otimes \cdots \otimes x_{n+m}\big]
$$
As for this notation, see also
(\ref{natural map for cotensor}).
The map
\begin{align*}
& A\otimes_{\Cal O}\cdots \otimes_{\Cal O} A
\to
\Cal O\otimes (A\otimes_{\Cal O}\cdots \otimes_{\Cal O} A) \\
 \text{(resp. }&
A\otimes_{\Cal O}\cdots \otimes_{\Cal O} A
\to
(A\otimes_{\Cal O}\cdots \otimes_{\Cal O} A)
\otimes \Cal O 
\text{)}
\end{align*}
obtained by the left (resp. right) 
$\Cal O$ coproduct of $A$ at the first
(resp. the last)
factor is written as
\begin{align*}
&x_1\otimes \cdots \otimes x_{n+m} \mapsto
\Delta^l(x_1)\otimes \cdots \otimes x_{n+m} \\
(\text{resp.} 
&x_1\otimes \cdots \otimes x_{n+m} \mapsto
x_1\otimes \cdots \otimes \Delta^r(x_{n+m})
\quad \text{)}. 
\end{align*}

We introduce a coalgebra structure 
\begin{equation}
\label{comultiplication for bar}
\Delta_{\bold B}:
{\bold {Bar}}(A/\Cal O,\epsilon)\to
{\bold {Bar}}(A/\Cal O,\epsilon)\otimes 
{\bold {Bar}}(A/\Cal O,\epsilon)
\end{equation}
by
\begin{align*}
\Delta_{\bold B}(\big[x_1\otimes \cdots \otimes x_n\big])=
& 
\Delta^l(x_{1}) \otimes \cdots \otimes x_n\\
&+
\sum_{i=1}^{n-1}\big[x_1 \otimes \cdots \otimes x_i\big]\otimes
\big[x_{i+1} \otimes \cdots \otimes x_n\big] \\
&
+x_{1} \otimes \cdots \otimes \Delta^r(x_n).
\end{align*}
The right hand side of (\ref{comultiplication for bar})
becomes a double complex using the tensor product of complexes.

\begin{proposition}
\label{coproduct as a morphism of complex}
This homomorphism $\Delta_{\bold B}$
is a homomorphism of double complexes.
\end{proposition}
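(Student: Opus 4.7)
The plan is to verify that $\Delta_{\bold B}$ commutes with both differentials of $\bold{Bar}(A/\Cal O,\epsilon)\otimes\bold{Bar}(A/\Cal O,\epsilon)$ viewed as a double complex under the tensor-product convention. I decompose $\Delta_{\bold B}=\sum_{i=0}^{n}\Delta_i$ on $Bar_n=A^{\otimes_{\Cal O} n}$, where $\Delta_0$ applies $\Delta^l$ to the first tensor factor, $\Delta_n$ applies $\Delta^r$ to the last, and for $1\le i\le n-1$ the map $\Delta_i$ is the natural map $A\otimes_{\Cal O} A\to A\otimes A$ applied between positions $i$ and $i+1$ via (\ref{natural map for cotensor}).

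Compatibility with the internal differential of $A$ is essentially by inspection: $\Delta^l$ and $\Delta^r$ are chain maps by axiom (1) of Definition \ref{definition of relative DGA}, and the natural map $A\otimes_{\Cal O} A\to A\otimes A$ is a chain map by the construction of the cotensor product in \S \ref{def of cotensor product}. Hence each $\Delta_i$, and therefore $\Delta_{\bold B}$, commutes with the internal differential.

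The substantive content is compatibility with the outer differential $\partial$ of (\ref{relative bar double complex}), namely
\begin{equation*}
(\partial\otimes 1+(-1)^{p}\,1\otimes\partial)\circ\Delta_{\bold B}=\Delta_{\bold B}\circ\partial
\end{equation*}
with the Koszul sign on the second term (see \S \ref{convention of orientation}). The strategy is the classical cancellation argument indexed by pairs (cut position $i$, operation position $j$). When $i$ and $j$ are disjoint, the cut and the bar operation commute and the contributions appear identically on both sides. When $j=i$, the multiplication $x_ix_{i+1}$ appearing in $\Delta_{\bold B}\circ\partial$ is matched on the $(\partial\otimes 1\pm 1\otimes\partial)\circ\Delta_{\bold B}$ side by $\epsilon^l(x_i)$ as the last factor of $[x_1\otimes\cdots\otimes x_i]$ together with $\epsilon^r(x_{i+1})$ as the first factor of $[x_{i+1}\otimes\cdots\otimes x_n]$; this matching is exactly the compatibility of $\epsilon^l,\epsilon^r$ with $\mu_A$ expressed by the commutative squares used to prove Proposition \ref{complex for relative bar construction}. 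The extreme cuts $i=0$ and $i=n$ involving $\Delta^l$ and $\Delta^r$ cancel against the $\epsilon^r(x_1)$ and $\epsilon^l(x_n)$ terms of $\partial$ by the counit axiom for the $\Cal O$-coactions.

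The main obstacle is sign bookkeeping: $\partial$ carries alternating signs and the tensor-product differential has an additional $(-1)^{p}$ from the Koszul rule. I would first verify the identity in the middle range $1\le i,j\le n-1$ to isolate the combinatorial cancellation pattern, then handle the boundary cases $i\in\{0,n\}$ and $j\in\{0,n\}$ symmetrically, with $\Delta^l,\Delta^r,\epsilon^l,\epsilon^r$ playing the role of products. The underlying identity is the familiar one showing that the tensor coalgebra on a DGA is a DG coalgebra, here twisted by the bi-$\Cal O$-comodule structure on $A$ and by the relative augmentation $\epsilon$.
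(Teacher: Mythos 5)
Your overall scaffolding is correct — commutativity with the internal differential is immediate, and commutativity with the outer differential reduces to a pair-cancellation count, with the $i\in\{0,n\}$ cases handled by counitarity. But the description of the crucial cancellation at an internal cut is wrong, and it cites the wrong lemma. You assert that the multiplication $x_ix_{i+1}$ term in $\Delta_{\bold B}\circ\partial$ is matched on the other side by $\epsilon^l(x_i)$ on the right end of the first factor together with $\epsilon^r(x_{i+1})$ on the left end of the second factor, via the commutative squares used for Proposition~\ref{complex for relative bar construction} (compatibility of $\epsilon^l,\epsilon^r$ with $\mu_A$). This cannot be right: once $x_i$ and $x_{i+1}$ have been multiplied, the cut ``at $i$'' no longer exists, so there is no ``multiplication at $i$, cut at $i$'' term in $\Delta_{\bold B}\circ\partial$ to be matched at all. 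The multiplication terms actually match term-by-term between the two sides, with no augmentations involved.

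The real content is that the extra augmentation terms produced by $(\partial\otimes 1\pm 1\otimes\partial)\circ\Delta_{\bold B}$ at internal cuts cancel \emph{among themselves}: the term ``cut at $i$, then $\epsilon^l(x_i)$ at the right end of the first factor'' coincides, up to the Koszul sign, with ``cut at $i-1$, then $\epsilon^r(x_i)$ at the left end of the second factor.'' The identity that makes this work is not $\epsilon^r\circ(1\otimes\mu)=\epsilon^r\circ\epsilon^r$ (the squares of Proposition~\ref{complex for relative bar construction}), but the interchange relation
$$
(1\otimes\epsilon^r)\circ(\delta\otimes 1)=(\epsilon^l\otimes 1)\circ(1\otimes\delta)
\quad\text{on } A\otimes_{\Cal O}A\otimes_{\Cal O}A,
$$
where $\delta$ denotes the natural map $A\otimes_{\Cal O}A\to A\otimes A$ of (\ref{natural map for cotensor}). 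This relation follows from the componentwise description (\ref{pre-augmentation componentwise}) of the relative augmentation, and it is exactly the commutative diagram the paper displays in its proof. So while your strategy is the right one, the specific pairing and the key lemma supporting it need to be replaced by this cut-versus-augmentation interchange.
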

\begin{proof}
Since the differential of $\bold {Bar}(A/\Cal O,\epsilon)$
is defined by the multiplications and left and right
augmentations, it commute with that of
$\bold {Bar}(A/\Cal O,\epsilon)\otimes 
\bold {Bar}(A/\Cal O,\epsilon)$ 
by the following commutative diagrams.
$$
\begin{matrix}
\end{matrix}
$$
$$
\begin{matrix}
V^{\alpha}
\otimes A^{\alpha\beta}
\otimes A^{\beta\gamma}
\otimes A^{\gamma\delta}
\otimes {}^{\delta}V &  &
\begin{matrix}
(V^{\alpha}
\otimes A^{\alpha\beta}
\otimes {}^{\beta}V) \\
\otimes (V^{\beta}
\otimes A^{\beta\gamma}
\otimes A^{\gamma\delta}
\otimes {}^{\delta}V) 
\end{matrix}
\\
\Vert & & \Vert \\
A\otimes_{\Cal O} A \otimes_{\Cal O} A &
\overset{\delta\otimes 1}\to &
A\otimes (A \otimes_{\Cal O} A) \\
1\otimes \delta\downarrow 
\phantom{1\otimes \delta}
& & \phantom{1\otimes \epsilon^r}
\downarrow 1\otimes \epsilon^r\\
(A\otimes_{\Cal O} A) \otimes A &
\overset{\epsilon^l\otimes 1}\to &
A\otimes A  \\
\Vert & & \Vert \\
\begin{matrix}
(V^{\alpha}
\otimes A^{\alpha\beta}
\otimes A^{\beta\gamma}
\otimes {}^{\gamma}V) \\
\otimes (V^{\gamma}
\otimes A^{\gamma\delta}
\otimes {}^{\delta}V) 
\end{matrix}
&  &
\begin{matrix}
(V^{\alpha}
\otimes A^{\alpha\beta}
\otimes {}^{\beta}V) \\
\otimes (
V^{\gamma}
\otimes A^{\gamma\delta}
\otimes {}^{\delta}V) 
\end{matrix}
\\
\end{matrix}
$$
This commutativity follows from the identification
(\ref{pre-augmentation componentwise}).
\end{proof}

\subsubsection{}
We define a coproduct 
$$
\Delta_{simp}^{\alpha}:Bar_{simp}^{\alpha} \to Bar_{simp} \otimes
Bar_{simp} 
$$
by the formula
\begin{align*}
&\Delta_{simp}^{\alpha}(\big[y_0
\overset{\alpha_0}\otimes x_1
\overset{\alpha_1}\otimes \cdots 
\overset{\alpha_{n-1}}\otimes x_n
\overset{\alpha_n}\otimes y_{n+1}\big]) \\
=
& 
\Delta^l(y_0\otimes x_{1}) 
\overset{\alpha_1}\otimes \cdots 
\overset{\alpha_{n-1}}\otimes x_n
\overset{\alpha_n}\otimes y_{n+1} \\
&+
\sum_{i=1}^{n-1}
\big[y_0
\overset{\alpha_0}\otimes x_1 
\overset{\alpha_1}\otimes \cdots 
\overset{\alpha_{i-1}}\otimes \Delta^r(x_i)
\big]\otimes
\big[
\Delta^l(x_{i+1}) 
\overset{\alpha_{i+1}}\otimes \cdots 
\overset{\alpha_{n-1}}\otimes x_n
\overset{\alpha_n}\otimes y_{n+1}\big] \\
&
+
y_0
\overset{\alpha_0}\otimes x_{1} 
\overset{\alpha_1}\otimes \cdots 
\overset{\alpha_{n-1}}\otimes \Delta^r(x_n\otimes y_{n+1}).
\end{align*}
Here $\Delta^l(y_0\otimes x_1)$ and $\Delta^r(x_n\otimes y_{n+1})$ are
considered as maps
\begin{align*}
&
\Cal O\otimes_{\Cal O}A=A\to \Cal O\otimes A=
(\Cal O\otimes_{\Cal O}\Cal O)\otimes
(\Cal O\otimes_{\Cal O}A)
\\
&A\otimes_{\Cal O}\Cal O=A\to A\otimes \Cal O=
(\Cal O\otimes_{\Cal O}A)
\otimes
(\Cal O\otimes_\Cal O\Cal O).
\end{align*}

\begin{proposition}
\begin{enumerate}
\item
The sum of the map 
$\Delta_{simp}=\sum_{\alpha}\Delta^{\alpha}_{simp}$
becomes a homomorphism of double complexes
$$
Bar_{simp}(A/\Cal O,\epsilon)\to
Bar_{simp}(A/\Cal O,\epsilon)\otimes
Bar_{simp}(A/\Cal O,\epsilon).
$$
\item
The coproduct structure of $Bar(A/\Cal O,\epsilon)$
and $Bar_{simp}(A/\Cal O,\epsilon)$ are compatible with the 
quasi-isomorphism $\sigma$
defined in \S \ref{comparizon for bar and simplicial bar}.
\end{enumerate}
\end{proposition}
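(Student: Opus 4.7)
The plan is to verify the two parts separately and to reduce as much as possible to the analogous proof for $Bar(A/\Cal O,\epsilon)$ already established in Proposition \ref{coproduct as a morphism of complex}.

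For part (1), I split the differential of $\bold{Bar}_{simp}$ into the \emph{inner} differential coming from the complex structure on $A$ (applied factor by factor, with Koszul signs) and the \emph{outer} simplicial differential $\partial_n$ of (\ref{outer differential for simplical bar complex}). Commutation with the inner differential is automatic: each building block of $\Delta^\alpha_{simp}$ is a composition of the chain maps $\Delta^l, \Delta^r$, the multiplication $\mu$, and the natural inclusion $A \otimes_{\Cal O} A \to A \otimes A$; they commute with the differential of $A$, and the Koszul sign rule for shifts used in \S 1.2 makes the tensor-product differential on the right-hand side match. For the outer differential, I would write $\partial_n \circ \Delta^\alpha_{simp}$ as a sum indexed by pairs (cut position $i$, multiplication or augmentation position $p$) and show term-by-term that cross terms with $p \ne i, i\pm 1$ cancel in pairs (one coming from $\partial \otimes 1$, one from $1 \otimes \partial$ applied after the cut), while the terms with $p = i, i\pm 1$ reproduce exactly the terms in $(\partial \otimes 1 + 1 \otimes \partial) \circ \Delta_{simp}$. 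This last matching is where associativity of $\mu$ and the compatibility squares
$$
\begin{matrix}
\Cal O\otimes_{\Cal O}A\otimes_{\Cal O}A & \overset{1\otimes\mu}\to & \Cal O\otimes_{\Cal O}A \\
\epsilon^r\otimes 1\downarrow & & \downarrow \epsilon^r \\
\Cal O\otimes_{\Cal O}A & \overset{\epsilon^r}\to & \Cal O
\end{matrix}
\qquad
\begin{matrix}
A\otimes_{\Cal O}A\otimes_{\Cal O}\Cal O & \overset{\mu\otimes 1}\to & A\otimes_{\Cal O}\Cal O \\
1\otimes\epsilon^l\downarrow & & \downarrow \epsilon^l \\
A\otimes_{\Cal O}\Cal O & \overset{\epsilon^l}\to & \Cal O
\end{matrix}
$$
used in Proposition \ref{complex for relative bar construction} are crucial: they identify a multiplication happening ``across the cut'' with the left (resp.\ right) augmentation of the right (resp.\ left) half.

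The hardest part is bookkeeping the signs in the outer differential check. A clean way to organize it is to note that the same argument that proves Proposition \ref{coproduct as a morphism of complex} for $\bold{Bar}(A/\Cal O,\epsilon)$ applies mutatis mutandis to each $\bold{Bar}^\alpha_{simp}$, with the simplicial indices $\alpha_0 < \cdots < \alpha_n$ playing the role of consecutive positions $0 < 1 < \cdots < n$ of $\bold{Bar}$. In particular the same commutative squares displayed in the proof of Proposition \ref{coproduct as a morphism of complex} (using (\ref{pre-augmentation componentwise})) suffice here. The only novelty is that the outer differential $\partial_n$ of $\bold{Bar}_{simp}$ also contains the ``insertion'' faces $\partial_{\beta,i+1}$ between two consecutive indices of $\alpha$ that are not present in the non-simplicial bar; but these faces just multiply two adjacent factors, so they interact with $\Delta_{simp}$ exactly like ordinary multiplication faces do in the non-simplicial case.

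For part (2), recall that $\sigma$ is the sum of the maps $Bar_{simp}^\alpha \to Bar_{simp}$ which drop the decorations $\alpha_i$ from the tensor symbols, as in \S \ref{comparizon for bar and simplicial bar}. Given this description it is immediate from comparing the formula for $\Delta_{simp}^\alpha$ with (\ref{comultiplication for bar}) that $(\sigma\otimes\sigma)\circ \Delta_{simp}^\alpha = \Delta_{\bold{B}}\circ \sigma$ on each $Bar_{simp}^\alpha$: the boundary terms $\Delta^l(y_0\otimes x_1)$ and $\Delta^r(x_n\otimes y_{n+1})$ map, after $\sigma$, to $\Delta^l(x_1)\otimes\cdots$ and $\cdots\otimes \Delta^r(x_n)$ respectively (using counitarity and the identification $\Cal O\otimes_{\Cal O}A=A$), and the interior cut terms $[\cdots\otimes\Delta^r(x_i)]\otimes[\Delta^l(x_{i+1})\otimes\cdots]$ correspond under $\sigma\otimes \sigma$ precisely to the terms $[x_1\otimes\cdots\otimes x_i]\otimes[x_{i+1}\otimes\cdots\otimes x_n]$ of $\Delta_{\bold B}$ via the identification (\ref{natural map for cotensor}). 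Summing over $\alpha$ gives the desired compatibility.
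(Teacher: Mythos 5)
Your proof is correct and follows essentially the same route the paper intends, which is simply to say the argument of Proposition \ref{coproduct as a morphism of complex} carries over: for (1), commutation with the inner differential is formal, and commutation with the outer differential comes from associativity of $\mu$ together with the compatibility squares relating $\mu$ to $\epsilon^l,\epsilon^r$; for (2), a direct comparison of the defining formulas under $\sigma$ using $\Cal O\otimes_{\Cal O}A\simeq A$ and (\ref{natural map for cotensor}). One small slip in a side remark: there are no ``extra'' faces in the simplicial bar — for a fixed $\alpha$ of length $n+1$ the outer differential is a sum of exactly $n+1$ face maps $\partial_{\alpha,0},\dots,\partial_{\alpha,n}$, matching the two augmentation terms and $n-1$ multiplication terms of the non-simplicial $d:Bar_n\to Bar_{n-1}$ — but this does not affect the validity of the argument.
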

The proof is similar to 
Proposition \ref{coproduct as a morphism of
complex} and we omit it.
\subsubsection{}
The homomorphism $\Cal O \to \bold k$ and
the sum of
$\Cal O\overset{i}\otimes_{\Cal O}\Cal O\to \bold k$
defines counit $Bar(A/\Cal O,\epsilon)\to \bold k$
and $Bar_{simp}(A/\Cal O,\epsilon)\to \bold k$.

\section{Relative DGA and DG category}
\label{section:relative DGA and DG category}
Let $S$ be a split reductive group, $\Cal O$ the coordinate ring of $S$,
$A$ a relative DGA over $\Cal O$, and $\epsilon$ a relative augmentation 
of $A$.
In this section, we introduce the DG category associated to a relative
DGA $A$ over $\Cal O$.
Moreover, we prove that DG category $K\Cal C(A/\Cal O)$
of DG complexes in $\Cal C(A/\Cal O)$ is homotopy equivalent to that of
$Bar(A/\Cal O,\epsilon)$-comodules
(Theorem \ref{main theorem genereal}).

\subsection{DG category associated to a relative DGA}
\subsubsection{}
\label{definition of DG cat associate to relative DGA}
We define a DG category $\Cal C(A/\Cal O)$ for a relative DGA $A$ over
the Hopf algebra $\Cal O$.
An object $V=V^{\bullet}$ of $\Cal C(A/\Cal O)$ is a complex
$V=V^{\bullet}$ of finite dimensional
left $\Cal O$-comodules such that $V^{\bullet}\to \Cal O\otimes V^{\bullet}$
is a homomorphism of complexes.
For objects $V_1$ and $V_2$ in 
$\Cal C(A/\Cal O)$, we define a complex of homomorphisms
$\underline{Hom}_{\Cal C(A/\Cal O)}(V_1, V_2)$ by
$$
\underline{Hom}_{\Cal C(A/\Cal O)}(V_1, V_2) =
\underline{Hom}_{\Cal O}(V_1, A\otimes_{\Cal O}V_2).
$$
Then the composite of the complexes of homomorphisms
is defined by the composite of the following maps:
\begin{align}
\label{from yoneda to composite}
&
Hom_{\Cal O}(V_2,A \otimes_{\Cal O}
V_3)
\otimes
Hom_{\Cal O}(V_1,A \otimes_{\Cal O}
V_2)
 \\
\nonumber
\to & Hom_{\Cal O}(A\otimes_{\Cal O}V_2,
A \otimes_{\Cal O}
A \otimes_{\Cal O}
V_3)
\otimes
Hom_{\Cal O}(V_1,A \otimes_{\Cal O}
V_2)
\\
\nonumber
\to &
Hom_{\Cal O}(V_1,
A \otimes_{\Cal O}
A \otimes_{\Cal O}
V_3)
\\
\nonumber
\overset{\mu_A}\to &
Hom_{\Cal O_S}(V_1,
A \otimes_{\Cal O}
V_3).
\end{align}
Then we have
$A\simeq \underline{Hom}_{\Cal C(A/\Cal O)}(L\Cal O, L\Cal O)$
and the multiplication map is equal to
\begin{align}
\label{eqn:Yoneda pair}
&A\otimes_{\Cal O}A
\to 
A\otimes A 
\xrightarrow{\sigma}
A\otimes A \\
\nonumber
\simeq &
\underline{Hom}_{\Cal C(A/\Cal O)}(L\Cal O, L\Cal O)\otimes
\underline{Hom}_{\Cal C(A/\Cal O)}(L\Cal O, L\Cal O) \\
\nonumber
\overset{\circ}\to &
\underline{Hom}_{\Cal C(A/\Cal O)}(L\Cal O, L\Cal O)
\end{align}
Here $\sigma$ denotes the transposition 
(\ref{transposition}).

The DG category of finite DG complexes in $\Cal C(A/\Cal O)$ is
denoted as $K\Cal C(A/\Cal O)$.
An object in $K\Cal C(A/\Cal O)$ is written as $(V^{i}, d_{ij})$
where 
$$
d_{ij}\in 
\underline{Hom}^{1}_{\Cal C(A/\Cal O)}
(V^je^{-j}, V^ie^{-i})\overset{(*)}\simeq
\underline{Hom}^{1+j-i}_{\Cal C(A/\Cal O)}(V^j, V^i).
$$
Here the map $(*)$ is given in 
(\ref{shift and homomorphism and sgn}).
(See also \cite{T}, \S 2.2.)
By the definition of DG complex, we have
\begin{equation}
\label{recall condition for DG complex}
\partial d_{ik}+\sum_{i<j<k}d_{ij}\circ d_{jk}=0.
\end{equation}

\subsubsection{}
Let $\Cal C_1$ and $\Cal C_2$ be DG categories.
A pair $F=(ob(F),mor(F))$
of map $ob(F):ob(\Cal C_1)\to ob(\Cal C_2)$
and $mor(F):mor(\Cal C_1)\to mor(\Cal C_2)$
is called a DG functor, if it is compatible with the composite,
and preserves identity morphisms.
\subsubsection{}
If $A_{pre}=Hom_{\bold k}(\Cal O,\Cal O)$, 
then $\Cal C(A_{pre}/\Cal O)$ is nothing but
the full sub-DG category of complex of finite dimensional
$\bold k$-vector spaces
consisting of $\Cal O$-comodules. 
For an object $M,N\in \Cal C(A_{pre}/\Cal O)$,
we have
$$
Hom_{\Cal C(A_{pre}/\Cal O)}(M,N)=
Hom_{\Cal O}(M,A_{pre}\otimes_{\Cal O}N)=
Hom_{\bold k}(M,N).
$$

Let $\epsilon:A\to Hom_{\bold k}(L\Cal O,L\Cal O)$ 
be a relative augmentation. We define a DG functor
$\rho_{\epsilon}:\Cal C(A/\Cal O)\to \Cal C(\bold k)$ by forgetting
left $\Cal O$-comodule structure for objects. 
For a morphism $\varphi \in Hom_{\Cal C(A/\Cal O)}(M,N)$,
we define $\rho_{\epsilon}(\varphi)$ by the image of
$\varphi$ under the map
$$
Hom_{\Cal C(A/\Cal O)}(M,N)\to Hom_{\Cal C(A_{pre}/\Cal O)}(M,N)=
Hom_{Vect_{\bold k}}(M,N).
$$

\subsubsection{DG category $(B-com)$}
\label{subsection:B-com}
Let $B$ be a counitary and coassociative differential graded coalgebra over $\bold k$.
The comultiplication and the counit is written as $\Delta_B$ and $u$.
A complex $M$ with a homomorphism $M\to B\otimes M$ of complexes is called a
$B$-comodule if it is coassociative and counitary.
For two $B$-comodules $M$ and $N$, we define the double complex
$\bold R Hom_{B}(M,N)$ by
$$
Hom_{KVect}^{\bullet}(M,N)\to Hom_{KVect}^{\bullet}(M, B\otimes N)\to 
Hom_{KVect}^{\bullet}(M, B\otimes B\otimes N)\to \cdots
$$
where the differential is given by
\begin{align*}
d\varphi=&
(1_B\otimes 1_B\otimes \cdots \otimes 1_B\otimes \Delta_N)\circ \varphi
-(1_B\otimes 1_B\otimes \cdots \otimes \Delta_B\otimes 1_N)\circ \varphi \\
&
+\cdots+(-1)^n(\Delta_B\otimes 1_B\otimes \cdots \otimes 1_B\otimes 1_N)\circ \varphi\\
&
+(-1)^{n+1}(1_B\otimes\varphi)\circ \Delta_M
\end{align*}
for an element 
$\varphi \in Hom_{KVect}^{\bullet}
(M, B^{\otimes n}\otimes N)$.
We introduce a composite map
\begin{equation}
\label{product structure on canonical resol}
c:\bold R Hom_{B}(M,N)\otimes
\bold R Hom_{B}(L,M)\to
\bold R Hom_{B}(L,N).
\end{equation}
For elements $\varphi$ and $\psi$
in $Hom_{KVect}^{\bullet}(M,B^{\otimes n}\otimes N)$ and
$Hom_{KVect}^{\bullet}(L,B^{\otimes m}\otimes M)$, we define
$c(\varphi\otimes \psi)$ in 
$Hom_{KVect}^{\bullet}(L,B^{\otimes (n+m)}\otimes N)$ by
$$
c(\varphi\otimes \psi)=(1_B^{\otimes m}\otimes \varphi)\circ \psi.
$$
We can check that this composite is associative.
The associate simple complex of $\bold RHom_B(M,N)$ is written as
$RHom_B(M,N)$.
\begin{definition}
We define a DG category $(B-com)$
for a differential graded coalgebra $B$ as follows.
The class of objects of $(B-com)$ consists of finite dimensional $B$-comodules,
and for $B$-comodules $M,N$, the complex of homomorphisms is
defined by $RHom_B(M,N)$. The composite of homomorphisms is defined
by the homomorphism (\ref{product structure on canonical resol}).
\end{definition}
\begin{proposition}
\label{prop: exact to cofree comodules}
Let $M$ be a cofree $B$-comodule. Then the functor
$N\mapsto \bold R Hom_{B}(N,M)$ is exact.
\end{proposition}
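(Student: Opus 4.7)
The plan is to reduce $\bold R Hom_B(N, M)$ to $Hom_{\bold k}(N, W)$ by exploiting the cofree structure $M = B \otimes W$ (with coaction $\Delta_M = \Delta_B \otimes 1_W$), and then invoke the manifest exactness of $Hom_{\bold k}(-, W)$.

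First, I would recall the classical adjunction. Applying the counit $u \colon B \to \bold k$ in the leftmost $B$-slot gives a map $Hom_B(N, B \otimes W) \to Hom_{\bold k}(N, W)$, whose inverse sends $\psi \colon N \to W$ to $(1_B \otimes \psi) \circ \Delta_N$; the two composites are the identity by the counit axiom of $\Delta_N$. Thus one already has $H^0 \bold R Hom_B(N, M) = Hom_B(N, M) \simeq Hom_{\bold k}(N, W)$.

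Next, I would promote this to a quasi-isomorphism at the level of complexes. Define an augmentation $\varepsilon \colon \bold R Hom_B(N, M) \to Hom_{\bold k}(N, W)$ that is the adjunction map in cobar degree zero and vanishes in higher cobar degrees. Construct a contracting homotopy by contracting one $B$-factor against the counit: for $\varphi \colon N \to B^{\otimes n} \otimes B \otimes W$ in cobar degree $n \geq 1$, set $s(\varphi) = (u \otimes 1_{B^{\otimes n}} \otimes 1_W) \circ \varphi$. Using coassociativity of $\Delta_B$ and $\Delta_N$ together with the counit identities $(u \otimes 1) \circ \Delta_B = 1$ and $(u \otimes 1_N) \circ \Delta_N = 1_N$, one verifies the homotopy relation $ds + sd = \operatorname{id} - \varepsilon$ in positive cobar degrees (and $sd = \operatorname{id} - \varepsilon$ in degree $0$). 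Since the homotopy is visibly natural in $N$, we obtain a natural quasi-isomorphism $\bold R Hom_B(-, M) \simeq Hom_{\bold k}(-, W)$.

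Finally, since $\bold k$ is a field, the functor $N \mapsto Hom_{\bold k}(N, W)$ is exact (short exact sequences of complexes of $\bold k$-vector spaces remain exact after termwise application of $Hom_{\bold k}(-, W)$, and it therefore preserves quasi-isomorphisms). Transporting across the natural quasi-isomorphism above yields exactness of $N \mapsto \bold R Hom_B(N, M)$.

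The main technical obstacle is sign bookkeeping in the verification of $ds + sd = \operatorname{id} - \varepsilon$. The differential $d\varphi$ in cobar degree $n$ has $n+2$ alternating-sign contributions (the $\Delta_N$-term, the $n$ internal $\Delta_B$-terms, and the $\Delta_M = \Delta_B \otimes 1_W$-term); after post-composition with the counit-contraction $s$, the counit identity $(u \otimes 1) \circ \Delta_B = 1$ collapses adjacent pairs so that all but one term cancels telescopically, leaving precisely $\operatorname{id} - \varepsilon$. The remaining care is to make sure the internal differential of $N$ and of $W$ do not spoil the cancellation, which follows from $u$ being a chain map $B \to \bold k$.
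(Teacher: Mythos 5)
Your strategy is a genuine alternative to the paper's. The paper filters $\bold R Hom_B(N, M)$ by cobar degree (the stupid filtration) and observes that each graded piece $Hom_{KVect_{\bold k}}(N, B\otimes\cdots\otimes B\otimes M)$ is exact in $N$ because $\bold k$ is a field; the filtration then gives exactness of the total complex. Your proposal instead builds an explicit contracting homotopy onto $Hom_{\bold k}(N, W)$ from the cofree structure $M = B\otimes W$, which is sharper — it identifies the cohomology rather than merely asserting exactness, and it makes the role of the cofree hypothesis visible, which the paper's columnwise argument does not.

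However, as written, the homotopy contracts the wrong $B$-factor and the claimed identity fails. Your formula $s(\varphi) = (u\otimes 1_{B^{\otimes n}}\otimes 1_W)\circ\varphi$ kills the \emph{leftmost} cobar $B$. With the paper's differential
$$
d\varphi = (1^{\otimes n}\otimes \Delta_M)\circ\varphi - \cdots + (-1)^n(\Delta_B\otimes 1^{\otimes(n-1)}\otimes 1_M)\circ\varphi + (-1)^{n+1}(1_B\otimes\varphi)\circ\Delta_N,
$$
exactly two terms of $s(d\varphi)$ touch the leftmost slot and collapse to $\varphi$ by a counit identity: the term that applies $\Delta_B$ to position one, with sign $(-1)^n$, and the $\Delta_N$-source term, with sign $(-1)^{n+1}$. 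These two ``identity'' contributions carry opposite signs and cancel each other, so no copy of $\varphi$ survives in $sd+ds$, while the source term $(1_B\otimes s\varphi)\circ\Delta_N$ of $d(s\varphi)$ has no partner in $s(d\varphi)$. The contraction that does work applies the counit to the $B$ sitting inside $M=B\otimes W$, namely $s'(\varphi) = (1_{B^{\otimes n}}\otimes u\otimes 1_W)\circ\varphi$. Then the \emph{only} surviving term of $s'(d\varphi)$ is $s'\circ(1^{\otimes n}\otimes \Delta_M)\circ\varphi=\varphi$, via $(1_B\otimes u)\circ\Delta_B=1_B$, and every remaining term of $s'(d\varphi)$ pairs off, with opposite sign, against a term of $d(s'\varphi)$ — including the two source terms. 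One gets $ds'+s'd=1-\iota\varepsilon$ where $\iota(\psi)=(1_B\otimes\psi)\circ\Delta_N$ is the section you describe. So the plan is sound and even preferable to the paper's, but the slot to which $u$ is applied must be the one coming from $M$, not the outer cobar $B$; the ``telescoping'' you invoke only terminates correctly when $u$ contracts against $\Delta_M$, not against $\Delta_N$.
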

\begin{proof}
We consider the stupid filtration on
$\bold R Hom_{B}(N,M)$ and reduce to the 
exactness of
$Hom_{KVect_{\bold k}}(N, B\otimes\cdots \otimes B\otimes M)$.
\end{proof}

\subsubsection{}
\label{subsection:cofree resolution}
$N$ be a complex of $B$-comodules. We define a standard
cofree resolution 
$\bold F(N)$ of $N$
by
$$
\bold F(N):B\otimes N \to 
B \otimes B\otimes N \to 
B\otimes B \otimes B\otimes N \to \cdots
$$
where
\begin{align*}
d(b_n \otimes \cdots b_0\otimes n)
=&\sum_{i=0}^n(-1)^{i+1}
b_n\otimes \cdots \otimes \Delta_B(b_i)
\otimes \cdots \otimes b_0 \otimes n \\
&+b_n\otimes \cdots b_0\otimes \Delta_N(n)
\end{align*}
Then the associate simple complex $F(N)$ becomes a complex of $B$-comodules.

Let $N_1, N_2$ be $B$-comodules and $\varphi:N_1 \to N_2$
be a $B$-homomorphism, i.e.
the following diagram commutes:
$$
\begin{matrix}
N_1 &\xrightarrow{\varphi} & N_2 \\
\Delta_{N_1}\downarrow \phantom{\Delta_{N_1}}
& & 
\phantom{\Delta_{N_2}}\downarrow \Delta_{N_2}\\
B\otimes N_1 &\xrightarrow{1\otimes \varphi} & B\otimes N_2 
\end{matrix}.
$$
Note that we do not assume that $\varphi$ commutes with the
differentials.
The set of $B$-homomorphism from $N_1$ to $N_2$ 
is denoted by
$Hom_B(M,N)$. This space becomes a sub complex of
$Hom_{KVect \bold k}(M,N)$, since the differentials and
the comodule structures on $N_1$ and $N_2$ commute. 
\begin{lemma}
\begin{enumerate}
\item
\label{free B-comodule and vect sp}
Let $M$ be a $B$-comodule, and $N$ a complex of 
$\bold k$-vector spaces. Then $B\otimes N$ becomes a $B$-comodule
with the product complex structure. By attaching 
$\varphi \in Hom_B(M,B\otimes N)$ to
the element $\widetilde \varphi$ defined by
$$
(\widetilde{\varphi} :
M \xrightarrow{\varphi}
B\otimes N \xrightarrow{\epsilon \otimes 1} N)
\in Hom_{\bold k}(M, N),
$$
we have an isomorphism
$
Hom_B(M,B\otimes N)\to Hom_{\bold k}(M, N).
$
The inverse map is given by
$$
\varphi:M\xrightarrow{\Delta_M} B\otimes M
\xrightarrow{1\otimes \widetilde\varphi} B\otimes N.
$$
\item
Using the isomorphism of (\ref{free B-comodule and vect sp}),
we have a natural isomorphism of complexes
$$
RHom_{B}(M,N)\simeq Hom_B(M,F(N)).
$$
\end{enumerate}
\end{lemma}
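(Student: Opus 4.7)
The plan is to treat part (1) as the cofree-comodule adjunction and then deduce part (2) by applying (1) termwise to the canonical resolution.

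For part (1), I would verify directly that the two displayed maps are mutually inverse. Starting with $\varphi\in Hom_B(M,B\otimes N)$, the $B$-comodule map condition says $(\Delta_B\otimes 1_N)\circ\varphi=(1_B\otimes \varphi)\circ\Delta_M$. Composing both sides with $(1_B\otimes u\otimes 1_N)$ in the middle factor, where $u:B\to\bold k$ is the counit, the left-hand side becomes $\varphi$ by counitarity of $B$, while the right-hand side becomes $(1_B\otimes\widetilde\varphi)\circ\Delta_M$, giving one direction. Conversely, given $\widetilde\varphi\in Hom_{\bold k}(M,N)$, one checks that $\varphi=(1_B\otimes\widetilde\varphi)\circ\Delta_M$ is a $B$-comodule map into $B\otimes N$ by applying coassociativity of $\Delta_M$, and then $(u\otimes 1_N)\circ\varphi=((u\otimes 1_M)\circ\Delta_M)\cdot\widetilde\varphi=\widetilde\varphi$, again by counitarity. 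Compatibility with the differentials is automatic since $\Delta_M$, $\Delta_B$, and $u$ all commute with the differentials in the underlying complexes.

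For part (2), I would observe that each component of the standard cofree resolution $\bold F(N)$ has the form $B\otimes(B^{\otimes n}\otimes N)$, which is a cofree $B$-comodule on the complex $B^{\otimes n}\otimes N$. Applying part (1) componentwise yields an isomorphism of graded vector spaces
$$Hom_B(M,B^{\otimes(n+1)}\otimes N)\;\simeq\;Hom_{\bold k}(M,B^{\otimes n}\otimes N),$$
whose right-hand side is exactly the $n$-th term of the double complex computing $\bold RHom_B(M,N)$. It remains to compare the two differentials. The differential in $\bold F(N)$ is a signed sum of $\Delta_B$ applied to each of the $n+1$ copies of $B$, plus a term involving $\Delta_N$. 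Under the isomorphism of (1), the $\Delta_B$ applied to the very first copy of $B$ (the one ``outside'' that makes $\bold F(N)$ cofree) is absorbed into $\Delta_M$ when one transports via $\varphi\mapsto(u\otimes 1)\circ\varphi$, producing no contribution; the remaining $n$ applications of $\Delta_B$ correspond precisely to the first $n$ terms in the differential on $\bold RHom_B(M,N)$ recalled in \S\ref{subsection:B-com}, and the $\Delta_N$ term corresponds to the last.

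The only delicate point is tracking signs and showing that the ``outermost'' $\Delta_B$ differential of $\bold F(N)$ matches the contribution $(-1)^{n+1}(1_B\otimes\varphi)\circ\Delta_M$ appearing in the definition of the $\bold RHom_B$ differential; here the sign emerges from the shift convention (\ref{shift and homomorphism and sgn}) and the transposition rule (\ref{transposition}) applied to the bar-type sum. Once these signs are matched, naturality in $M$ and $N$ is transparent from the construction, and passing to associated simple complexes gives the asserted isomorphism $RHom_B(M,N)\simeq Hom_B(M,F(N))$. The main obstacle is thus not conceptual but bookkeeping: verifying that the Koszul-type signs in the differential of $\bold F(N)$ exactly reproduce those in the cobar differential of $\bold RHom_B(M,N)$.
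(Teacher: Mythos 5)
The paper states this lemma without proof, so there is no proof of record to compare against; I will therefore evaluate your argument on its own merits. The overall strategy — identify part (1) as the cofree–comodule adjunction and deduce part (2) by applying (1) termwise to the standard resolution $\bold F(N)$ — is the natural one, and your verification of part (1) via counitarity and coassociativity is correct and complete.

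Part (2) is also essentially correct, but the bookkeeping narrative contains two confusions worth flagging. First, you write that the outermost $\Delta_B$ of $\bold F(N)$ is ``absorbed into $\Delta_M$ \dots producing no contribution,'' and then in the next sentence say that precisely this term must be matched with the contribution $(-1)^{n+1}(1_B\otimes\varphi)\circ\Delta_M$ in the $\bold RHom_B$ differential. The second statement is the right one; the first should be deleted, since the outer $\Delta_B$ \emph{does} produce a contribution, namely the $\Delta_M$ term. Concretely, if $\psi\in Hom_B(M,B^{\otimes (n+1)}\otimes N)$ and $\widetilde\psi=(u\otimes 1)\circ\psi$, then applying $u$ to the leftmost slot of the $i=n$ term of $d_{\bold F}$ gives $(-1)^{n+1}\psi$ by counitarity, which equals $(-1)^{n+1}(1_B\otimes\widetilde\psi)\circ\Delta_M$ by part (1). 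The remaining $n$ inner $\Delta_B$'s (indices $i=0,\dots,n-1$, carrying signs $(-1)^{i+1}$) line up slot-by-slot with the $n$ $\Delta_B$ terms of the $\bold RHom_B$ differential, and the $\Delta_N$ terms match on the nose; your phrase ``$\Delta_N$ \dots corresponds to the last'' is off — in the paper's formula $\Delta_N$ is the \emph{first} term — but this is immaterial. Second, you attribute the $(-1)^{n+1}$ to the shift convention (\ref{shift and homomorphism and sgn}) and the transposition rule (\ref{transposition}); this is not where it comes from. The sign is already present in the $i=n$ summand of the differential of $\bold F(N)$ as $(-1)^{i+1}$, so the match at the level of double complexes is exact without invoking any additional sign conventions. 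The passage to associated simple complexes is then routine. With these two descriptive corrections, the proof is sound.
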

We define a homomorphism of complexes
$$
\alpha:\bold RHom_B(M,N)\to Hom_B(\bold F(M),\bold F(N)).
$$
Let $\varphi$ be an element of 
$\bold RHom_B(M,N)^p=
Hom_\bold k(M, 
\underset{p}{\underbrace{B\otimes \cdots \otimes
B}}\otimes N)$.
Then the map
\begin{equation}
\label{B-com to cofree resol map}
\alpha^{q}(\varphi):
\underset{q+1}{\underbrace{B\otimes\cdots \otimes B}}\otimes M
\xrightarrow{1^{\otimes q}\otimes \varphi}
\underset{q+1}{\underbrace{B\otimes\cdots \otimes B}}\otimes
\underset{p}{\underbrace{B\otimes\cdots \otimes B}}\otimes N
\end{equation}
is an element of $Hom_B(F(M)^q, F(N)^{p+q})$ for $q\geq 0$.
By taking the sum of $\alpha^q(\varphi)$, we have a map
$\alpha(\varphi)\in Hom_B(F(M),F(N))$.
By Proposition \ref{prop: exact to cofree comodules}, 
we have the following lemma.
\begin{lemma}
\begin{enumerate}
\item The homomorphism $\alpha$ is a homomorphism of complexes
and a quasi-isomorphism.
\item
Let 
$$
\mu: Hom_B(F(M),F(N)) \otimes Hom_B(F(L),F(M))
\to Hom_B(F(L),F(N))
$$
be a homomorphism of complexes defined by the composite.
The map $\mu$ commute with the composite map
$c$ in (\ref{product structure on canonical resol})
via the homomorphism $\alpha$.
\end{enumerate}
\end{lemma}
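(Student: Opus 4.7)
The plan is to verify both assertions by direct computation from the explicit formula defining $\alpha$, and to deduce the quasi-isomorphism in (1) by factoring $\alpha$ through the isomorphism $\bold{R}Hom_B(M,N)\simeq Hom_B(M,F(N))$ supplied by part (2) of the preceding lemma.

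For (1), I would first check that $\alpha$ is a chain map. The differential on $\bold{R}Hom_B(M,N)$ has two kinds of contributions: the internal differentials of $M$ and $N$, and the simplicial-type terms inserting $\Delta_B$, $\Delta_M$, or $\Delta_N$. The differential on $Hom_B(F(M),F(N))$ decomposes the same way, coming from the internal differentials together with the differentials of the cofree resolutions $F(M)$ and $F(N)$ defined in \S \ref{subsection:cofree resolution}, which are themselves built from precisely the same insertion operations. Because $\alpha^{q}(\varphi)$ is the identity on every prepended $B$-factor and applies $\varphi$ only at the $M$-slot, the $\Delta_B$-insertion terms in $d\alpha(\varphi)$ occurring at interior positions match the corresponding terms of $\alpha(d\varphi)$ one-to-one, while the boundary terms match those involving $\Delta_M$ and $\Delta_N$. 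The sign tracking is dictated by the shift conventions of \S \ref{convention of orientation}.

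For the quasi-isomorphism part of (1), I would factor $\alpha$ as
$$
\bold{R}Hom_B(M,N)\xrightarrow{\sim} Hom_B(M,F(N))\xrightarrow{\Delta_M^{*}} Hom_B(F(M),F(N)),
$$
where the first arrow is the isomorphism from part (2) of the preceding lemma and the second is precomposition with the augmentation $\Delta_M\colon M\to F(M)^{0}$. The augmented complex $0\to M\to F(M)^{0}\to F(M)^{1}\to\cdots$ is exact, by a homotopy built from the counit entirely analogous to the argument in Proposition \ref{acyclicity for simplicial bar complex}. Since each component of $F(N)$ is cofree, Proposition \ref{prop: exact to cofree comodules} applied to the cone of $\Delta_M$ shows that the map $\Delta_M^{*}$ above is a quasi-isomorphism. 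That the resulting composite agrees with $\alpha$ is immediate from the formula (\ref{B-com to cofree resol map}): an element of $Hom_B(M,F(N))$ extends uniquely to a $B$-comodule morphism out of the cofree complex $F(M)$ by applying the identity to all prepended $B$-factors, which is exactly the definition of $\alpha^{q}(\varphi)$.

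For (2), the equality is checked directly on the formula. Given $\varphi\colon M\to B^{\otimes p}\otimes N$ and $\psi\colon L\to B^{\otimes m}\otimes M$, the composite $\alpha(\varphi)\circ\alpha(\psi)$ at level $q$ first applies $\alpha(\psi)$, acting by $\psi$ on the $L$-slot and leaving the prepended $B$-factors untouched, producing an intermediate element of $B^{\otimes(q+m+1)}\otimes M$; then $\alpha(\varphi)$ applies $\varphi$ to the newly exposed $M$-slot, landing in $B^{\otimes(q+m+p+1)}\otimes N$. The combined effect is the identity on $q+m+1$ prepended $B$-factors, followed by $(1_B^{\otimes m}\otimes\varphi)\circ\psi$, which is precisely $\alpha^{q}(c(\varphi\otimes\psi))$ by the definition of $c$ in (\ref{product structure on canonical resol}). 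The main obstacle is the sign bookkeeping in step (1), since both differentials split into many insertion terms; once a consistent convention for the shifts and transpositions is fixed, the remaining comparisons are purely formal.
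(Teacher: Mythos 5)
Your proposal is correct and fills in what the paper leaves implicit: the only stated justification in the paper is the single sentence ``By Proposition~\ref{prop: exact to cofree comodules}, we have the following lemma,'' and your argument is a reasonable elaboration of exactly that pointer. In particular, your route to the quasi-isomorphism (exactness of the augmented complex $0\to M\to F(M)^0\to F(M)^1\to\cdots$, then apply Proposition~\ref{prop: exact to cofree comodules} to its cone since each $F(N)^q$ is cofree, then conjugate by the isomorphism $RHom_B(M,N)\simeq Hom_B(M,F(N))$ of the preceding lemma) is the natural way to use that proposition, and your direct check of multiplicativity for (2) is correct.

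One inconsistency worth fixing: you label the second arrow of your displayed factorization $\Delta_M^*$ and call it ``precomposition,'' but that arrow points from $Hom_B(M,F(N))$ to $Hom_B(F(M),F(N))$, i.e.\ in the direction of extension, not restriction. What your cone argument actually shows is that the genuine precomposition map $r\colon Hom_B(F(M),F(N))\to Hom_B(M,F(N))$ is a quasi-isomorphism; the extension map you describe (the one that agrees with $\alpha$ via the preceding lemma's isomorphism) is then a quasi-isomorphism because, as you observe in the last sentence of that paragraph, it is a chain-level section of $r$. Stating it this way avoids the impression that $\Delta_M^*$ goes both directions at once.
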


\subsection{Bijection on objects}
\label{subsec:bijection on objects}
\subsubsection{Construction of a bijection $\varphi$}
In this subsection, we construct a map
$$
\varphi:ob(K\Cal C(A/\Cal O))\to 
ob(Bar_{simp}(A/\Cal O,\epsilon)-com).
$$

Let $(V^i, d_{ij})$ be an object of $K\Cal C(A/\Cal O)$.
For an index $\alpha=(\alpha_0< \cdots <\alpha_n)$, we define 
the following degree $n$ left $\Cal O$-homomorphism 
$$
\Delta^{\alpha}:V^{\alpha_0}e^{-\alpha_0} \to 
Bar_{simp}^{\alpha_0, \dots, \alpha_n}
\otimes V^{\alpha_n}e^{-\alpha_n},
$$
where
$$
Bar_{simp}^{\alpha_0, \dots, \alpha_n}=
\Cal O\overset{\alpha_0}\otimes_{\Cal O}A
\overset{\alpha_1}\otimes_{\Cal O}\cdots
\overset{\alpha_{n-1}}\otimes_{\Cal O}A
\overset{\alpha_n}\otimes_{\Cal O}\Cal O.
$$
For $n=0$, the map
$$
\Delta^{\alpha_0}:V^{\alpha_0}e^{-\alpha_0}\to 
\Cal O\otimes V^{\alpha_0}e^{-\alpha_0}=
Bar_{simp}^{\alpha_0}\otimes V^{\alpha_0}e^{-\alpha_0}
$$
is defined as the coproduct structure on $V^{\alpha_0}e^{-\alpha_0}$.
We define $\Delta^{\alpha_0, \dots, \alpha_n}$ for $n\geq 1$ by
the induction on $n$. 
For $n=1$, by the definition of
$K\Cal C(A/\Cal O)$, we have a left $\Cal O$-homomorphism
$
d_{\alpha_1\alpha_0}:V^{\alpha_0}e^{-\alpha_0}\to 
A\otimes_{\Cal O}V^{\alpha_1}e^{-\alpha_1}
$ 
of degree one.
By composing the following $\Cal O$-homomorphisms
\begin{align*}
\Delta^{\alpha_0,\alpha_1}:
V^{\alpha_0}e^{-\alpha_0}
&\xrightarrow{d_{\alpha_1\alpha_0}}
A\otimes_{\Cal O} V^{\alpha_1}e^{-\alpha_1} \\
& \to 
A\otimes  V^{\alpha_1}e^{-\alpha_1}=
(\Cal O
\overset{\alpha_0}\otimes_{\Cal O}A
\overset{\alpha_1}\otimes_{\Cal O}\Cal O) \otimes  V^{\alpha_1}e^{-\alpha_1},
\end{align*}
we have the required homomorphism.
Suppose that a left $\Cal O$-homomorphism
$$
\Delta^{\alpha_1, \dots, \alpha_n}:V^{\alpha_1}e^{\alpha_1}
\to
Bar_{simp}^{\alpha_1, \dots, \alpha_n}
\otimes V^{\alpha_n}e^{-\alpha_n}
$$
of degree $(n-1)$ is given. 
Using the inductive definition of $\Delta^{\alpha_1, \dots,\alpha_n}$,
we have the following composite map
\begin{align*}
V^{\alpha_0}e^{-\alpha_0}
\xrightarrow{d_{\alpha_1\alpha_0}}
A\otimes_{\Cal O}V^{\alpha_1}e^{-\alpha_1}
\xrightarrow{1\otimes \Delta^{\alpha_1, \dots, \alpha_n}}
 &
A\otimes_{\Cal O}
Bar_{simp}^{\alpha_1, \dots, \alpha_n}
\otimes V^{\alpha_n}e^{-\alpha_n} \\
&=
Bar_{simp}^{\alpha_0, \dots, \alpha_n}
\otimes V^{\alpha_n}e^{-\alpha_n},
\end{align*}
and we have
a required degree $n$-homomorphism. 
The map $\Delta^{\alpha}$ is written as
$$
\Delta^{\alpha}=(1\otimes d_{\alpha_{n}\alpha_{n-1}})\circ 
(1\otimes d_{\alpha_{n-1}\alpha_{n-2}})\circ \cdots \circ
d_{\alpha_1\alpha_0}.
$$
We set $V=\oplus_i V^{i}e^{-i}$. Then it is a finite dimensional
vector space. 
We define a homogeneous map 
$\Delta_V:V\to Bar_{simp}(A/\Cal O_S,\epsilon)\otimes V$ 
of degree zero  by 
$$
\Delta_V=\sum_{0\leq n}\sum_{\mid\alpha\mid =n}
(1\otimes t^n\otimes 1)\circ\Delta^{\alpha},
$$
where
$$
1\otimes t^n\otimes 1:
Bar_{simp}^{\alpha_1, \dots, \alpha_n}
\otimes V^{\alpha_n}e^{-\alpha_n}\to
Bar_{simp}^{\alpha_1, \dots, \alpha_n}e^n
\otimes V^{\alpha_n}e^{-\alpha_n}.
$$

We define a differential $\delta$ on the vector space $V$.
We set $\delta_{ii}=\delta_i\otimes 1$
on $V^ie^{-i}$, where $\delta_i$
is the differential on $V^i$, and
$\delta_{ji}:V^i\to V^j$ is the image of $d_{ji}$
under the map
\begin{align*}
&\underline{Hom}^1_{\Cal C(A/\Cal O)}(V^{i}e^{-i},A
\otimes_{\Cal O}V^je^{-j})  \\
 \to &
\underline{Hom}^1_{\Cal C(A/\Cal O)}(V^{i}e^{-i},
A_{pre}\otimes_{\Cal O}V^je^{-j}) \\
= &
\underline{Hom}^1_{KVect}(V^{i}e^{-i},V^je^{-j}) 
\end{align*}
induced by the relative augmentation.
We define $\delta_V$ on $V$ by
$\delta_V=\sum_{i\leq j}\delta_{ji}$.
\begin{lemma}
The map $\delta_V$ defines a differential on $V$.
\end{lemma}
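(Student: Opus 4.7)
The plan is to push the DG-complex data $(V^i,d_{ij})$ forward along the DG functor $\rho_\epsilon:\Cal C(A/\Cal O)\to \Cal C(\bold k)$ and invoke the standard totalization of a twisted complex of vector spaces.

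Since $\rho_\epsilon$ is a DG functor, it commutes with $\partial$ on morphism complexes and converts the Yoneda-type composition in $\Cal C(A/\Cal O)$ (defined via $\mu_A$ and the transposition $\sigma$, cf.~(\ref{eqn:Yoneda pair})) into ordinary composition of $\bold k$-linear maps. Applying $\rho_\epsilon$ to the defining DG-complex relation (\ref{recall condition for DG complex}) therefore produces the corresponding identity in $\Cal C(\bold k)$:
\[
\partial \delta_{ik}+\sum_{i<j<k}\delta_{ij}\circ \delta_{jk}=0,
\]
where $\delta_{ij}=\rho_\epsilon(d_{ij})$ and $\circ$ denotes ordinary composition of linear maps.

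To finish, one decomposes $\delta_V^2$ by source and target. The diagonal component is $\delta_i^2$, which vanishes because $V^i$ is a complex. For each off-diagonal component between two distinct shifts $V^ie^{-i}$ and $V^ke^{-k}$, the two endpoint contributions, in which one of the two factors is an internal differential $\delta_{ii}$ or $\delta_{kk}$, combine, via the sign rule $\partial f = df-(-1)^{|f|}fd$ applied to the degree-one map $\delta_{ik}$, into $\partial \delta_{ik}$; the remaining interior terms give exactly $\sum_{i<j<k}\delta_{ij}\circ \delta_{jk}$. Hence each off-diagonal component of $\delta_V^2$ equals the left-hand side of the displayed identity and vanishes.

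The step expected to require the most care is verifying that $\rho_\epsilon$ genuinely respects compositions, i.e.~that the multiplication $\mu_A$ of the relative DGA corresponds under $\epsilon$ to ordinary composition on $\underline{Hom}_{\bold k}(\Cal O,\Cal O)$ (up to the transposition $\sigma$). This is precisely axiom (2) in the definition of a relative augmentation; once it is unwound, the passage from (\ref{recall condition for DG complex}) to the vector-space identity above is automatic, and the rest is a formal bookkeeping of signs and shifts $e^{-i}$.
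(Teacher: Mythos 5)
Your proof is correct and essentially the same as the paper's: the paper verifies directly, via the commutative square involving $\epsilon^r\otimes 1$, that $\epsilon^r$ intertwines the multiplication $\mu_A$ with composition of linear maps, which is precisely your claim that $\rho_\epsilon$ is a DG functor, and both proofs then conclude by applying the DG-complex relation (\ref{recall condition for DG complex}) and grouping terms. Your packaging of the commutative-diagram check as DG functoriality of $\rho_\epsilon$ is a harmless abstraction of the same computation.
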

\begin{proof}
The map $\delta_{ji}$ ($i<j$) is the composite of the following map.
$$
V^{i}e^{-i} \to A\otimes_{\Cal O}V^je^{-j} \xrightarrow{\epsilon^r\otimes 1} 
\Cal O \otimes_{\Cal O}V^je^{-j}
=V^je^{-j}.
$$
Let $i_i:V^ie^{-i} \to V$ and $p_j:V\to V^je^{-j}$
be the inclusion and the projection. It is enough to show that
$p_j\delta_V^2 i_i=0$ 
for $i\leq j$. If $i=j$, then the equality
holds, because $\delta_{ii}$ is a differential of $V^i$.
By the commutative diagram
$$
{\tiny
\begin{matrix}
V^ie^{-i} 
\xrightarrow{d_{ki}}
& A\otimes_{\Cal O}V^ke^{-k} &  
\xrightarrow{1\otimes d_{jk}} & 
A\otimes_{\Cal O}A\otimes_{\Cal O} V^je^{-j}
& \xrightarrow{\mu\otimes 1} & 
A\otimes_{\Cal O} V^je^{-j} \\
 & 
\epsilon^r\otimes 1\downarrow\phantom{e^r\otimes 1} 
& & 
\epsilon^r\otimes 1\otimes 1\downarrow\phantom{e^r\otimes 1\otimes 1} 
& & \epsilon^r\otimes 1\downarrow \phantom{e^r\otimes 1}  \\
 & V^ke^{-k} &
\xrightarrow{d_{jk}} & 
A\otimes_{\Cal O} V^je^{-j} &
\xrightarrow{\epsilon^r\otimes 1} & 
V^je^{-j},
\end{matrix}
}
$$
we have 
\begin{align*}
& (\epsilon^r\otimes 1)
(d_{jk}\circ d_{ki})=
(\epsilon^r\otimes 1)(\mu\otimes 1)
(1\otimes d_{jk})d_{ki}=
\delta_{jk}\delta_{ki}, \\
&(\epsilon^r\otimes 1)(\partial d_{ji})
=\delta_{jj}\delta_{ji}+\delta_{ji}\delta_{ii}
\end{align*}
for $i<k<j$. Therefore by the condition 
(\ref{recall condition for DG complex})
 for 
DG complex, we have the lemma.
\end{proof}
\begin{proposition}
The homomorphism $\Delta_V$ defines a 
$Bar_{simp}(A/\Cal O,\epsilon)$-comodule structure on $V$.
\end{proposition}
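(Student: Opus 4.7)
The plan is to verify the three defining conditions of a comodule structure: (i) compatibility with differentials (i.e.\ $\Delta_V$ is a map of complexes), (ii) coassociativity with respect to the coproduct $\Delta_{simp}$ defined earlier, and (iii) counitality. The counit condition is immediate: the $n=0$ summand of $\Delta_V$ is, up to sign, the left $\Cal O$-coaction on each $V^i e^{-i}$ landing in $Bar_{simp}^i = \Cal O$, and composing with $u:\Cal O\to \bold k$ recovers the identity by the counitarity of the coaction. The higher $\Delta^{\alpha}$'s contribute nothing because applying $u$ to a tensor factor that sits outside $\Cal O$ (i.e.\ in some $A$ between the endpoints) kills them.

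For coassociativity, I would use the explicit description
\[
\Delta^{\alpha} = (1\otimes d_{\alpha_n\alpha_{n-1}})\circ\cdots\circ(1\otimes d_{\alpha_2\alpha_1})\circ d_{\alpha_1\alpha_0}
\]
and split this composition at an intermediate index $\alpha_i$. The image naturally sits in $Bar^{\alpha_0,\dots,\alpha_i}_{simp}\otimes_{\Cal O} Bar^{\alpha_i,\dots,\alpha_n}_{simp}\otimes V^{\alpha_n}e^{-\alpha_n}$, and the natural map to $Bar^{\alpha_0,\dots,\alpha_i}_{simp}\otimes Bar^{\alpha_i,\dots,\alpha_n}_{simp}\otimes V^{\alpha_n}e^{-\alpha_n}$ given by (\ref{natural map for cotensor}) inserts the diagonal $\delta_{\alpha_i}\in {}^{\alpha_i}V\otimes V^{\alpha_i}$. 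Comparing with the formula for $\Delta_{simp}^{\alpha}$ in \S 2.4, the boundary terms involving $\Delta^l$ at the head and $\Delta^r$ at the tail correspond exactly to the cases $i=0$ and $i=n$, where only the $\Cal O$-comodule structure at the endpoint contributes. Summing over $\alpha$ and all splitting positions recovers $(\Delta_{simp}\otimes 1)\Delta_V = (1\otimes \Delta_V)\Delta_V$ on the nose.

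The main obstacle is (i), that $\Delta_V$ commutes with the total differentials. I would compute $\partial\Delta^{\alpha}$ as a sum of three types of terms: applying the internal differential of $A$ inside some factor, applying the internal differential of $V^{\alpha_n}$ at the end, and applying $\partial$ to some $d_{\alpha_{i+1}\alpha_i}$. By the DG-complex relation (\ref{recall condition for DG complex}),
\[
\partial d_{\alpha_{i+1}\alpha_i} = -\sum_{\alpha_i < k < \alpha_{i+1}} d_{\alpha_{i+1}k}\circ d_{k\alpha_i},
\]
so the $\partial d$-terms precisely create new compositions with a fresh intermediate index $k$. These must be cancelled against the outer differential of $Bar_{simp}$ applied to $\Delta^{\beta}$ for $\beta$ obtained from $\alpha$ by inserting $k$: the $\partial_{\beta,j}$ for $0<j<|\beta|$ multiplies adjacent $A$-factors, which by the Yoneda description (\ref{from yoneda to composite}) of composition in $\Cal C(A/\Cal O)$ is exactly the composite $d_{\alpha_{i+1}k}\circ d_{k\alpha_i}$.

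The remaining boundary faces $\partial_{\beta,0}$ and $\partial_{\beta,|\beta|}$ of the simplicial bar differential apply $\epsilon^r$ and $\epsilon^l$ at the two ends; these are precisely what produces the $\delta_{ji}$'s of $\delta_V$ defined above, so they combine with the internal differential contributions to give $\delta_V\circ \Delta_V$ with the correct signs. The hard part is the sign bookkeeping: one must be careful that the signs in the shift conventions (\ref{shift and homomorphism and sgn}) and the Koszul sign in (\ref{outer differential for simplical bar complex}) match the signs appearing in $\partial d_{ij}$ and in the definition of $\delta_{ji}$ via $\epsilon^r$. Once the signs are checked in a single representative case (say $|\alpha|=2$), the general case follows by induction on $|\alpha|$ using the recursive definition $\Delta^{\alpha} = (1\otimes \Delta^{\alpha_1,\dots,\alpha_n})\circ d_{\alpha_1\alpha_0}$ together with the lemma above showing that $\delta_V^2 = 0$.
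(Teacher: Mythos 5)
Your proposal takes essentially the same route as the paper's proof: it checks counitality and coassociativity quickly, and puts the weight on showing $\Delta_V$ is a chain map by matching the faces $\partial_{\gamma,0}$, $\partial_{\gamma,n+1}$ of the simplicial bar differential against the augmentation-induced $\delta_{ji}$'s, and using the DG-complex relation (\ref{recall condition for DG complex}) to cancel the interior faces against $\partial d$-terms plus inner differentials. The paper carries out the resulting cancellation fully in a term-by-term enumeration, whereas you propose to verify signs in a base case and induct on $|\alpha|$ via the recursive definition of $\Delta^{\alpha}$ — a reasonable shortcut, but the substance is the same.
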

\begin{proof}
We can easily check the coassociativity and the counitarity.
We show that the homomorphims
$\Delta_V$ is a homomorphism of complexes, in other
words, the homomorphism $\Delta_V$ commutes with
the differential $\delta_V$ on $V$ and 
differential $d_{Bar_{simp}}\otimes 1+1\otimes \delta_V$ 
on $Bar_{simp}\otimes V$.

The outer differential defined in
(\ref{outer differential for simplical bar complex})
for $\bold {Bar}_{simp}$ is denoted 
by $\partial_{\alpha,i}$, 
and the inner differential 
$Bar^{\alpha}_{simp}\to Bar^{\alpha}_{simp}$
is denoted by $d_{in}$.
We compute 
\begin{equation}
\label{checking the commutativity with differential}
(d_{Bar_{simp}}\otimes 1+1\otimes \delta_V)\Delta_V -
\Delta_V\circ\delta_V
\end{equation}
on the component
from $V^{\alpha}e^{-\alpha}$ to 
$Bar^{\alpha_0, \dots,\alpha_n}e^{n}\otimes V^\beta e^{-\beta}$
for $\alpha\leq \alpha_0 < \cdots <\alpha_n \leq \beta$.
It is the sum of the following terms:
\begin{enumerate}
\item
\label{term 1}
$-(1\otimes t^n\otimes 1)\Delta^{\alpha}\delta_{\alpha_0\alpha}$
if $\alpha_n=\beta$ and $\alpha<\alpha_0$.
\item
\label{term 2}
$-(1\otimes t^n\otimes 1)\Delta^{\alpha}\delta_{\alpha\alpha}$
if $\alpha_n=\beta$ and $\alpha=\alpha_0$.
\item
\label{term 3}
$(1\otimes \delta_{\beta\alpha_n})(1\otimes t^n\otimes 1)\Delta^{\alpha}$
if $\alpha=\alpha_0$ and $\alpha_n<\beta$.
\item
\label{term 4}
$(1\otimes \delta_{\beta\beta})(1\otimes t^n\otimes 1)\Delta^{\alpha}$
if $\alpha=\alpha_0$ and $\alpha_n=\beta$.
\item
\label{term 5}
$(d_{in}\otimes 1)(1\otimes t^n\otimes 1)\Delta^{\alpha}$
if $\alpha=\alpha_0$ and $\alpha_n=\beta$.
\item
\label{term 6}
summation 
$(-1)^i(\partial_{\gamma,i}\otimes 1)(1\otimes t^n\otimes 1)\Delta^{\gamma}$
over the index $\gamma$
such that $\alpha$ is obtained from 
$\gamma=(\gamma_0, \dots, \gamma_{n+1})$ 
by deleting $i$-th element,
for $1\leq i \leq n$
if $\alpha=\alpha_0$ and $\alpha_n=\beta$.
\item
\label{term 7}
$(\partial_{\gamma,0}\otimes 1)(1\otimes t^n\otimes 1)\Delta^{\gamma}$
where $\gamma=(\alpha<\alpha_0<\cdots <\alpha_n)$
if $\alpha<\alpha_0$ and $\alpha_n=\beta$.
\item
\label{term 8}
$(-1)^{n+1}(\partial_{\gamma,n+1}\otimes 1)(1\otimes t^n\otimes 1)\Delta^{\gamma}$
where $\gamma=(\alpha_0<\cdots <\alpha_n<\beta)$
if $\alpha=\alpha_0$ and $\alpha_n<\beta$.
\end{enumerate}
We compute (\ref{checking the commutativity with differential}) 
for all components.

(a) The case $\alpha<\alpha_0$ and $\alpha_n=\beta$.
The terms (\ref{term 1}) and (\ref{term 7}) contribute.
They cancel by the definition of $\partial_{\gamma,0}$
and $\delta_{\alpha_0\alpha}$.

(b) The case $\alpha=\alpha_0$ and $\alpha_n<\beta$.
The terms (\ref{term 3}) and (\ref{term 8}) contribute.
They cancel by the following commutative diagrams.
$$
\begin{matrix}
A\otimes_{\Cal O}V^{\alpha_n}e^{\alpha_n}& 
\xrightarrow{1\otimes d_{\beta\alpha_n}} &
A\otimes_{\Cal O}A\otimes_{\Cal O}V^{\beta}e^{-\beta}
&\xrightarrow{} &
(A\otimes_{\Cal O}A)\otimes V^{\beta} e^{-\beta} \\
\downarrow & & \downarrow & & 
(1\otimes \epsilon^l)\otimes 1 
\downarrow 
\phantom{(1\otimes \epsilon^l)\otimes 1 } \\
A\otimes V^{\alpha_n}e^{-\alpha_n}& 
\xrightarrow{1\otimes d_{\beta\alpha_n}} &
A\otimes (A\otimes_{\Cal O}V^{\beta}e^{-\beta})
&\xrightarrow{1\otimes (\epsilon^r\otimes 1)} &
A\otimes V^{\beta}e^{-\beta}.
\end{matrix}
$$

(c) The case $\alpha=\alpha_0$ and $\alpha_n=\beta$.
The summation of the term in (\ref{term 6}) 
for a fixed $i$ is equal to the summation $(-1)^i$ times the
composites of
\begin{align*}
V^{\alpha_0}e^{-\alpha_0}&\to 
Bar_{simp}^{\alpha_0,\dots,\alpha_{i-1}}
\otimes_{\Cal O}V^{\alpha_{i-1}}e^{-\alpha_{i-1}} \\
&\xrightarrow[(\#)]{1_B\otimes 
(d_{\alpha_i\gamma_i}
\circ d_{\gamma_i\alpha_{i-1}})}
Bar_{simp}^{\alpha_0,\dots,\alpha_{i-1}}
\otimes_{\Cal O}A
\otimes_{\Cal O}V^{\alpha_{i}}e^{-\alpha_{i}} \\
&
\to
Bar_{simp}^{\alpha_0,\dots,\alpha_{n}}
\otimes
V^{\alpha_{n}} e^{-\alpha_{n}} \\
&
\xrightarrow{1\otimes t^n\otimes 1}
Bar_{simp}^{\alpha_0,\dots,\alpha_{n}}e^n
\otimes
V^{\alpha_{n}} e^{-\alpha_{n}}
\end{align*}
over $\alpha_{i-1}<\gamma_i< \alpha_i$.
By the relation (\ref{recall condition for DG complex}), 
the morphism (\#) 
is $(-1)$-times the summation of
\begin{enumerate}
\item[(c-1-i)]
\begin{align*}
Bar_i
\otimes_{\Cal O}V^{\alpha_{i-1}}e^{-\alpha_{i-1}} 
&\xrightarrow{1\otimes \delta_{\alpha_{i-1}\alpha_{i-1}}}
Bar_i\otimes_{\Cal O}V^{\alpha_{i-1}}e^{-\alpha_{i-1}} \\
&\xrightarrow{1\otimes d_{\alpha_i\alpha_{i-1}}}
Bar_i\otimes_{\Cal O}A\otimes_{\Cal O}V^{\alpha_{i}}e^{-\alpha_{i}}
\end{align*}
\item[(c-2-i)]
\begin{align*}
Bar_i
\otimes_{\Cal O}V^{\alpha_{i-1}} e^{-\alpha_{i-1}}
&\xrightarrow{1\otimes d_{\alpha_i\alpha_{i-1}}} 
Bar_i\otimes_{\Cal O}A\otimes_{\Cal O}
V^{\alpha_{i}}e^{-\alpha_{i-1}} \\
&\xrightarrow{1\otimes \delta_{\alpha_{i}\alpha_{i}}}
Bar_i\otimes_{\Cal O}A\otimes_{\Cal O}V^{\alpha_{i}}e^{-\alpha_{i}}
\end{align*}
\item[(c-3)]
\begin{align*}
Bar_i
\otimes_{\Cal O}V^{\alpha_{i-1}} e^{-\alpha_{i-1}}
&\xrightarrow{1\otimes d_{\alpha_i\alpha_{i-1}}}
Bar_i\otimes_{\Cal O}A\otimes_{\Cal O}
V^{\alpha_{i}} e^{-\alpha_{i-1}}\\
&\xrightarrow{d_A\otimes 1}
Bar_i\otimes_{\Cal O}A\otimes_{\Cal O}V^{\alpha_{i}}e^{-\alpha_{i}}
\end{align*}
\end{enumerate}
Thus 
\begin{enumerate}
\item[(A)]
the summation for (c-3) cancels with
the term (\ref{term 5}), 
\item[(B)]
the term (c-1-(i+1)) and (c-2-i) cancel for
$i=1,\dots,n-1$,
\item[(C)]
the term (c-1-1) and (\ref{term 2}) cancel, and
\item[(D)]
the term (c-2-n) and (\ref{term 4}) cancel.
\end{enumerate}
Therefore (\ref{checking the commutativity with differential})
is equal to zero and $\Delta_V$ is a homomorphism of complexes.
\end{proof}
\begin{definition}
By associating $(V,\Delta_V)$ to 
$(V^i,d_{ij})$, we have a map
$\varphi:ob(K\Cal C(A/\Cal O_S))\to ob(Bar_{simp}-com)$.
\end{definition}

\subsubsection{ Construction of the functor $\psi$}
We construct the inverse 
$\psi: ob(Bar_{simp}(A/\Cal O_S)-com)\to ob(K\Cal C(A/\Cal O_S))$
of the map $\varphi$.
Let $V$ be a bounded  complex of finite dimensional vector spaces and
$$
\Delta_V:V\to Bar_{simp}\otimes V
$$
be a $Bar_{simp}$-comodule structure on $V$.
Let 
\begin{align*}
&\pi_i:Bar_{simp}\to 
\Cal O{\overset{i}\otimes}_{\Cal O}\Cal O,\quad \\
&\pi_{ij}:Bar_{simp}\to 
\Cal O{\overset{i}\otimes}_{\Cal O}
A{\overset{j}\otimes}_{\Cal O}
\Cal Oe
\xrightarrow{1\otimes t^{-1}}
\Cal O{\overset{i}\otimes}_{\Cal O}
A{\overset{j}\otimes}_{\Cal O}
\Cal O
\end{align*}
be the projection to the 
$Bar^{(i)}_{simp}$ and
$Bar^{(ij)}_{simp}$
components. 
Let $\Delta_i$ be the composite
$$
V \xrightarrow{\Delta_V}
Bar_{simp}\otimes V
\xrightarrow{1\otimes \pi_i}
(\Cal O_S\overset{i}\otimes_{\Cal O_S}\Cal O_S)\otimes V
=\Cal O_S \otimes V 
$$ 
and
$pr_i=(\epsilon^r\otimes 1_V)\Delta_i:
V\longrightarrow V
$.
Let $d_{ji}$ be the composite
$$
V \to Bar_{simp}\otimes V \xrightarrow{\pi_{ij}\otimes 1}
(\Cal O_S\overset{i}\otimes_{\Cal O_S}
A\overset{j}\otimes_{\Cal O_S}\Cal O_S)\otimes V
= A\otimes V.
$$
\begin{proposition}
\begin{enumerate}
\item
The maps $pr_i$ ($i \in \bold Z$) define complete orthogonal idempotents of $V$.
\item
We have
\begin{align*}
\Delta_ipr_i=\Delta_i=(1\otimes pr_i)\Delta_i,\quad
d_{ji}pr_i=d_{ji}=(1\otimes pr_j)d_{ji}
\end{align*}
As a consequence $V^i$ becomes an $\Cal O$-comodule.
\item
The differential on $V$ commute with the projection $pr_i$
and the coaction of $\Cal O$. As a consequence, 
$V^i$ becomes an object in $\Cal C(A/\Cal O)$.
\item
The homomorphism
$d_{ji}$ defines an element in 
$Hom_{\Cal O}(V^i, A\otimes_{\Cal O} V^j)=
Hom_{\Cal C(A/\Cal O)}(V^i,V^j)$.
\end{enumerate}
\end{proposition}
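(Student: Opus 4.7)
The proof rests on two structural facts that should be recorded first. Bar-degree $0$ of $Bar_{simp}(A/\Cal O,\epsilon)$ equals $\bigoplus_i \Cal O \overset{i}\otimes_\Cal O \Cal O$, and on this piece the coproduct $\Delta_B$ is simply the comultiplication $\Delta_\Cal O$ applied summand-by-summand, sending $\Cal O\overset{i}\otimes_\Cal O \Cal O$ into $(\Cal O\overset{i}\otimes_\Cal O \Cal O)\otimes (\Cal O\overset{i}\otimes_\Cal O \Cal O)$ with the \emph{same} index $i$ in both factors; and the counit $u_B$ of $Bar_{simp}$ equals $\sum_i u_i$, where $u_i$ is the counit of $\Cal O$ on the $i$-th summand. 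Both facts are read off directly from \S \ref{subsec:relative bar complex} and \S 2.4.

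For part (1), completeness follows by summing: $\sum_i pr_i = \sum_i (u_i\otimes 1_V)(\pi_i\otimes 1_V)\Delta_V = (u_B\otimes 1_V)\Delta_V = 1_V$ by counitarity of the $Bar_{simp}$-coaction on $V$. For orthogonality, apply coassociativity $(1\otimes \Delta_V)\Delta_V = (\Delta_B\otimes 1)\Delta_V$ and compose with $(u_i\otimes u_j\otimes 1_V)(\pi_i\otimes \pi_j\otimes 1_V)$. The left side reproduces $pr_i\circ pr_j$. On the right, $(\pi_i\otimes\pi_j)\Delta_B$ vanishes on any piece of $Bar_{simp}$ of bar-degree $\geq 1$, and on bar-degree $0$ equals $\Delta_\Cal O$ on the $i$-th summand only when $i=j$, by the first structural fact; then $(u_i\otimes u_j)\Delta_\Cal O = u_i$ by counitarity of $\Cal O$, giving $\delta_{ij}pr_i$. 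Part (2) is an exact parallel: projecting coassociativity onto $(\Cal O\overset{i}\otimes_\Cal O \Cal O)\otimes(\Cal O\overset{i}\otimes_\Cal O \Cal O)\otimes V$ and $(\Cal O\overset{i}\otimes_\Cal O A\overset{j}\otimes_\Cal O \Cal O)\otimes(\Cal O\overset{j}\otimes_\Cal O \Cal O)\otimes V$ respectively, and using the index-preservation of $\Delta_B$ on bar-degree $0$, yields $(1\otimes pr_i)\Delta_i = \Delta_i$ and $(1\otimes pr_j)d_{ji} = d_{ji}$; the identities $\Delta_i pr_i = \Delta_i$ and $d_{ji}pr_i = d_{ji}$ follow symmetrically. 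Coassociativity and counitarity of $\Delta_i\colon V^i\to \Cal O\otimes V^i$ reduce, via these same projections, to coassociativity and counitarity of $\Delta_V$.

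For part (3), $\Delta_V$ is a morphism of complexes by hypothesis, and both $\pi_i$ and the counit $u$ are morphisms of complexes (the former because the internal differential of $Bar_{simp}$ respects the indexing by tuples $\alpha$, the latter because $\Cal O$ is a counitary coalgebra in complexes); hence $pr_i$ and $\Delta_i$ commute with $\delta_V$, so $\delta_V$ preserves $V^i$ and its $\Cal O$-coaction, placing $V^i\in\Cal C(A/\Cal O)$. Part (4) is the most delicate: $d_{ji}$ is a priori only valued in $A\otimes V$, and one must promote it to $A\otimes_\Cal O V^j$ and verify left $\Cal O$-linearity from $V^i$. The main obstacle is establishing the cotensor condition, i.e.\ matching the right $\Cal O$-coaction on $A$ with the left $\Cal O$-coaction on $V^j$. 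This is obtained by applying coassociativity $(\Delta_B\otimes 1)\Delta_V = (1\otimes\Delta_V)\Delta_V$ and projecting onto $(Bar^{(ij)}_{simp})\otimes(\Cal O\overset{j}\otimes_\Cal O \Cal O)\otimes V$: the only surviving term of $\Delta_B$ on $Bar^{(ij)}_{simp}$ mapping to this bidegree is $y_0\overset{i}\otimes \Delta^r(x_1\otimes y_2)$, which yields exactly the cotensor relation. Left $\Cal O$-equivariance is obtained by the analogous projection onto $(\Cal O\overset{i}\otimes_\Cal O \Cal O)\otimes Bar^{(ij)}_{simp}\otimes V$, picking up the term $\Delta^l(y_0\otimes x_1)\overset{j}\otimes y_2$. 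The only bookkeeping nuisance throughout is keeping track of the degree shifts $e^{-i}$, $t^{-1}$ and the transposition signs of \S \ref{convention of orientation}, but these are routine once the coalgebraic identities above are in hand.
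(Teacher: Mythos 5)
Parts (1), (2), and (4) follow the same route the paper intends (counitarity for completeness; coassociativity of $\Delta_V$ projected to the appropriate bar--bidegree for the remaining identities), and your detailed projection computations there are sound and considerably more explicit than the paper's one-line justification.

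Your argument for part (3) has a gap, however. You assert that $\pi_i$ is a morphism of complexes because ``the internal differential of $Bar_{simp}$ respects the indexing by tuples $\alpha$'', but $Bar_{simp}$ is the associated simple complex of the double complex $\bold{Bar}_{simp}$, and its total differential also contains the outer simplicial boundary $\partial$, which lowers bar degree and changes $\alpha$: in particular $\partial$ sends $Bar^{(ij)}_{simp}$ into both $Bar^{(i)}_{simp}$ and $Bar^{(j)}_{simp}$ via the boundary maps $\partial_{\alpha,k}$. Hence $\pi_i\circ d_{Bar_{simp}}\neq 0$ even though $d_{\Cal O}\circ\pi_i=0$ (since $\Cal O$ is concentrated in degree zero), so $\pi_i$ is \emph{not} a chain map. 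Applying $(q_i\otimes 1_V)$ with $q_i:=u\circ\pi_i$ to the chain-map identity $(d_B\otimes 1+1\otimes\delta_V)\Delta_V=\Delta_V\delta_V$ gives
\begin{equation*}
pr_i\,\delta_V-\delta_V\,pr_i=(q_id_B\otimes 1_V)\Delta_V,
\end{equation*}
and the right-hand side picks up exactly the outer-boundary contributions coming from the bar-degree-one components $(\pi_{jk}\otimes 1)\Delta_V$ with $j=i$ or $k=i$. Your formal deduction that $pr_i$ commutes with $\delta_V$ therefore does not close; one must either show these boundary terms vanish, or reinterpret (3) as compatibility of $\Delta_i$ with the \emph{induced} differential $pr_i\,\delta_V\,pr_i$ on $V^i$, extracted by the same projection argument used in (2). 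This is the content hidden in the paper's laconic ``follows from the coassociativity'', and it is not a purely formal consequence of $\pi_i$ and $u$ being chain maps.
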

\begin{proof}
(1) By the counitarity of the coaction of $Bar_{simp}$
on $V$, the composite map 
$$
V\xrightarrow{\Delta_V} Bar_{simp}\otimes V
\xrightarrow{\epsilon \otimes 1} V
$$
is the identity map on $V$. Therefore we have
$\sum_ipr_i=1_V$.
The statement (2),(3) follows form the coassociativity.
\end{proof}
By the compatibility of $\Delta_V$ for the differentials of
$V$ and $Bar_{simp}\otimes V$, we have the following proposition.
\begin{proposition}
\begin{enumerate}
\item
The map $d_{ij}$ defines
a DG complex in $\Cal C(A/\Cal O)$
\item
The maps $\varphi$ and $\psi$ are inverse to each other.
\end{enumerate}
\end{proposition}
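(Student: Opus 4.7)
The plan is to verify both claims by projecting the coassociativity and the differential-compatibility relations of the given $Bar_{simp}$-comodule structure $\Delta_V$ onto components of small simplicial index, and matching the resulting identities with the DG-complex condition (\ref{recall condition for DG complex}) and with the defining formulas for $\varphi$. For part (1), apply the identity
$$(d_{Bar_{simp}}\otimes 1 + 1\otimes \delta_V)\circ \Delta_V = \Delta_V\circ \delta_V$$
and project from $V^i$ to the component $Bar_{simp}^{(i,k)}\otimes V^k$ for $i<k$. The outer simplicial face map $\partial_{\gamma,1}$ applied to $\Delta_V^{(i,j,k)}$ collapses the two $A$-factors via $\mu_A$, which by (\ref{from yoneda to composite}) is exactly the Yoneda composition $d_{kj}\circ d_{ji}$; summing over intermediate $j$ produces $\sum_{i<j<k}d_{kj}\circ d_{ji}$. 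The remaining three contributions -- the inner differential of $Bar_{simp}^{(i,k)}$ acting on $d_{ki}$, the post-composition with $\delta_V|_{V^k}$, and the pre-composition with $\delta_V|_{V^i}$ -- assemble into $\partial d_{ki}$, so the total sum vanishes as required by (\ref{recall condition for DG complex}).

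For part (2), the identity $\psi\circ \varphi=\mathrm{id}$ is immediate from the inductive definition of $\Delta_V$: the length-$1$ component from $V^i$ to $A\otimes_{\Cal O}V^j$ is $d_{ji}$ by the $n=1$ case, and the length-$0$ component is the $\Cal O$-coaction on $V^i$, so both $\pi_{ij}$ and $pr_i$ recover the original data. For $\varphi\circ\psi=\mathrm{id}$, one must show that for a comodule $(V,\Delta_V)$ the length-$n$ component $\Delta_V^{(\alpha_0,\dots,\alpha_n)}$ equals the iterated composition $(1\otimes d_{\alpha_n\alpha_{n-1}})\circ \cdots \circ d_{\alpha_1\alpha_0}$. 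This follows by induction on $n$ from coassociativity: the identity $(\Delta_{Bar_{simp}}\otimes 1)\Delta_V=(1\otimes \Delta_V)\Delta_V$, together with the explicit formula for $\Delta_{simp}$ from \S2.4, forces $\Delta_V^{(\alpha_0,\dots,\alpha_n)}$ to factor as $(1\otimes \Delta_V^{(\alpha_1,\dots,\alpha_n)})\circ \Delta_V^{(\alpha_0,\alpha_1)}$ after projection to $Bar_{simp}^{(\alpha_0,\alpha_1)}\otimes Bar_{simp}^{(\alpha_1,\dots,\alpha_n)}\otimes V^{\alpha_n}$, and the inductive hypothesis completes the argument.

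The main obstacle is sign bookkeeping: the Koszul signs coming from the shifts $e^{-i}$ and $t^n$ in the definition of $\Delta_V$, together with the signs from the simplicial differential on $Bar_{simp}$ and from the composition rule of $K\Cal C(A/\Cal O)$, must combine to yield the signed DG relation (\ref{recall condition for DG complex}) and the correctly signed iterated composition in part (2). The algebraic skeleton of the argument is essentially the computation in the proof of the preceding proposition read in reverse, so conceptually nothing new is needed; most of the effort goes into the careful tracking of signs using the conventions of \S1.2.
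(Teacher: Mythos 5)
Your proof is correct and follows the same approach the paper indicates with its one-line justification ("by the compatibility of $\Delta_V$ for the differentials of $V$ and $Bar_{simp}\otimes V$"); since the paper gives no further detail, your argument is a legitimate and well-conceived expansion of it, and your observation that it is the preceding proposition's computation read in reverse is accurate.

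One point of logical ordering worth making explicit: in part (1), identifying $(\partial_{\gamma,1}\otimes 1)\Delta_V^{(i,j,k)}$ with the Yoneda composite $d_{kj}\circ d_{ji}$ is not purely formal. A priori $\Delta_V^{(i,j,k)}$ is just a map $V^i\to Bar_{simp}^{(i,j,k)}\otimes V^k$, and to know that collapsing its two $A$-factors by $\mu_A$ yields $d_{kj}\circ d_{ji}$ you must already know that $\Delta_V^{(i,j,k)}$ factors through $\Delta_V^{(i,j)}$ and $\Delta_V^{(j,k)}$ — i.e., the iterated-composition formula you prove in part (2) via coassociativity. So the cleanest write-up would first establish, by coassociativity, the identity
$$
\Delta_V^{(\alpha_0,\dots,\alpha_n)}=(1\otimes d_{\alpha_n\alpha_{n-1}})\circ\cdots\circ d_{\alpha_1\alpha_0}
$$
for all $n$ (which simultaneously gives $\varphi\circ\psi=\mathrm{id}$), and only then project the differential-compatibility identity onto $Bar_{simp}^{(i,k)}\otimes V^k$ to extract (\ref{recall condition for DG complex}). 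The content is the same; this only re-orders the steps so that nothing is used before it is available. Your acknowledgment that the Koszul signs from the $e^{-i}$ shifts and the simplicial face signs $(-1)^i$ require careful tracking is warranted, but the paper's own proof of the preceding proposition already performs exactly this bookkeeping, so nothing new is needed there.
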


We define the functor $\psi$ by associating the $Bar$-comodule $(V,\Delta_V)$
to the system $\{V^i,d_{ij}\}$ in $K\Cal C(A/\Cal O_S)$.

\begin{remark}
Without the finite dimensionality of the $Bar_{simp}$-comodule $V$,
we can define the above decomposition $V=\oplus_i V^i$ and
the map $\Delta_i$ by
the counitarity and the coassociativity.
\end{remark}

\subsection{Homotopy equivalence of $(K\Cal C(A/\Cal O))$
and $(Bar_{simp}-com)$}
\label{subsection: homotopy equivalence of DG-cat and Bar}

\subsubsection{}
We set $B=Bar_{simp}(A/\Cal O,\epsilon)$. 
Let $N_1$ and $N_2$ be $B$-comodules.
We set 
$\psi(N_1)=(N_1^{i},d_{ji})$,
$\psi(N_2)=(N_2^{i},d_{ji})$ and let
$i_i:N_1^i \to N_1$ and $p_j:N_2 \to N_2^j$
be the natural inclusion and the projection.
Via the bijections $\varphi$ and $\psi$, we identify the class of
objects in $K\Cal C(A/\Cal O)$ and that of
$(B-com)$ defined in \S \ref{subsec:bijection on objects}.
%

Let $f$ be an element of $Hom_B(N_1,N_2)$.
Since
the coaction of $B$ is compatible with the map $f$,
we have $f(N_1^i)\subset N_2^i$ and
the restriction $f^i$ of $f$ to $N_i^i$
is a $\Cal O$-homomorphism.
Let $\beta(f)^i\in 
Hom_{\Cal O}(N_1^i,A\otimes_{\Cal O}N_2^i)$ be the following composite
homomorphism
$$
N_1^i \xrightarrow{f^i} N_2^i=\Cal O\otimes_{\Cal O} N_2^i\to
A\otimes_{\Cal O}N_2^i.
$$
Then we have 
$\beta(f)\in Hom_{K\Cal C(A/\Cal O)}
(\psi(N_1),\psi(N_2))$.
\begin{definition}
The category of DG complexes in $\Cal C$ without assuming the finiteness of 
$\{i \mid V^i\neq 0\}$ for $\{V^i,d_{ij}\}$ is denoted as
$K\Cal C'$. In this category, the morphism $\{\varphi_{ij}\}$ is assumed to be bounded from
 below, i.e. there exists $m$ such that $\varphi_{i,j}=0$ for $j<i+m$,
 so that the composite of two morphisms are well defined. 
\end{definition}
\begin{lemma}
\begin{enumerate}
\item
The homomorphism
$$
\beta:Hom_B(N_1,N_2)\to Hom_{K\Cal C(A/\Cal O)}(\psi(N_1), \psi(N_2))
$$
commutes with the differentials and is compatible with the
composites.
\item
Moreover if $N_2$ is a cofree $B$-comodule,
the map $\beta$ is a quasi-isomorphism.
\end{enumerate}
\end{lemma}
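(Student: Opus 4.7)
The plan is to verify part (1) by a direct componentwise computation using the matrix decomposition $N_k = \oplus_i N_k^i e^{-i}$ and the explicit idempotents $pr_i$, and to establish part (2) by reducing to the cofree case $N_2 = B \otimes W$ and comparing both sides with $Hom_{\bold k}(N_1,W)$ through the adjunction lemma.

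For part (1), the first observation is that any $f \in Hom_B(N_1,N_2)$ automatically preserves the orthogonal idempotents (since $pr_i$ is determined by the $B$-coaction through $\pi_i$), hence decomposes as $f = \oplus f^i$ with each $f^i : N_1^i \to N_2^i$ an $\Cal O$-comodule map; by construction $\beta(f)^i$ is the lift of $f^i$ along $i : \Cal O \to A$. Compatibility with composites then follows from $\mu_A \circ (i \otimes i) = i$ together with the Yoneda composition formula (\ref{from yoneda to composite}), since on components of the form $i \circ f^i$ the Yoneda composite reduces to ordinary composition of $\bold k$-linear maps. For the differential compatibility, one decomposes $\partial f = d_{N_2} \circ f - (-1)^{\deg f} f \circ d_{N_1}$ into block components and applies the identity $\delta_{ji} = (\epsilon^r \otimes 1) \circ d_{ji}$ that was used in the definition of the differential $\delta_V$; combined with the DG-complex relation (\ref{recall condition for DG complex}) this yields exactly the expression for the $K\Cal C(A/\Cal O)$-differential applied to $\beta(f)$.

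For part (2), assume $N_2 = B \otimes W$ is cofree with $W$ a complex of $\bold k$-vector spaces. The adjunction lemma established earlier (for $Hom_B(M, B \otimes N)$) gives an isomorphism of complexes $Hom_B(N_1, B \otimes W) \simeq Hom_{\bold k}(N_1, W)$. On the other side, I would equip $Hom_{K\Cal C(A/\Cal O)}(\psi(N_1), \psi(B \otimes W))$ with the descending filtration induced by the simplicial degree on $B$; by Proposition \ref{prop: exact to cofree comodules} and the acyclicity in Proposition \ref{acyclicity for simplicial bar complex}, the associated graded pieces other than the bottom one are contractible, and the surviving $E_1$-term is canonically $Hom_{\bold k}(N_1, W)$. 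The final step is to check that the composite identification agrees with $\beta$, which amounts to unwinding the formula $\widetilde\varphi \mapsto (1 \otimes \widetilde\varphi) \circ \Delta_{N_1}$ from the adjunction lemma and matching it with the component-wise construction of $\beta$ through the description of $\psi$ in \S \ref{subsec:bijection on objects}.

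The main obstacle will be the sign bookkeeping in part (1), especially in reconciling the transposition $(1 \otimes t^n \otimes 1)$ used in the definition of $\Delta_V$ with the shift conventions of (\ref{shift and homomorphism and sgn}) and the Koszul sign in (\ref{transposition}); this forces a patient chain-level verification rather than a conceptual shortcut. For part (2), the subtle point is matching the two quasi-isomorphisms on the nose: the filtration spectral sequence gives a canonical identification up to the $E_\infty$-page, but identifying it precisely with $\beta$ requires tracing how the coaction $\Delta_{N_1}$ reconstructs the coherent family $\{\beta(f)^i\}$ from the underlying map $\widetilde\varphi \in Hom_{\bold k}(N_1, W)$.
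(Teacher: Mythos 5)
The paper states this lemma without proof, so I can only assess your proposal on its own terms. The structural observations in part (1) are sound: $f$ preserves the idempotents $pr_i$ because they are defined purely via the $B$-coaction, the diagonal components $\beta(f)^i$ are the lifts of $f^i$ along $i:\Cal O\to A$, and composite-compatibility does reduce to the unit identity $\mu(i\otimes 1)=1_A$. However, your differential-compatibility argument has a genuine gap. The twisted-complex differential on $\underline{Hom}_{K\Cal C(A/\Cal O)}(\psi(N_1),\psi(N_2))$ applied to the diagonal morphism $\beta(f)$ produces, besides the diagonal term $\partial\beta(f)^i$, off-diagonal $(k,i)$-terms of the form $d^{N_2}_{ki}\circ\beta(f)^i - (-1)^{|f|}\beta(f)^k\circ d^{N_1}_{ki}$, and these must vanish since $\beta(\partial f)$ is diagonal. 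That vanishing comes from $f$ being a $B$-homomorphism: intertwining the $\pi_{ij}$-components of the coactions gives (up to Koszul sign) $d^{N_2}_{ki}\circ f^i = (1\otimes f^k)\circ d^{N_1}_{ki}$ as $A$-valued maps, which together with $\mu(i\otimes 1)=1_A$ forces the cancellation. Your argument invokes the identity $\delta_{ji}=(\epsilon^r\otimes 1)d_{ji}$ and the DG-complex relation (\ref{recall condition for DG complex}) instead. The former only recovers the $\bold k$-linear shadow of the required identity after applying $\epsilon^r$, which is strictly weaker than the $A$-valued statement; the latter is a constraint on the structure maps of a \emph{single} object and has no bearing on this morphism-level cancellation. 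So the correct ingredient --- the $B$-homomorphism property of $f$ read at the level of the full coaction components $d_{ki}$ --- is missing.

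For part (2), the use of the adjunction $Hom_B(N_1,B\otimes W)\cong Hom_{\bold k}(N_1,W)$ is correct, and a filtration argument on the target (which should be $Hom_{K\Cal C'(A/\Cal O)}$ here, since $B\otimes W$ is infinite-dimensional) is plausible in outline. But the claim that Propositions \ref{prop: exact to cofree comodules} and \ref{acyclicity for simplicial bar complex} deliver the collapse of the associated graded is not justified as stated: the first concerns $\bold R Hom_B(N,M)$ rather than the twisted $K\Cal C'$-Hom, and the second concerns acyclicity of the free simplicial bar complex, neither of which applies directly to the filtration you are proposing. You also do not explain why the proposed filtration (by simplicial degree on $B$ appearing in $(B\otimes W)^j$) is compatible with the twisted differential $D$, which mixes the internal $A$-differential with the structure maps $d^{N_1}_{ji}$ and $d^{B\otimes W}_{kj}$. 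The identification of the surviving $E_1$-term with $Hom_{\bold k}(N_1,W)$, and the matching of that identification with $\beta$, would require an explicit computation that the proposal only sketches; as written this part is not yet a proof.
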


\subsubsection{}
Let $M$ be an object of $K\Cal C(A/\Cal O)$.
Then the free resolution
$F(\varphi(M))$ defined in \S \ref{subsection:cofree resolution}
of $\varphi(M)$ is a cofree $B$-comodule.
The object $\psi(F(\varphi(M)))\in K\Cal C(A/\Cal O)$ corresponding to
$F(\varphi(M))$ is denoted by $F(M)$.
By the above lemma, we have the following corollary.

\begin{lemma}
Let $N_1,N_2$ be $B$-comodules.
\begin{enumerate}
\item
Then the homomorphism
\begin{equation}
\label{B-com to KC map of morphism}
\beta:Hom_B(F(N_1),F(N_2))\to
Hom_{K\Cal C'(A/\Cal O)}(\psi(F(N_1)),\psi(F(N_2)))
\end{equation}
is a quasi-isomorphism and compatible with the composites.
\item
The natural homomorphisms of complexes
\begin{align}
\label{map KC to free KC}
&Hom_{K\Cal C'(A/\Cal O)}(N_1, N_2)\to
Hom_{K\Cal C'(A/\Cal O)}(N_1, F(N_2)) \\
\label{map free free KC to id-free KC}
&Hom_{K\Cal C'(A/\Cal O)}(F(N_1), F(N_2))\to
Hom_{K\Cal C'(A/\Cal O)}(N_1, F(N_2)) 
\end{align}
induced by the natural homomorphism $N_1 \to F(N_1)$
and $N_2\to F(N_2)$
are quasi-isomorphisms.
\end{enumerate}
\end{lemma}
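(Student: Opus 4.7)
The plan is to deduce both parts of the lemma from the previous lemma---which shows that $\beta$ is a quasi-isomorphism compatible with composites whenever its target is a cofree $B$-comodule---combined with the functorial properties of the standard cofree resolution $F$ constructed in \S \ref{subsection:cofree resolution}. Part (1) is immediate: $F(N_1)$ and $F(N_2)$ are cofree $B$-comodules by construction, so applying the previous lemma with $F(N_1)$ and $F(N_2)$ in place of $N_1$ and $N_2$ yields both the quasi-isomorphism assertion and the composite-compatibility of (\ref{B-com to KC map of morphism}).

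For the second map of Part (2), I would consider the commutative square
\[
\begin{array}{ccc}
Hom_B(F(N_1), F(N_2)) & \longrightarrow & Hom_B(N_1, F(N_2)) \\
\beta \downarrow \phantom{\beta} & & \phantom{\beta} \downarrow \beta \\
Hom_{K\Cal C'(A/\Cal O)}(F(N_1), F(N_2)) & \longrightarrow & Hom_{K\Cal C'(A/\Cal O)}(N_1, F(N_2))
\end{array}
\]
whose horizontals are induced by $N_1 \to F(N_1)$ and whose commutativity follows from the compatibility of $\beta$ with composition. Both vertical arrows are quasi-isomorphisms since $F(N_2)$ is cofree. For the top horizontal, I would use the natural isomorphism $Hom_B(-, F(N_2)) \simeq RHom_B(-, N_2)$ to reinterpret the map as $RHom_B(F(N_1), N_2) \to RHom_B(N_1, N_2)$ induced by the $B$-comodule quasi-isomorphism $N_1 \to F(N_1)$; the functor $Hom_B(-, F(N_2))$ is exact by Proposition \ref{prop: exact to cofree comodules} applied to the cofree comodule $F(N_2)$, so it preserves quasi-isomorphisms. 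Hence the bottom horizontal, which is the map (\ref{map free free KC to id-free KC}), is a quasi-isomorphism.

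The first map (\ref{map KC to free KC}) of Part (2) is the main obstacle, since $\beta: Hom_B(N_1, N_2) \to Hom_{K\Cal C'(A/\Cal O)}(N_1, N_2)$ is not known to be a quasi-isomorphism for the non-cofree target $N_2$, so the analogous square fails. My plan is to exploit the bar filtration on $F(N_2)$ obtained by bounding the number of $B$-factors in the cobar complex of \S \ref{subsection:cofree resolution}. Each associated graded piece is, up to shift, of the form $B^{\otimes k} \otimes N_2$, which is a cofree $B$-comodule, so $\beta$ is a quasi-isomorphism on each graded piece by the previous lemma. This produces a convergent spectral-sequence identification of $Hom_{K\Cal C'(A/\Cal O)}(N_1, F(N_2))$ with the totalization of the cobar double complex $Hom_B(N_1, B^{\otimes \bullet} \otimes N_2)$, and $Hom_{K\Cal C'(A/\Cal O)}(N_1, N_2)$ maps into it at the augmentation position. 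The augmented cobar complex is acyclic by Proposition \ref{acyclicity for simplicial bar complex}, and this acyclicity transfers through $\beta$ to give the desired quasi-isomorphism. The hardest technical step will be setting up the bar filtration compatibly with the twisted-complex differentials of $K\Cal C'(A/\Cal O)$ and verifying the convergence of the associated spectral sequence; once that is in place, the acyclicity of the augmented bar complex delivers the conclusion.
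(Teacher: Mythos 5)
Your handling of part (1) and of the second map (\ref{map free free KC to id-free KC}) is sound and, as far as one can tell from the paper's terse ``By the above lemma'' justification, matches the intended argument: $F(N_1)$ and $F(N_2)$ are cofree, so the previous lemma applies directly for (1), and for (\ref{map free free KC to id-free KC}) the two-out-of-three argument in the $\beta$-square works because $Hom_B(-,F(N_2))\simeq Hom_{\bold k}(-,W)$ for a complex of vector spaces $W$, which is exact and hence sends the quasi-isomorphism $N_1\to F(N_1)$ to a quasi-isomorphism.

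However, your argument for the first map (\ref{map KC to free KC}) has a genuine gap, and the last step does not follow as stated. The bar filtration on $F(N_2)$ does identify $Hom_{K\Cal C'(A/\Cal O)}(N_1,F(N_2))$, via $\beta$, with $Hom_B(N_1,F(N_2))\simeq RHom_B(N_1,N_2)$ --- but this is essentially just part (1) again and does not touch the source $Hom_{K\Cal C'(A/\Cal O)}(N_1,N_2)$. To finish you assert that ``the acyclicity of the augmented cobar complex transfers through $\beta$,'' but this is exactly the point at which $\beta$ is \emph{not} known to be a quasi-isomorphism: the augmentation term has target $N_2$, which is not cofree, so the previous lemma is silent there. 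What is actually needed is that $Hom_{K\Cal C'(A/\Cal O)}(N_1,-)$ carries the quasi-isomorphism $N_2\to F(N_2)$ to a quasi-isomorphism, i.e.\ some form of exactness or contractibility at the $K\Cal C'$-level, and the natural candidate homotopy --- the extra-degeneracy contraction of the cobar resolution, which applies the counit $u:B\to\bold k$ to the leftmost factor --- is not $\Cal O$-colinear (for $b\in\Cal O\subset B^0$ one has $(1\otimes u)\Delta^l(b)=b$, not $u(b)\cdot 1$), so it does not obviously produce a morphism in $K\Cal C'(A/\Cal O)$. This is precisely the ``hardest technical step'' you flag, but the sketch as written does not give a mechanism to supply it. Separately, the citation of Proposition \ref{acyclicity for simplicial bar complex} is misplaced: that proposition concerns the augmented simplicial free bar complex $\widetilde{\bold{Bar}_{simp}}^+(A/\Cal O)$ of the relative DGA $A$, not the cobar resolution $F(N_2)$ of a $B$-comodule. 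You would need a different argument (or a direct $K\Cal C'$-level contraction respecting the $\psi$-decomposition) to establish (\ref{map KC to free KC}).
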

\subsubsection{}

We define a DG category $BK(A/\Cal O)$.
The class of objects of $BK(A/\Cal O)$ is that of $B-com$.
Let $M,N$ be objects in $BK(A/\Cal O)$.
The complex of homomorphism 
$Hom_{BK(A/\Cal O)}(M,N)$ is defined by the cone of
$$
\begin{matrix}
& & Hom_{K\Cal C(A/\Cal O)}(\psi(M), \psi(N)) \\
& &\downarrow \eta\ 
(\ref{map KC to free KC})
\\
RHom_B(M,N)& \xrightarrow{\xi} &
Hom_{K\Cal C'(A/\Cal O)}(\psi(M),F(\psi(N)).
\end{matrix}
$$
Here the map $\xi$ is the composite
\begin{align*}
RHom_B(M,N)
&\xrightarrow{\alpha 
(\ref{B-com to cofree resol map})
} Hom_B(F(M),F(N)) \\
&\xrightarrow{\beta 
(\ref{B-com to KC map of morphism})
}
Hom_{K\Cal C'(A/\Cal O)}(\psi(F(M)),\psi(F(N))) \\
&
\xrightarrow{
(\ref{map free free KC to id-free KC})
}
Hom_{K\Cal C'(A/\Cal O)}(\psi(M),F(\psi(N)).
\end{align*}
Then the homomorphisms $\xi$ and $\eta$ are quasi-isomorphism.
We introduce a composite structure
$$
\circ:Hom_{BK(A/\Cal O)}(M,N)\otimes Hom_{BK(A/\Cal O)}
(L,M) \to Hom_{BK(A/\Cal O)}(L,M)
$$
by the rule
$$
(a+b+c)\circ(a'+b'+c')=(a\circ a')+(b\circ b')+
(c\circ\eta(a')+\xi(b)\circ c')
$$
for 
\begin{align*}
& a\in Hom_{K\Cal C(A/\Cal O)}(\psi(M), \psi(N)), \quad
 b\in RHom_B(M,N) \\
&c \in Hom_{K\Cal C'(A/\Cal O)}(\psi(M),F(\psi(N)).
\end{align*}
Then this composite is associative. Thus we have the following theorem.
\begin{theorem}
\label{main theorem genereal}
\begin{enumerate}
\item
The natural projections 
\begin{align*}
&Hom_{BK(A/\Cal O)}(M,N)\to 
Hom_{K\Cal C(A/\Cal O)}(\psi(M),\psi(N)) \\
&Hom_{BK(A/\Cal O)}(M,N)\to
RHom_B(M,N)
\end{align*}
defines a DG functor.
\item
The DG functors $BK(A/\Cal O)\to K\Cal C(A/\Cal O)$
and $BK(A/\Cal O)\to (B_{simp}-com)$ are homotopy equivalent.
\end{enumerate}
\end{theorem}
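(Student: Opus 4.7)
The plan is to recognize that the Hom complex in $BK(A/\Cal O)$ is a homotopy equalizer of two quasi-isomorphisms, so that the two natural projections are quasi-isomorphisms on Hom complexes while being the identity on objects. An element of $Hom_{BK(A/\Cal O)}(M,N)$ is a triple $a+b+c$ with $a\in Hom_{K\Cal C(A/\Cal O)}(\psi(M),\psi(N))$, $b\in RHom_B(M,N)$, and $c$ a degree-shifted element of $Hom_{K\Cal C'(A/\Cal O)}(\psi(M),F(\psi(N)))$; the candidate DG functors are the projections onto the $a$- and $b$-components.

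For part (1), the composition rule
$$
(a+b+c)\circ(a'+b'+c')=(a\circ a')+(b\circ b')+(c\circ\eta(a')+\xi(b)\circ c')
$$
shows that projection to $a$ sends the composite to $a\circ a'$ and projection to $b$ sends it to $b\circ b'$; moreover identity morphisms project to identities. Compatibility with differentials is immediate: the differential on $Hom_{BK(A/\Cal O)}$ acts by the differentials of the three summands together with a ``cone term'' of shape $\eta(a)-\xi(b)$ landing in the $c$-component, which is invisible to both projections.

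For part (2), the three DG categories share the same class of objects via the bijection $\varphi$ of \S\ref{subsec:bijection on objects}, and the projections act as the identity on objects, so homotopy equivalence reduces to showing that the two Hom-complex maps are quasi-isomorphisms. The kernel of the projection onto the $a$-component consists of triples $(0,b,c)$, which is, up to a shift, the mapping cone of $\xi$; symmetrically the kernel of projection onto the $b$-component is the cone of $\eta$. Since both $\xi$ and $\eta$ were already established as quasi-isomorphisms (via the quasi-isomorphism $\alpha$ of \S\ref{subsection:B-com}, the map $\beta$ of (\ref{B-com to KC map of morphism}), and the natural maps (\ref{map KC to free KC}) and (\ref{map free free KC to id-free KC})), both cones are acyclic, so the projections are quasi-isomorphisms and hence homotopy equivalences of DG categories.

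The only point that truly needs verification is the associativity of the composition formula, upon which the definition of $BK(A/\Cal O)$ itself rests. Expanding $((a_1+b_1+c_1)\circ(a_2+b_2+c_2))\circ(a_3+b_3+c_3)$ and its rebracketing, the diagonal pieces $a_1 a_2 a_3$ and $b_1 b_2 b_3$ agree by associativity in $K\Cal C(A/\Cal O)$ and $(B\text{-com})$, while equality of the $c$-components reduces to the identities $\eta(a_2)\circ\eta(a_3)=\eta(a_2\circ a_3)$ and $\xi(b_1\circ b_2)=\xi(b_1)\circ\xi(b_2)$. These hold because $\eta$ and $\xi$ are built from natural constructions (the cofree resolution $F$ and the map $\alpha$) that the preceding lemmas have already shown to intertwine composition. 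This is the only place where a direct calculation is required; the homotopy equivalence assertion of the theorem is then a purely formal consequence.
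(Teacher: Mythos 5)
Your proof is correct and follows essentially the same route as the paper, which leaves the details implicit after defining $BK(A/\Cal O)$ and asserting associativity of the composite: the projections are DG functors because the composition formula and cone-differential project componentwise onto those of $K\Cal C(A/\Cal O)$ and $RHom_B$, and the projections are quasi-isomorphisms because their kernels are (up to shift) the acyclic cones of $\xi$ and $\eta$. The one caveat is that the identities $\eta(a_2)\circ\eta(a_3)=\eta(a_2\circ a_3)$ and $\xi(b_1\circ b_2)=\xi(b_1)\circ\xi(b_2)$ should be stated more carefully — the targets involve $F(\psi(-))$, so the compositions must be understood via the cofree lift of the second factor — but this imprecision is inherited from the paper's own shorthand and is not a gap in your argument.
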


\subsection{Integrable $(A/\Cal O)$-connection}
\label{subsection:integrable connection}
In this and the next subsection, we
show that the category $(Rep_G)^S$ and
that of $H^0(Bar(A/\Cal O),\epsilon)$-comodules
are equivalent when
$A$ is the relative DGA introduced in
\S \ref{example of relative DGA associated to group}.
The contents of \S \ref{subsection:integrable connection}
and \S \ref{subsection:relative completion associated to represenataion}
are not used for the definition of the category
of mixed elliptic motives.

We define the category of integrable 
$(A/\Cal O)$-connections.
\begin{definition}
\begin{enumerate}
\item
Let $\bold V=(V^ie^i,\{d_{ij}\})$ be an object of $K\Cal C(A/\Cal O_S)$.
The object $\bold V$ is said to be concentrated at degree zero if and only
if $V^i$ is an $\Cal O_S$-comodule put at degree zero.
The full subcategory of $K\Cal C(A/\Cal O_S)$ consisting of
objects concentrated at degree zero is denoted by $K\Cal C(A/\Cal O_S)^0$.
\item
We define the homotopy category $H^0(K\Cal C(A/\Cal O_S)^0)$ of 
$K\Cal C(A/\Cal O_S)^0$ whose class of objects is that in
$K\Cal C(A/\Cal O_S)^0$.
The set of morphisms from $\bold M$ to $\bold N$ in $H^0(K\Cal C(A/\Cal O_S)^0)$
is defined by
$H^0(\underline{Hom}_{K\Cal C(A/\Cal O)}(\bold M,\bold N))$.
The category $IC(A/\Cal O)=H^0(K\Cal C(A/\Cal O_S)^0)$ is
called the category of integrable $(A/\Cal O)$-connections.
\end{enumerate}
\end{definition}

\begin{proposition}
An object in $H^0(K\Cal C(A/\Cal O)^0)$ is equivalent to
the following data:
\begin{enumerate}
\item
A finite set of comodules $V^i$.
The comodule structure on $V^i$
 is denoted by $\Delta_i:V^i \to \Cal O_S\otimes_k V^i$.
\item
$\Cal O$-homomorphims $\nabla_{ji}:V^i \to A^1\otimes_k V^j$
for each $i< j$.
\end{enumerate}
We impose the following conditions for data.
\begin{enumerate}
\item
The composite 
$$
V^i\to A^1\otimes_k V^j \xrightarrow{\Delta_A^r\otimes 1-1\otimes \Delta_j}
 A^1\otimes_k\Cal O_S
\otimes_k V^j
$$ 
is zero.
\item
Let 
$$
\nabla_{kji}^2=(1_A\otimes \nabla_{kj})\circ\nabla_{ji}
:V^i \to A^1 \otimes_{k} A^1 \otimes_k V^k
$$
be the composite map which defines an element in
$Hom_{\Cal O_S}(V^i,A^1\otimes_{\Cal O_S}A^1
\otimes_{\Cal O_S}V^k)$. 
The image of $\nabla^2_{kji}$ 
under the map
$$
Hom_{\Cal O_S}(V^i,A^1\otimes_{\Cal O_S}A^1
\otimes_{\Cal O_S}V^k)\to
Hom_{\Cal O_S}(V^i,A^2\otimes_{\Cal O_S}V^j)
$$
can be written as
${\nabla}_{kj}\circ \nabla_{ji}$ using the composite in $\Cal C(A/\Cal O)$.
Then the equality
\begin{equation}
\label{condition for connection in nabla}
\partial \nabla_{ki}+\sum_{i<j<k}{\nabla}_{kj}\circ\nabla_{ji}=0
\end{equation}
holds.
\end{enumerate}
\end{proposition}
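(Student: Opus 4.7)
The statement is essentially an unpacking of the definition of $K\Cal C(A/\Cal O)^0$, together with the observation that $H^0$ keeps the same class of objects. The plan is to exhibit a bijection between objects of $K\Cal C(A/\Cal O)^0$ and the listed data, and then to translate the DG-complex axiom \eqref{recall condition for DG complex} into the stated flatness condition \eqref{condition for connection in nabla}.

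I would start from an object $\bold V=(V^ie^i,\{d_{ij}\})$ of $K\Cal C(A/\Cal O)^0$. Each $V^i$ is a comodule concentrated at degree zero, which gives item (1) of the data. For each pair $i<j$, I would apply the shift isomorphism \eqref{shift and homomorphism and sgn} to rewrite
\[
d_{ij}\in \underline{Hom}^1_{\Cal C(A/\Cal O)}(V^je^{-j},V^ie^{-i})
\]
as a degree-zero $\Cal O$-linear map into the appropriate component of $A\otimes_{\Cal O}V^j$. The hypothesis that each $V^i$ is a complex concentrated at degree zero pins down the $A$-grading, forcing this map to land precisely in $A^1\otimes_{\Cal O}V^j$. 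After relabeling indices according to the proposition's convention, one obtains the required element $\nabla_{ji}\in Hom_{\Cal O}(V^i,\, A^1\otimes_{\Cal O}V^j)$, and the reverse procedure of assembling the $\nabla_{ji}$'s into the $d_{ij}$'s is immediate, giving a bijection on data.

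Next, I would identify the proposition's first condition with the assertion that $\nabla_{ji}$ actually factors through the cotensor $A^1\otimes_{\Cal O}V^j\subset A^1\otimes V^j$. By the definition of cotensor product recalled in \S\ref{def of cotensor product}, this factorization is equivalent to the vanishing of $(\Delta_A^r\otimes 1 - 1\otimes\Delta_j)\circ\nabla_{ji}$, which is exactly condition (1).

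Finally, I would translate the DG-complex axiom $\partial d_{ik}+\sum_{i<j<k} d_{ij}\circ d_{jk}=0$ into the flatness relation. Composition in $\Cal C(A/\Cal O)$ is given by \eqref{from yoneda to composite}, which applies the multiplication $\mu_A\colon A\otimes_{\Cal O}A\to A$; restricted to the $A^1$-pieces this produces precisely the element $\nabla^2_{kji}\in Hom_{\Cal O}(V^i,A^2\otimes_{\Cal O}V^k)$ described in the statement, so the DG axiom becomes term-by-term the stated flatness condition \eqref{condition for connection in nabla}. The main technical obstacle throughout is careful bookkeeping of the shift isomorphism \eqref{shift and homomorphism and sgn}, the associated Koszul signs, and the distinction between the cotensor and the ordinary tensor; once these conventions are tracked, both directions of the equivalence reduce to a direct translation of the data.
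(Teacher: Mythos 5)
Your overall strategy is the right one: unpack the definitions, identify the proposition's condition (1) with the cotensor constraint from \S\ref{def of cotensor product}, and translate condition (2) into the DG-complex axiom \eqref{recall condition for DG complex} via the composition \eqref{from yoneda to composite}. The paper states this proposition without proof, so there is no competing argument; your treatment of conditions (1) and (2) is essentially correct.

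The gap is in the step where you claim that the shift isomorphism \eqref{shift and homomorphism and sgn} ``forces this map to land precisely in $A^1\otimes_{\Cal O}V^j$.'' Carrying out that shift literally, the morphism $d_{ji}\in\underline{Hom}^1_{\Cal C(A/\Cal O)}(V^ie^{-i},V^je^{-j})$ corresponds to an element of $\underline{Hom}^{1+i-j}_{\Cal C(A/\Cal O)}(V^i,V^j)=Hom_{\Cal O}(V^i,A^{1+i-j}\otimes_{\Cal O}V^j)$, whose $A$-degree is $1+i-j\leq 0$ for $i<j$, not $1$. The uniform placement of $\nabla_{ji}$ in $A^1$ for every pair $i<j$ --- which the proposition asserts, and which the group-theoretic parallel in \S\ref{subsection:relative completion associated to represenataion} and the proof of Proposition \ref{prop:invariance for quasi-isom} both use --- therefore does not fall out mechanically from \eqref{shift and homomorphism and sgn}. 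What the paper is implicitly doing is reading ``DG complex concentrated at degree zero'' in a shift-free way: the $V^i$ are plain comodules all sitting at degree zero, the $\nabla_{ji}$ are uniformly degree-one morphisms in $\Cal C(A/\Cal O)$, and the labels $e^{\pm i}$ (whose sign is in fact written inconsistently, as $V^ie^i$ in the definition of $K\Cal C(A/\Cal O)^0$ and as $V^je^{-j}$ in the general definition of $K\Cal C(A/\Cal O)$) serve as formal bookkeeping indices rather than internal shifts contributing to the $A$-grading. Your write-up should either make this shift-free reading explicit, or simply take the $A^1$-grading as part of what the statement asserts rather than derive it from the shift isomorphism; as written, the one sentence quoted above is the step that does not go through as stated.
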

\subsubsection{}

Let $V$ and $W$ be objects in $K\Cal C(A/\Cal O)^0$.
A closed homomorphism $\varphi$ in $Z^0\underline{Hom}_{K\Cal C(A/\Cal O)}(V,W)$
of degree zero
is a set of homomorphisms $\{\varphi_{ji}\}$ with $\varphi_{ji}\in 
Hom_{\Cal O_S}(V^i, A^0\otimes_{\Cal O_S} W^j)$ such that
\begin{equation}
\label{closed element in z0 for connection}
\partial\varphi_{ki}+\sum_{k>j}\nabla_{kj}^W\circ\varphi_{ji}
-\sum_{j>i}\varphi_{kj}\circ\nabla_{ji}^V=0,
\end{equation}
using differentials and composites in $\Cal C(A/\Cal O)$.
\subsubsection{}
Let $A\to A'$ be a quasi-isomorphism between relative DGA's over $\Cal
O_S$.
An $A$-connection $V$ relative to $S$ can be regarded as an $A'$-connection.
\begin{proposition}
\label{prop:invariance for quasi-isom}
\begin{enumerate}
\item 
Let $V$ be an $A$-connection relative to $S$.
Then the set of homomorphisms
$Hom_{IC(A/\Cal O)}(V, W)$ and $Hom_{IC(A'/\Cal O)}(V, W)$ are naturally isomorphic.
\item
For any $A'$-connection $W$, there exists an $A$-connection $V$
which is isomorphic to $W$.
As a consequence, the categories $IC(A/\Cal O)$ and $IC(A'/\Cal O)$
of $A$-connections and
$A'$-connections
are equivalent.
\end{enumerate}
\end{proposition}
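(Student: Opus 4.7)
The plan is to use the semisimplicity of $\Cal O$-comodules in characteristic zero to reduce the quasi-isomorphism $A \to A'$ of relative DGAs to a family of quasi-isomorphisms on isotypic blocks. By the decomposition $A=\bigoplus_{\alpha,\beta}V^{\alpha}\otimes A^{\alpha\beta}\otimes {}^{\beta}V$ of \S\ref{many copy of group its elements, etc} and the exactness of isotypic projection, the hypothesis forces $A^{\alpha\beta}\to (A')^{\alpha\beta}$ to be a quasi-isomorphism for every pair $(\alpha,\beta)$. Consequently, for finite-dimensional $\Cal O$-comodules $P,Q$ the induced map
\begin{align*}
\underline{Hom}_{\Cal O}(P, A\otimes_{\Cal O}Q)\longrightarrow \underline{Hom}_{\Cal O}(P, A'\otimes_{\Cal O}Q)
\end{align*}
is a quasi-isomorphism of complexes. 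This is the key input for both parts.

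For part (1), given $V,W\in K\Cal C(A/\Cal O)^0$, I view them in $K\Cal C(A'/\Cal O)^0$ via $A\to A'$ and consider the natural comparison map on morphism complexes. The complex $\underline{Hom}_{K\Cal C(A/\Cal O)}(V,W)$ carries a decreasing filtration indexed by $j-i$, compatible with the comparison map, whose associated graded is a direct sum of $\underline{Hom}_{\Cal O}(V^i, A\otimes_{\Cal O}W^j)$ (with appropriate shifts), because compositions involving $\nabla^V$ and $\nabla^W$ strictly raise this index. On each graded piece the comparison map is a quasi-isomorphism by the key input above. Since $V,W$ are bounded, the filtration is finite and the spectral sequences converge; comparison yields a quasi-isomorphism on total Hom complexes, and passing to $H^0$ gives the claimed bijection between $Hom_{IC(A/\Cal O)}(V,W)$ and $Hom_{IC(A'/\Cal O)}(V,W)$.

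For part (2), given $W=(W^i,\nabla'_{ji})$ in $K\Cal C(A'/\Cal O)^0$, I set $V^i=W^i$ and build $\nabla_{ji}\in \underline{Hom}^1_{\Cal O}(V^i, A\otimes_{\Cal O}V^j)$ by induction on the length $n=j-i$. For $n=1$ one simply lifts $\nabla'_{j,j-1}$ through the component-wise quasi-isomorphism $A^1\to (A')^1$. For $n>1$, assuming $\nabla_{lk}$ constructed for $k-l<n$, equation (\ref{condition for connection in nabla}) requires $\partial\nabla_{ji}=-\sum_{i<l<j}\nabla_{jl}\circ\nabla_{li}$; the right-hand side is closed in $\underline{Hom}^2_{\Cal O}(V^i,A\otimes_{\Cal O}V^j)$ thanks to the inductive Maurer-Cartan identities, and its image in the $A'$-complex equals $-\partial\nabla'_{ji}$, hence is a coboundary there. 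The key input identifies the relevant $H^2$ for $A$ and $A'$, so the obstruction class vanishes in the $A$-complex as well and $\nabla_{ji}$ exists. After producing $V$, its image in $K\Cal C(A'/\Cal O)^0$ differs from $W$ by cochains introduced at the lifting steps; an upper-triangular degree-zero system $\varphi$ with $\varphi_{ii}=\mathrm{id}$ and $\varphi_{ji}$ chosen to absorb those cochains then satisfies (\ref{closed element in z0 for connection}) by the same inductive scheme, and furnishes the required isomorphism in $IC(A'/\Cal O)$ (since on the associated graded it is the identity, hence invertible by the same construction in reverse).

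The hard part is the inductive lifting in part (2): at each stage the obstruction must vanish literally, not merely cohomologically, which in turn forces coherent re-adjustment of choices made at shorter lengths. This is the standard Maurer-Cartan transfer along a quasi-isomorphism of DGAs, and it is feasible precisely because semisimplicity decouples the homological problem into independent questions on each block $A^{\alpha\beta}$, on each of which $A\to A'$ is an honest quasi-isomorphism of ordinary complexes.
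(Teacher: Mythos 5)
Part (1) of your proposal is essentially the paper's argument: filter the Hom complex by $j-i$ so that compositions with $\nabla^V,\nabla^W$ strictly raise the filtration degree, reduce to the graded pieces $\underline{Hom}_{\Cal O}(V^i,A\otimes_{\Cal O}W^j)$, and invoke the component-wise quasi-isomorphism $A^{\alpha\beta}\to (A')^{\alpha\beta}$. That matches what the paper does, only spelled out a bit more.

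Part (2) has a genuine gap. You claim that, with $\nabla_{lk}$ already lifted for $k-l<n$, the image of the obstruction cocycle $-\sum_{i<l<j}\nabla_{jl}\circ\nabla_{li}$ in the $A'$-complex \emph{equals} $-\partial\nabla'_{ji}$. This is false: the images $\bar\nabla_{lk}$ of your lifts need not coincide with $\nabla'_{lk}$ — they only agree up to a gauge transformation, and for $k-l\geq 2$ they may additionally differ by a nontrivial $1$-cocycle that you have not controlled. Consequently the image of the obstruction is gauge-equivalent to $-\partial\nabla'_{ji}$ only if you have already chosen a gauge equivalence between the partial $A'$-connections $(\bar\nabla_{lk})$ and $(\nabla'_{lk})$, which is exactly a partial $\varphi$. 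Your plan defers the construction of $\varphi$ to after $V$ is built, but by then the obstruction at each step must already have been shown to vanish — a circularity. You acknowledge the tension in the last paragraph ("at each stage the obstruction must vanish literally, not merely cohomologically, which in turn forces coherent re-adjustment"), but you do not carry out the re-adjustment, and the previous paragraph's equality, as stated, is wrong. The paper avoids this by co-constructing $\nabla^V_{j0}$ and $\varphi_{j0}$ simultaneously, via induction on the bottom filtration step and then on $j$, and it explicitly verifies that the right-hand side of equation (\ref{cond:connection hom}) becomes a cocycle once the lower-order $\varphi$'s are in place; the quasi-isomorphism is used precisely to split this cocycle into an $A$-part $\nabla''_{j0}$ and an exact $A'$-part $d\varphi_{j0}$. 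To repair your argument you should mirror this: at each step prove jointly the existence of the next $\nabla$ together with the piece of $\varphi$ needed to re-align $\bar\nabla$ with $\nabla'$, rather than building all of $\nabla$ first.
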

\begin{proof}
(1) 
We can introduce a natural complex structure
on $Hom_{\Cal O_S}(V, A^{\bullet}\otimes_{\Cal O_S}W)$ and
can show that $Hom_{\Cal O_S}(V, A^{\bullet}\otimes_{\Cal O_S}W)$
and $Hom_{\Cal O_S}(V, {A'}^{\bullet}\otimes_{\Cal O_S}W)$ are quasi-isomorphic
by taking a suitable filtrations on $V$ and $W$
and considering the associate graded objects.

(2) We set $W=\oplus_{i=0}^n W^i$ and we prove 
that there exists an $A$-connection $V$ such that $V^i=W^i$
and a closed homomorphism $\varphi:V\to {A'}^0\otimes W$ such that 
$$
\text{
$\varphi_{ji}=0$ for
 $j<i$
and $\varphi_{ii}=id$.
}\quad (P)
$$
We set $F^1W=\oplus_{i\geq 1}W^i, F^1V=\oplus_{i\geq 1}V^i$ and assume that there exists
a closed isomorphism $\varphi:F^1V\to {A'}^0\otimes F^1W$ by induction.
We extend this isomorphism to $V\to {A'}^0\otimes W$ with the same properties (P).
Since $\varphi_{00}=id$ and $\varphi_{0i}=0$ for $i>0$, 
it is enough to define 
$\varphi_{j0}\in Hom_{\Cal O_S}(V^0,A^0\otimes_{\Cal O_S}W^j)$ for $j>0$
and $\nabla_{j0}^V\in Hom_{\Cal O_S}(V^0,A^1\otimes_{\Cal O_S}V^j)$ for $j>0$
such that
\begin{align}
\label{cond:connection hom}
&\partial\varphi_{j0}
- \varphi_{jj}\circ\nabla_{j0}^V \\
\nonumber
=&
- \nabla_{j0}^W\circ \varphi_{00}
-\sum_{0< k<j}\nabla^W_{jk}\circ\varphi_{k0}
+\sum_{0<l<j}\varphi_{jl}\circ \nabla_{l0}^V
\end{align}
in $Hom_{\Cal O_S}(V^0,{A'}^1\otimes_{\Cal O_S}W^j)$.
We define $\varphi_{j0}$ by the induction on $j$. 
By the assumption of induction,
the right hand side of (\ref{cond:connection hom}) is defined.
Then we have
\begin{align*}
&\partial(-\sum_{0\leq k<j}\nabla^W_{jk}\circ\varphi_{k0}
+\sum_{0<l<j}\varphi_{jl}\circ\nabla_{l0}^V) \\
=&
\sum_{0\leq k<l<j}\nabla^W_{jl}\circ\nabla^W_{lk}\circ\varphi_{k0}
+\sum_{0<l<j}\nabla^W_{jl}\circ (\partial\varphi_{l0})\\
&+\sum_{0<k<j}(\partial \varphi_{jk})\circ\nabla_{k0}^V 
-\sum_{0<k<l<j}\varphi_{jl}\circ\nabla_{lk}^V\circ\nabla_{k0}^V \\
=&
\sum_{0\leq k<l<j}\nabla^W_{jl}\circ\nabla^W_{lk}\circ\varphi_{k0}
-\sum_{0\leq p<l<j}\nabla^W_{jl}\circ \nabla^W_{lp}\circ \varphi_{p0}
+\sum_{0< p\leq l<j}\nabla^W_{jl}\circ \varphi_{lp}\circ \nabla^V_{p0}
\\
&
+\sum_{0<k< p\leq j}\varphi_{jp}\circ \nabla_{pk}^V\circ\nabla_{k0}^V 
-\sum_{0<k\leq p<j}\nabla_{jp}^W\circ \varphi_{pk}\circ\nabla_{k0}^V 
-\sum_{0<k<l<j}\varphi_{jl}\circ\nabla_{lk}^V\circ\nabla_{k0}^V \\
=&\sum_{0<k<j}\varphi_{jj}\circ\nabla_{jk}^V\circ\nabla_{k0}^V.
\end{align*}
Since
$Hom_{\Cal O_S}(V^0,{A'}^{\bullet}\otimes_{\Cal O_S}W^j)$ and
$Hom_{\Cal O_S}(V^0,{A}^{\bullet}\otimes_{\Cal O_S}W^j)$ are
 quasi-isomorphic, we can choose
$\nabla_{j0}'\in Hom_{\Cal O_S}^1(V^0,{A}^{\bullet}\otimes_{\Cal O_S}W^j)$ such that 
$$
\partial\nabla_{j0}'+\sum_{0<k<j}\nabla_{jk}^V\circ\nabla_{k0}^V=0.
$$
Therefore
\begin{equation}
\label{approximate by cohomology}
-\sum_{0\leq k<j}\nabla^W_{jk}\circ\varphi_{k0}
+\sum_{0<l<j}\varphi_{jl}\circ\nabla_{l0}^V+\varphi_{jj}\circ\nabla_{j0}'
\in Z^1Hom_{\Cal O_S}^1(V^0,{A'}^{\bullet}\otimes_{\Cal O_S}W^j)
\end{equation}
and there exist elements 
$$
\nabla_{j0}''\in Z^1Hom_{\Cal O_S}^1(V^0,{A}^{\bullet}\otimes_{\Cal
 O_S}W^j)\text{ and }
\varphi_{j0}\in Hom_{\Cal O_S}^0(V^0,{A'}^{\bullet}\otimes_{\Cal
 O_S}W^j)
$$ 
such that the left hand side of (\ref{approximate by cohomology})
is equal to $-\nabla_{j0}''+d\varphi_{j0}$.
Therefore the equation (\ref{cond:connection hom}) holds for 
$\nabla_{j0}^V=\nabla_{j0}'+\nabla_{j0}''$.
\end{proof}

\begin{definition}[Rigid relative DGA]
Let $A$ be a relative DGA over the coalgebra $\Cal O$,
$i:\Cal O\to A$ be the homomorphism of Definition \ref{definition of relative DGA}
(\ref{O-structure of A}), and $\epsilon:A \to 
Hom_{\bold k}(L\Cal O,L\Cal O)$ be a relative augmentation.
The relative DGA $A$ is said to be rigid if
\begin{enumerate}
\item
$A^i=0$ for $i<0$,
\item
$i:\Cal O\to A^0$ is an isomorphism, and
\item
the differential $A^0\to A^1$
is a zero map.
\end{enumerate}
If $A$ is rigid, then the map $i$ induces an isomorphism
$\Cal O\simeq H^0(A^0)$.
\end{definition}
Let $A$ be a rigid relative DGA.
We define the augmentation ideal $I_{\epsilon}$ by
the kernel of the relative augmentation $\epsilon$.
The
reduced bar complex
$\bold {Bar}_{red}(A/\Cal O,\epsilon)$ is defined by the 
following sub-complex of $\bold {Bar}(A/\Cal O,\epsilon)$.
$$
\cdots \to I_{\epsilon}\otimes_{\Cal O}I_{\epsilon}\otimes_{\Cal O}I_{\epsilon} \to
 I_{\epsilon}\otimes_{\Cal O}I_{\epsilon} \to
 I_{\epsilon}\to \Cal O \to 0.
$$
The associate simple complex of $\bold {Bar}(A/\Cal O,\epsilon)$
is denoted by $Bar(A/\Cal O,\epsilon)$.
The proof of the following lemma is similar to \cite{T}, Theorem 5.2, and we omit it.
\begin{lemma}
\begin{enumerate}
\item
The inclusion 
$\bold {Bar}_{red}(A/\Cal O,\epsilon)\to
\bold {Bar}(A/\Cal O,\epsilon)
$
is a quasi-isomorphism.
\item
$Bar_{red}(A/\Cal O,\epsilon)^{n}=0$ for $n<0$.
As a consequence, we have the following inclusion 
\begin{equation}
\label{inclusion for rigid relative DGA}
H^0(Bar(A/\Cal O,\epsilon))
\to
Bar_{red}(A/\Cal O,\epsilon)^{0} 
\end{equation}
\end{enumerate}
\end{lemma}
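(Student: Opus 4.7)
The plan is to exploit the splitting of $A$ forced by rigidity and then reduce both claims to (a) a standard normalized-versus-unnormalized bar acyclicity and (b) a direct degree count.

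First, I would verify that rigidity produces a direct-sum decomposition $A = \Cal O \oplus I_\epsilon$ in the category of complexes of bi-$\Cal O$ comodules. Since $Hom_{\bold k}(\Cal O,\Cal O)$ is concentrated in degree zero, $\epsilon$ vanishes in positive degrees, so $I_\epsilon^i = A^i$ for $i \geq 1$. In degree zero, the compatibility of $\epsilon$ with the bi-$\Cal O$ coactions and with the unit $i:\Cal O \to A$ forces $\epsilon^0$ to be the canonical embedding of $A^0 = \Cal O$ into $Hom_{\bold k}(\Cal O,\Cal O)$, so $I_\epsilon^0 = 0$. The vanishing of $d:A^0 \to A^1$ makes this an honest decomposition of complexes, and the multiplicativity of $\epsilon$ ensures $I_\epsilon$ is a two-sided ideal.

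For part (1), the decomposition yields $A^{\otimes_\Cal O n} = \bigoplus_{S \subseteq [1,n]} K_S$ where $K_S$ has $I_\epsilon$ in positions $i \in S$ and $\Cal O$ elsewhere; the reduced bar is the summand $S = [1,n]$. On the quotient $\bold Q = \bold{Bar}/\bold{Bar}_{red}$, I would filter by $F_p \bold Q$ consisting of those $K_S$ with at most $p$ positions in $\Cal O$. The differential only decreases the number of $\Cal O$ positions: multiplication of two $I_\epsilon$ factors stays in $I_\epsilon$ since $I_\epsilon$ is an ideal, every multiplication involving an $\Cal O$ factor or every boundary application of $\epsilon^l,\epsilon^r$ to an $\Cal O$ removes one $\Cal O$ (while $\epsilon^l,\epsilon^r$ vanish on $I_\epsilon$). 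Hence $F_p$ is a subcomplex. On $gr_p = F_p/F_{p-1}$ only the differentials preserving the $\Cal O$-count survive, namely multiplications of adjacent $I_\epsilon$'s inside each $I_\epsilon$-block separated by $\Cal O$'s. For $p \geq 1$ there is at least one $\Cal O$ position, and I would construct a contracting homotopy by the standard extra-degeneracy: use the coassociativity of the left $\Cal O$-coaction and the identification $\Cal O \otimes_\Cal O \Cal O = \Cal O$ to insert an extra $\Cal O$ position adjacent to an existing one, witnessing acyclicity of each $gr_p$ with $p \geq 1$. Taking the totalization, $\bold Q$ is acyclic, proving (1).

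For part (2), the degree count is direct: each $I_\epsilon$ sits in internal degree $\geq 1$, so $I_\epsilon^{\otimes_\Cal O n}$ sits in internal degrees $\geq n$. In the associated simple complex the bar-degree $n$ piece contributes $-n$ to the total degree, so these contributions are in total degrees $\geq 0$, giving $Bar_{red}(A/\Cal O,\epsilon)^m = 0$ for $m < 0$. Combining this vanishing with the quasi-isomorphism of (1), $H^0(Bar(A/\Cal O,\epsilon)) = H^0(Bar_{red}(A/\Cal O,\epsilon))$ embeds in $Bar_{red}(A/\Cal O,\epsilon)^0$ as the kernel of the differential.

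The main obstacle is writing the extra-degeneracy homotopy on $gr_p$ with the correct signs for the simple-complex totalization: one must check the homotopy identity using the explicit interplay between $\mu$, $\epsilon^l$, $\epsilon^r$, the unit $i$ and the sign rule $(\ref{shift and homomorphism and sgn})$. Once that verification is made on a single block of adjacent $I_\epsilon$'s between two $\Cal O$'s, the argument globalizes tensorially across all blocks and over all patterns $S$, so the rest is bookkeeping.
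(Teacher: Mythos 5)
Part (2) is fine: rigidity forces $I_\epsilon^j=0$ for $j\leq 0$, so $I_\epsilon^{\otimes_{\Cal O}n}$ sits in internal degree $\geq n$ and hence, after the shift by $e^n$ in the associated simple complex, in total degree $\geq 0$; combining this with (1) gives the stated inclusion. The decomposition $A=\Cal O\oplus I_\epsilon$ as complexes of bi-$\Cal O$-comodules, used throughout, is also correct under the standing convention that $\epsilon\circ i$ is the canonical embedding of $\Cal O$ into $Hom_{\bold k}(\Cal O,\Cal O)$.

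Part (1), however, does not go through as written. The filtration $F_p\bold Q$ by the number of tensor factors lying in $\Cal O$ is a subcomplex filtration, as you observe, but its associated graded $gr_p$ is \emph{not} acyclic for $p\geq 1$. On $gr_p$ the only surviving piece of the bar differential is multiplication of two adjacent factors of $I_\epsilon$: the boundary maps $\epsilon^l,\epsilon^r$ kill $I_\epsilon$, and every multiplication touching an $\Cal O$-factor strictly drops the $\Cal O$-count and so dies on the associated graded. Thus $gr_p$ carries only the internal bar differential of the non-unital algebra $I_\epsilon$ on each $I_\epsilon$-block, which is not acyclic in general. For a concrete counterexample take $S$ trivial, $\Cal O=\bold k$, and the rigid relative DGA $A=\bold k\oplus\bold k x$ with $A^0=\bold k$, $A^1=\bold k x$, $d=0$, $x^2=0$. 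Then $I_\epsilon=\bold k x$, all products in $I_\epsilon$ vanish, so the differential on $gr_1$ is identically zero while $gr_1\neq 0$ in every bar degree; thus $H(gr_1)\neq 0$ even though $\bold Q=\bold{Bar}/\bold{Bar}_{red}$ is acyclic, and the cancellation you need is only seen on later pages of the spectral sequence of your filtration. In fact the homotopy you propose (doubling an $\Cal O$-position via $\Cal O\otimes_{\Cal O}\Cal O=\Cal O$) increases the $\Cal O$-count by one, so it maps $gr_p$ into $gr_{p+1}$ and cannot be a contracting homotopy of $gr_p$. The extra-degeneracy idea is the right one, but it must be paired with a different filtration---for instance by the position of the leftmost $\Cal O$-factor, as in the classical comparison of normalized and unnormalized chains of a simplicial module---so that the error term in $dh+hd=1$ lies in strictly lower filtration. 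This is the argument of \cite{T}, Theorem~5.2, which the paper cites for this lemma without reproducing the proof.
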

Using the above lemma, we have the following proposition.
\begin{proposition}
\label{rigid and equivalence}
Let $A$ be a DGA relative to $\Cal O$, and $\epsilon$ be a relative augmentation
of $A$. Assume that there is a sub-DGA $A_{rig}\to A$ of $A$ which is
 rigid and quasi-isomorphic to $A$.
Then the category $IC(A/\Cal O)$ is isomorphic to the category of
 $H^0(Bar(A/\Cal O,\epsilon))$-comodules.
\end{proposition}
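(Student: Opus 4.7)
The plan is to combine three ingredients: the invariance of $IC$ under quasi-isomorphism of relative DGA's, the object-level bijection of \S \ref{subsec:bijection on objects}, and the rigidity bound on $Bar_{red}$. First, Proposition \ref{prop:invariance for quasi-isom} reduces the claim to the case $A = A_{rig}$, because the inclusion $A_{rig}\hookrightarrow A$ is a quasi-isomorphism of bi-$\Cal O$-comodule DGA's and therefore (by a standard spectral sequence argument on the length filtration of the bar complex) induces a quasi-isomorphism $Bar(A_{rig}/\Cal O,\epsilon)\xrightarrow{\sim} Bar(A/\Cal O,\epsilon)$; passing to $H^0$ gives an isomorphism of coalgebras, so the corresponding categories of comodules are equivalent.

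Next, for $A = A_{rig}$ I would use the bijection $\varphi$ of \S \ref{subsec:bijection on objects} and show that its restriction to objects concentrated at degree zero lands in, and surjects onto, the $H^0(Bar(A/\Cal O,\epsilon))$-comodules. Given $(V^i, \nabla_{ji})\in K\Cal C(A/\Cal O)^0$, the coaction $\Delta_V$ is the sum of maps $(1\otimes t^n\otimes 1)\circ (1\otimes\nabla_{\alpha_n\alpha_{n-1}})\circ\cdots\circ \nabla_{\alpha_1\alpha_0}$, each $\nabla_{\alpha_{k+1}\alpha_k}$ landing in $A^1_{rig}\otimes_{\Cal O}V^{\alpha_{k+1}}$. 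Because $A_{rig}^{<0}=0$, $A_{rig}^0=\Cal O$, the differential $A_{rig}^0\to A_{rig}^1$ vanishes, and each $V^i$ sits in internal degree zero, a direct degree count shows that the component of $\Delta_V$ factoring through tensor length $n$ lives in $Bar_{red}(A/\Cal O,\epsilon)^0\otimes V$. The coassociativity of $\Delta_V$ together with the fact that $\Delta_V$ is a morphism of complexes forces the image to consist of cocycles, so by the inclusion \eqref{inclusion for rigid relative DGA} the coaction factors through $H^0(Bar(A/\Cal O,\epsilon))\otimes V$. Conversely, given an $H^0(Bar)$-comodule $W$, composing with $H^0(Bar)\hookrightarrow Bar_{red}^0 \hookrightarrow Bar_{simp}$ gives a $Bar_{simp}$-comodule, and $\psi$ of \S \ref{subsec:bijection on objects} produces the decomposition $W=\oplus W^i$ via the idempotents from $\Cal O\overset{i}\otimes_{\Cal O}\Cal O$ and the maps $\nabla_{ji}$ from the length-$1$ part; the degree-zero constraint together with rigidity ensures that the compatibility relations reduce to condition \eqref{condition for connection in nabla}.

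For morphisms, I would invoke Theorem \ref{main theorem genereal} to identify $H^0\underline{Hom}_{K\Cal C(A/\Cal O)}(\bold M,\bold N)$ with $H^0RHom_{Bar_{simp}}(\varphi(\bold M),\varphi(\bold N))$, and then repeat the degree analysis above: for objects concentrated at degree zero the relevant cocycles in the $R Hom$ complex are controlled by $H^0(Bar)$, so both sides equal $Hom_{H^0(Bar)\text{-com}}(W_1,W_2)$. Compatibility with composition is automatic from the DG functorial structure of Theorem \ref{main theorem genereal}.

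The main obstacle is the degree bookkeeping of the third step: one must verify carefully, not merely up to homotopy, that $\Delta_V$ takes values in $H^0(Bar(A/\Cal O,\epsilon))\otimes V$ as a subspace of $Bar_{simp}(A/\Cal O,\epsilon)\otimes V$, and that the resulting comodule structure is strictly coassociative. The rigidity hypotheses $A^{<0}_{rig}=0$, $A^0_{rig}=\Cal O$, and $d_{|A^0_{rig}}=0$ are exactly what is needed to collapse the naive cocycle condition on $\Delta_V$ to the DG-algebra condition \eqref{recall condition for DG complex} on $\{\nabla_{ji}\}$; and the inclusion \eqref{inclusion for rigid relative DGA} guarantees that the ``cocycles inside the bar complex of length $n$'' representation of the image equals the $H^0$. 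This is the analogue of \cite[Theorem 5.2]{T} invoked by the authors and can be carried out by translating that argument into the relative setup.
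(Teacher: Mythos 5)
Your proof follows essentially the same route as the paper's: reduce to the rigid case via Proposition \ref{prop:invariance for quasi-isom}, identify objects via the bijection $\varphi$ of \S\ref{subsec:bijection on objects}, and pass between $H^0(Bar(A/\Cal O,\epsilon))$-comodules and degree-zero connections using the inclusion \eqref{inclusion for rigid relative DGA}. You supply more detail than the paper, which is quite terse (it asserts $H^i(\Cal M)=0$ for $i\neq 0$ without verification and says nothing explicit about morphisms), notably by replacing the bare $H^0$-extraction with a direct degree count on $\Delta_V$ and by invoking Theorem \ref{main theorem genereal} for the morphism side.
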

\begin{proof}
By Proposition \ref{prop:invariance for quasi-isom}, 
we may assume that $A$ is a rigid relative DGA over $\Cal O$.
Let $M$ be an object of $IC(A/\Cal O)$. Then we have a 
$Bar(A/\Cal O,\epsilon)$-comodule $\Cal M$
corresponding to $M$.
By taking $0$-th cohomology of 
$\Cal M\to Bar(A/\Cal O,\epsilon)\otimes \Cal M$, 
we have a $H^0(Bar(A/\Cal O,\epsilon))$-comodule 
$\widetilde{\Cal M}=H^0(\Cal M)$, since
$H^i(\Cal M)=0$ for $i\neq 0$.

Conversely, let $\widetilde{\Cal M}$ be a
$H^0(Bar(A/\Cal O,\epsilon))$-comodule. Using the inclusion
(\ref{inclusion for rigid relative DGA}), we have the following map.
$$
\widetilde{\Cal M}\to 
H^0(Bar(A/\Cal O,\epsilon))\otimes \widetilde{\Cal M}\to
Bar_{red}(A/\Cal O,\epsilon)^0 \otimes \widetilde{\Cal M}.
$$
By this map, we have an object $M$ in $IC(A/\Cal O)$.
By this bijections, we have a category equivalence of 
$IC(A/\Cal O)$ and the category of $H^0(Bar(A/\Cal O,\epsilon))$-comodules.
\end{proof}

\subsection{The case associated to $G\to S(\bold k)$}
\label{subsection:relative completion associated to represenataion}
Let $G$ be a group and $G \to S(k)$ be a Zariski dense
homomorphism. Let $A=Hom_G(L\Cal O,L\Cal O)$ be the relative DGA with
respect to $\Cal O=\Cal O_S$
introduced in \S \ref{example of relative DGA associated to group}.

\subsubsection{}
We consider the category of integrable $(A/\Cal O)$-connections.
For two $G$-modules $V_1,V_2$, the extension group of $Ext_G(V_1,V_2)$
is equal to the cohomology of the complex
\begin{align*}
\underline{Hom}_G(V_1,V_2)&=
Hom_{\Cal O}(V_1,A \otimes_{\Cal O}
V_2).
\end{align*}

\label{first statement for group}
An object in $H^0(K\Cal C(A/\Cal
 O)^0)$
is equivalent to the following data.
\begin{enumerate}
\item
Algebraic representations
$(V^1,\rho_1), (V^2,\rho_2),\dots, (V^n,\rho_n)$ 
of $S$,
\item
$\nabla_{i,j}$ an element of $\underline{Hom}^1_G(V_i, V_j)$
for $i<j$. 
\end{enumerate}
The element in $Hom_k(V_i,V_j)$ determined by 
$$
v_i \mapsto \nabla_{i,j}(g\otimes v_i)
$$
is denoted as $\nabla_{i,j}(g)$.
Since 
\begin{align*}
&(\partial \nabla_{i,k})(g_1\otimes g_2\otimes v)
=\rho_k(g_1)\nabla_{i,k}(g_2\otimes v)
-\nabla_{i,k}(g_1g_2\otimes
 v)+\nabla(g_1\otimes \rho_i(g_2)v),\\
&\nabla_{j,k}\circ\nabla_{i,j}(g_1\otimes g_2\otimes v)=
\nabla_{j,k}(g_1\otimes \nabla_{i,j}(g_2\otimes v)),
\end{align*}
by the condition 
(\ref{condition for connection in nabla})
is equivalent to
the relation
\begin{align}
\label{cocycle for group case}
&\nabla_{i,k}(g_1g_2) \\
\nonumber
=&
\nabla_{i,k}(g_1)\rho_{i}(g_2)
+\rho_k(g_1)\nabla_{i,k}(g_2)+\sum_{i<j<k}
\nabla_{j,k}(g_1)\nabla_{i,j}(g_2)
\end{align}
for all $g_1,g_2\in G$.
Therefore the map 
$$
\rho:G\to Aut(V):g\mapsto \sum_{i}\rho_i(g)+\sum_{i<j}\nabla_{i,j}(g)
$$
is a homomorphism of groups if and only if 
the condition (\ref{cocycle for group case}) holds.

For an object $V=(V^i,\nabla_{ij})$ in $H^0(K\Cal C(A/\Cal O)^0)$, 
the $G$-module $(\rho, \sum_iV^i)$ obtained from $V$
as in the last proposition is denoted as $\rho(V)$.
\begin{proposition}
There exists a rigid relative sub-DGA $A_{rig}$ of $A$,
which is quasi-isomorphic to $A$.
As a consequence, $IC(A/\Cal O)$ is equivalent to
the category of $H^0(Bar(A/\Cal O,\epsilon))$-comodules.
\end{proposition}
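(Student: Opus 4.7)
The plan is to produce a rigid sub-DGA $A_{rig}\subseteq A$ that is quasi-isomorphic to $A$, after which the second assertion follows directly from Proposition~\ref{rigid and equivalence}. I would take $A^0_{rig}=i(\Cal O)\subseteq A^0$, $A^n_{rig}=A^n$ for $n\geq 2$, and choose $A^1_{rig}\subseteq A^1$ to be an $\Cal O$-bicomodule direct-sum complement of $\operatorname{Im}(d^0_A)$. Such a complement exists because the category of $\Cal O$-bicomodules is semisimple (equivalent to finite-dimensional representations of the reductive group $S\times S$ in characteristic zero), and $\operatorname{Im}(d^0_A)$ is visibly a sub-bicomodule of $A^1$ since $d^0_A$ is a morphism of bicomodules.

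To verify that $A^0_{rig}$ is correct, I would exploit the Peter--Weyl decomposition $A=\oplus_{\alpha,\beta}V^\alpha\otimes A^{\alpha\beta}\otimes{}^\beta V$ with $A^{\alpha\beta}_n=Hom_{\bold k}(\bold k[G]^{\otimes n}\otimes V^\alpha,V^\beta)$. Since $\rho(G)$ is Zariski dense in $S$, every $G$-equivariant map between algebraic $S$-representations is $S$-equivariant, and Schur's lemma gives $\Ker(d^{\alpha\beta}_0)=\bold k\cdot 1_{V^\alpha}$ when $\alpha=\beta$ and $0$ otherwise. Summing, $\Ker(d^0_A)=\oplus_\alpha V^\alpha\otimes \bold k\cdot 1_{V^\alpha}\otimes{}^\alpha V$, which is exactly $i(\Cal O)$. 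Consequently $d^0_A$ vanishes on $A^0_{rig}$ and $i:\Cal O\to A^0_{rig}$ is an isomorphism, so the rigidity conditions (1)--(3) hold.

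Multiplicative closure is the point requiring care: the only nontrivial cases are $A^0_{rig}\cdot A^1_{rig}\subseteq A^1_{rig}$ and its mirror. In the decomposition above, $A^0_{rig}$ occupies the diagonal slots via the identity endomorphisms $1_{V^\alpha}\in A^{\alpha\alpha}_0$, and the Yoneda product (\ref{from rev comp to multiplication}) sends $1_{V^\alpha}\otimes y\mapsto y$ on each $A^{\alpha\gamma}_1$-slot. Hence left and right multiplication by $i(\Cal O)$ acts by the identity on the middle $A^{\alpha\beta}_1$ pieces, and preserves any slot-by-slot bicomodule complement $A^1_{rig}=\oplus V^\alpha\otimes B^{\alpha\beta}\otimes{}^\beta V$ with $B^{\alpha\beta}\subseteq A^{\alpha\beta}_1$ chosen complementary to $\operatorname{Im}(d^{\alpha\beta}_0)$.

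Finally, the inclusion $A_{rig}\hookrightarrow A$ will be a quasi-isomorphism by a degree-by-degree computation. In degree $0$ both sides give $\Cal O$; in degree $1$, the splitting $A^1=A^1_{rig}\oplus \operatorname{Im}(d^0_A)$ together with $\operatorname{Im}(d^0_A)\subseteq\Ker(d^1_A)$ identifies $\Ker(d^1_A)\cap A^1_{rig}$ with $H^1(A)$; in degree $2$, since $d^1_A$ vanishes on $\operatorname{Im}(d^0_A)$ we have $d^1_A(A^1_{rig})=\operatorname{Im}(d^1_A)$, so $H^2(A_{rig})=H^2(A)$; and in degrees $\geq 3$ equality is trivial since $A^n_{rig}=A^n$. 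Proposition~\ref{rigid and equivalence} then delivers the equivalence of $IC(A/\Cal O)$ with the category of $H^0(Bar(A/\Cal O,\epsilon))$-comodules. The main obstacle is the multiplicative closure step; the slot-by-slot bicomodule complement is what reconciles the multiplicative and comodule-theoretic constraints on $A^1_{rig}$.
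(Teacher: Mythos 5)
Your proposal is correct and follows essentially the same route as the paper: take $A_{rig}^0=i(\Cal O)$, $A_{rig}^1$ a slot-by-slot bicomodule complement of $\operatorname{Im}(d^0_A)$ chosen from the Peter--Weyl decomposition (the paper's $\oplus V^\alpha\otimes C^{\alpha\beta}\otimes{}^\beta V$), $A_{rig}^i=A^i$ for $i\geq 2$, then invoke Proposition~\ref{rigid and equivalence}. The paper is terse and omits the verification of multiplicative closure and the degree-by-degree quasi-isomorphism check that you spell out; note that closure under left/right multiplication by $i(\Cal O)$ is actually automatic from the counitarity identity $\mu\Delta^l=1_A$ used earlier in the paper, so this step is less delicate than you suggest.
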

\begin{proof}
Since the image $G\to S(\bold k)$ is Zariski dense, we have
\begin{align*}
H^0(A^{\alpha,\beta}) 
=&Hom_{\Cal O}(V^{\alpha},V^{\beta})
\\
=&\begin{cases}
\bold k & \text{ if }\alpha=\beta \\
0 & \text{ if }\alpha\neq \beta. \\
\end{cases}
\end{align*}
Let $C^{\alpha,\beta}$ be a complement of the image of the differential
$A^{\alpha,\beta,0}\to A^{\alpha,\beta,1}$.
We set
$$
A_{rig}^i=
\begin{cases}
0 & \text{ for }i<0 \\
\oplus_{\alpha}V^{\alpha}\otimes {}^{\alpha}V & \text{ for }i=0 \\
\oplus_{\alpha}V^{\alpha}\otimes C^{\alpha,\beta}\otimes {}^{\beta}V &
 \text{ for }i=1 \\
A^i &
 \text{ for }i\geq 2. \\
\end{cases}
$$
Then $A_{rig}$ satisfies the required properties.
The latter part is a consequence of 
Proposition \ref{rigid and equivalence}.
\end{proof}

\subsubsection{}
Then for two connections, $V$ and $W$, the set 
$Hom_{IC(A/\Cal O)}(V,W)$ 
is a subset of $Hom_{\bold k}(V, W)$
and can be identified with the set of $G$-homomorphisms from $\rho(V)$
to $\rho(W)$ by the closedness condition
(\ref{closed element in z0 for connection}).
Thus we get a fully faithful functor from $IC(A/\Cal O)$ to 
$(Rep_G)^S$.
We have the following proposition.
\begin{proposition}
\label{sample group case}
\begin{enumerate}
\item
The essential image of $\rho:IC(A/\Cal O) \to (Rep_G)$ is equal to
 $(Rep_G)^S$. As a consequence, $(Rep_G)^S$ is equivalent to $IC(A/\Cal O)$.
\item
The category $(Rep_G)^S$ is equivalent to the category of
$H^0(Bar(A/\Cal O))$-comodules.
\end{enumerate}
\end{proposition}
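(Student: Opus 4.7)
The plan is to prove Part (1) by producing, for any $W \in (Rep_G)^S$, an object of $IC(A/\Cal O)$ whose image under $\rho$ is isomorphic to $W$, and then to deduce Part (2) by combining with the preceding rigidity and equivalence results. The containment $\rho(IC(A/\Cal O)) \subseteq (Rep_G)^S$ is immediate from the construction: on $\bigoplus_i V^i$ the representation $\rho(V)$ is upper triangular with diagonal blocks the algebraic $S$-representations $(V^i,\rho_i)$, so the filtration $F^pW := \bigoplus_{i \geq p} V^i$ exhibits $\rho(V)$ as a successive extension of pullbacks of algebraic representations of $S$. Combined with the full faithfulness already noted just before the proposition, only essential surjectivity remains.

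For essential surjectivity, given $W \in (Rep_G)^S$ with filtration $W = W^0 \supset W^1 \supset \cdots \supset W^n = 0$ and with each $V^i := W^i/W^{i+1}$ the pullback of an algebraic $S$-representation, I first split the filtration as vector spaces, which is possible because $\bold k$ has characteristic zero. Fixing an identification $W \simeq \bigoplus_i V^i$ compatible with the filtration, the $G$-action takes the block form $\rho_W(g) = \sum_i \rho_i(g) + \sum_{i<j}\nabla_{ij}(g)$ with each $\nabla_{ij}(g) \in Hom_{\bold k}(V^i,V^j)$. The assignment $g \otimes v \mapsto \nabla_{ij}(g)(v)$ defines an element of the canonical cochain complex $\underline{Hom}^1_G(V^i,V^j)$, which via the definition of $A = \underline{Hom}_G(L\Cal O, L\Cal O)$ in \S \ref{example of relative DGA associated to group} identifies with $Hom_{\Cal O}(V^i, A^1 \otimes_{\Cal O} V^j)$. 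The requirement that $\rho_W$ be a group homomorphism is precisely the cocycle relation (\ref{cocycle for group case}), which by the discussion immediately preceding the proposition is equivalent to the DG-complex condition (\ref{condition for connection in nabla}) for $(V^i, \nabla_{ij})$. Thus $(V^i,\nabla_{ij})$ is an object of $IC(A/\Cal O)$ with $\rho$-image isomorphic to $W$, proving (1).

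Part (2) is then a direct combination. The previous proposition provides a rigid sub-DGA $A_{rig} \subset A$ quasi-isomorphic to $A$, so Proposition \ref{rigid and equivalence} identifies $IC(A/\Cal O)$ with the category of $H^0(Bar(A/\Cal O,\epsilon))$-comodules, and Part (1) translates this into the desired equivalence with $(Rep_G)^S$. The main obstacle lies in the essential surjectivity step: one must check carefully that the cochain $g \mapsto \nabla_{ij}(g)$ produced from the vector-space splitting really defines an element of the $\Cal O$-equivariant Hom complex $Hom_{\Cal O}(V^i, A^\bullet \otimes_{\Cal O} V^j)$, and not merely of the underlying $\bold k$-linear cochain complex. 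This amounts to unwinding how the bi-$\Cal O$-comodule structure on $A$, induced by the right $\Cal O_S$-coaction on $L\Cal O$, identifies the canonical $G$-cochain complex $\underline{Hom}_G(V^i,V^j)$ with $Hom_{\Cal O}(V^i, A^\bullet \otimes_{\Cal O} V^j)$, and in exploiting that the $G$-action on each algebraic $V^j$ factors through $S(\bold k)$.
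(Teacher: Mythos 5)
Your proposal is correct and follows essentially the same approach as the paper: both choose a $\bold k$-linear splitting of the filtration on $W\in(Rep_G)^S$, read off the block-upper-triangular $G$-action as a system of cochains $\nabla_{ij}\in\underline{Hom}^1_G(V^i,V^j)=Hom_{\Cal O}(V^i,A^1\otimes_{\Cal O}V^j)$, and observe that the group-homomorphism condition for $\rho_W$ is precisely the cocycle relation (\ref{cocycle for group case}) $\Leftrightarrow$ (\ref{condition for connection in nabla}). The paper's proof makes a brief detour through $IC(A_{rig}/\Cal O)$ via Proposition \ref{prop:invariance for quasi-isom} before reading off $\rho(W_{A'})\simeq W$, whereas you land directly in $IC(A/\Cal O)$ — a harmless streamlining — and the $\Cal O$-equivariance point you flag is exactly the identification $\underline{Hom}_G(V_1,V_2)=Hom_{\Cal O}(V_1,A\otimes_{\Cal O}V_2)$ that the paper records at the start of \S\ref{subsection:relative completion associated to represenataion}, so it is not a gap but an input both proofs rely on.
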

\begin{proof}
Let $V$ be an element of $(Rep_G)^S$ and $F^*W$ be a filtration whose
 associate graded modules come from $\Cal O$-comodules.
We choose a splitting $\{W^{p}\}$ of $F^{\bullet}W$ in $W$ as $k$-vector spaces.
Then we have $F^{p}W=\oplus_{k\geq p}W^p$.
By the isomorphism $W^p\to Gr_W^p(W)$ of $k$ vector spaces, we
introduce a $\Cal O$ module structure on $W^p$.
The action of $G$ on $W$ defines a map $\nabla_{ji}$ in
$\underline{Hom}_G(W^i,W^j)=Hom_{\Cal O}(W^i,A^1\otimes_{\Cal O}W^j)$.
Since $W$ is a $G$-module, $(W^p,\nabla_{ij})$ defines a $A$-connection
$W_A$ relative to $S$. By Proposition 
\ref{prop:invariance for quasi-isom}, 
the natural functor $IC(A_{rig}/\Cal O)\to IC(A/\Cal O)$
is an equivalence of category, we have an object $W_{A'}$ in
 $IC(A_{rig}/\Cal O)$ such that the image is isomorphic to $W_{A}$.
Then one can check that $\rho(W_{A'})$ is isomorphic to the given $W$.
\end{proof}

\section{Mixed elliptic motif}
In this section, we define quasi-DG categories of naive mixed elliptic motives
$(MEM)$ and virtual mixed elliptic motives $(VMEM)$. Roughly speaking,
the DG category of virtual mixed elliptic motives is obtained
by adding objects which are homotopy equivalent to zero.
Therefore $(MEM)$ and $(VMEM)$ are weak homotopy equivalent.

We can not introduce a natural tensor structure on $(EM)$
with the distributive property as is explained in
\S \ref{reason why naive does not distributive}.
On the other hand, $(VMEM)$ has a distributive tensor structure,
which is necessary to obtain a shuffle product on
the bar complex
$Bar(\Cal C_{VEM})$. Using this shuffle product,
$H^0(Bar(\Cal C_{VEM}))$ becomes a Hopf algebra.
In this paper, Bloch cycle groups, higher Chow groups
are $\bold Q$-coefficients.

\subsection{Injectivity of linear Chow group}
\label{subsection injectivity of linear chow}

In this subsection, we prove the injectivity of linear Chow group,
which is also proved in \cite{BL}. The proof given here
is more direct.
\begin{proposition}
\label{injective to cohomology}
Let $E$ be an elliptic curve over a field $k$ which does not have complex
multiplication, that is $End_{\bar{k}}(E)=\bold Z$. 
Let $CH^*_{lin}(E^{n})$ be the subring of $CH^*(E^{n})$
generated by $f^*([0])\in CH^1(E^n)$, where $f$ is a homomorphism of abelian varieties
$$
f:E^n \to E.
$$
The scalar extension of $E$ to its algebraic closure is written as $\overline{E}$.
Then the cycle map
$$
cl:CH_{lin}^*(E^{n}) \to H^{2*}_{et}(\overline{E}^{n},\bold Q_l)
$$
is injective.
\end{proposition}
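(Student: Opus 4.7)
The plan is to identify both the linear Chow subring and its cohomological image with explicit representation-theoretic pieces, and then verify matching dimensions via a handful of Chow-level identities.

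First, since $\operatorname{End}_{\overline k}(E) = \bold Z$, every homomorphism of abelian varieties $f \colon E^n \to E$ has the form $f_a(x_1,\dots,x_n) = \sum_i a_i x_i$ for some $a \in \bold Z^n$. The theorem of the cube, applied to the symmetric line bundle $\Cal O_E([0])$, shows that $f_a^*[0]$ depends quadratically on $a$:
$$f_a^*[0] = \sum_i a_i^2 h_i + \sum_{i<j} a_i a_j h_{ij},$$
where $h_i = p_i^*[0]$ and $h_{ij} = (p_i+p_j)^*[0] - h_i - h_j$. Thus $CH^1_{lin}(E^n)_{\bold Q}$ is a quotient of $\operatorname{Sym}^2(\bold Q^n)$, spanned by the $\binom{n+1}{2}$ classes $h_i, h_{ij}$.

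Next, I would identify the image in cohomology. Set $V = H^1_{et}(\overline E, \bold Q_l)$ (of dimension two) and $U = \bold Q^n$, so $H^*_{et}(\overline{E}^n, \bold Q_l) = \bigwedge^{\bullet}(V \otimes U)$. The Cauchy decomposition
$$\bigwedge^2(V \otimes U) = \big(\operatorname{Sym}^2 V \otimes {\textstyle\bigwedge}^2 U\big) \oplus \big({\textstyle\bigwedge}^2 V \otimes \operatorname{Sym}^2 U\big)$$
together with a direct computation yields $cl(f_a^*[0]) = \omega_0 \otimes (a \otimes a)$ in the second summand, where $\omega_0$ generates $\bigwedge^2 V$. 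Since $\{a \otimes a \mid a \in U\}$ spans $\operatorname{Sym}^2 U$, the composition $\operatorname{Sym}^2(\bold Q^n) \twoheadrightarrow CH^1_{lin}(E^n)_{\bold Q} \to H^2$ is an isomorphism onto $\bigwedge^2 V \otimes \operatorname{Sym}^2 U$, proving the degree-one case. In higher degrees, the subring of $\bigwedge^{\bullet}(V\otimes U)$ generated by this degree-two piece equals
$$R = \bigoplus_{k \geq 0} ({\det} V)^{\otimes k} \otimes S_{(2^k)} U,$$
where $S_{(2^k)} U$ is the Schur module of the rectangular partition $(2,\dots,2)$ of length $k$; this is a consequence of the $GL(V) \times GL(U)$-equivariance of the exterior algebra together with the vanishing $S_{\lambda} V = 0$ for $\ell(\lambda) > \dim V = 2$.

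On the Chow side I would then establish two elementary relations: $h_i^2 = 0$ (from $[0]^2 \in CH^2(E) = 0$) and $h_{ij}\, h_i = h_{ij}\, h_j = 0$ (from the intersection computation $(p_i+p_j)^*[0] \cdot p_i^*[0] = p_{ij}^*([0] \times [0]) = h_i h_j$ on $E^2$, pulled back to $E^n$). Propagated via the $GL_n(\bold Z)$-action on $E^n$ coming from $\operatorname{End}_{\overline k}(E^n) = M_n(\bold Z)$, these relations should force each $CH^k_{lin}(E^n)_{\bold Q}$ to be a quotient of the rectangular Schur component $S_{(2^k)} U$. Since the cycle map surjects onto an image of the same dimension, it must be injective, giving the proposition.

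The main obstacle is the final matching step: verifying that the elementary Chow-level identities above, together with their $GL_n(\bold Z)$-translates, cut the polynomial algebra $\operatorname{Sym}^{\bullet}(\operatorname{Sym}^2 U)$ down to precisely $R$. Via the plethystic identity $\operatorname{Sym}^k(\operatorname{Sym}^2 U) = \bigoplus_{\mu \vdash k} S_{2\mu} U$, this reduces to showing that the summands $S_{2\mu} U$ with $\mu \neq (1^k)$ are killed in $CH^k_{lin}(E^n)_{\bold Q}$. This combinatorial and representation-theoretic verification is the heart of the ``more direct'' argument promised in the introduction.
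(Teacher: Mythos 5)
Your overall strategy is the same as the paper's, just repackaged in representation-theoretic language: write $f_a^*[0]$ as a quadratic form in the classes $h_i$ and $h_{ij}$, establish quadratic relations among them, and match dimensions against the cohomological image. Your $h_i, h_{ij}$ are the paper's $p_i$ and $D_{ij} = -\Delta_{ij} + p_i + p_j$ (the identity $h_{ij} = (p_i+p_j)^*[0] - h_i - h_j = D_{ij}$ follows from the quadraticity formula with $a_i = a_j = 1$), and your two relations $h_i^2 = 0$, $h_i h_{ij} = 0$ lie in the degree-two relation module $S_{(4)}U \subset \operatorname{Sym}^2(\operatorname{Sym}^2 U)$; since $GL_n(\bold Z)$ is Zariski dense in $GL_n$ and $S_{(4)}U$ is irreducible, their translates span all of $S_{(4)}U$, so in particular they subsume the paper's relations (\ref{the second rel}) and (\ref{the thrid rel}). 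Up to this point the two arguments are entirely parallel.

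The gap is exactly the one you flag: you still need to show that the ideal generated by $S_{(4)}U$ in $\operatorname{Sym}^{\bullet}(\operatorname{Sym}^2 U)$ cuts the ring down to $\bigoplus_k S_{(2^k)}U$, i.e.\ that all $S_{2\mu}U$ with $\mu\neq(1^k)$ are actually killed in $CH^k_{lin}$ before passing to cohomology. Labeling this ``the heart of the more direct argument'' and leaving it unverified means the proof does not close: without it, $CH^k_{lin}$ is only known to be a quotient of $\operatorname{Sym}^k(\operatorname{Sym}^2 U)/\langle S_{(4)}U\rangle$, and you have no upper bound on its dimension to compare with the cohomological image. The paper fills precisely this step concretely, not abstractly: Lemma \ref{The useful lemma} records the three explicit Chow relations, Lemma \ref{lemma generation} runs a straightening algorithm (using relation (\ref{the thrid rel}) to resolve Gelfand--Zetlin violations, with a minimal-counterexample argument on the lexicographic order (\ref{total order})) to show every monomial reduces to standard ones, and Lemma \ref{linear indpendentness of standard monomial} shows the standard monomials stay independent in cohomology via a lowest-term comparison. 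If you want to complete your route, you must supply the missing presentation result for the algebra $\bigoplus_k S_{(2^k)}U$ — for instance by showing directly that the multiplication map $\operatorname{Sym}^k(\operatorname{Sym}^2 U) \to \Lambda^{2k}(V\otimes U)$ has kernel generated in degree two, or by noting that $S_{(2^k)}U$ is irreducible so any nonzero $GL_n(\bold Z)$-equivariant quotient of $\operatorname{Sym}^k(\operatorname{Sym}^2 U)/\langle S_{(4)}U\rangle$ mapping onto it must be it — but some such verification is required, and at present it is absent.
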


Let $\pi_i:E^n\to E$ and $\pi_{ij}:E^n \to E\times E$ 
be the projection to $i$-th and $ij$-th components, and
the diagonal in $E\times E$ is denoted by $\Delta$.
The classes $\pi_i^*([0])$ and $\pi_{ij}^*(\Delta)$ in $CH^1(E^n)$
are denoted by $p_i$ and $\Delta_{ij}$, respectively.
Let $f:E^n \to E$ be a homomorphism defined by $f(x_1, \dots,
x_n)=\sum_{i=1}^n a_ix_i$ for $a_i \in \bold Z$.
Let $\alpha, \beta$ be symplectic basis of
$H^1(\overline{E})=H^1_{et}(\overline{E},\bold Q_l)$.
For the copy of $E_i$, the corresponding symplectic basis in
$H^1(\overline{E_i})$ are denoted by $\alpha_i, \beta_i$.
then the class $cl(f^*([0]))\in H^2(\overline{E}^n)$ is equal to 
\begin{align*}
f^*(\alpha)  f^*(\beta)& =
(\sum_{i}a_i\alpha_i)(\sum_{i}a_i\beta_i)=\sum_{i}a_i^2\alpha_i\beta_i
+\sum_{i<j}a_ia_j(\alpha_i\beta_j-\beta_i\alpha_j) \\
&=\sum_{i}a_i^2cl(p_i)-\sum_{i<j}a_ia_jcl(\Delta_{ij}-p_i-p_j).
\end{align*}
Since $Pic^0(E^n)$ component of $f^*([0])$ is zero,
we have
$$
f^*([0])=\sum_{i}a_i^2p_i-\sum_{i<j}a_ia_j(\Delta_{ij}-p_i-p_j)
$$
in $CH^1(E^n)$. Therefore $CH^*_{lin}(E^n)$ is generated by
$p_i$ and $D_{ij}=-\Delta_{ij}+p_i+p_j$ for $i\neq j$.
We have $D_{ij}=D_{ji}$. By the map $\sigma:E_1\times E_2\to E_1\times E_2:(x,y)\to
(x,-y)$, we have $\sigma(D_{12})=-D_{12}$.
\begin{lemma}
\label{The useful lemma}
We have
\begin{align}
\label{the first rel}
& p_iD_{ij}=0,\\
\label{the second rel}
& D_{ij}D_{ik}+p_iD_{jk}=0, \quad \text{ and }\\
\label{the thrid rel}
& D_{ij}D_{kl}+
D_{ik}D_{jl}+
D_{il}D_{jk}=0
\end{align}
in $CH^2(E^n)$
for $\#\{i,j\}=2$, $\#\{i,j,k\}=3$ or $\#\{i,j,k,l\}=4$, respectively.
\end{lemma}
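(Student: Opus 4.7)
The plan is to deduce all three relations from a single source: the vanishing $(f^{*}[0])^{2}=0$ that holds for every homomorphism of abelian varieties $f\colon E^{n}\to E$.

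Concretely, I would take $f(x_{1},\dots ,x_{n})=\sum_{i}a_{i}x_{i}$ with $a_{i}\in\bold Z$ and invoke the formula
$f^{*}[0]=\sum_{i}a_{i}^{2}p_{i}+\sum_{i<j}a_{i}a_{j}D_{ij}$
derived in the discussion immediately preceding the lemma. Since $[0]$ is a zero-cycle on the one-dimensional variety $E$, we have $[0]^{2}\in CH^{2}(E)=0$, and therefore $(f^{*}[0])^{2}=f^{*}([0]^{2})=0$ in $CH^{2}(E^{n})$.

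Expanding the square produces a polynomial identity in $(a_{1},\dots ,a_{n})$ with coefficients in $CH^{2}(E^{n})$ that vanishes at every point of $\bold Z^{n}$, hence vanishes as a polynomial. Each of the three desired relations is then the coefficient of a specific monomial. The coefficient of $a_{i}^{3}a_{j}$ (with $i\neq j$) can only come from the cross term $2(a_{i}^{2}p_{i})(a_{i}a_{j}D_{ij})$, yielding \eqref{the first rel}. The coefficient of $a_{i}^{2}a_{j}a_{k}$ for distinct $i,j,k$ combines $2(a_{i}^{2}p_{i})(a_{j}a_{k}D_{jk})$ with the two orderings of $(a_{i}a_{j}D_{ij})(a_{i}a_{k}D_{ik})$, producing \eqref{the second rel}. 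Finally the coefficient of $a_{i}a_{j}a_{k}a_{l}$ for four distinct indices is a sum over the three pair-partitions of $\{i,j,k,l\}$, each appearing with multiplicity $2$ from the two orderings, giving \eqref{the thrid rel}.

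I do not anticipate any genuine obstacle: the argument is essentially a mechanical expansion followed by the Zariski-density extraction of coefficients. As a byproduct the coefficients of $a_{i}^{4}$ and $a_{i}^{2}a_{j}^{2}$ recover the tautology $p_{i}^{2}=0$ and the auxiliary identity $D_{ij}^{2}=-2p_{i}p_{j}$, the latter of which will be convenient in the subsequent dimension count establishing Proposition~\ref{injective to cohomology}.
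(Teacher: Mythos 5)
Your proof is correct, and it takes a genuinely different route from the paper's. You deduce all three relations from the single identity $(f^{*}[0])^{2}=0$ (valid because $f^{*}$ is a ring homomorphism on Chow rings of smooth varieties, and $[0]^{2}\in CH^{2}(E)=0$), expanded via the formula $f^{*}[0]=\sum_i a_i^{2}p_i+\sum_{i<j}a_ia_jD_{ij}$ established just before the lemma, and then extract each relation as the coefficient of an appropriate monomial in $(a_1,\dots,a_n)$. The paper instead argues case by case: \eqref{the first rel} is a direct cycle computation $p_1p_2-p_1\Delta_{12}=0$; \eqref{the second rel} is extracted from the small-diagonal identity $\Delta_{12}\cap\Delta_{13}=\Delta_{12}\cap\Delta_{23}$ in $CH^{2}(E^{3})$ followed by the automorphism $(x_1,x_2,x_3)\mapsto(x_1,-x_2,x_3)$ to isolate the odd part; and \eqref{the thrid rel} is obtained by pulling back \eqref{the second rel} along $\varphi(x_1,x_2,x_3,x_4)=(x_1,x_2-x_4,x_3)$. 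Your approach is shorter and more uniform, subsumes all three relations (plus the bonus $p_i^{2}=0$ and $D_{ij}^{2}=-2p_ip_j$) under one polynomial identity, and avoids the small-diagonal manipulation entirely; the paper's approach is more elementary cycle-by-cycle and makes the geometric content of \eqref{the second rel} explicit. The one step worth stating explicitly in your write-up is that the pullback $f^{*}:CH^{*}(E)\to CH^{*}(E^{n})$ is multiplicative (Gysin pullback for a morphism between smooth varieties, or flat pullback since a surjective homomorphism of abelian varieties is flat), which is what licenses $(f^{*}[0])^{2}=f^{*}([0]^{2})$.
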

\begin{proof}
The equality $p_1D_{12}=p_1p_2-p_1\Delta=0$ is trivial.

In $CH^2(E_1\times E_2\times E_3)$, we have $\Delta_{12}\cap
 \Delta_{13}=\Delta_{12}\cap \Delta_{23}$.
Therefore
\begin{align*}
&(p_1+p_2-D_{12})
(p_1+p_3-D_{13})=
(p_1+p_2-D_{12})
(p_2+p_3-D_{23}),\\
& p_1p_2+p_2p_3+p_3p_1-p_3D_{12}-p_2D_{13}+D_{12}D_{13} \\
=&
p_1p_2+p_2p_3+p_3p_1-p_3D_{12}-p_1D_{23}+D_{12}D_{23}
\end{align*}
and we have
$$
-p_2D_{13}+D_{12}D_{13}=-p_1D_{23}+D_{12}D_{23}.
$$
Applying an automorphism $(x_1,x_2,x_3)\mapsto (x_1,-x_2,x_3)$, we have
$$
-p_2D_{13}-D_{12}D_{13}=+p_1D_{23}+D_{12}D_{23}
$$
and we get the equality 
\begin{equation}
\label{typical equality}
p_2D_{13}=-D_{12}D_{23}.
\end{equation}
(Chow groups are considered as $\bold Q$-coefficient.)
We consider the map $\varphi:E_1\times E_2\times E_3\times E_4 \to E_1\times
 E_2\times E_3$ defined by $(x_1,x_2,x_3,x_4) \mapsto (x_1,x_2-x_4,x_3)$.
Then applying $\varphi$ to the equality (\ref{typical equality})
and using the equality
$$
\varphi^*(p_2)=p_2+p_4-D_{42},\quad
\varphi^*(D_{12})=D_{12}-D_{14},\quad
\varphi^*(D_{23})=D_{23}-D_{34},
$$
we have
$$
(p_2+p_4-D_{42})D_{13}=-(D_{12}-D_{14})(D_{23}-D_{34}),
$$
which implies the third equality of the lemma.
\end{proof}
\begin{lemma}
\label{lemma generation}
$CH^m_{lin}(E^n)$ is generated by the set of elements of the form
\begin{equation}
\label{form of monomial}
p_{i_1}\cdots p_{i_p}D_{j_1k_1}\cdots D_{j_qk_q}
\end{equation}
such that 
\begin{enumerate}
\item
\label{first cond for monomials}
$i_1, \dots, i_p,j_1, \dots,j_q,k_1, \dots, k_q$ are distinct and
\item
\label{second cond for monomials}
(Gelfand-Zetlin condition)
$$
\begin{matrix}
j_1 & < & j_2 & <\cdots< & j_q \\
\bigwedge & & \bigwedge & & 
\bigwedge & & \\
k_1 & < & k_2 & <\cdots< & k_q \\
\end{matrix}.
$$
\end{enumerate}
\end{lemma}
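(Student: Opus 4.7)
My plan is to start with an arbitrary monomial in the generators $p_i$ and $D_{jk}$ and successively apply the relations $D_{jk}=D_{kj}$, $p_i^2=0$ (which holds since $[0]\in CH^1(E)$ satisfies $[0]^2\in CH^2(E)=0$), and the three identities of Lemma \ref{The useful lemma}, until the monomial is expressed as a sum of ones of the desired form. First I establish the distinctness condition (\ref{first cond for monomials}). If two $p$-factors share an index, the monomial vanishes by $p_i^2=0$; if a $p_i$ and a $D_{ij}$ share $i$, it vanishes by (\ref{the first rel}); if two $D$-factors share an index, relation (\ref{the second rel}) lets me substitute $D_{ij}D_{ik}=-p_iD_{jk}$, strictly decreasing the number of $D$-factors. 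Iterating reduces to a sum of monomials all of whose indices are distinct. Using $D_{jk}=D_{kj}$ I then arrange each $D$-factor so that $j<k$, and I sort the factors using commutativity of the Chow ring so that $j_1<j_2<\cdots<j_q$.

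Next I enforce the Gelfand--Zetlin inequality $k_1<k_2<\cdots<k_q$. If it fails there is an adjacent position $l$ with $j_l<j_{l+1}$ but $k_l>k_{l+1}$. If we had $k_l<j_{l+1}$ then $k_l<j_{l+1}<k_{l+1}$, contradicting $k_l>k_{l+1}$; hence $j_l<j_{l+1}<k_{l+1}<k_l$, so $(j_{l+1},k_{l+1})$ is nested inside $(j_l,k_l)$. Applying (\ref{the thrid rel}) to the four distinct indices $j_l<j_{l+1}<k_{l+1}<k_l$ gives
\begin{equation*}
D_{j_lk_l}D_{j_{l+1}k_{l+1}}=-D_{j_lj_{l+1}}D_{k_{l+1}k_l}-D_{j_lk_{l+1}}D_{j_{l+1}k_l}.
\end{equation*}
A direct check shows that each of the four new gaps $j_{l+1}-j_l$, $k_l-k_{l+1}$, $k_{l+1}-j_l$, $k_l-j_{l+1}$ is strictly less than the gap $k_l-j_l$ of the outer pair.

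The main obstacle is termination: an Arnold-type substitution may introduce pairs that create new non-GZ configurations elsewhere in the monomial, so a naive induction on the number of inversions can fail. To handle this I attach to each monomial (after the distinctness reduction) the multiset of gaps $\mu(M)=\{k-j : D_{jk}\text{ is a factor of }M\}$, ordered by the Dershowitz--Manna multiset order, in which removing one element $x$ and inserting finitely many elements all strictly less than $x$ strictly decreases the multiset. The substitution above removes the gap $k_l-j_l$ and the gap $k_{l+1}-j_{l+1}$ (which is itself smaller than $k_l-j_l$) and inserts two gaps each strictly less than $k_l-j_l$, while leaving the gaps of all other $D$-factors unchanged; hence $\mu(M)$ strictly decreases. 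Since $\mu$ takes values in a well-founded order, iteration terminates and yields a sum of monomials satisfying both (\ref{first cond for monomials}) and (\ref{second cond for monomials}), completing the proof.
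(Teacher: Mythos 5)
Your proof is correct and follows the same broad outline as the paper's — the same two-stage reduction (first kill repeated indices using $p_i^2=0$, (\ref{the first rel}), and (\ref{the second rel}); then straighten the Gelfand--Zetlin inversions using (\ref{the thrid rel})), but with a genuinely different termination certificate. The paper proves termination by ordering the (normalized) offending monomials lexicographically with leading term ``minus the position of the first inversion,'' taking a minimal bad monomial, and showing that the two monomials produced by (\ref{the thrid rel}) have strictly larger first-inversion index, hence cannot be bad. That argument is sound but requires keeping track of where the first inversion sits after one re-sorts the factors, a slightly delicate bookkeeping step. You instead attach to each monomial the multiset of gaps $\{k-j\}$ of its $D$-factors and invoke the Dershowitz--Manna well-founded multiset order; since the outer gap $k_l-j_l$ dominates all four gaps that can appear on the right-hand side of (\ref{the thrid rel}), the multiset strictly decreases under every straightening step, and the multiset of gaps is manifestly unchanged by commuting and re-normalizing the factors, so no re-sorting analysis is needed. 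Your invariant is thus more robust (it is position-independent) at the modest cost of citing the multiset-order machinery. One small stylistic point: the initial case split on whether $k_l<j_{l+1}$ is superfluous — $j_l<j_{l+1}<k_{l+1}<k_l$ already follows directly from the sorting of the $j$'s, the normalization $j_{l+1}<k_{l+1}$, and the hypothesis $k_{l+1}<k_l$.
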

\begin{proof}
By the relations
(\ref{the first rel}) and
(\ref{the second rel}), $CH^*_{lin}(E^n)$
is generated by the monomial of the form $(\ref{form of monomial})$
with the condition (\ref{first cond for monomials}) of Lemma
\ref{lemma generation}.
Let $W$ be the subspace of $CH^*(E^n)$ generated by monomials for the form 
$(\ref{form of monomial})$ with the conditions
(\ref{first cond for monomials}) and (\ref{second cond for monomials}) of Lemma
\ref{lemma generation}. Assume that there exists a monomial of the form
$(\ref{form of monomial})$ which is not an element of $W$.
We set $S=\{j_1, \dots, j_q,k_1,\dots, k_q\}$.
We consider the set $\Cal M$ of monomials of the form
\begin{equation}
\label{form of monomial copy}
M'=p_{i_1}\cdots p_{i_p}D_{j'_1k'_1}\cdots D_{j'_qk'_q},
\end{equation}
which is not contained in $W$ such that
$\{j'_1, \dots,j'_q,k'_1, \dots, k'_q\}=S$ and 
$$
\begin{matrix}
j'_1 & < & j'_2 & <\cdots< & j'_q \\
\bigwedge & & \bigwedge & & 
\bigwedge & & \\
k'_1 &  & k'_2 & \cdots & k'_q. \\
\end{matrix}
$$
By the assumption, $\Cal M$ is not an empty set.
Since the monomial (\ref{form of monomial copy}) is not contained in
$W$, there exists a $t$ such that $k'_t>k'_{t+1}$.
The minimal of such $t$ is denoted by $t(M')$.
We introduce a total order of $\Cal M$ by the lexicographic
order of
\begin{equation}
\label{total order}
(-t(M'),j'_1,k'_1, \dots, j'_q,k'_q).
\end{equation}
Let $M'$ be the minimal element in $\Cal M$ and set $t=t(M')<q$.
Then we have the equality
$$
\begin{matrix}
j'_1 & < & j'_2 & <\cdots< & j'_t & < & j'_{t+1} &\cdots \\
\bigwedge & & \bigwedge & & 
\bigwedge & & \bigwedge  \\
k'_1 & < & k'_2 & <\cdots< & k'_t & > & k'_{t+1} &\cdots \\
\end{matrix}.
$$
By the equality (\ref{the thrid rel}), we have
$$
D_{j'_t,k'_t}D_{j'_{t+1},k'_{t+1}}+
D_{j'_t,j'_{t+1}}D_{k'_{t+1},k'_t}+
D_{j'_t,k'_{t+1}}D_{j'_{t+1},k'_t}=0.
$$
Let $M''$ and $M'''$ be monomial obtained by replacing the factor
$D_{j'_t,k'_t}D_{j'_{t+1},k'_{t+1}}$ by
$D_{j'_t,j'_{t+1}}D_{k'_{t+1},k'_t}$ and
$D_{j'_t,k'_{t+1}}D_{j'_{t+1},k'_t}$.
Then $t(M'')\geq t(M')+1$ and $t(M''')\geq t(M')+1$.
Therefore $M'',M'''\in W$. Since $M'+M''+M'''=0$, we have $M'\in W$ and
a contradiction to the choice of $M'$.
\end{proof}
\begin{lemma}
\label{linear indpendentness of standard monomial}
The set of the images $cl(p_{i_1}\cdots p_{i_p}D_{j_1k_1}\cdots D_{j_qk_q})$
with the conditions 
(\ref{first cond for monomials}) and (\ref{second cond for monomials}) of Lemma
\ref{lemma generation} are linearly independent in $H^*(\overline{E}^n)$.
\end{lemma}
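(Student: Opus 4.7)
The plan is to combine an explicit Kunneth computation with a reduction to the classical fact that the $\mathrm{SL}_2$-invariants in a tensor power of the standard two-dimensional representation admit a basis indexed by a Catalan family of perfect matchings.

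First, I would compute the cycle classes. From the Kunneth formula for the diagonal class in $H^2(\overline{E}\times\overline{E})$, namely $cl(\Delta)=p_1+p_2+\alpha_1\beta_2-\beta_1\alpha_2$, one obtains $cl(p_i)=\alpha_i\beta_i$ and $cl(D_{ij})=\beta_i\alpha_j-\alpha_i\beta_j$. Consequently, the class of a standard monomial $M=p_{i_1}\cdots p_{i_p}D_{j_1k_1}\cdots D_{j_qk_q}$ is supported in the Kunneth summand of $H^{2(p+q)}(\overline{E}^n)$ with degree $2$ at each singleton index $i_a$, degree $1$ at each paired index $j_b$ or $k_b$, and degree $0$ at indices outside the support $I=\{i_a\}\cup\{j_b,k_b\}$.

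Second, I would reduce to a single Kunneth summand. Two standard monomials with distinct invariants $(I,S)$, where $S=\{i_1,\dots,i_p\}$ is the singleton part, live in distinct Kunneth summands and are thus automatically linearly independent. Fixing $(I,S)$ and setting $T=I\setminus S$, the common nonzero factor $\prod_a\alpha_{i_a}\beta_{i_a}$ can be cancelled. This reduces the lemma to showing that the elements $|\pi\rangle:=\prod_b cl(D_{j_bk_b})\in\bigotimes_{t\in T}H^1(\overline{E}_t)$ are linearly independent as $\pi=\{(j_b,k_b)\}$ ranges over Gelfand-Zetlin matchings of the ordered set $T$ of cardinality $2q$.

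Third, I would identify these tensors as $\mathrm{SL}_2$-invariants. Writing $V_t=H^1(\overline{E}_t)\cong k^2$, each $cl(D_{jk})\in V_j\otimes V_k$ is, up to sign, the symplectic singlet, hence $\mathrm{SL}_2$-invariant, and so every $|\pi\rangle$ lies in the $\mathrm{SL}_2$-invariant subspace of $V^{\otimes 2q}$. This invariant subspace has dimension equal to the $q$-th Catalan number $C_q$, matching the count of Gelfand-Zetlin matchings; hence the lemma reduces to the statement that the $C_q$ tensors $|\pi\rangle$ span the invariant subspace. It is classical that the states indexed by all perfect matchings of $T$ span the invariant subspace, and the three-term relation of Lemma \ref{The useful lemma} allows one to express any nested (non-GZ) matching tensor as a combination of GZ ones; together, these yield the desired span.

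The main obstacle is the final span step. A self-contained alternative avoids invoking Schur-Weyl duality and proceeds by induction on $q$: after projecting the hypothetical relation onto the $\alpha_{t_{\max}}$-component of the Kunneth decomposition with respect to $\overline{E}_{t_{\max}}$, where $t_{\max}=\max T$ is forced to be $k_q(\pi)$ in every GZ matching, the surviving equation $\sum_\pi c_\pi\cdot cl(M_{\pi,<q})\cdot\beta_{j_q(\pi)}=0$ is analyzed by extracting individual Kunneth components. Exploiting that the factor $\beta_{j_q(\pi)}$ forces the symbol $\beta$ at position $j_q(\pi)$ isolates, for each admissible $j$, a sub-relation among the truncated GZ matchings on $T\setminus\{j,t_{\max}\}$, which is then dispatched by the inductive hypothesis. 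The combinatorial bookkeeping verifying that the Kunneth extraction cleanly separates the contributions of distinct values of $j_q(\pi)$ is the delicate part.
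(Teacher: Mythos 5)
Your argument is correct and takes a genuinely different route from the paper. The paper's proof is a short, self-contained leading-term argument: it introduces a lexicographic order on the cohomology monomials $\alpha_{i_1}\beta_{i_1}\cdots\alpha_{j'_1}\beta_{k'_1}\cdots$ and checks that the lowest-order term appearing in $cl(p_{i_1}\cdots D_{j_qk_q})$ for a Gelfand--Zetlin monomial is exactly the corresponding monomial, so the change-of-basis matrix is triangular with nonzero diagonal. Your proof instead isolates the pairing part and recognizes the $cl(D_{jk})$ as the symplectic singlet in $H^1(\overline{E}_j)\otimes H^1(\overline{E}_k)$, so the GZ tensors land in the $\mathrm{SL}_2$-invariant subspace of $V^{\otimes 2q}$, which has dimension $C_q$; since GZ (equivalently non-nesting) matchings also number $C_q$, independence reduces to spanning, which you obtain by combining the first fundamental theorem of invariant theory for $\mathrm{SL}_2$ (all matching tensors span the invariants) with the reduction of arbitrary matchings to GZ ones via the three-term relation (\ref{the thrid rel}) — the same combinatorics that already underlies Lemma \ref{lemma generation}. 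What this buys is a cleaner conceptual picture and an a priori explanation of \emph{why} the GZ monomials form a basis (they realize the Catalan dimension of the invariant subspace), at the cost of importing classical invariant theory; the paper's argument avoids all of this at the price of a slightly ad hoc ordering. One caveat: your proposed ``self-contained alternative'' by induction on $q$ with Kunneth extraction is only a sketch — as you acknowledge, the bookkeeping that separates the contributions of the distinct $j_q(\pi)$ is exactly where a careless argument could fail, and in effect a careful version of that step amounts to an ordering argument rather like the one the paper actually uses.
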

\begin{proof}
Since $cl(D_{ij})=\alpha_i\beta_j-\beta_i\alpha_j$,
it is enough to prove the linear independence for the set
of monomial for fixed $\{i_1, \dots, i_p\}$ and $S=\{j_1, k_1, \dots,
 j_q,k_q\}$.
For a monomial 
\begin{equation}
\label{monomial for alpha beta}
\alpha_{i_1}\beta_{i_1}\cdots \alpha_{i_p}\beta_{i_q}
\alpha_{j'_1}\beta_{k'_1}\cdots \alpha_{j'_q}\beta_{k'_q} 
\end{equation}
with $j'_1<\cdots <j'_q, k'_1<\cdots <k'_q$, we introduce lexicographic order by
 deleting ``$-t(M')$''-part of (\ref{total order}).
Let $p_{i_1}\cdots p_{i_p}D_{j'_1k'_1}\cdots D_{j'_qk'_q}$
be an element with the condition
(\ref{first cond for monomials}) and (\ref{second cond for monomials})
of Lemma \ref{lemma generation}.
Then the lowest monomial appeared in $cl(p_{i_1}\cdots p_{i_p}D_{j'_1k'_1}\cdots
 D_{j'_qk'_q})$ of the above form is equal to (\ref{monomial for alpha beta}).
Therefore they are independent.
\end{proof}
\begin{proof}[Proof of Proposition \ref{injective to cohomology}]
The proof of the proposition is a consequence of Lemma 
\ref{lemma generation} and \ref{linear indpendentness of standard monomial}.
\end{proof}

\begin{corollary}
Let $p_1, \dots, p_m$ be elements in 
the correspondence algebra  $CH^n(E^n\times E^n)$
of $E^n$, which are contained in
$CH_{lin}^n(E^n\times E^n)$.
The image of $p_i$ in $H^{2n}(E^n\times E^n,\bold Q)$
is written by $cl(p_i)$.
\begin{enumerate}
\item
If $cl(p_1)=cl(p_2)$, then
$p_1=p_2$ in $CH^n(E^n\times E^n,\bold Q)$.
In particular, the images
$p_1^*CH^i(E^n,j)$ and $p_2^*CH^i(E^n,j)$ of
the correspondences $p_1^*$ and $p_2^*$ in 
the higher Chow group
$CH^i(E^n,j)$ are equal.
\item
The element $p_i$ is a projector if and only if
the class $H^{2n}(E^n\times E^n,\bold Q)$ is 
a projector in the cohomological correspondence ring.
\item
The correspondences $\{p_i\}$ are orthogonal
(resp. complete set of projectors)
if and only if $\{cl(p_i)\}$ are orthogonal
(resp. complete set of projectors).
\end{enumerate}
\end{corollary}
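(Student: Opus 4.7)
The plan is to deduce all three parts from Proposition \ref{injective to cohomology} together with the closure of $CH^*_{lin}$ under the operations underlying correspondence composition. Part (1) is immediate: the difference $p_1-p_2$ lies in $CH^n_{lin}(E^n\times E^n)$ and has vanishing cycle class, so Proposition \ref{injective to cohomology} forces $p_1=p_2$ in $CH^n(E^n\times E^n)$. Equal correspondences induce equal maps on higher Chow groups, proving the first assertion.

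For parts (2) and (3) the main technical step is to show $CH^*_{lin}$ is closed under correspondence composition $p\circ q=\pi_{13*}(\pi_{12}^*p\cdot\pi_{23}^*q)$. Pullback by the projections $\pi_{12},\pi_{23}:E^{3n}\to E^{2n}$ sends generators $f^*[0]$ to generators $(f\circ\pi)^*[0]$, and the intersection product preserves $CH^*_{lin}$ since the latter is a subring. The non-trivial statement is that $\pi_{13*}$ sends $CH^*_{lin}(E^{3n})$ into $CH^*_{lin}(E^{2n})$; by iterating, it suffices to treat a single projection $\pi:E^m\to E^{m-1}$ forgetting the last coordinate. Using the standard-monomial basis from Lemma \ref{lemma generation}, in which every index appears at most once and the Gelfand-Zetlin condition forces the largest index $m$ (if present) to occur either as some $i_s$ or as some $k_t$, I split into three cases: $m$ missing from $M$; $M=p_m\cdot M'$; $M=D_{jm}\cdot M'$, with $M'$ free of $m$ in each case. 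The projection formula combined with the direct evaluations $\pi_*(1)=0$, $\pi_*(p_m)=1$, $\pi_*(\Delta_{jm})=1$ (and hence $\pi_*(D_{jm})=0$) gives $\pi_*(M)\in\{0,M'\}\subset CH^*_{lin}(E^{m-1})$.

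With closure in hand, parts (2) and (3) follow from part (1). Since $\Delta_{E^n}=\prod_{i=1}^n(p_i+p_{n+i}-D_{i,n+i})\in CH^n_{lin}(E^{2n})$ (the product equals the intersection class because the factors meet properly), every identity to be verified ($p_i\circ p_i=p_i$, $p_i\circ p_j=0$, $\sum_i p_i=\Delta_{E^n}$) is an equality in $CH^n_{lin}(E^{2n})$. Because the cycle class map is a ring homomorphism for correspondence composition, $cl(p_i\circ p_j)=cl(p_i)\circ cl(p_j)$, and by part (1) each such Chow identity holds if and only if its cohomological image does.

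The hard part is the Chow-level pushforward closure. The cohomological analogue is transparent from the $SL_2$-equivariance of integration over $E^n$ (viewing $H^1(E)$ as the standard $SL_2$-representation), but this does not lift directly to the Chow level since the kernel of the cycle class map on the full $CH^*(E^{2n})$ is not known to be trivial. The combinatorial argument via standard monomials is what pins $\pi_{13*}$ down to $CH^*_{lin}$, and the exhaustion of cases relies crucially on the distinctness of indices built into Lemma \ref{lemma generation}.
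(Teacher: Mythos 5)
Your proposal is correct, and it supplies precisely the argument that the paper omits: the corollary is stated without proof, evidently treated as immediate from Proposition \ref{injective to cohomology}. The step you rightly isolate as nontrivial --- that $CH^*_{lin}$ is closed under correspondence composition, in particular under $\pi_{13*}$ --- is implicit in the paper and is exactly what makes parts (2) and (3) reduce to part (1). Your case analysis on the standard monomials of Lemma \ref{lemma generation} (using that each index occurs at most once, so $m$ appears in none, one $p_m$, or one $D_{jm}$ factor, combined with $\pi_*(1)=0$, $\pi_*(p_m)=1$, $\pi_*(D_{jm})=0$ and the projection formula) is a clean and correct way to establish this. The observation that $\Delta_{E^n}=\prod_{i=1}^n(p_i+p_{n+i}-D_{i,n+i})\in CH^n_{lin}(E^{2n})$ is also needed for the ``complete set of projectors'' clause of (3) and you include it. Minor remark: for the closure argument you only need the monomials of Lemma \ref{lemma generation} to \emph{generate}, not to be a basis, since $\pi_{13*}$ is linear, so Lemma \ref{linear indpendentness of standard monomial} is not required at this step.
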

\begin{corollary}
\label{two algebraic correspondence inducing det and Q}
Let $G=\frak S_2\rtimes \langle\sigma\rangle$
and $\rho$ be the character of $G$ defined by
$\rho((1,2))=1$ and $\rho(\sigma)=-1$. Here the element $\sigma$
is the inversion of $E$.
Let $\Delta^+$ and $\Delta^-$ be the diagonal divisor of $E\times E$ and
 the divisor
defined by $x+y=0$, where $(x,y)$ are the coordinates of $E\times E$.
The maps $Z^i(E\times E\times X,j)\to Z^i(E\times E\times X,j)$ 
on Bloch cycle groups induced by
\begin{align*}
 \varphi_1: 
Z\mapsto -\frac{1}{4}(\Delta^+-\Delta^-) \times pr_X(Z\cap
\{ (\Delta^+-\Delta^-)\times X\})
\end{align*}
and
$$
\varphi_2: 
Z\mapsto \frac{1}{\#G}\sum_{g\in G}\rho(g)g^*Z
$$
induce the same maps in the 
higher Chow group $CH^i(E\times E\times X,j)$.
\end{corollary}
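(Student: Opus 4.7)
The plan is to reduce the statement to a cohomological equality via the preceding Corollary, which says that two linear correspondences in $CH^n_{lin}(E^n\times E^n)$ with the same $\ell$-adic cycle class in $H^{2n}_{\text{et}}(\overline{E}^n\times \overline{E}^n,\bold Q_l)$ coincide as Chow cycles, and hence induce the same maps on any higher Chow group. Applied with $n=2$ (so $E^n = E\times E$), this turns the claim into an explicit computation in $H^4_{\text{et}}(\overline{E}^4,\bold Q_l)$.

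First I will realize both $\varphi_1$ and $\varphi_2$ as correspondence actions on $CH^i(E\times E\times X,j)$ by cycles in $CH^2((E\times E)^2)$. The operator $\varphi_2$ is manifestly the action of $C_2 = \frac{1}{\#G}\sum_{g\in G}\rho(g)[\Gamma_g]$, where $\Gamma_g\subset (E\times E)^2$ is the graph of $g\colon E\times E\to E\times E$. Unwinding the formula for $\varphi_1$ as the composite of intersection with the divisor $(\Delta^+-\Delta^-)\times X$, pushforward along $pr_X$, and external product with $-\tfrac{1}{4}(\Delta^+-\Delta^-)$, one recognizes it as the correspondence action of
$$
C_1 = -\tfrac{1}{4}(\Delta^+-\Delta^-)\boxtimes(\Delta^+-\Delta^-)\in CH^2((E\times E)^2),
$$
the external product of the same linear divisor on the input and output copies of $E\times E$.

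Next I will check linearity. For each $g\in G$, the graph $\Gamma_g\subset E^4$ is cut out by two equations of the form $\{x_k\mp x_i=0\}$, each defining the pullback of $[0]\in CH^1(E)$ along a homomorphism of abelian varieties $E^4\to E$; hence $[\Gamma_g]\in CH^2_{lin}(E^4)$, and so $C_2\in CH^2_{lin}(E^4)$. Similarly $\Delta^+,\Delta^-\subset E\times E$ are pulled back from $[0]$ along the homomorphisms $(x,y)\mapsto x-y$ and $(x,y)\mapsto x+y$, so $C_1\in CH^2_{lin}(E^4)$ as well. The preceding Corollary now reduces the problem to verifying $cl(C_1)=cl(C_2)$ in $H^4_{\text{et}}(\overline{E}^4,\bold Q_l)$.

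The cohomological verification is a direct Künneth computation. Choosing a symplectic basis $\alpha_i,\beta_i$ of $H^1_{\text{et}}(\overline{E}_i,\bold Q_l)$ and using $cl(\Delta^+)-cl(\Delta^-) = 2(\beta_1\alpha_2-\alpha_1\beta_2)$ in $H^2$, both classes admit explicit expansions; the normalization $-\tfrac{1}{4}$ in $\varphi_1$ is precisely what is needed to match the $G$-averaging in $\varphi_2$, so that both cohomology classes realize the cohomological projector onto the $\rho$-isotypic summand of $H^*_{\text{et}}(\overline{E\times E},\bold Q_l)$ under the $G$-action. The main obstacle I anticipate is bookkeeping rather than conceptual: correctly distinguishing the input and output copies of $E\times E$ inside $E^4$ and tracking signs in the anticommutative Künneth algebra. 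No further geometric input beyond Proposition \ref{injective to cohomology} and the linearity of graphs of homomorphisms is required.
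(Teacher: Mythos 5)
Your proposal is correct and follows essentially the same route as the paper's proof: both $\varphi_1$ and $\varphi_2$ are realized as actions of correspondences in $CH^2_{lin}(E^4)$ (the paper writes the second correspondence out explicitly as $\frac{1}{8}\bigl[(\Delta^+_{13}-\Delta^-_{13})(\Delta^+_{24}-\Delta^-_{24})+(\Delta^+_{14}-\Delta^-_{14})(\Delta^+_{23}-\Delta^-_{23})\bigr]$ while you keep it packaged as $\frac{1}{\#G}\sum_g \rho(g)[\Gamma_g]$), the corollary to Proposition \ref{injective to cohomology} reduces the equality to a comparison of cohomology classes, and the verification is a Künneth/symplectic-basis computation. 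Your observations that the graphs $\Gamma_g$ and the divisors $\Delta^\pm$ are linear are exactly the points the paper leaves implicit, and your identification of $\varphi_1$ with the external-product correspondence $-\frac14(\Delta^+-\Delta^-)\boxtimes(\Delta^+-\Delta^-)$ matches the paper's $-\frac14(\Delta^+_{12}-\Delta^-_{12})\times(\Delta^+_{34}-\Delta^-_{34})$.
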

\begin{proof}
The pull back of the divisors $\Delta^+$ and $\Delta^-$ by the
 $(i,j)$-component
are denoted by $\Delta_{ij}^+$ and $\Delta^-_{ij}$, respectively.
Since the maps $\varphi_1$ and $\varphi_2$ are induced by 
the algebraic correspondences
$$-\frac{1}{4}(\Delta^+_{12}-\Delta^-_{12}) \times (\Delta^+_{34}-\Delta^-_{34})$$
and
\begin{align*}
&\frac{1}{8}(\Delta_{13}^+\Delta_{24}^+
-\Delta_{13}^-\Delta_{24}^+
-\Delta_{13}^+\Delta_{24}^-
+\Delta_{13}^-\Delta_{24}^- \\
&+\Delta_{14}^+\Delta_{23}^+
-\Delta_{14}^-\Delta_{23}^+
-\Delta_{14}^+\Delta_{23}^-
+\Delta_{14}^-\Delta_{23}^-)
\end{align*}
in $CH^2_{lin}(E_1\times E_2\times E_3\times E_4)$.
By computing the images of two cycles in $H^4(E_1\times E_2\times
 E_3\times E_4)$,
we can check the identity.
\end{proof}

\subsection{Naive mixed elliptic motives}

\subsubsection{}

Let $S=GL(2)$ and $\Cal O=\Cal O_S$.
The natural two dimensional representation is written as $V$.
The set of isomorphism classes of irreducible representations
is written by $Irr_2$.
Then using alternating and symmetric tensor products, we have
$$
Irr_2=\{(Alt^2)^{\otimes n}\otimes Sym^m \mid 
n\in \bold Z, m \in \bold N\}.
$$
Let $E$ be an elliptic curve over a field $K$. We assume that $E$ does not have complex 
multiplication, i.e. $End_{\bar{K}}(E)=\bold Z$. 
Let $Z^i(X,j)=Z^{i,-j}(X)$ be the cubical anti-symmetric Bloch higher cycle group.
This becomes a complex which is denoted by $Z^{i,\bullet}(Z)$.
Let $A$ be a finite set. The $A$-power of the elliptic curve $E$
is denoted by $E^A$. 
The group $(\bold Z/2\bold Z)^A$ acts on $Z^{i,\bullet}(E^A)$ by
the inversions of elliptic curves for each component.
The $(-,\dots, -)$-part of $Z^{i,\bullet}(E^A)$ under the action of
$(\bold Z/2\bold Z)^A$ is denoted by $Z_-^{i,\bullet}(E^A)$.
The group $\frak S[A]$ acts on $Z_-^{i,\bullet}(E^A)$.
We define the complex $\Cal H^{\bullet}(E^A, E^B, k)$ by
\begin{equation}
\label{definition of hom as alg cycle and augmentation}
\Cal H^{\bullet}(E^A, E^B, k)=
\Lambda^* (A) \otimes Z^{a+k,\bullet}_{-}
(E^A\times E^B)\otimes \Lambda(B)[-2k],
\end{equation}
where $a=\# A$. 
The complexes $\Lambda^*(A)$ and $\Lambda(B)$ are defined
in \S \ref{orientation is introduced here}.

Then the groups $\frak S[A]$ and $\frak S[B]$ act on this complex.
If $A=[1,a], B=[1,b]$, then we have
\begin{align*}
\Cal H^i(E^A,E^B,k)
=&
f_a\wedge\cdots \wedge f_1\cdot
Z^{a+k}_{-}
(E^a\times E^b,2k+a-b-i)\cdot 
e_1\wedge\cdots \wedge e_b
\end{align*}

\subsubsection{Object and morphisms of $(EM)$}
For a natural number $n$, $E^{[1,n]}$ are denoted
by $E^n$. We define quasi-DG category 
of naive elliptic motives $(EM)$.
As for the definition of quasi-DG category, 
see \S \ref{definitions quasi-DG category}.
The objects and complexes of
homomorphisms of $(EM)$ are defined
as follows:
\begin{enumerate}
\item
An object of $(EM)$ is a finite dimensional complex of 
$\Cal O$-comodule.
\item
$Sym^a\otimes (Alt^2)^{\otimes (-p)}$ is denoted
by $Sym^a(p)$.
Let $Sym^a(s),Sym^b(t)$ be elements in $Irr_2$.
We set
\begin{align*}
&
\underline{Hom}_{EM}^{\bullet}(
Sym^a(s),
Sym^b(t)
) \\
=&
sym^a
\Cal H^{\bullet}(E^{a}, E^{b}, t-s)sym^b.
\end{align*}
\item
Let $U_1,U_2$ be finite dimensional complexes of
$\Cal O$-comodules.
The complex of homomorphisms is defined by
\begin{align*}
&\underline{Hom}_{EM}(U_1, U_2) \\
=&
\oplus_{V_1,V_2\in Irr_2}
Hom_{\Cal O}(U_1,V_1)
\otimes
\underline{Hom}_{EM}(V_1,V_2)
\otimes
Hom_{\Cal O}(V_2, U_2).
\end{align*}

\end{enumerate}
\subsubsection{Multiplication map for $(EM)$}

Here $\pi$ is the projector to the $\chi$-part 
for the action of group
and $Z^{\bullet}(X,\bullet)$ denotes the Bloch cycle group
of cubical type of $X$.
The intersection theory for Bloch cycle complexes we have
a ``homomorphism'' of complex
\begin{align*}
& \Pi:Z^{p_1}(E^{m_1}\times E^{m_2},
q_1) 
\otimes
Z^{p_2}(E^{m_2}\times
 E^{m_3},
q_2) \\
&\to
Z^{p_1+p_2-m_2}(E^{m_1}\times
 E^{m_3},
q_1+q_2).
\end{align*}
Roughly speaking, this intersection homomorphism $\Pi$ is 
defined by
$$
z\otimes w \mapsto pr_{13*}((z\times w)\cap 
(E^{m_1}\times \Delta_{m_2} \times E^{m_3} \times \square^{q_1+q_2})).
$$
Here $\Delta_m$ is the image of the diagonal map $E^m \to E^m \times E^m$ and
$pr_{13*}$ is the push forward for the map
$$
(E^{m_1}\times E^{m_2} \times E^{m_3} \times \square^{q_1+q_2}))\to
(E^{m_1}\times E^{m_3} \times \square^{q_1+q_2})).
$$
\subsubsection{}
\label{definitions quasi-DG category}
To get the correct definition of $\Pi$, it is necessary to
take a quasi-isomorphic subcomplex of
$Z^{p_1}(E^{m_1}\times E^{m_2},q_1) \otimes
Z^{p_2}(E^{m_2}\times E^{m_3},q_2)$ consisting of elements
which intersect properly to all the boundary stratification of
$(E^{m_1}\times \Delta_{m_2} \times E^{m_3} 
\times \square^{q_1+q_2}))$.
See \cite{Han}, Proposition 1.3, p.112, and \cite{Le}, Corollary 4.8,
p.297, for details. 
A homomorphism of complex which is defined only on
a quasi-isomorphic subcomplex is called a quasi-morphism.
Similarly, a DGA whose ``multiplication'' is defined only
on a quasi-isomorphic subcomplex is called a quasi-DGA.
We can similarly define quasi-DG category, quasi-comodule over a quasi-DG Hopf 
algebra. 
\subsubsection{}
Thus by taking projector and the above intersection pairing,
we have a quasi-morphism of complexes.
\begin{align}
\label{eqn:product str}
&
\underline{Hom}_{EM}^i(
Sym^{m_1}(q_1),
Sym^{m_2}(q_2)
) 
\otimes 
\underline{Hom}_{EM}^j(
Sym^{m_2}(q_2),
Sym^{m_3}(q_3)
) \\
\nonumber
\to &
\underline{Hom}_{EM}^{i+j}(
Sym^{m_1}(q_1),
Sym^{m_3}(q_3)
) 
\end{align}
Here the sign rule for the pairing of ``orientations''
are given by
$$
(e_1\wedge\cdots e_a)\otimes (f_a\wedge\cdots \wedge f_1)
\mapsto 1.
$$
Using the above pairing, we define quasi-DGA's 
$(EM)$, $(MEM)$ as follows.
\begin{definition}
\label{definition:naive mixed elliptic motives}
\begin{enumerate}
\item
We define a relative quasi-DGA $A_{EM}$ by
$$
A_{EM}=\oplus_{V_1,V_2\in Irr_2}V_1\otimes
     \underline{Hom}_{EM}(V_1,V_2)\otimes V_2^*
$$
The multiplication map $\mu$ is given by 
\begin{align*}
&A_{EM}\otimes_\Cal O A_{EM} \\
=&\oplus_{V_1,V_2,V_3\in Irr_2}
V_1\otimes 
\underline{Hom}_{EM}(V_1,V_2)
\otimes 
\underline{Hom}_{EM}(V_2,V_3)
\otimes V_3^* \\
\to &
\oplus_{V_1,V_3\in Irr_2}
V_1\otimes 
\underline{Hom}_{EM}(V_1,V_3)
\otimes V_3^*,
\end{align*}
where the last arrow is induced by the multiplication map
of (\ref{eqn:product str}).
\item
We define the quasi-DG category $(EM)$
by $C(A_{EM}/\Cal O)$ defined in 
\S \ref{definition of DG cat associate to relative DGA}.
\item
The quasi-DG category $K(EM)$ of DG complexes in $(EM)$
is called the category of mixed elliptic motives
and is denoted by $(MEM)$.
\end{enumerate}
\end{definition}
Let $U_1, U_2$ and $U_3$ be an object in $(EM)$.
Then we have an identification
$$
\underline{Hom}_{EM}(U_1,U_2)\simeq
\oplus_{V_1,V_2\in Irr_2}Hom_{\Cal O}(U_1, V_1)\otimes
\underline{Hom}_{EM}(V_1,V_2)\otimes
Hom_{\Cal O}(V_2, U_2).
$$
The multiplication is given by the following
composite map:
\begin{align}
\label{rule for composite in general}
&\underline{Hom}_{EM}(U_1,U_2)\otimes
\underline{Hom}_{EM}(U_2,U_3) \\
\nonumber
=&
\oplus_{V_1,V_2,V_2',V_3\in Irr_2}
Hom_{\Cal O}(U_1, V_1)\otimes
\underline{Hom}_{EM}(V_1,V_2)\otimes
Hom_{\Cal O}(V_2, U_2) \\
\nonumber
&\otimes 
Hom_{\Cal O}(U_2, V_2')\otimes
\underline{Hom}_{EM}(V_2',V_3)
Hom_{\Cal O}(V_3, U_3) \\
\nonumber
\xrightarrow{\alpha} &
\oplus_{V_1,V_2,V_3\in Irr_2}
Hom_{\Cal O}(U_1, V_1)\otimes
\underline{Hom}_{EM}(V_1,V_2) \\
\nonumber
&\hskip 0.5in \otimes 
\underline{Hom}_{EM}(V_2,V_3)
\otimes Hom_{\Cal O}(V_3, U_3) \\
\nonumber
\xrightarrow{\beta} &
\oplus_{V_1,V_3\in Irr_2}
Hom_{\Cal O}(U_1, V_1)\otimes
\underline{Hom}_{EM}(V_1,V_3)
\otimes Hom_{\Cal O}(V_3, U_3)\\
\nonumber
&=\underline{Hom}_{EM}(U_1,U_3), 
\end{align}
where $\alpha$
is induced by the multiplication in $\Cal O$-homomorphisms
$$
Hom_{\Cal O}(V_2, U_2)
\otimes 
Hom_{\Cal O}(U_2, V_2)
\to
Hom_{\Cal O}(V_2, V_2)\simeq \bold k
$$
for $V_2=V_2'$ 
and the map $\beta$ is defined in
(\ref{eqn:product str}).
The composite map for $(EM)$ is defined by the rule
(\ref{from yoneda to composite}).
\subsubsection{Augmentation}
We introduce an augmentation $\epsilon_{EM}:A_{EM}\to \bold k$.
The augmentation is defined by the composite map
\begin{align*}
&\underline{Hom}_{EM}^0(Sym^n(q),Sym^n(q))
\\
\simeq
&sym^n(f_n\wedge\cdots \wedge f_1)Z_{-}^n(E^n\times E^n,0)(e_1\wedge\cdots e_n)sym^n
\\
\to
&sym^n(f_n\wedge\cdots \wedge f_1)CH_{-}^n(E^n\times E^n,0)(e_1\wedge\cdots e_n)sym^n
\\
\to
&sym^n(f_n\wedge\cdots \wedge f_1)CH_{hom,-}^n(E^n\times E^n,0)(e_1\wedge\cdots e_n)sym^n
\\
\simeq &\bold Q,
\end{align*}
where $CH_{hom,-}^*$ is the $(-)$-part of the Chow group modulo
homological equivalence.
Here we use the assumption that the elliptic curve $E$ has no complex multiplication.
On the component $\underline{Hom}_{EM}^i(Sym^{n_1}(q_1),Sym^{n_2}(q_2))$,
where either $i\neq 0$, $n_1\neq n_2$ or $q_1\neq q_2$, the augmentation
is set to the zero map.
By Theorem \ref{main theorem genereal}, 
we have  the following theorem.
\begin{theorem}
\label{main theorem for naive mixed elliptic motives}
The quasi-DG category $(MEM)$ of naive mixed elliptic motives 
is homotopy equivalent to the
quasi-DG category $(Bar(A_{EM}/\Cal O,\epsilon_{EM})-com)$
of $Bar(A_{EM}/\Cal O,\epsilon_{EM})$-comodules. 
\end{theorem}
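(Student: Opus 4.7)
The plan is to deduce this theorem as a direct application of the general comparison Theorem \ref{main theorem genereal} to the specific relative quasi-DGA $A=A_{EM}$ over $\Cal O = \Cal O_{GL(2)}$ with augmentation $\epsilon = \epsilon_{EM}$. By Definition \ref{definition:naive mixed elliptic motives}, we have a tautological identification $(MEM) = K(EM) = K\Cal C(A_{EM}/\Cal O)$, so once the general machinery is in force, the homotopy equivalence with $(Bar(A_{EM}/\Cal O,\epsilon_{EM})-com)$ is immediate.

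The main obstacle, and the only non-formal content in the proof, is that $A_{EM}$ is a \emph{quasi}-DGA rather than a strict DGA: the multiplication \eqref{eqn:product str} arising from intersection of Bloch cycles is defined only on the quasi-isomorphic subcomplex of cycles meeting all relevant diagonal strata properly (cf.\ \cite{Han}, Proposition 1.3 and \cite{Le}, Corollary 4.8). All of the constructions of \S\ref{section:relative DGA and relative bar} and \S\ref{section:relative DGA and DG category} --- the tensor products $A\otimes_{\Cal O}A$, the bar differential $x_1\otimes\cdots\otimes x_n\mapsto \sum(-1)^i x_1\otimes\cdots\otimes x_ix_{i+1}\otimes\cdots\otimes x_n$, the simplicial bar complex, the Yoneda composite \eqref{from yoneda to composite}, and the cofree resolution $F(N)$ --- must therefore be redone in the quasi setting. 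I would handle this in two steps. First, verify that by choosing, for each finite collection of tensor factors, a simultaneously good quasi-isomorphic subcomplex (using the standard iterated argument of Hanamura--Levine), one obtains well-defined quasi-homomorphisms $\bold{Bar}_n \to \bold{Bar}_{n-1}$ and composition maps in $K\Cal C(A_{EM}/\Cal O)$ that are coherent up to explicit quasi-isomorphism. Second, check that the bijection $\varphi\leftrightarrow\psi$ on objects of \S\ref{subsec:bijection on objects} and the comparison DG category $BK(A/\Cal O)$ of \S\ref{subsection: homotopy equivalence of DG-cat and Bar} are built purely from the coactions of $\Cal O$ and the multiplication $\mu_A$, so all the commutative diagrams establishing that $\Delta_V$ commutes with $\delta_V$ carry over verbatim on the common quasi-isomorphic subcomplex.

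Concretely, I would proceed as follows. First, verify that $\epsilon_{EM}$ is a relative augmentation in the sense of \S\ref{subsec:relative bar complex}: one checks the bi-$\Cal O$ comodule compatibility and that the cycle-level intersection product becomes the Yoneda composite in $Hom_{\bold k}(L\Cal O,L\Cal O)$ after reduction modulo homological equivalence (using the no-complex-multiplication hypothesis so that $CH^n_{hom,-}(E^n\times E^n,0)\simeq \bold Q$). Second, using Proposition \ref{injective to cohomology} to ensure that the linear correspondences giving projectors $sym^a$ and the augmentation behave well on Bloch cycles, confirm the axioms of Definition \ref{definition of relative DGA} on the subcomplex where the intersection product is strictly defined. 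Third, invoke Theorem \ref{main theorem genereal} to obtain the homotopy equivalence
\[
K\Cal C(A_{EM}/\Cal O) \;\simeq\; (Bar(A_{EM}/\Cal O,\epsilon_{EM})-com).
\]
Finally, since $(MEM)$ is defined as $K\Cal C(A_{EM}/\Cal O)$, this gives the desired conclusion.

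The hard part is genuinely the quasi-DGA issue: although each individual step in \S\ref{section:relative DGA and relative bar}--\S\ref{section:relative DGA and DG category} translates routinely (since the only operations used are tensor, $\Cal O$-coaction, and one multiplication map at a time), one must exhibit explicit cofinal systems of good subcomplexes on which all simultaneously needed multiplications and bar differentials are defined, and verify that the resulting comparison DG category $BK(A_{EM}/\Cal O)$ and the projections to $K\Cal C(A_{EM}/\Cal O)$ and $(Bar(A_{EM}/\Cal O,\epsilon_{EM})-com)$ are quasi-isomorphisms. Once this bookkeeping is done, no further argument is needed beyond citing Theorem \ref{main theorem genereal}.
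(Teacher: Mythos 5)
Your proposal is correct and follows essentially the same route as the paper, which proves this theorem simply by invoking Theorem~\ref{main theorem genereal} for $A=A_{EM}$, $\Cal O=\Cal O_{GL(2)}$, $\epsilon=\epsilon_{EM}$, having already declared $(MEM)=K\Cal C(A_{EM}/\Cal O)$ by definition. You are in fact more explicit than the paper about the quasi-DGA bookkeeping (good subcomplexes for iterated intersections via Hanamura--Levine, coherence of the induced bar differential and Yoneda composites), which the paper delegates to its earlier blanket remark that the constructions of \S\ref{section:relative DGA and relative bar}--\S\ref{section:relative DGA and DG category} carry over verbatim to quasi-DGAs; the one inessential deviation is your citation of Proposition~\ref{injective to cohomology}, which the paper only uses later for the passage from $(MEM)$ to $(VMEM)$, not for this theorem.
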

\subsection{An application of Schur-Weyl reciprocity}
In this section, we review some properties of group ring of symmetric group.
For a finite set $A$, the symmetric group of the set $A$ is denoted by
$\frak S[A]$. Assume that a finite set $A$ is equipped with a total order.
A sequence of integers $(\lambda_1, \dots, \lambda_p)$ is called
a partition if $\lambda_1 \geq \cdots \geq \lambda_n\geq 0$.
A tableau $Y$ consisting of a set $A$ is defined in \cite{M}.
Then the support of $Y$ is a partition.

Let $W$ be a vector space and $W_i$ be a copy of $W$ indexed by
$i \in A$. The tensor product $\otimes_{i\in A} W_i$ is
denoted by $W^{\otimes A}$.
The set of tableaux consisting of a finite set $A$
is denoted by $Tab(A)$.

For a tableau $Y$ consisting of $A$, we can define
a projector $e_Y\in \bold Q[\frak S[A]]$ by
$e_Y=e_{sym_Y}e_{alt_Y}$,
where $e_{sym_Y}$ and $e_{alt_Y}$ are
symmetric and anti-symmetric projectors for
horizontal and vertical symmetric subgroups.
(See \cite{M}.) 
The projector $e_Y$ defines a $GL(W)$-homomorphism
$W^{\otimes A} \to W^{\otimes A}$ and the image 
$$
e_YW^{\otimes A}=M_Y(W)
$$
is an irreducible 
representation of $GL(W)$ and $M_Y$ becomes a functor from 
$(Vect_{\bold k})$ to $(Vect_{\bold k})$.
The isomorphism class of the functor $M_Y$ depends only on the 
support of the tableaux $Y$ of $A$, and
the set of isomorphism class of irreducible representation
contained in $W^{\otimes n}$ corresponds in one to one
with the set of partition of $n$.

On the other hand, 
for any projector $p:W^{\otimes A}\to W^{\otimes A}$ of $GL(W)$-modules,
there exists an 
idempotent $e_P \in \bold Q[\frak S[A]]$ which induces the projector $p$.
Moreover, if $\#A < \dim(W)$, 
the idempotent $e_p$ which induces the projector
$p$ is unique by Schur-Weyl reciprocity \cite{W}, p.150.

\begin{definition}
\label{definition of adjoint for group ring}
The linear map $\bold Q[Isom(S,S')]\to \bold Q[Isom(S',S)]$ defined by
$g\mapsto g^{-1}$ is called the adjoint map. The adjoint of $x$
is denoted by $x^*$. An element $x$ in $\bold Q[\frak S[A]]$ is called
self-adjoint if $x=x^*$. We have $e_Y^*=e_{alt_Y}e_{sym_Y}$.
\end{definition}

For three sets $S,S'$ and $S''$ with the same cardinality,
and $x\in \bold Q[Isom(S,S')]$ and $y\in \bold Q[Isom(S',S'')]$,
we have $(yx)^*=x^*y^*$.

We can reformulate for the set of
isomorphisms between $A$ and $A'$ instead of automorphism of $A$.
Let $A$ and $A'$ be a finite set such that $\# A =\# A'<\dim(W)$.
Then any $GL(W)$-equivariant homomorphism
$\varphi:W^{\otimes A} \to W^{\otimes A'}$ is induced by
a unique element $e_{\varphi}$ in $\bold Q[Isom(A,A')]$.
This action is written from the left.
An element $e$ of $\bold Q[Isom(A',A)]$
acts on $W^{\otimes A}$ from the right via the conjugate 
$e^*$ of $e$.
Thus properties for elements in
$\bold Q[\frak S[A]]$ and $\bold Q[Isom(A,A')]$ is reduced to
that of $GL(W)$ equivariant homomorphisms between $W^{\otimes A}$
and $W^{\otimes A'}$. In other words, the natural map
\begin{equation}
\label{schur-weyl duality}
\bold Q[Isom(S,S')] \to Hom_{GL(W)}(W^{\otimes S},W^{\otimes S'})
\end{equation}
is an isomorphism.
Via this isomorphism, we have the following isomorphism
$$
Hom_{GL(W)}(M_{Y_1},M_{Y_2})\simeq e_{Y_2}\bold 
Q[Isom(\Supp(Y_1),Supp(Y_2))]e_{Y_1}.
$$

The symmetric product $Sym_A(W)$ and alternating product $Alt_A(W)$
are defined as subspaces of $W^{\otimes A}$ corresponding to
the Young tableaux with supports $(n )$ and
$(1,1, \dots, 1)=(1^n)$, where $n=\#A$. The associate idempotents
are written as $sym_A$ and $alt_A$.
For a subset $B$ of $A$, the element
$sym_B$ and $alt_B$ can be regarded as 
elements in $\bold Q[\frak S[A]]$.

\begin{definition}[Category $(GL_{\infty})$]
Let $Y_i$ be Young tableaux and $V_i$ complexes of vector spaces.
The direct sum $\oplus_iM_{Y_i}\otimes V_i$ becomes
a functor from $(Vect)$ to $(KVect)$.
A functor which is isomorphic to this form 
is called a Schur functor.
The full subcategory of $(Vect)\to (KVect)$
consisting of Schur functors 
is denoted by $(GL_{\infty})$.
\end{definition}
\begin{lemma}
\label{schur closed under tensor}
The category $(GL_{\infty})$
is closed under tensor product.
\end{lemma}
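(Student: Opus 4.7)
The plan is to reduce the statement to a decomposition at the group-algebra level. Given two Schur functors $F = \oplus_i M_{Y_i}\otimes V_i$ and $G = \oplus_j M_{Y'_j}\otimes W_j$, the pointwise tensor product is
\[
(F\otimes G)(W) \;=\; \bigoplus_{i,j}\bigl(M_{Y_i}(W)\otimes M_{Y'_j}(W)\bigr)\otimes \bigl(V_i\otimes W_j\bigr),
\]
so by distributivity of tensor over direct sums it suffices to show that, for any two Young tableaux $Y$ and $Y'$ with supports on finite sets $A$ and $A'$, the functor $W\mapsto M_Y(W)\otimes M_{Y'}(W)$ is a Schur functor.

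The key identification is
\[
M_Y(W)\otimes M_{Y'}(W) \;=\; e_Y W^{\otimes A}\otimes e_{Y'} W^{\otimes A'} \;=\; (e_Y\otimes e_{Y'})\,W^{\otimes (A\sqcup A')},
\]
where $e_Y\otimes e_{Y'}$ is viewed as an idempotent in $\bold Q[\frak S[A]\times\frak S[A']]\subset \bold Q[\frak S[A\sqcup A']]$. The next step is to decompose this idempotent in the ambient group algebra. Since $\bold Q[\frak S[A\sqcup A']]$ is semisimple with Wedderburn decomposition $\prod_\lambda M_{d_\lambda}(\bold Q)$ indexed by partitions $\lambda$ of $n = \#A+\#A'$, the projection $(e_Y\otimes e_{Y'})_\lambda$ in each matrix factor has some rank $m_\lambda$ and is therefore conjugate in the group algebra to a sum of $m_\lambda$ orthogonal minimal idempotents, each of which is conjugate to the Young projector $e_{Z_\lambda}$ for any fixed tableau $Z_\lambda$ with $\Supp(Z_\lambda)=\lambda$. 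Writing $e_Y\otimes e_{Y'}=\sum_\lambda\sum_{k=1}^{m_\lambda}g_{\lambda,k}\,e_{Z_\lambda}\,g_{\lambda,k}^{-1}$ with $g_{\lambda,k}\in\bold Q[\frak S[A\sqcup A']]$, and applying both sides to $W^{\otimes(A\sqcup A')}$, one obtains
\[
M_Y(W)\otimes M_{Y'}(W)\;\cong\;\bigoplus_\lambda M_{Z_\lambda}(W)^{\oplus m_\lambda},
\]
which is manifestly a Schur functor (with trivial complex factors); the multiplicities $m_\lambda$ are of course the Littlewood--Richardson coefficients, but one does not need to identify them explicitly for the closure statement.

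I expect the main obstacle to be purely formal: verifying that the above decomposition is functorial in $W$, not merely a statement about each individual $GL(W)$-module. This is handled by observing that for any idempotent $e\in\bold Q[\frak S[A\sqcup A']]$ the assignment $W\mapsto e\cdot W^{\otimes(A\sqcup A')}$ is a functor on $(Vect)$, and conjugation by elements $g_{\lambda,k}$ of the group algebra produces natural isomorphisms between such functors. Consequently the group-algebra decomposition above upgrades automatically to an isomorphism of functors, and no appeal to the dimension hypothesis in Schur--Weyl reciprocity (the uniqueness half of \eqref{schur-weyl duality}) is needed for the existence of the Schur-functor decomposition.
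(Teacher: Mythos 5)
Your proof is correct, and it follows a genuinely more explicit route than the one in the paper. The paper's proof is a two-line appeal to the semisimple decomposition of the category $(GL_\infty)$: it writes
$M_{Y_1}\otimes M_{Y_2}\cong\oplus_{Y\in\widetilde{\Cal P}}\Hom_{(GL_\infty)}(M_Y,M_{Y_1}\otimes M_{Y_2})\otimes M_Y$
and asserts that each multiplicity space is finite-dimensional and vanishes for all but finitely many partitions. You instead descend to the group-algebra level, realize the tensor product as the image of the idempotent $e_{Y_1}\otimes e_{Y_2}$ in $\bold Q[\frak S[A\sqcup A']]$, and decompose that idempotent via the Wedderburn structure into conjugates of Young projectors. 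The two arguments are logically equivalent — the ranks $m_\lambda$ in your argument are precisely the dimensions of the paper's multiplicity spaces, and in both cases the Littlewood--Richardson coefficients lurk behind the scenes — but your version makes explicit \emph{why} the multiplicity spaces are finite and almost all zero (the group algebra is finite-dimensional), and your final paragraph correctly isolates the functoriality issue and disposes of it by noting that the $\frak S_n$-action on $W^{\otimes n}$ is natural in $W$; this also correctly observes that the dimension hypothesis of the Schur--Weyl uniqueness statement \eqref{schur-weyl duality} is irrelevant here.

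One small point worth tightening: when you write $e_{Y_1}\otimes e_{Y_2}=\sum_\lambda\sum_k g_{\lambda,k}e_{Z_\lambda}g_{\lambda,k}^{-1}$, a sum of conjugates of idempotents is not automatically an idempotent, so one must say a word about why the terms can be chosen mutually orthogonal. The cleanest fix is to note that within the matrix block $M_{d_\lambda}(\bold Q)$ the rank-$m_\lambda$ projection of $e_{Y_1}\otimes e_{Y_2}$ is conjugate by a single invertible element $g_\lambda$ to $E_{11}+\cdots+E_{m_\lambda m_\lambda}$, and each $E_{kk}$ is conjugate to the image of $e_{Z_\lambda}$ in that block, so the orthogonality comes for free. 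Conjugation preserves the Wedderburn blocks (they are two-sided ideals), so this decomposition lives in the full group algebra as claimed. With this clarification, the argument is complete.
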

The set of partitions is denoted by $\Cal P$
We choose a tableau $Y$ for each partition $\mid Y \mid$ and
the set of chosen tableaux is denoted by $\widetilde{\Cal P}$.
\begin{proof}[Proof of Lemma \ref{schur closed under tensor}]
Let $Y_1$ and $Y_2$ be Young tableaux.
The tensor product $M_{Y_1}\otimes M_{Y_2}$ of
$M_{Y_1}$ and $M_{Y_2}$ is
isomorphic to
$$
\oplus_{Y \in \widetilde{\Cal P}}
Hom_{(GL_{\infty})}(M_Y,M_{Y_1}\otimes M_{Y_2})\otimes M_Y.
$$
The the subspace $Hom_{(GL_{\infty})}(M_Y,M_{Y_1}\otimes M_{Y_2})$
is finite dimensional and it is zero for finite number 
of partitions $\mid Y \mid\in \Cal P$.
\end{proof}

\subsection{Virtual mixed elliptic motives}
We use the set $\widetilde{\Cal P}$ of Young tableaux chosen in the last subsection.

\subsubsection{}
In this section, we define the quasi-DG category of
virtual elliptic motives $(VEM)$ for the elliptic curve 
$E$. 
\begin{definition}
\begin{enumerate}
\item
An object of $(VEM)$ is a direct sum of
symbolic Tate twisted object $M(p)$, where $M$
is an object in $ob(GL_{\infty})$.
\item
Let $M_1(p_1)$ and $M_2(p_2)$ be objects in $(VEM)$.
We define the complex of 
homomorphisms by
\begin{align*}
&\underline{Hom}_{VEM}^{\bullet}(
M_1(p_1),
M_2(p_2)) \\
= &
\oplus_{Y_1,Y_2}\big(
Hom_{GL_{\infty}}(M_1,M_{Y_1}) 
\otimes e_{Y_1}
\Cal H^{\bullet}
(E^{s(Y_1)},
E^{s(Y_2)},p_2-p_1)
e_{Y_2} \\
&\qquad\otimes 
Hom_{GL_{\infty}}(M_{Y_2},M_2)\big),
\end{align*}
where $\Cal H^{\bullet}(*,*)$ is defined in
(\ref{definition of hom as alg cycle and augmentation}).
\item
The multiplication is
defined by the intersection theory and
the rule (\ref{rule for composite in general}).
Composite of complex of homomorphisms is defined by 
\[f\circ g=(-1)^{{\rm deg}(f){\rm deg }(g)}g\cdot f,\]
where $\cdot$ is the multiplication.
\item
We define the quasi-DG category of virtual mixed elliptic motives
$(VMEM)$ by
$K(VEM)$.
\end{enumerate}
\end{definition}

\subsubsection{}
We define Young tableaux $Y_{p,n}$ as follows.
\begin{equation*}
Y_{p,m}=
\left(\begin{matrix}
1 & 3 & \cdots & 2p-1 & 2p+1 & \cdots 2p+m \\
2 & 4 & \cdots & 2p   &      &
\end{matrix}
\right)
.
\end{equation*}

\begin{proposition}
\label{essentially surjectivity}
\begin{enumerate}
\item
The objects $Sym^n$ and $M_{Y,p}(p)$ of $(VEM)$ are homotopy
equivalent in the sense of 
Definition \ref{definition weak homotopy equivalence} (2).

\item
If either the depth of $Y_1$ or that of $Y_2$ is
greater than 2, then
the complex
$
\underline{Hom}^i(
M_{Y_1}(s),
M_{Y_2}(t))
$
is acyclic.
\end{enumerate}
\end{proposition}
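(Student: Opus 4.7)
The plan is to handle the two parts by different methods.

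For part (1), the conceptual point is that $M_{Y_{p,n}}$ is the irreducible $GL(2)$-representation of highest weight $(p+n,p)$, hence as a $GL(2)$-module one has $M_{Y_{p,n}} \cong (Alt^2 V)^{\otimes p}\otimes Sym^n V$. Under the twist convention $Sym^a(q) = Sym^a\otimes(Alt^2)^{\otimes(-q)}$, the Tate twist by $p$ exactly absorbs the $p$ copies of $Alt^2$, so $M_{Y_{p,n}}(p)$ and $Sym^n$ carry the same underlying $GL(2)$-data. I will realize this abstract identification concretely by algebraic cycles. The basic ingredient is the divisor $D_{12} = p_1 + p_2 - \Delta_{12}$ on $E\times E$ from Lemma \ref{The useful lemma}, whose class in $H^2(E\times E)$ is the swap-antisymmetric element of $h^1(E)^{\otimes 2}$; projected by $alt_{\{1,2\}}$ it represents the motivic identification $Alt^2 h^1(E)\cong \bold Q(-1)$. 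Placing $p$ parallel copies of this divisor (one for each depth-two column of $Y_{p,n}$) alongside the diagonal on the remaining $n$ symmetric factors produces a cycle representing a morphism $f\colon Sym^n\to M_{Y_{p,n}}(p)$, and symmetrically one constructs $g$ in the opposite direction. The step occupying most of the work will be to verify that $g\circ f$ and $f\circ g$ are homotopic to the identities, which by the composition rule (\ref{rule for composite in general}) and intersection theory reduces to an identity in the linear Chow ring of a product of copies of $E$. By Proposition \ref{injective to cohomology} this identity is detected by cohomology, where it becomes the tautological $Alt^2 V\otimes(Alt^2 V)^{\vee}\cong \bold k$.

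For part (2), suppose without loss that $Y_1$ has a column $C$ of length at least three, so that the Young symmetrizer $e_{Y_1}$ factors through the antisymmetrizer $alt_C$. Via the Schur-Weyl isomorphism (\ref{schur-weyl duality}), $e_{Y_1}$ lifts to a linear correspondence, a signed sum of graphs of permutations in $CH^{s(Y_1)}_{lin}(E^{s(Y_1)}\times E^{s(Y_1)})$. Composed with the projector to the $(-,\dots,-)$-part, itself a linear correspondence built from graphs of the inversions $[-1]_i$, the resulting element of the linear Chow ring acts on cohomology as the Schur projector $e_{Y_1}$ on $h^1(E)^{\otimes s(Y_1)}$. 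This cohomological action vanishes because $h^1(E)$ has rank two whereas $alt_C$ antisymmetrizes at least three factors. By the Corollary following Proposition \ref{injective to cohomology}, a linear correspondence cohomologous to zero induces the zero map on every higher Chow group. Applied to cycles on $E^{s(Y_1)}\times E^{s(Y_2)}$ through the standard correspondence formalism, this shows that $e_{Y_1}$ kills the cohomology of the Hom complex, which is therefore acyclic. The case of $Y_2$ having depth greater than two is symmetric.

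The main obstacle I foresee lies in part (1): the orientation factors $\Lambda^*(A)$ and $\Lambda(B)$ in (\ref{definition of hom as alg cycle and augmentation}) together with the Tate-twist shifts introduce delicate bookkeeping of signs, so one must normalize the cycle representing the Tate twist and the accompanying wedge generators so that $g\circ f$ and $f\circ g$ are not merely a nonzero scalar multiple of, but actually homotopic to, the respective identities. Part (2), by contrast, is a clean application of the Schur-Weyl/linear-Chow-injectivity package already at hand.
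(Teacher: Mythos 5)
Your proposal follows essentially the same route as the paper: for (1), build explicit linear-Chow cycles (your $D_{12}$ is the same antisymmetric class as the paper's $-\tfrac14(\Delta^+-\Delta^-)$, up to the factor of $\tfrac12$ whose normalization you flag), placed as $p$ parallel copies next to the diagonal on the symmetric factors, and verify both composites via Proposition \ref{injective to cohomology}; for (2), observe that the identity correspondence $e_{Y_1}\{\Delta\}_-e_{Y_1}$ is cohomologically zero when a column has length $\ge 3$ and hence rationally zero by the same injectivity. The only step you leave implicit that the paper makes explicit is the preliminary replacement of $e_{Y_{p,n}}$ by the block projector $alt(1,2)\cdots alt(2p-1,2p)sym^n$, which again rests on Schur–Weyl plus the linear-Chow injectivity corollary.
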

\begin{proof}
(1) 
We denote the set $\{1,\cdots, 2p+n\}$ by $A$. Let $p_A\in \bold Q[{\frak S}[A]]$
be the projector 
\[alt(1,2)alt(3,4)\cdots alt(2p-1,2p)sym^n.\]
The projector $p_A^*$ induces the isomorphism
\[e_{Y_{p,n}}^*H^1(E)^{\otimes A}\simeq p_A^*H^1(E)^{\otimes A}\]
of the cohomology groups. By Corollary 4.5(1) and the Schur-Weyl
reciprocity, it induces a homotopy equivalence
between the objects $M_{p_A}(p)$ and $M_{Y_{p,n}}(p)$. So it suffices to show that
the objects $Sym^n$ and $M_{p_A}(p)$ are homotopy equivalent. Let 
$pr_2:E^n\times E^{2p+n}=E^n\times E^{2p}\times E^n\to E^{2p}$
be the projection to the second factor and 
$pr_{13}:E^n\times E^{2p+n}=E^n\times E^{2p}\times E^n\to E^n\times E^n$
be the projection to the product of the first and the third factors.

\no Let
\begin{align}
\label{quasi-iso for depth 2(I)}
&I \in \underline{Hom}^0(
Sym^n,
M_{p_A}(p))
\end{align}
be the closed homomorphism of degree zero defined by
$$
sym^nf_n\wedge \cdots \wedge f_1
\{(pr_2^*(-\frac{1}{4}(\Delta^+-\Delta^-))^p)\cap pr_{13}^*\Delta_{E^n}\}_-e_1\wedge
\cdots \wedge e_{2p+n}p_A,
$$
where the subscript ${}_-$ means the $(-)$-part by the action of the group
$(\Z/2\Z)^n\ti (\Z/2\Z)^A$,
and let
\begin{align}
\label{quasi-iso for depth 2(I)}
&J \in \underline{Hom}^0(
M_{p_A}(p),
Sym^n)
\end{align}
be the element defined by
$$
p_Af_{2p+n}\wedge\cdots \wedge f_1\{\Delta^p
\ti \Delta_{E^n}\}_-e_1\wedge\cdots \wedge e_nsym^n.
$$
The composite class $[J]\circ [I]\in 
H^0(\underline{Hom}(Sym^n,Sym^n))$
is equal to the class of the diagonal by the intersection
equality 
$$
[-\frac{1}{4}(\Delta^+-\Delta^-)]\cap [\Delta]=\{0\}
.
$$ 
By changing the cycles $I$ and $J$ to their rationally equivalent
ones $[I']$ and $[J']$,
the class
$[I'\circ J']\in 
H^0\underline{Hom}(M_{Y_{p,n}}(V)(p),M_{Y_{p,n}}(V)(p))$
is homologous to the diagonal from Corollary 4.6.

\no (2) Let $p_1=\sharp |Y_1|$. If the depth of $Y_1$ is greater than 2,
then the cohomology class of 
\[{\bf 1}_{M_{Y_2}}=e_{Y_1}\{\Delta_{E^{p_1}}\}_-e_{Y_1}
\in Z^{p_1}_-(E^{p_1}\ti E^{p_1})\] is zero. It follows from
Proposition 4.1 that this class is rationally trivial. 
\end{proof}

\begin{definition}
\label{definition weak homotopy equivalence}
Let 
$F:\Cal C_1 \to \Cal C_2$
be a functor of two DG-categories $\Cal C_1$ and $\Cal C_2$.
\begin{enumerate}
\item
The functor $F$ is said to be homotopy fully faithful if
the map
$$
\underline{Hom}^{\bullet}_{\Cal C_1}(A, B) \to
\underline{Hom}^{\bullet}_{\Cal C_2}(F(A), F(B))
$$
is a quasi-isomorphism for all $A,B \in ob(\Cal C_1)$.
\item
Two objects $M,N$ in $\Cal C_2$ are said to be homotopy equivalent
if there exist closed homomorphisms $I,J$ of degree zero
$$
I \in \underline{Hom}^0(M,N), \quad
J \in \underline{Hom}^0(N,M), 
$$
such that the cohomology classes $I\circ J$ and $J\circ I$ are equal to
identities.
\item
The functor $F$ is said to be homotopy essentially surjective
if for any object $M$ in $\Cal C_2$, there exists an object $N$
in $\Cal C_1$ such that $F(N)$ is homotopy equivalent to $M$.
\item
The functor $F$ is said to be weak homotopy equivalent
if it is homotopy fully faithful and homotopy essentially surjective.
\end{enumerate}
\end{definition}
The following proposition is a consequence of Proposition
\ref{essentially surjectivity}.
\begin{proposition}
The natural functor
$(MEM)\to (VMEM)$ is weak homotopy equivalent.
\end{proposition}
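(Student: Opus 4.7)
By Definition \ref{definition weak homotopy equivalence}, I must verify two properties of the natural functor $\Phi:(MEM)\to (VMEM)$: homotopy fully faithfulness and homotopy essential surjectivity. The functor is induced by the quasi-DG functor $(EM)\to(VEM)$ sending $Sym^a(s)$ to the Schur functor $Sym^a(s)=M_{Y^{\mathrm{row}}_a}(s)$, where $Y^{\mathrm{row}}_a$ denotes the single-row Young tableau of size $a$. Since $(MEM)=K(EM)$ and $(VMEM)=K(VEM)$, I would first establish the two conditions on $(EM)\to(VEM)$ and then bootstrap to the level of DG complexes.

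For homotopy fully faithfulness it suffices, by taking direct sums, to compare Hom complexes between generators $Sym^a(s)$ and $Sym^b(t)$. In $(VEM)$ the Hom is a direct sum indexed by pairs of tableaux $(Y_1,Y_2)\in\widetilde{\Cal P}^2$, but $Hom_{GL_\infty}(Sym^a,M_{Y_1})$ and $Hom_{GL_\infty}(M_{Y_2},Sym^b)$ vanish unless $Y_1$ and $Y_2$ are the row tableaux of sizes $a$ and $b$ respectively; since $e_{Y^{\mathrm{row}}_n}=sym^n$ and $Hom_{GL_\infty}(Sym^n,Sym^n)=\bold k$, the sum collapses to
\[
sym^a\,\Cal H^\bullet(E^a,E^b,t-s)\,sym^b,
\]
which is exactly the Hom complex computed in $(EM)$. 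The identification is compatible with composition, hence extends to quasi-isomorphisms of Hom complexes between any two objects of $K(EM)$ and their images in $K(VEM)$.

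For homotopy essential surjectivity, let $\bold W=(W^i,d_{ij})$ be an object of $K(VEM)$. Each $W^i$ is a finite direct sum of Schur-Tate objects $M_Y(p)$. By Proposition \ref{essentially surjectivity}(2), any summand with $\mathrm{depth}(Y)>2$ has acyclic Hom complexes to and from every object, hence is homotopy equivalent to zero in $(VEM)$. The remaining summands have depth at most two and therefore take the form $M_{Y_{p,m}}(q)$, which by Proposition \ref{essentially surjectivity}(1) is homotopy equivalent to $Sym^m(q)$ via the explicit closed degree-zero cycles $I$ and $J$ constructed there. Since $Sym^m(q)$ lies in the image of $\Phi$, each term of $\bold W$ is termwise homotopy equivalent to an object coming from $(EM)$.

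The main obstacle is to promote this termwise equivalence to a bona fide homotopy equivalence of DG complexes, that is, to build an object $\bold V\in K(EM)$ whose image under $\Phi$ is homotopy equivalent to $\bold W$ while respecting the higher components $d_{ij}$. I would handle this by an inductive obstruction-theoretic argument along the length filtration of $\bold W$, modelled on the proof of Proposition \ref{prop:invariance for quasi-isom}: at each stage the obstruction to extending a partial lift lives in the cohomology of a Hom complex that either agrees with its $(MEM)$ counterpart (by the homotopy fully faithfulness established above) or is acyclic (for any remnant depth-$>2$ summands by Proposition \ref{essentially surjectivity}(2)). Successively correcting the connecting morphisms yields the required $\bold V$ and completes the proof.
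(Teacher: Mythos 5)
Your argument is correct and follows the same route the paper takes, namely reducing the statement to Proposition \ref{essentially surjectivity}: the paper's own ``proof'' is the single sentence preceding the Proposition, so your write-up is in fact the more informative one. The chain-level identification of Hom complexes (so that $(EM)\to(VEM)$ is actually fully faithful, not merely homotopy fully faithful) is a sound observation, the reduction of essential surjectivity to the depth-$\leq 2$ and depth-$>2$ dichotomy is exactly what Proposition \ref{essentially surjectivity} provides, and the final step of transporting the twisted-complex data $\{d_{ij}\}$ along the termwise homotopy equivalences by an inductive obstruction argument modelled on Proposition \ref{prop:invariance for quasi-isom} is a legitimate way to finish (the paper does not spell this out either). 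One small indexing slip: since $Sym^m\simeq M_{Y_{p,m}}(p)$, a summand $M_{Y_{p,m}}(q)$ is homotopy equivalent to $Sym^m(q-p)$, not $Sym^m(q)$; this is consistent with the map $\varphi(Y_{p,m}(q))=Y_{0,m}(q-p)$ used in the proof of Proposition \ref{proposition: homotopy equiv for VEM and Bar(VEM)} and does not affect the structure of the argument.
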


\subsection{Bar complex of a small DG-category}

\subsubsection{}
Let $T_2$ be a set. We consider a small DG category $\Cal C_2$ whose class of objects is
the set $T_2$.
The complex of homomorphisms $\underline{Hom}_{\Cal C_2}(V_1,V_2)$
is denoted by $H(V_1, V_2)$.
The multiplication
$$
\eta:H(V_1,V_2)\otimes H(V_2,V_3) \to H(V_1, V_3)
$$
is defined by the composite of the transposition
and the composite of the complex of homomorphisms.
Let $ev:\Cal C_2\to Vect_{\bold k}$ be a
functor of DG categories.
Here the category $Vect_{\bold k}$ of vector spaces is a 
DG category in obvious way.
The functor $ev$ is called an augmentation of $\Cal C_2$.

In this subsection, we define $Bar(\Cal C_2,ev)$.
For a finite sequence $\alpha=(\alpha_0<\cdots <\alpha_n)$ of integers, we define
$Bar^{\alpha}(\Cal C_2,ev)$ by
$$
\oplus_{V_0,\dots, V_n\in T_2}
\bigg(
ev(V_0)
\overset{\alpha_0}\otimes H(V_0,V_1)
\overset{\alpha_1}\otimes
\cdots \overset{\alpha_{n-1}}\otimes
H(V_{n-1},V_n)
\overset{\alpha_n}\otimes
ev(V_n)^*
\bigg).
$$
We define a complex $\bold{Bar}_n(\Cal C_2,ev)$ by
$$
\oplus_{\mid \alpha\mid =n}\bold{Bar}^{\alpha}(\Cal C_2,ev).
$$
For $\beta=(\alpha_0, \dots, \widehat{\alpha_k},\dots, \alpha_n)$,
we define
$$
d_{\beta\alpha}:
\bold{Bar}^{\alpha}(\Cal C_2,ev) \to
\bold{Bar}^{\beta}(\Cal C_2,ev)
$$
by
\begin{align*}
&d_{\beta,\alpha}(
v_0\otimes \varphi_{01}\otimes \cdots \otimes
 \varphi_{n-1n}\otimes v_n^*) \\
=&(-1)^k\cdot
\begin{cases}
ev(\varphi_{01})(v_0)\otimes \varphi_{12}\otimes\cdots 
\otimes
 \varphi_{n-1n}\otimes 
v_n^*  &(\text{if }k=0) \\
v_0\otimes 
\cdots \otimes
(\varphi_{k-1,k}\cdot\varphi_{k,k+1})\otimes \cdots 
\otimes 
v_n^*  &(\text{if }1\leq k\leq n) \\
v_0\otimes \varphi_{01}\otimes\cdots 
\otimes\varphi_{n-2,n-1}
\otimes
(v_n^*\circ ev(\varphi_{n-1n}))  &(\text{if }k=n).
\end{cases}
\end{align*}
By taking summation for $\alpha$ and $\beta$,
we have a double complex
\begin{align*}
\cdots \to \bold{Bar}_n(\Cal C_2,ev) &\to
\bold{Bar}_{n-1}(\Cal C_2,ev)\to \dots \\
&\to\bold{Bar}_1(\Cal C_2,ev)\to
\bold{Bar}_0(\Cal C_2,ev)\to 0.
\end{align*}
\begin{definition}
The bar complex $Bar(\Cal C_2,ev)$
is defined by the associate simple complex of $\bold{Bar}(\Cal C_2,ev)$.
\end{definition}

\subsubsection{}
Let $T_1$ and $T_2$ be sets and we consider a surjective map
$\varphi:T_1\to T_2$ and a section 
$\psi:T_2\to T_1$, i.e. $\varphi\circ\psi=id_{T_2}$.
We introduce a DG category $\varphi^*\Cal C_2$ as follows.
\begin{enumerate}
\item
The class of objects is defined to be $T_1$.
\item
For elements $V_1,V_2$ in $T_1$, the complex of homomorphism
$\underline{Hom}_{\varphi^*\Cal C_2}(V_1,V_2)$
is defined to be $\underline{Hom}_{\Cal C_2}
(\varphi(V_1),\varphi(V_2))$.
\item
The composite of the complex of homomorphisms are defined to be those 
in $\Cal C_2$.
\end{enumerate}
Note that if $\varphi(V_1)=\varphi(V_2)=W$, then $V_1$ and $V_2$ are
canonically isomorphic in $\varphi^*\Cal C_2$. The composite of functors 
$ev\circ\varphi:\varphi^*\Cal C_2\to \Cal C_2\to Vect_{\bold k}$ defines an augmentation
of $\varphi^*\Cal C_2$.

Then we also have a bar complex $Bar(\varphi^*\Cal C_2,ev\circ\varphi)$.
The natural functor $\varphi:\varphi^*\Cal C_2 \to \Cal C_2$ defines a
homomorphism of bar complexes
$$
Bar(\varphi):
Bar(\varphi^*\Cal C_2, ev\circ \varphi) \to Bar(\Cal C_2, ev).
$$
\begin{proposition}
\label{quasi-isomorphism for surjective map}
The homomorphism $Bar(\varphi)$ is a quasi-isomorphism.
\end{proposition}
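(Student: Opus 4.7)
My approach is to exhibit an explicit quasi-inverse to $Bar(\varphi)$ and then construct a chain homotopy. First, the section $\psi: T_2 \to T_1$ of $\varphi$ induces a DG functor $\psi: \Cal C_2 \to \varphi^*\Cal C_2$: on objects it is $\psi$ itself, and on morphisms it is the identity (since $\underline{Hom}_{\varphi^*\Cal C_2}(\psi(W_1), \psi(W_2)) = \underline{Hom}_{\Cal C_2}(W_1, W_2)$). The equality $\varphi \circ \psi = id_{T_2}$ extends to an equality of DG functors, hence $Bar(\varphi) \circ Bar(\psi) = id$ on $Bar(\Cal C_2, ev)$.

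The heart of the argument is to show $Bar(\psi) \circ Bar(\varphi) \sim id$ on $Bar(\varphi^*\Cal C_2, ev\circ\varphi)$. The key observation is that for each $V \in T_1$, we have a canonical isomorphism
\[
\eta_V = id_{\varphi(V)} \in \underline{Hom}^0_{\Cal C_2}(\varphi(V), \varphi(V)) = \underline{Hom}^0_{\varphi^*\Cal C_2}(V, \psi(\varphi(V))),
\]
which is a closed degree-zero isomorphism with inverse the same element read in the other direction. The family $\{\eta_V\}_{V \in T_1}$ assembles into a natural isomorphism $\eta : id_{\varphi^*\Cal C_2} \Rightarrow \psi\circ\varphi$ of DG endofunctors of $\varphi^*\Cal C_2$. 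I would then produce the chain homotopy $h$ by the standard ``insertion of a natural transformation'' formula: for a basis element
\[
v_0 \overset{\alpha_0}\otimes \varphi_{01} \overset{\alpha_1}\otimes \cdots \overset{\alpha_{n-1}}\otimes \varphi_{n-1,n} \overset{\alpha_n}\otimes v_n^*
\]
indexed by $(V_0, \ldots, V_n)$, set
\[
h \;=\; \sum_{j=0}^{n+1} (-1)^j\, h_j,
\]
where $h_j$ inserts $\eta_{V_{j-1}}$ (or $\eta_{V_j}^{-1}$, at the endpoints) between positions $j-1$ and $j$, simultaneously replacing $V_0, \ldots, V_{j-1}$ by $\psi(\varphi(V_0)), \ldots, \psi(\varphi(V_{j-1}))$ and introducing a fresh $\alpha$-index between $\alpha_{j-1}$ and $\alpha_j$. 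Because $\eta$ is closed, a term-by-term verification of $\partial h + h\partial$ leaves only the ``telescoping'' contributions at $j=0$ and $j=n+1$, which sum to $Bar(\psi)\circ Bar(\varphi) - id$.

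An equivalent and perhaps cleaner perspective, which I would use as a consistency check, is the following filtration argument: decompose along $\varphi$-fibers
\[
\bold{Bar}^\alpha(\varphi^*\Cal C_2, ev\circ\varphi) \;=\; \bigoplus_{(W_0,\ldots,W_n)\in T_2^{n+1}} \bold{Bar}^\alpha(\Cal C_2, ev)_{(W_0,\ldots,W_n)} \,\otimes\, \bigotimes_{i=0}^{n} \bold k[\varphi^{-1}(W_i)],
\]
so that $Bar(\varphi)$ is the sum map $\bold k[\varphi^{-1}(W_i)] \to \bold k$ on each combinatorial factor, while $Bar(\psi)$ is the inclusion of the basis vector $\psi(W_i)$. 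The mapping cone of $Bar(\varphi)$ then splits as $Bar(\Cal C_2, ev)$ tensored with a combinatorial complex that has a contracting homotopy given by ``subtract the $\psi$-component.'' The main obstacle I anticipate is bookkeeping the signs in $h$ so that they are compatible with the two gradings (the external index $\alpha$ and the internal degree on $\underline{Hom}$) and with the boundary-insertion cases $k=0, n$ of the differential in the definition of $d_{\beta\alpha}$; this is routine but tedious, and the second perspective above serves as a safety net since it reduces the sign problem to the well-known acyclicity of the standard simplex.
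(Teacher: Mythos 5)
Your overall strategy matches the paper's: verify $Bar(\varphi)\circ Bar(\psi) = id$ directly, then construct an explicit chain homotopy witnessing $Bar(\psi)\circ Bar(\varphi) \sim id$ by inserting the canonical degree-zero closed isomorphism $\eta_V \in \underline{Hom}^0_{\varphi^*\Cal C_2}(V, \psi\varphi(V))$. But the construction of $h$ as you state it has a real gap: you propose ``introducing a fresh $\alpha$-index between $\alpha_{j-1}$ and $\alpha_j$.'' The simplicial bar complex is indexed by \emph{strictly increasing sequences of integers}, so whenever $\alpha_j = \alpha_{j-1}+1$ there is no integer strictly between them and $h_j$ is simply undefined on that summand. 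This is not a corner case, and no convention of signs will repair it; the map $h$ is not well-defined.

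The paper circumvents exactly this issue with a filtration-and-limit trick (the same one already used for Proposition~\ref{acyclicity for simplicial bar complex}). For each integer $N$, one restricts to the subcomplex spanned by sequences with $\alpha_0 > N$ (denoted $Bar_{N<}$). The homotopy $\theta$ then inserts the new index at the fixed position $N$ — below every existing $\alpha$ — so it is unambiguously defined and carries $Bar_{N<}$ into $Bar_{N-1<}$; the relation $d\theta + \theta d = Bar(\psi)\circ Bar(\varphi) - 1$ is checked on $Bar_{N<}$, and the proposition follows by passing to the inductive limit over $N$. Note also that the paper's $\theta$ is built from a \emph{pair} of insertion operators $\theta_1^{(i)}, \theta_2^{(i)}$ at each position $i$ (one inserting $\eta$ and switching the remaining slots to their $\psi\varphi$-versions, the other inserting the identity of $V_i$ without switching); this doubling is what makes the telescoping close up cleanly, and a single sum over $j$ as you wrote would need careful re-verification even after the index issue is fixed.

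Your ``second perspective'' points at a reasonable intuition but does not close the gap. The fiberwise decomposition of $\bold{Bar}^\alpha(\varphi^*\Cal C_2)$ is fine as a vector space, but the bar differential changes the length $n$ and hence the number of tensor factors $\bold k[\varphi^{-1}(W_i)]$, so the mapping cone of $Bar(\varphi)$ does not split as $Bar(\Cal C_2,ev)$ tensored with a single fixed combinatorial complex. Making that heuristic precise leads you straight back to a contracting homotopy that, again, must insert a new $\alpha$-index somewhere — so the $N<$-filtration (or some equivalent device) is genuinely needed, not merely a stylistic choice.
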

\begin{proof}
The map $\psi:T_2 \to T_1$ defines a homomorphism of complex
$$
Bar(\psi):Bar(\Cal C_2, ev)\to Bar(\varphi^*\Cal C_2,ev\circ\varphi).
$$
One can check that the composite $Bar(\varphi)\circ Bar(\psi)$
is the identity. We will show that $Bar(\psi)\circ Bar(\varphi)$ induces
the identity on the cohomologies.

Let $N$ be an integer. By taking summations for 
$\alpha=(\alpha_0<\cdots <\alpha_n)$ such that
$N<\alpha_0$, we obtain a subcomplex $Bar(\Cal C_2,ev)_{N<}$
of $Bar(\Cal C_2,ev)$. 
(In the proof of Proposition \ref{acyclicity for simplicial bar
 complex}, we used the similar notation.)
We define a map 
$$
\theta:Bar(\varphi^*\Cal C_2,ev\circ\varphi)_{N<} \to
Bar(\varphi^*\Cal C_2,ev\circ\varphi)_{N-1<}
$$
of degree $-1$
such that
\begin{equation}
\label{homotopy for bar retraction}
d\theta+\theta d=Bar(\psi)\circ Bar(\varphi)-1
\end{equation}
on
$VBar(\varphi^*\Cal C_2,ev\circ\varphi)_{N<}$. The composite map
 $\psi\circ\varphi$ is denoted by $r$ and
$H(V_{i},V_{i+1}),H(V_{i},r(V_{i}))$ and $H(r(V_{i}),r(V_{i+1}))$
are denoted by $H_{i,i+1},H_{i,\overline{i}}$ 
and $H_{\overline{i},\overline{i+1}}$.
The element in $H_{\overline{i},\overline{i+1}}$
corresponding to $\varphi_{i,i+1}$ in $H_{i,i+1}$ is denoted by
$\varphi_{\overline{i},\overline{i+1}}$.
The element in $H_{i\overline{i}}$ corresponding to the identity is
denoted by $id_{i\overline{i}}$.
For $i=0, \dots, n$, we define $\theta_1^{(i)}$ as follows.
\begin{align*}
\theta_1^{(i)}:&ev(V_0)^*
\overset{\alpha_0}\otimes H_{0,1}
\overset{\alpha_1}\otimes
\cdots \overset{\alpha_{n-1}}\otimes
H_{n-1,n}
\overset{\alpha_n}\otimes
ev(V_n) \\
\to\  & ev(V_0)^*
\overset{N}\otimes H_{0,1}
\overset{\alpha_0}\otimes 
\cdots 
\overset{\alpha_{i-2}}\otimes
H_{i-1,i}
\overset{\alpha_{i-1}}\otimes
H_{i,\overline{i}}
\overset{\alpha_{i}}\otimes
H_{\overline{i},\overline{i+1}}
\cdots 
\overset{\alpha_n}\otimes
ev(r(V_n)) \\
%
&v_0^*
\overset{\alpha_0}\otimes \varphi_{0,1}
\overset{\alpha_1}\otimes
\cdots \overset{\alpha_{n-1}}\otimes
\varphi_{n-1,n}
\overset{\alpha_n}\otimes
v_n \\
\mapsto\  & v_0^*
\overset{N}\otimes \varphi_{0,1}
\overset{\alpha_0}\otimes 
\cdots 
\overset{\alpha_{i-2}}\otimes
\varphi_{i-1,i}
\overset{\alpha_{i-1}}\otimes
id_{i,\overline{i}}
\overset{\alpha_{i}}\otimes
\varphi_{\overline{i},\overline{i+1}}
\cdots 
\overset{\alpha_n}\otimes
v_{\overline{n}}) 
\end{align*}
The map $\theta^{(i)}_2$
\begin{align*}
\theta_1^{(i)}:&ev(V_0)^*
\overset{\alpha_0}\otimes H_{0,1}
\overset{\alpha_1}\otimes
\cdots \overset{\alpha_{n-1}}\otimes
H_{n-1,n}
\overset{\alpha_n}\otimes
ev(V_n) \\
\to\  & ev(V_0)^*
\overset{N}\otimes H_{0,1}
\overset{\alpha_0}\otimes 
\cdots 
\overset{\alpha_{i-2}}\otimes
H_{i-1,i}
\overset{\alpha_{i-1}}\otimes
H_{i,i}
\overset{\alpha_{i}}\otimes
H_{i,i+1}
\cdots 
\overset{\alpha_n}\otimes
ev(V_n) \\
\end{align*}
for $i=0, \dots, n$ is defined similarly.
Then one can check that the identity (\ref{homotopy for bar retraction})
holds for $\theta=\sum_{i=0}^n(-1)^i(\theta_1^{(i)}-\theta_2^{(i)})$.
By taking the inductive limit for $N$, we have the proposition.
\end{proof}

\subsubsection{}

Let $TT=\{(Y,p)\}$ the set of twisted tableaux, i.e. pairs of $Y \in
\widetilde{\Cal P}$ and $p\in \bold Z$. 
We define a small DG category $\Cal C_{VEM}$
as follows.
\begin{enumerate}
\item
The class of objects
is $TT$.
\item
For elements $V_1=(Y_1,p_1),V_2=(Y_2,p_2)\in TT$, the complex of
homomorphism is given by
$$
H(V_1,V_2)=
\underline{Hom}_{VEM}(M_{Y_1}(p_1),M_{Y_2}(p_2)).
$$
\end{enumerate}
For a two dimensional vector space $V$ let $ev_V$ be a functor from $(VEM)$
to $(KVect)$ defined by 
$$
M_Y(p)\mapsto 
M_Y(V)\otimes (Alt^2(V))^{\otimes (-p)}.
$$
We have defined the bar complex $Bar(\Cal C_{VEM},ev_V)$.
Let $TT_{\leq 1}$ be the set of twisted tableaux of depth 
smaller than or equal to one
and $\Cal C_{EM}$ be the full sub DG category of 
$M_Y$ with $Y\in TT_{\leq 1}$. If $V$ is the standard representation
of the group $GL_2$ then we have 
$$
Bar(A_{EM},\epsilon)=Bar(\Cal C_{EM},ev_V).
$$

By Proposition \ref{essentially surjectivity} and Proposition
\ref{quasi-isomorphism for surjective map}, we have the following
proposition.
\begin{proposition}
\label{proposition: homotopy equiv for VEM and Bar(VEM)}
The natural homomorphism 
$$
Bar(\Cal C_{EM},ev_V) \to 
Bar(\Cal C_{VEM},ev_V)
$$
is a quasi-isomorphism.
\end{proposition}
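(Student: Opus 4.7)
The plan is to factor the natural homomorphism through an intermediate full sub-DG-category $\Cal C_{VEM,\leq 2}$ of $\Cal C_{VEM}$ obtained by restricting to twisted tableaux of depth $\leq 2$, and to prove that each inclusion in the chain
\[
\Cal C_{EM} \hookrightarrow \Cal C_{VEM,\leq 2} \hookrightarrow \Cal C_{VEM}
\]
induces a quasi-isomorphism on bar complexes. The second inclusion is handled by Proposition \ref{essentially surjectivity}(2) via a filtration argument, and the first inclusion by combining Proposition \ref{essentially surjectivity}(1) with Proposition \ref{quasi-isomorphism for surjective map}.

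For the second inclusion, I would filter $Bar(\Cal C_{VEM}, ev_V)$ by the number of tensor indices $V_i$ whose underlying tableau has depth $>2$. Both differentials preserve this filtration $F^{\bullet}$: the inner differential leaves the indices alone, and the outer differential only removes them. On the graded pieces $Gr^p F$ for $p \geq 1$, a secondary filtration by $|\alpha|$ kills the outer differential on the associated graded, and each remaining tensor product is acyclic. Indeed, if a depth-$>2$ index occurs as a boundary index $V_0$ or $V_n$, the term vanishes because $M_{Y_j}(V)$ is zero as soon as $\mathrm{depth}(Y_j) > \dim V = 2$; and if instead an interior $V_j$ has depth $>2$, then by Proposition \ref{essentially surjectivity}(2) both adjacent hom complexes $H(V_{j-1}, V_j)$ and $H(V_j, V_{j+1})$ are acyclic, so the whole tensor product is acyclic by K\"unneth over $\mathbf{k}$. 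Consequently $Bar(\Cal C_{VEM,\leq 2}, ev_V) \to Bar(\Cal C_{VEM}, ev_V)$ is a quasi-isomorphism.

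For the first inclusion, every depth-$2$ two-row tableau is of the form $Y_{p,n}$ with $p \geq 1$, and Proposition \ref{essentially surjectivity}(1) supplies closed degree-zero morphisms $I:(Y_{0,n}, q-p)\to (Y_{p,n}, q)$ and $J$ in the opposite direction realizing a homotopy equivalence in $\Cal C_{VEM,\leq 2}$. Define a surjection $\varphi: TT_{\leq 2} \twoheadrightarrow TT_{\leq 1}$ by $\varphi(Y_{p,n}, q) = (Y_{0,n}, q-p)$, with the natural inclusion $TT_{\leq 1} \hookrightarrow TT_{\leq 2}$ as a section. Proposition \ref{quasi-isomorphism for surjective map} applied to $\Cal C_{EM}$ then yields $Bar(\varphi^* \Cal C_{EM}, ev_V \circ \varphi) \xrightarrow{\sim} Bar(\Cal C_{EM}, ev_V)$. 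It remains to produce a chain-level quasi-isomorphism $Bar(\varphi^* \Cal C_{EM}, ev_V \circ \varphi) \to Bar(\Cal C_{VEM,\leq 2}, ev_V)$ that is the identity on objects $TT_{\leq 2}$ and transports each hom factor by conjugation with $I$ and $J$ at every depth-$2$ vertex.

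The main obstacle is this last identification, because $I \circ J$ and $J \circ I$ equal the identity only in cohomology, so the naive conjugation is not a strict DG functor between $\varphi^* \Cal C_{EM}$ and $\Cal C_{VEM,\leq 2}$. I would deal with this by a secondary filtration on $Bar(\Cal C_{VEM,\leq 2}, ev_V)$ by the number of depth-$2$ tensor indices and compare each graded piece with the corresponding piece of $Bar(\varphi^* \Cal C_{EM})$; locally, the insertion of $J \circ I \sim \mathrm{id}$ at a single depth-$2$ position is a quasi-isomorphism, and the global chain map is assembled inductively by adding higher homotopy corrections in the spirit of an $A_\infty$-morphism of DG categories. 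Composing with the quasi-isomorphisms of the two preceding steps gives the proposition.
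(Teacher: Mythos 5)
Your route is the paper's route, with one harmless cosmetic difference: the paper does not pass through the intermediate category $\Cal C_{VEM,\leq 2}$, but instead defines a single surjection $\varphi:TT\to TT_{\leq 1}$ sending $Y_{p,m}(q)\mapsto Y_{0,m}(q-p)$ and sending every tableau of depth $>2$ to the zero object. Proposition \ref{quasi-isomorphism for surjective map} then gives the quasi-isomorphism $Bar(\psi):Bar(\Cal C_{EM})\to Bar(\varphi^*\Cal C_{EM})$, and the paper defines a functor $\alpha:\varphi^*\Cal C_{EM}\to \Cal C_{VEM}$ that is the identity on objects and on each hom complex is conjugation by the $I$, $J$ of Proposition \ref{essentially surjectivity}(1). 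The fact that each $\alpha_{Y_1,Y_2}$ is a quasi-isomorphism (including the case where $\varphi(Y_i)=0$ and the target hom complex is acyclic by Proposition \ref{essentially surjectivity}(2)) then yields the result; this subsumes your separate filtration argument for the depth-$>2$ indices.

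On the substance: you have correctly noticed the one genuine subtlety, namely that conjugation by the homotopy equivalences $I_Y,J_Y$ is a strict DG functor only if the interior composite $J_Y\circ I_Y$ equals the identity on the nose, not merely in cohomology. The paper simply asserts that $\alpha$ is a DG functor and does not spell this out. Note, however, that only the one direction $J_Y\circ I_Y$ enters the functoriality check
$$
\alpha(g)\circ\alpha(f)=I_{Y_3}\circ g\circ(J_{Y_2}\circ I_{Y_2})\circ f\circ J_{Y_1},
$$
and the paper's explicit choice of $I$ and $J$ is designed so that this particular composite reduces to the intersection $\bigl(-\tfrac14(\Delta^{+}-\Delta^{-})\bigr)\cap\Delta$, which is computed to be the diagonal; so the intended reading is that $J_Y\circ I_Y=\mathrm{id}$ holds at the cycle level, not merely up to homotopy, and no $A_{\infty}$ corrections are needed. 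Your proposal treats the composite as only homotopic to the identity and therefore invokes $A_{\infty}$-morphism machinery that you do not carry out; that machinery would indeed repair the argument if the chain-level identity failed, but it is not what the paper does, and as a self-contained proof it is left at the level of a sketch. If you make the cycle-level computation of $J_Y\circ I_Y$ explicit, your proof collapses to the paper's and the $A_{\infty}$ detour disappears.
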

\begin{proof}
Let $TT_{\leq 1}$ be the set of twisted tableaux of depth smaller than
or equal to one. We define a surjective map $\varphi:TT\to TT_{\leq 1}$
 by
$$
\varphi(Y)=
\begin{cases}
Y_{0,m}(q-p) & \text{ if }Y=Y_{p,m}(q) \\
0 & \text{ otherwise}.
\end{cases}
$$
Then the natural inclusion $\psi:TT_{\leq 1}\to TT$ is a section of $\varphi.$
By Proposition \ref{quasi-isomorphism for surjective map}, the map
$
Bar(\varphi):Bar(\varphi^*\Cal C_{EM})\to Bar(\Cal C_{EM})
$
is a quasi-isomorphism. As a consequence, the natural inclusion
\begin{equation}
\label{eq:1st quasi-iso in proof}
Bar(\psi):Bar(\Cal C_{EM})\to Bar(\varphi^*\Cal C_{EM}) 
\end{equation}
is also a quasi-isomorphism.
We define a DG functor $\alpha:\varphi^*\Cal C_{EM}\to \Cal C_{VEM}$ as follows.
\begin{enumerate}
\item 
The map on the set of objects are identity.
\item
For objects $M_{Y_1}, M_{Y_2}$ with $Y_1, Y_2\in TT$, the
map
\begin{align*}
\alpha_{Y_1,Y_2}:&\underline{Hom}_{\varphi^*\Cal C_{EM}}(M_{Y_1},M_{Y_2}) \\
=&
\underline{Hom}_{\Cal C_{VEM}}(M_{\varphi(Y_1)},M_{\varphi(Y_2)})
\to
\underline{Hom}_{\Cal C_{VEM}}(M_{Y_1},M_{Y_2})
\end{align*}
is obtained by the homotopy equivalence given in Proposition 4.12. 
\end{enumerate}
Then the composite of the functor $\alpha$ and the augmentation map $ev_V:\Cal C_{VEM}\to Vect$
is equal to the augmentation map $ev_V$ of $\varphi^*\Cal C_{EM}$.
Therefore we have the following homomorphism of bar complexes:
\begin{equation}
\label{eq:2nd quasi-iso in proof}
Bar(\varphi^*\Cal C_{EM},ev_V)\to Bar(\Cal C_{VEM},ev_V).
\end{equation}
Since the maps $\alpha_{Y_1,Y_2}$ are quasi-isomorphisms,
the above map is a quasi-isomorphism.
By the quasi-isomorphisms (\ref{eq:1st quasi-iso in proof}) and
(\ref{eq:2nd quasi-iso in proof}), we have the required quasi-isomorphism.
\end{proof}

By the similar argument as in 
\S \ref{subsection: homotopy equivalence of DG-cat and Bar}, 
we have the following proposition.
\begin{proposition}
\label{equivalence for virtual mixed elliptic motives}
The quasi-DG category of comodules over
the bar complex $Bar(\Cal C_{VEM},ev_V)$
is homotopy equivalent to
the quasi-DG category $(VMEM)$ of
virtual mixed elliptic motives.
\end{proposition}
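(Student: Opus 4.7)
The plan is to reproduce, in the context of the small quasi-DG category $\Cal C_{VEM}$ equipped with the augmentation $ev_V$, the construction of \S\ref{subsection: homotopy equivalence of DG-cat and Bar}. First I would construct a bijection
$$
\varphi\colon ob((VMEM)) \longrightarrow ob(Bar(\Cal C_{VEM},ev_V)\text{-com})
$$
following \S\ref{subsec:bijection on objects} almost verbatim. Given a DG complex $\bold V=(V^i,d_{ij})$ in $(VEM)$, decompose each $V^i$ into its isotypic pieces indexed by twisted tableaux $(Y,p)\in TT$, apply $ev_V$ componentwise to form the underlying vector space $V=\oplus_i ev_V(V^i)[-i]$, and define the coaction $\Delta_V$ as the sum, over increasing sequences $\alpha=(\alpha_0<\cdots<\alpha_n)$, of the iterated composites $(1\otimes d_{\alpha_n\alpha_{n-1}})\circ\cdots\circ d_{\alpha_1\alpha_0}$ inserted into the $\alpha$-component of $Bar(\Cal C_{VEM},ev_V)$. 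The same computation as in \S\ref{subsec:bijection on objects} shows that $\Delta_V$ is coassociative and counitary and that $\varphi$ is a bijection, with inverse $\psi$ reading off the isotypic pieces $V^i$ and the structure maps $d_{ij}$ from the simplicial components of the coaction.

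Next I would introduce an intermediate quasi-DG category $BK(\Cal C_{VEM},ev_V)$ whose class of objects agrees with that of $Bar(\Cal C_{VEM},ev_V)$-comodules and whose Hom-complexes are defined as cones of comparison maps
$$
RHom_{Bar}(M,N)\xrightarrow{\ \xi\ } Hom_{K\Cal C'(VEM)}(\psi(M),F(\psi(N)))\xleftarrow{\ \eta\ } Hom_{(VMEM)}(\psi(M),\psi(N))
$$
built from the cofree resolution functor $F$ and the maps $\alpha$ and $\beta$ of \S\ref{subsection: homotopy equivalence of DG-cat and Bar}, all transcribed to the present setting. The two canonical projections from $BK(\Cal C_{VEM},ev_V)$ down to $(VMEM)$ and to the bar-comodule category are the DG functors realizing the desired homotopy equivalence. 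The content of the proposition is that both projections induce quasi-isomorphisms on Hom-complexes; this follows, as in Theorem \ref{main theorem genereal}, from the acyclicity of the augmented simplicial bar resolution attached to $\Cal C_{VEM}$, which provides a cofree resolution of every bar comodule and thereby forces $\xi$ and $\eta$ to be quasi-isomorphisms.

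The main obstacle is not conceptual but book-keeping: in contrast to \S\ref{section:relative DGA and DG category}, here $\Cal C_{VEM}$ is a genuine quasi-DG category, with an infinite class of irreducible objects indexed by $TT$, and with composites defined only on quasi-isomorphic admissible subcomplexes. Hence every instance of a product, cone, or cofree resolution in the translated argument must be restricted to an admissible subcomplex in the sense of \cite{Han}, \cite{Le}, just as in \S\ref{definitions quasi-DG category}. Once this is systematically carried out, the role played by the isotypic decomposition $A=\oplus V^\alpha\otimes A^{\alpha\beta}\otimes {}^\beta V$ of the relative DGA is taken over by the family of hom-complexes $H(V_1,V_2)$ indexed by pairs $V_1,V_2\in TT$, and the remainder of the proof of Theorem \ref{main theorem genereal} goes through with only notational changes.
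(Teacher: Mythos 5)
Your proposal takes essentially the same approach as the paper: both transcribe the proof of Theorem \ref{main theorem genereal} to the small quasi-DG category $\Cal C_{VEM}$, building the object bijection from the iterated structure maps $d_{ij}$ and establishing the homotopy equivalence through the intermediate category and cofree-resolution comparison maps. The only step the paper makes explicit that you leave implicit is that the insertion of $d_{\alpha_0\alpha_1}\otimes\cdots\otimes d_{\alpha_{n-1}\alpha_n}$ into the relevant bar component is carried out via the contraction map of \S\ref{definition of traces}, which is exactly the formalization of your informal ``decompose each $V^i$ into its isotypic pieces indexed by $TT$.''
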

Before giving the outline of the proof, we define the contraction map $con$.

\subsubsection{Definition of the map $con$}
\label{definition of traces}
Let $N_0,N_1,N_2$ be objects in $(VEM)$
and $p$ be an integer.
We assume that $N_2=\oplus_iM_{Y_i}(p)\otimes V_i$.
We introduce a contraction homomorphism $con$
by the composite of
\begin{align*}
&H(N_1,N_2)\otimes
H(N_2,N_3) \\
=&
\oplus_{Y_1(p),Y_2(p)\in TT}
H(N_1,M_{Y_1}(p))\otimes
Hom_{GL_{\infty}}(M_{Y_1}(p),N_2) \\
&\quad\otimes
Hom_{GL_{\infty}}(N_2,M_{Y_2}(p)) \otimes
H(M_{Y_2}(p),N_3) \\
\overset{\tau}\to&
\oplus_{Y(p)\in TT}
H(N_1,M_Y(p))
\otimes
H(M_{Y}(p),N_3)
\end{align*}
Here the map $\tau$ is the map induced by
the multiplication:
\begin{align*}
\tau:
&
Hom_{GL_{\infty}}(M_{Y_1},N_1)
\otimes
Hom_{GL_{\infty}}(N_1,M_{Y_2}) 
  \\
&\to
Hom_{GL_{\infty}}(M_{Y_1},M_{Y_2})\simeq 
\begin{cases}
\bold k &(Y_1=Y_2) \\
0 &(Y_1\neq Y_2)
\end{cases}
\end{align*}
Similarly, we can define homomorphisms $con$:
\begin{align*}
&ev_V(N_0)\otimes
H(N_0,N_1) 
\to
\oplus_{Y(p)\in TT}
ev_V(M_{Y}(p))\otimes
H(M_Y(p),N_1), \\
&
H(N_0,N_1) 
\otimes ev_V(N_1)^*
\to
\oplus_{Y(p)\in TT}
H(N_0,M_Y(p)) \otimes
ev_V(M_{Y}(p))^*.
\end{align*}
By composing the above contraction, we define the following map,
which is also called a contraction:
\begin{align}
\label{composite of trace}
&
H(U_0,U_1)
\otimes 
\cdots 
\otimes 
H(U_{n-1},U_{n})
\\
\nonumber
&\to
\oplus_{Y_0, \dots, Y_{n}\in TT}
Hom_{GL_{\infty}}(U_0,M_{Y_0})
\otimes 
H(M_{Y_0},M_{Y_1})
\otimes 
\cdots \\
\nonumber
& \hskip 0.7in 
\otimes 
H(M_{Y_{n-1}},M_{Y_{n}})
\otimes 
Hom_{GL_{\infty}}(M_{Y_{n}},U_n).
\end{align}

\subsubsection{Outline of the proof of Prop
\ref{equivalence for virtual mixed elliptic motives}
}
\begin{proof}
The proof is similar to Theorem \ref{main theorem genereal}.
We only give the correspondence on objects.
Let $\bold W=(W^i,d_{ji})$ be a DG-complex in $(VEM)$.
We introduce a $Bar(\Cal C_{VEM},ev_V)$-comodule
structure 
$$
\Delta_W:ev_V(W)\to Bar(\Cal C_{VEM},ev_V)\otimes ev_V(W)
$$
on $ev_V(W)=\oplus_i ev_V(W^i)e^{-i}$.
For an index $\alpha=(\alpha_0<\dots <\alpha_n)$
such that $\alpha_0=i,\alpha_n=j$,
the component
$$
\Delta_W^{\alpha}: \ ev_V(W^i)\to Bar(\Cal C,ev_V)^{\alpha} \otimes
ev_V(W^j)
$$
can be described as follows.
By the data $d_{ji}\in
H(W^i,W^j)$ 
of the DG complex $\bold W$,
we define an element
$$
D^{\alpha}=d_{\alpha_0,\alpha_1}\otimes\cdots \otimes
d_{\alpha_{n-1},\alpha_n}\in
H(W^{\alpha_0},W^{\alpha_1})\otimes\dots \otimes
H(W^{\alpha_{n-1}},W^{\alpha_{n}}).
$$
By the contraction map (\ref{composite of trace}),
we have the following map
\begin{align*}
&H(W^{\alpha_0},W^{\alpha_1})\otimes\dots \otimes
H(W^{\alpha_{n-1}},W^{\alpha_n}) \\
\to &
\oplus_{Y_0, \dots, Y_n\in TT}
Hom_{GL_{\infty}}(W^{\alpha_0},M_{Y_0})\otimes
H(M_{Y_0},M_{Y_1})
\otimes\dots \\
&\hskip 0.8in\otimes
H(M_{Y_{n-1}},M_{Y_{n}})\otimes
Hom_{GL_{\infty}}(M_{Y_n},W^{\alpha_n}) \\
\to &
\oplus_{Y_0, \dots, Y_n\in TT}
Hom_{KVect}(ev_V(W^{\alpha_0}),ev_V(M_{Y_0}))\otimes
H(M_{Y_0},M_{Y_1})
\otimes\dots \\
&\hskip 0.8in\otimes
H(M_{Y_{n-1}},M_{Y_{n}})\otimes
Hom_{KVect}(ev_V(M_{Y_n}),ev_V(W^{\alpha_n})).
\end{align*}
The image of $D^{\alpha}$ under the map 
defines the required map 
\begin{align*}
\Delta^{\alpha}_W:\ &ev_V(W^{\alpha_0})\mapsto 
\bigg[ev_V(M_{Y_0})
\overset{\alpha_0}\otimes
H(M_{Y_0},M_{Y_1})
\otimes\dots \\
&\hskip 0.3in
\overset{\alpha_{n-1}}\otimes
H(M_{Y_{n-1}},M_{Y_{n}})
\overset{\alpha_n}\otimes
ev_V(M_{Y_n})^*\bigg]\otimes ev_V(W^{\alpha_n}).
\end{align*}

\end{proof}
\begin{definition}
The quasi-DG category of comodule over 
$Bar(\Cal C_{VEM})$ is called
a quasi-DG category of mixed elliptic motives. 
The homotopy category becomes a triangulated category
and it is called the triangulated category of 
virtual mixed elliptic motives.
\end{definition}
\begin{remark}
In this section, we assume that $E$ is an elliptic curve over
a field $K$. In general we can modify the definition of
relative quasi-DG category and relative quasi-DG algebra
for an elliptic curve $X\to S$ over an arbitrary scheme $S$ 
over a field $k$. Then  
$Bar(\Cal C_{VEM})(E/S)$ and ${VMEM}(E/S)$ 
become a contravariant 
functor and
a fibered quasi-DG category
over non-CM points $(Sch/S)_{non-CM}$ of $(Sch/S)$.
\end{remark}

\subsection{Tensor and antipodal structure on DG category}
\subsubsection{}
Let $\Cal C$ be a DG category. 
\begin{definition}
\begin{enumerate}
\item
A tensor structure $(\Cal C,\otimes)$on $\Cal C$
consists of
\begin{enumerate}
\item
The biadditive correspondence
$$
\bullet \otimes\bullet:ob(\Cal C)\times ob(\Cal C)\to ob(\Cal C)
:(M,N)\mapsto M\otimes N 
$$
on pairs of objects, and
\item
a natural homomorphism of complexes
$$
\underline{Hom}^{\bullet}(M_1, M_2)\otimes
\underline{Hom}^{\bullet}(N_1, N_2)\to
\underline{Hom}^{\bullet}(M_1\otimes N_1, M_2\otimes N_2).
$$
\end{enumerate}
\item
A tensor structure $(\Cal C, \otimes)$ satisfies
the distributive law if the relation
$$
(f_1\otimes f_2)\circ (g_1\otimes g_2)=(-1)^{\deg(f_2)\deg(g_1)}
(f_1 \circ g_1)\otimes (f_2\circ g_2),
$$
is satisfied for
$f_i \in\underline{Hom}^{\bullet}(M_i, N_i)$,
$g_i \in\underline{Hom}^{\bullet}(L_i, M_i)$
for $i=1,2$.
\item
Let $(\Cal C,\otimes)$ be a tensor structure on the
category $\Cal C$.
A system $\{c_{A,B}\}$ of  closed degree zero isomorphisms 
$c_{A,B}:A\otimes B\to B\otimes A$ is called the commutativity
constrain if $c_{A,B}$ and $c_{B,A}$ are inverse to each other
and they satisfies the relation:
\begin{align*}
(f_N\otimes f_M)\circ c_{M_1,N_1}=&
(-1)^{\deg(f_M)\deg(f_N)}
c_{M_2,N_2}\circ (f_M\otimes f_N) \\
&\in
\underline{Hom}(M_1\otimes N_1, N_2\otimes M_2)
\end{align*}
for
$
f_M\in\underline{Hom}(M_1, M_2),
f_N \in
\underline{Hom}(N_1, N_2)
$.
\item
Let $(\Cal C,\otimes)$ be a tensor structure on the
category $\Cal C$.
A system $\{c_{A,B,C}\}$ of  closed degree zero isomorphisms 
$c_{A,B,C}:(A\otimes B)\otimes C\to 
A\otimes (B\otimes C)$ is called the 
associativity constrain
if the following holds:
\begin{align*}
c_{L_2,M_2,N_2}\circ((f_L\otimes f_M)\otimes f_N)=
&(f_L\otimes  (f_M\otimes f_N)) \circ c_{L_1,M_1,N_1}\\
\in &
\underline{Hom}((L_1\otimes M_1)\otimes N_1, L_2\otimes (N_2\otimes M_2))
\end{align*}
for
$
f_L\in\underline{Hom}(L_1, L_2),
f_M\in\underline{Hom}(M_1, M_2),
f_N \in
\underline{Hom}(N_1, N_2).
$
\end{enumerate}
\end{definition}
\subsubsection{}
\label{reason why naive does not distributive}
The following example illustrates the category of naive
elliptic motives does not have a natural tensor structure
with the distributive
property.
On the category of $Rep_{GL(V)}$, we have
$V\otimes V\simeq Sym^2(V)\oplus \bold Q(-1)$.
Let $V_1, \dots, V_6$ be copies of $V$
and choose an isomorphism $V_3\otimes V_4=
Sym^2(V)\oplus \bold Q(-1)$.
We consider the composites of
\begin{align*}
&id_3\otimes id_4\in 
\underline{Hom}^0(V_3\otimes V_4,V_5\otimes V_6),\quad
id_1\otimes id_2\in 
\underline{Hom}^0(V_1\otimes V_2,V_3\otimes V_4)
\end{align*}
and consider a decomposition
\begin{align*}
&id_3\otimes id_4=S_1+A_1,\quad
id_1\otimes id_2=S_2+A_2
\end{align*}
according to the decomposition
\begin{align*}
&id_3\otimes id_4\in 
\underline{Hom}^0(V_3\otimes V_4,V_5\otimes V_6)=
\underline{Hom}^0(Sym^2\oplus \bold Q(-1),V_5\otimes V_6), \\
&id_1\otimes id_2\in 
\underline{Hom}^0(V_1\otimes V_2,V_3\otimes V_4)=
\underline{Hom}^0(V_1\otimes V_2,Sym^2\oplus \bold Q(-1)).
\end{align*}
Let $\Delta_{ij}^+$ and $\Delta_{ij}^-$ be divisors defined by $x_i=x_j$
and $x_i+x_j=0$,
where $x_i$ is the coordinate of the copy $E_i$.
Thus $A_1$ and $A_2$ are 
equal to 
$\frac{1}{2}(\Delta_{56}^++\Delta^{-}_{56})$
and
$\frac{1}{2}(\Delta_{12}^++\Delta^{-}_{12})$
as elements in
$Z^{\bullet}(pt\times (E_5\times E_6),\bullet)$ and
$Z^{\bullet}((E_1\times E_2)\times pt,\bullet)$.
Therefore the $A_1\circ A_2$ composite is equal to
$$
\frac{1}{4}(\Delta_{56}^+-\Delta^{-}_{56})\cap
(\Delta_{12}^+-\Delta^{-}_{12})
$$
On the other hand, the anti-symmetric part $A$ of 
$(id_3\circ id_1)\otimes (id_4\circ id_2)$
is equal to the intersection 
$$
\frac{1}{8}\bigg[(\Delta_{15}^+-\Delta^{-}_{15})
\cap 
(\Delta_{26}^+-\Delta^{-}_{26})
+
(\Delta_{16}^+-\Delta^{-}_{16})
\cap(\Delta_{25}^+-\Delta^{-}_{25})\bigg]
$$
Though $A$ and $A_1\circ A_2$ are homotopy equivalent,
they are different as cycles.
\subsubsection{}
Using the inverse of $S$, we have
a transpose representation $M^t$ on $M^*=Hom_{\bold k}(M,\bold k)$,
which is a left $S$-comodule on
the underlying right $S$-comodule $M^*$.
In general, we introduce an antipodal structure on DG categories.
\begin{definition}
Let $\Cal C$ be a DG category with a tensor structure. A pair of
\begin{enumerate}
\item
contravariant functor 
$\Cal C \to \Cal C:M\to M^t$, and 
\item
a system of degree zero closed isomorphisms
$$
\theta:(M\otimes N)^t \to M^t \otimes N^t
$$ 
\end{enumerate}
is called an antipodal structure on $\Cal C$ if the following
diagram commutes.
$$
\begin{matrix}
Hom_{\Cal C}(M_1, N_1)\otimes Hom_{\Cal C}(M_2, N_2)
&\xrightarrow{\otimes}&
Hom_{\Cal C}(M_1\otimes M_2, N_1\otimes N_2) \\
\downarrow & &
\downarrow
\\
Hom_{\Cal C}(N_1^t, M_1^t)\otimes Hom_{\Cal C}(N_2^t, M_2^t)
&\xrightarrow{\otimes}&
Hom_{\Cal C}(N_1^t\otimes N_2^t, M_1^t\otimes M_2^t) \\
& &\Vert\quad ad(\theta)
\\
& &
Hom_{\Cal C}((N_1\otimes N_2)^t, (M_1\otimes M_2)^t).
\end{matrix}
$$
\end{definition}

\subsubsection{}
\label{subsubsection: primitive difinition of shuffle product}
Assume that $\Cal C$ has a tensor structure $(\Cal C,\otimes)$.
We extend a tensor structure $(K\Cal C,\otimes)$ on the category 
$K\Cal C$ as follows.
Let $(V^i,\{d_{ij}\})$ and $(W^i,\{e_{ij}\})$ be DG complexes in
$\Cal C$.
Then $V^i\otimes W^j$ is an object in $\Cal C$.
The relative DG complex structure 
$(\oplus_{i+j=k} (V^i\otimes W^j), f_{kl})$
is given by
$$
f_{kl}=\sum_{i+j=k,p+j=l}\tau(d_{ip}\otimes 1_{W^j})
+\sum_{i+j=k,i+q=l}\tau(1_{V^i}\otimes e_{jq}).
$$
We can check the following lemma.
\begin{lemma}
If the tensor structure $\otimes$ on $\Cal C$ is distributive (resp. commutative),
the induced tensor structure on $K\Cal C$ is also distributive (resp. commutative).
\end{lemma}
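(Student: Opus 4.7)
The plan is to reduce both properties for $K\Cal C$ to the corresponding properties in $\Cal C$ by unpacking the component-wise definition of the induced tensor structure. A degree $n$ homogeneous morphism $F$ between DG complexes $(V^\bullet, d)$ and $(W^\bullet, d)$ in $K\Cal C$ is a collection of components $F_{ij} \in \underline{Hom}^{n+j-i}_{\Cal C}(V^j,W^i)$ (cf.\ the description of DG complexes given in \S\ref{definition of DG cat associate to relative DGA}), and the composition in $K\Cal C$ is matrix composition twisted by the transposition $\tau$ governing the Koszul signs induced by the shifts $e^{-j}$. Similarly, by the formula for $f_{kl}$ just above the lemma, the tensor product $F \otimes F'$ of two morphisms in $K\Cal C$ has components which, up to signs from $\tau$, coincide with tensor products $F_{ip}\otimes F'_{jq}$ taken in $\Cal C$. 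All the content of the lemma is therefore a bookkeeping statement about these transpositions.

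For the distributive part I would first fix closed homogeneous morphisms $F_i, G_i$ ($i=1,2$) in $K\Cal C$ and expand both $(F_1\otimes F_2)\circ (G_1\otimes G_2)$ and $(F_1\circ G_1)\otimes (F_2\circ G_2)$ into double sums indexed by the component indices. After applying $\tau$ once on each side to move factors into a common order, the sum collapses, term by term, to instances of the distributive identity $(f_1\otimes f_2)\circ(g_1\otimes g_2) = \pm (f_1\circ g_1)\otimes (f_2\circ g_2)$ valid in $\Cal C$. I then compare the two global sign prefactors: the sign appearing on the left is the product of the local Koszul sign of $\Cal C$ with a sign produced by the shift degrees of the component morphisms, while on the right the factor $(-1)^{\deg(F_2)\deg(G_1)}$ is the global Koszul sign for the full morphisms. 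Both signs are quadratic forms in the component degrees, and summing the shift contributions $n+j-i$ over the matched indices shows they agree.

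For the commutativity part I would define the constraint $c^{K}_{V,W}: V\otimes W \to W\otimes V$ in $K\Cal C$ component-wise: on the summand $V^i\otimes W^j$ (which sits in total degree $i+j$) set it to be the corresponding $c_{V^i,W^j}$ in $\Cal C$, multiplied by the sign $(-1)^{ij}$ that accounts for the interchange of the shift factors $e^{-i}$ and $e^{-j}$. That this is a closed morphism of degree zero follows from naturality of $c$ together with the defining identities of the differentials $f_{kl}$; here the cross terms involving $d_{ip}\otimes 1$ versus $1\otimes e_{jq}$ are exactly swapped by $c$, and the signs match because of the $(-1)^{ij}$ shift factor and the assumed naturality relation $(f_N\otimes f_M)\circ c = (-1)^{\deg f_M\deg f_N} c\circ (f_M\otimes f_N)$ in $\Cal C$. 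The required naturality of $c^{K}$ with respect to morphisms in $K\Cal C$ is then a direct computation in each matrix entry, again using only the $\Cal C$-level identity.

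The only real obstacle is keeping the Koszul signs consistent. There are three sources of signs: the intrinsic sign rule of composition in $\Cal C$, the extra sign produced by $\tau$ on shifted complexes, and the sign $(-1)^{ij}$ built into the commutativity constraint on the shifted tensor product. Writing a morphism $F_{ij}$ as $\widetilde F_{ij}\otimes t^{j-i}$ via the isomorphism \eqref{shift and homomorphism and sgn}, and computing all transpositions uniformly through \eqref{transposition}, will make the comparison routine; I do not expect any genuinely new identity beyond those already assumed in $\Cal C$.
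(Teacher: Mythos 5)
The paper gives no proof of this lemma at all; it simply states ``We can check the following lemma,'' leaving the sign-chasing to the reader. Your proposal carries out exactly that check in the expected way: you expand both sides into components, funnel the shift factors through the isomorphism (\ref{shift and homomorphism and sgn}), and reduce the identities in $K\Cal C$ to the distributive/commutativity relations already assumed in $\Cal C$, with the only genuine content being the book-keeping of the three sources of Koszul signs you identify. Your proposed definition of the commutativity constraint $c^K_{V,W}$ as $c_{V^i,W^j}$ twisted by $(-1)^{ij}$ on the summand $V^ie^{-i}\otimes W^je^{-j}$ is the correct one (the $(-1)^{ij}$ being the transposition sign of the shift factors), and your plan to verify closedness and naturality component-wise using the relation $(f_N\otimes f_M)\circ c = (-1)^{\deg f_M\deg f_N}c\circ(f_M\otimes f_N)$ is the right structure for the argument.

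One small inaccuracy: for the distributive part you say you would ``fix closed homogeneous morphisms $F_i, G_i$.'' The distributive law as defined in the paper is required for all homogeneous morphisms, not just closed ones; closedness plays no role in the identity or in the sign-bookkeeping, so the restriction should simply be dropped. With that wording adjusted, your sketch matches the (omitted) verification the paper has in mind, though it remains a sketch rather than an explicit sign computation.
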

\subsubsection{}
\label{subsubsection: primitive difinition of shuffle product
for bar}
Let $\Cal C$ be a small DG-category and $ev:\Cal C\to Vect_{\bold k}$ be
an augmentation.
This tensor structure on $K\Cal C$ gives rise to
the shuffle product on the relative bar complex 
$Bar=Bar(\Cal C,ev)$ via the equivalence of
categories as follows. 
Let $\bold {Bar}$ be the object in $K\Cal C$
corresponding to the left $Bar$ comodule $Bar$.
By the tensor structure, the object $\bold {Bar}\otimes \bold {Bar}$
in $K\Cal C$ is defined.
Therefore we have
the corresponding object $Bar\otimes Bar$ in $Bar$-comodule.
Let
$$
\Delta_{Bar\otimes Bar}:Bar\otimes Bar \to Bar\otimes
(Bar \otimes Bar)
$$
be the $Bar$-comodule structure on $Bar\otimes Bar$.
By composing the counit $u\otimes u:Bar\otimes Bar \to \bold k$,
we have a shuffle product
$$
Bar\otimes Bar \to Bar.
$$
One can check the following properties.
\begin{proposition}
If there is a commutative constrain (resp. associativity constrain),
the shuffle product on $Bar$ is commutative (resp. associative).
Moreover, an antipodal structure on $\Cal C$ gives an antipodal of
$Bar$.
\end{proposition}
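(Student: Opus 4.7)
The plan is to transport all three structural features on $\Cal C$ through the categorical equivalence between $K\Cal C$ and $Bar$-comodules established in Theorem~\ref{main theorem genereal} (adapted to the small DG category setting as in Proposition~\ref{equivalence for virtual mixed elliptic motives}). Let $\varphi$ denote the functor sending a DG complex $\bold V$ in $\Cal C$ to its corresponding $Bar$-comodule.

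For commutativity, the lemma in \S\ref{subsubsection: primitive difinition of shuffle product} shows that a commutativity constraint $\{c_{A,B}\}$ on $\Cal C$ lifts to one on $K\Cal C$. In particular $c_{\bold{Bar},\bold{Bar}}:\bold{Bar}\otimes\bold{Bar}\to\bold{Bar}\otimes\bold{Bar}$ is a closed degree-zero isomorphism, hence $\varphi(c_{\bold{Bar},\bold{Bar}})$ is a $Bar$-comodule automorphism of $Bar\otimes Bar$. A direct computation using the formula for the DG structure on a tensor product in \S\ref{subsubsection: primitive difinition of shuffle product} and the construction of $\Delta_V$ in the proof of Theorem~\ref{main theorem genereal} identifies $\varphi(c_{\bold{Bar},\bold{Bar}})$ with the standard transposition $\sigma$ from (\ref{transposition}). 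Consequently $\sigma$ is a $Bar$-comodule map, meaning $\Delta_{Bar\otimes Bar}\circ\sigma=(1_{Bar}\otimes\sigma)\circ\Delta_{Bar\otimes Bar}$. Since $(u\otimes u)\circ\sigma=u\otimes u$, the shuffle product $m_{sh}=(1\otimes u\otimes u)\circ\Delta_{Bar\otimes Bar}$ then satisfies $m_{sh}\circ\sigma=m_{sh}$, which is commutativity. The associativity assertion is handled by the same argument applied to $\bold{Bar}^{\otimes 3}$: the associator $c_{\bold{Bar},\bold{Bar},\bold{Bar}}$ pushes forward under $\varphi$ to the standard associator on $Bar^{\otimes 3}$, and comparing $m_{sh}\circ(m_{sh}\otimes 1)$ and $m_{sh}\circ(1\otimes m_{sh})$ via $\Delta_{Bar^{\otimes 3}}$ reduces the equality to the compatibility of the associator with the comodule structure together with the obvious identity $(u\otimes u\otimes u)\circ c_{\bold{Bar},\bold{Bar},\bold{Bar}}=u\otimes u\otimes u$.

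For the antipodal part, the contravariant functor $M\mapsto M^t$ together with the data $\theta$ extends to a contravariant DG endofunctor of $K\Cal C$ by taking the induced transpose on differentials. Applied to $\bold{Bar}$ and transported via $\varphi$, it produces a $Bar$-comodule $Bar^t$; identifying $Bar^t$ with $Bar$ at the level of underlying complexes yields a map $S:Bar\to Bar$. The commutative square in the definition of the antipodal structure, combined with the dualities $(u\otimes u)\circ\theta=u\otimes u$ and the already-established compatibility with the transposition, then implies that $S$ reverses both the shuffle product $m_{sh}$ and the coproduct $\Delta_{\bold B}$ from (\ref{comultiplication for bar}), i.e.\ it provides an antipode on the bialgebra $Bar$.

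The main obstacle will be verifying the identification $\varphi(c_{\bold{Bar},\bold{Bar}})=\sigma$: this amounts to checking that the Koszul signs introduced by the tensor structure on $K\Cal C$, in particular the sign $(-1)^{\deg(f_2)\deg(k_1)}$ in the definition of $f_1\otimes f_2$ recalled in the Convention, match those governing the transposition~(\ref{transposition}). A parallel sign verification is needed for the associator and for the $\theta$-compatibility of the antipodal, and together these computations form the technical heart of the proof; once settled, the commutativity, associativity, and antipode axioms for $Bar$ follow formally from the corresponding axioms in $\Cal C$.
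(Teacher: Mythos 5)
The paper itself gives no proof of this proposition (it only says ``one can check the following properties''), so there is no written argument to compare against; what can be assessed is whether your proposal closes the logical gap, and it does not quite do so. The conceptual blueprint — transport the commutativity/associativity constraints and the antipodal through the correspondence between $K\Cal C$ and $Bar$-comodules, and read off the multiplicative properties of the shuffle product — is reasonable, and your reduction of commutativity to the claim that $\sigma$ is a $Bar$-comodule automorphism satisfying $(u\otimes u)\circ\sigma=u\otimes u$ is clean.

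The difficulty is that the single step on which everything rests, the identification $\varphi(c_{\bold{Bar},\bold{Bar}})=\sigma$, is not ``a sign verification''; it is not even obviously well-posed. The map $\varphi$ constructed in \S\ref{subsec:bijection on objects} is a bijection on \emph{objects} only. Morphisms are compared via the zig-zag through the auxiliary category $BK(A/\Cal O)$ in \S\ref{subsection: homotopy equivalence of DG-cat and Bar}, and the comparison maps $\alpha$, $\beta$, $\xi$, $\eta$ are quasi-isomorphisms, not isomorphisms, so there is no strict transport of the closed degree-zero morphism $c_{\bold{Bar},\bold{Bar}}$ to a $Bar$-comodule endomorphism of $Bar\otimes Bar$. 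What you actually need, and what is genuinely provable, is a statement at the level of the explicit data: the comodule $Bar\otimes Bar=\varphi(\bold{Bar}\otimes\bold{Bar})$ is built from the DG structure $f_{kl}=\sum\tau(d_{ip}\otimes 1)+\sum\tau(1\otimes e_{jq})$ of \S\ref{subsubsection: primitive difinition of shuffle product}, and the transposition $\sigma$ on the underlying complex commutes with both the induced differential $\delta$ and the coaction $\Delta_{Bar\otimes Bar}$ because the constraint $c$ is closed and satisfies the naturality $(f_N\otimes f_M)\circ c_{M_1,N_1}=(-1)^{\deg f_M\deg f_N}c_{M_2,N_2}\circ(f_M\otimes f_N)$. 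Even there, a hidden hypothesis appears: for the underlying space of $\varphi(\bold{Bar}\otimes\bold{Bar})$ to actually \emph{be} $Bar\otimes Bar$, and for $ev(c_{A,B})$ to actually \emph{be} the Koszul transposition, the augmentation $ev$ must be a tensor functor compatible with the constraint. That compatibility is automatic in the concrete categories $(EM)$, $(VEM)$ (where the constraint is built out of diagonal cycles that augment to identities, cf.\ Proposition \ref{proposition:assiciativity and commutativity for virtual}), but it is not contained in the abstract definitions of commutativity constraint or augmentation, and your proof never invokes it. The same issues recur, magnified, in the antipode paragraph: ``identifying $Bar^t$ with $Bar$ at the level of underlying complexes'' is an isomorphism that must be constructed (it is the order-reversal with component-wise transpose and a Koszul sign), and the interaction of this isomorphism with $\Delta_{\bold B}$ and $m_{sh}$ is precisely the content to be proved, not a formal consequence.

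In short: the framework is the right one, but the proposal postpones exactly the verifications that constitute the proof. To make it complete you would either (i) spell out the tensor-compatibility assumptions on $ev$, check that $ev(c)$ equals the Koszul $\sigma$, and then verify directly that $\sigma$ intertwines the coaction $\Delta_{Bar\otimes Bar}$ built from the $f_{kl}$; or, more robustly, (ii) argue at the level of the explicit shuffle formula (\S5.7 in the $(VEM)$ case, with the general small-DG-category version being analogous), where commutativity becomes the bijection $\sha(m,n)\simeq\sha(n,m)$ together with the naturality of $c$ applied factor by factor, and where the antipode is the explicit order-reversal with the sign from $\tau$ in \S4.8 of the antipodal structure.
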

 
\subsection{Tensor product for virtual mixed elliptic motif}

We define a tensor structure on $(VEM)$ in this subsection.
Let $M_{Y_1}(p_1),M_{Y_2}(p_2),M_{Y_3}(p_3)$ and 
$M_{Y_4}(p_4)$ be objects of $(VEM)$.
We define a homomorphism
\begin{align}
\label{tensor map for virtual elliptic motif}
& \underline{Hom}(M_{Y_1}(p_1),M_{Y_3}(p_3))
\otimes
\underline{Hom}(M_{Y_2}(p_2),M_{Y_4}(p_4)) \\
\nonumber
& =e_{Y_1}
\Cal H^{\bullet}(E^{s(Y_1)}, E^{s(Y_3)},p_3-p_1)e_{Y_3}
\otimes
e_{Y_2}
\Cal H^{\bullet}(E^{s(Y_2)}, E^{s(Y_4)},p_4-p_2)e_{Y_4}
\\
\nonumber
& \to
\underline{Hom}(M_{Y_1}(p_1)\otimes M_{Y_2}(p_2)
 ,M_{Y_3}(p_3)\otimes M_{Y_4}(p_4)) \\
\nonumber
& =
\oplus_{Z_1,Z_2}
Hom_{GL_{\infty}}(M_{Y_1}\otimes M_{Y_2},M_{Z_1})\\
\nonumber
&\otimes
\underline{Hom}(M_{Z_1}(p_1+p_2),M_{Z_2}(p_3+p_4)) \\
\nonumber
&\otimes
Hom_{GL_{\infty}}(M_{Z_2},M_{Y_3}\otimes M_{Y_4}) \\
\nonumber
& =
\oplus_{Z_1,Z_2}
Hom_{GL_{\infty}}(M_{Y_1}\otimes M_{Y_2},M_{Z_1})\\
\nonumber
&\otimes
e_{Z_1}
\Cal H^{\bullet}(E^{s(Z_1)}, E^{s(Z_2)},p_3+p_4-p_1-p_2)e_{Z_2} 
\\
\nonumber
&\otimes
Hom_{GL_{\infty}}(M_{Z_2},M_{Y_3}\otimes M_{Y_4}).
\end{align}
We choose bases $\{\varphi_{Z_1,i}\}$ and $\{\psi_{Z_2,j}\}$ 
of 
$$
Hom_{GL_{\infty}}(M_{Z_1}, M_{Y_1}\otimes M_{Z_2}) \text{ and }
Hom_{GL_{\infty}}(M_{Z_2} ,M_{Y_3}\otimes M_{Y_4}).
$$
Their dual bases in
$$
Hom_{GL_{\infty}}(M_{Y_1}\otimes M_{Z_2}, M_{Z_1}) \text{ and }
Hom_{GL_{\infty}}(M_{Y_3}\otimes M_{Y_4}, M_{Z_2}).
$$
under the composite paring are written as 
$\{\varphi_{Z_1,i}^*\}$ and $\{\psi_{Z_2,j}^*\}$.
Let $(i,j,k)=(1,1,3)$ or $(2,2,4)$. For
\begin{align*}
f_j=e_{Y_i}\Lambda(s_i)\Cal Z_j\Lambda(s_k)e_{Y_k} \in
e_{Y_i}
\Cal H^{\bullet}(E^{s(Y_i)}, E^{s(Y_k)},p_k-p_i)e_{Y_k},
\end{align*}
we set
$$
f_1\times f_2=(-1)^{\# s_3(\#s_2+\deg(\Cal Z_2))+\#s_2\deg(\Cal Z_1)}
\Lambda(s_1)\Lambda(s_3)
(\Cal Z_1\times
\Cal Z_2)
\Lambda(s_2)\Lambda(s_4).
$$
The map (\ref{tensor map for virtual elliptic motif})
is defined by
\begin{align*}
&e_{Y_1}f_1
e_{Y_3}
\otimes
e_{Y_2}f_2e_{Y_4}
\mapsto 
\sum_{Z_1,Z_2,i,j}
\varphi_{Z_1,i}^*\otimes 
\big(\varphi_{Z_1,i}
(f_1\times f_2)
\psi_{Z_2,j}^*\big)
\otimes \psi_{Z_2,j}.
\end{align*}
for 
Here $s_i=s(Y_i)$ and
$\Cal Z_1 \times \Cal Z_2$ is the product of algebraic cycles
in $E^{(s(Y_1)\coprod \cdots \coprod s(Y_4))}\times
\bold A^{\bullet}$ and $\varphi_{Z_1,i}$ and $\psi_{Z_2,j}^*$
acts on the space 
$\Cal H^{\bullet}(E^{(s(Y_1)\coprod s(Y_3))},
E^{(s(Y_2)\coprod s(Y_4))},\bullet)$
from the left and the right via the 
Schur-Weyl reciprocity (\ref{schur-weyl duality}).

\begin{proposition}[Distributive relation]
\label{associativity for each component}
This tensor structure satisfies distributive law.
\end{proposition}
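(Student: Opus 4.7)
The plan is to verify the identity directly by unwinding both sides of the distributive relation according to the definition of tensor product on $(VEM)$ given in~(\ref{tensor map for virtual elliptic motif}) and the definition of composition given in~(\ref{rule for composite in general}), and then to check that the cycle-theoretic and Schur--Weyl components match separately. Fix homogeneous homomorphisms
\[f_i\in \underline{Hom}(M_{Y_i}(p_i),M_{Y_i'}(p_i')),\quad g_i\in \underline{Hom}(M_{Z_i}(q_i),M_{Y_i}(p_i))\quad (i=1,2),\]
and write $f_i=e_{Y_i}\Lambda(s_i)\Cal Z_{f_i}\Lambda(s_i')e_{Y_i'}$ and similarly for $g_i$. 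By bilinearity it suffices to prove the identity on such decomposable generators.

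First I would expand $(f_1\otimes f_2)\circ (g_1\otimes g_2)$: applying the tensor rule once produces the external cycle product $\Cal Z_{f_1}\times\Cal Z_{f_2}$ flanked by the chosen bases $\{\varphi_{*}\}$, $\{\psi_{*}\}$ of $GL_\infty$-intertwiners from $M_{Y_1}\otimes M_{Y_2}$ and $M_{Y_1'}\otimes M_{Y_2'}$ to irreducibles, and similarly for the $g$-factor. The composition~(\ref{rule for composite in general}) then pairs these bases with their duals and multiplies the two cycle classes by the intersection homomorphism $\Pi$ of \S4.2. On the other hand, $(f_1\circ g_1)\otimes (f_2\circ g_2)$ first intersects $\Cal Z_{g_i}\cdot\Cal Z_{f_i}$ on $E^{s(Z_i)}\times E^{s(Y_i)}\times E^{s(Y_i')}$ (pulled back to the diagonal in $E^{s(Y_i)}$ and pushed to $E^{s(Z_i)}\times E^{s(Y_i')}$), and then takes the external product of the two resulting cycles together with the basis re-expansion coming from the tensor rule applied to $f_1\circ g_1$ and $f_2\circ g_2$.

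The cycle parts of the two sides then agree thanks to the projection formula / associativity of Bloch's intersection pairing: intersecting $(\Cal Z_{g_1}\times\Cal Z_{g_2})$ with $(\Cal Z_{f_1}\times\Cal Z_{f_2})$ along the joint diagonal of $E^{s(Y_1)}\times E^{s(Y_2)}$ equals the external product of the two separate intersections, after the boundary-transversality moves of Hanamura--Levine recalled in~\S\ref{definitions quasi-DG category} have been performed on a common quasi-isomorphic subcomplex. For the $GL_\infty$-intertwiner parts, the completeness relation $\sum_{Z_1,i}\varphi_{Z_1,i}\otimes\varphi_{Z_1,i}^*=\mathrm{id}_{M_{Y_1}\otimes M_{Y_2}}$ (and the analogous one on the target side), which is exactly the Schur--Weyl isomorphism~(\ref{schur-weyl duality}), reduces the difference between the two basis expansions to the identity on the $GL_\infty$-isotypic summands. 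Composing the two matching halves then gives the same element on each side.

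The main obstacle is bookkeeping of the Koszul signs. These enter in three independent places: the sign $(-1)^{\#s_3(\#s_2+\deg\Cal Z_2)+\#s_2\deg\Cal Z_1}$ built into the definition of $f_1\times f_2$; the sign appearing when two orientations $\Lambda(s)$ are paired with $\Lambda^*(s)$ using the rule $(e_1\wedge\cdots\wedge e_a)\otimes(f_a\wedge\cdots\wedge f_1)\mapsto 1$; and the Koszul sign $(-1)^{\deg(f_2)\deg(g_1)}$ appearing on the right-hand side, together with the sign convention $f\circ g=(-1)^{\deg f\deg g}g\cdot f$ of \S4.4. The key check will be that, when the four orientation factors $\Lambda(s_i)\Lambda(s_i')$ are re-grouped from the $(f_1\otimes f_2)(g_1\otimes g_2)$ order into the $(f_1\circ g_1)\otimes(f_2\circ g_2)$ order by applying~(\ref{transposition}) repeatedly, the accumulated Koszul sign is precisely $(-1)^{\deg(f_2)\deg(g_1)}$ times the sign difference between the two prescriptions for the tensor-orientation convention. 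This is a routine but careful verification which can be done once and for all in the universal case where the $\Cal Z_{*}$ are formal generators of prescribed degree.
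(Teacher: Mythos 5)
Your proposal is essentially the same approach as the paper's and correctly identifies the two main ingredients: compatibility of the Bloch intersection pairing with external products, and a Schur--Weyl completeness relation for the intermediate intertwiners. However, there is a concrete gap in how you bridge these two ingredients. You separate the verification into a ``cycle part'' handled by the projection formula and a ``$GL_\infty$-intertwiner part'' handled by completeness, but in the category $(VEM)$ the intertwiners $\varphi_{Z,i}$ and their duals do not sit apart from the cycles: they act on $\Cal H^{\bullet}(E^{s(Y_1)\coprod s(Y_2)},\ldots)$ via the group-ring action through Schur--Weyl reciprocity (\ref{schur-weyl duality}). So when $(f_1\otimes f_2)$ is composed with $(g_1\otimes g_2)$, the middle intertwiner $\phi_{Z_2,j}^*$ sits on the right of the cycle $f_1\times f_2$, and $\phi_{Z_2,j}$ sits on the left of $g_1\times g_2$, each acting on the respective cycle group. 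The completeness relation $\sum_{j}\phi_{Z_2,j}^*\phi_{Z_2,j}=\mathrm{id}$ cannot be applied until these two are brought to the \emph{same} side of the intersection pairing.

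The paper's proof closes this gap by isolating Lemma \ref{adjoint of GL action}: for $\psi\in\bold Q[Isom(S',S)]$, the intersection pairing satisfies $(g\psi,f)_{S'}=(g,\psi f)_S$. This adjointness is precisely what allows $\phi_{Z_2,j}^*$ to be moved from the $f$-side to the $g$-side, after which the sum over $Z_2,j$ contracts to the identity. Your appeal to ``projection formula / associativity of Bloch's intersection pairing'' does not substitute for this; the projection formula governs pushforward against pullback of cycles, not the transfer of a group-ring factor across the diagonal pairing. Once you insert the adjoint lemma as an explicit step, the remainder of your plan --- the external-product compatibility $\mu((f_1\times f_2)\otimes(g_1\times g_2))=(-1)^{\deg(f_2)\deg(g_1)}[\mu(f_1\otimes g_1)\times\mu(f_2\otimes g_2)]$ and the Koszul sign bookkeeping --- matches the paper's computation and would complete the proof.
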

To prove the above proposition, we use the following easy lemma.
\begin{lemma}
\label{adjoint of GL action}
Let $S$ be a finite set and
$$
(*,*)_S:Z^i(E^a\times E^S,p)\otimes Z^j(E^S\times E^b,q) \to Z^{i+j}(E^a \times
E^b,p+q)
$$
be the composite with the push forward 
$E^a\times \Delta_{E^S}\times E^b \to E^a\times E^b$
and the intersection with $E^a\times \Delta_{E^S}\times E^b$.
Let $\psi \in \bold Q[Isom(S',S)]$.
The left and right action of $\psi$ defines a homomorphisms
\begin{align*}
& Z^i(E^{S'}\times E^{b},q) \to Z^i(E^{S}\times E^{b},q), \\
& Z^i(E^a\times E^S,p) \to Z^i(E^a\times E^{S'},p).
\end{align*}
Then we have
$
(g\psi, f)_{S'}=(g,\psi f )_S
$
for $f \in Z^i(E^{S'}\times E^{b},q)$ and
$g\in Z^i(E^a\times E^S,p)$.
\end{lemma}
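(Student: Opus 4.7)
The plan is to reduce by $\bold Q$-linearity to the case of a single bijection $\psi\in\mathrm{Isom}(S',S)$, realize the left and right actions of $\psi$ on cycles as pushforwards along the induced isomorphism of varieties, and then verify the identity by a change of variables that transports both pairings to a common expression involving the graph of $\psi$.

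First, a bijection $\psi:S'\to S$ induces an isomorphism of varieties $\tilde\psi:E^S\to E^{S'}$ by permutation of coordinates. Chasing through the conventions of \S\S4.3--4.4 (the action of $\bold Q[\mathrm{Isom}(S',S)]$ being the one dual, under Schur--Weyl, to the $GL$-equivariant homomorphism), the right and left actions appearing in the statement coincide with
\[
g\psi=(\mathrm{id}_{E^a}\times\tilde\psi)_*\,g,\qquad \psi f=(\tilde\psi^{-1}\times\mathrm{id}_{E^b})_*\,f,
\]
i.e.\ pushforward along the isomorphism of ambient varieties determined by $\psi$.

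Next, unfold the definition of the pairing: writing $\Delta_S\subset E^S\times E^S$ for the diagonal and $\pi_S:E^a\times E^S\times E^S\times E^b\to E^a\times E^b$ for the projection,
\[
(g,h)_S=\pi_{S,*}\bigl((g\times h)\cdot[E^a\times\Delta_S\times E^b]\bigr).
\]
The key geometric observation is that the isomorphism
\[
\Phi=\mathrm{id}\times\mathrm{id}\times\tilde\psi\times\mathrm{id}:\;E^a\times E^S\times E^S\times E^b\xrightarrow{\sim}E^a\times E^S\times E^{S'}\times E^b
\]
carries $E^a\times\Delta_S\times E^b$ to $E^a\times\Gamma\times E^b$, where $\Gamma\subset E^S\times E^{S'}$ is the graph of $\tilde\psi$. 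Applying $\Phi_*$ to the intersection defining $(g,\psi f)_S$ and using the projection formula together with the identification $\psi f=(\tilde\psi^{-1}\times\mathrm{id})_*f$, one obtains
\[
(g,\psi f)_S=\pi'_*\bigl((g\times f)\cdot[E^a\times\Gamma\times E^b]\bigr),
\]
where $\pi':E^a\times E^S\times E^{S'}\times E^b\to E^a\times E^b$ is the projection. The symmetric calculation starting from $(g\psi,f)_{S'}$, using the analogous isomorphism on the first diagonal factor to transport $\Delta_{S'}$ to the same graph $\Gamma$, yields the identical right-hand side. The claimed equality follows.

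The main technical obstacle is that all intersection and pushforward operations live in Bloch's cubical cycle complex, where they are only defined after restricting to a quasi-isomorphic subcomplex of cycles meeting the relevant strata properly (cf.\ \cite{Han,Le}). Since $\Phi$ is an isomorphism of ambient varieties that commutes with the projections to the cubical factor $\square^{p+q}$, the proper-intersection conditions on the two sides of the identity correspond exactly under $\Phi_*$, so the equality holds at the level of cycles in $Z^{i+j}(E^a\times E^b,p+q)$ rather than merely up to rational equivalence. No signs intervene because the diagonal $\Delta_S$ and its images under $\Phi$ are smooth subvarieties of the same codimension, and the orientations $\Lambda(-)$ are not involved in the definition of $(*,*)_S$ itself.
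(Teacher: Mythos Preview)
Your proof is correct and follows essentially the same approach as the paper: reduce by linearity to a single bijection and then verify the identity by a change of variables on the middle factor. The paper condenses this to a one-line set-theoretic identity (substituting $y\mapsto\sigma(y)$, equivalently $z\mapsto\sigma^{-1}(z)$), whereas you package the same substitution as pushforward along the isomorphism $\Phi$ and intersection with the graph $\Gamma$; your added remarks about proper-intersection conditions in the cubical complex are sound and go slightly beyond what the paper states explicitly.
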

\begin{proof}
The lemma follows from the identity:
\begin{align*}
& \{(x,w) \in E^a\times E^b \mid (x,\sigma(y))\in \sigma(f) , (z,w) \in
 g, \sigma(y)=z\} \\
=& \{(x,w) \in E^a\times E^b \mid (x,y)\in f , (\sigma^{-1}(z),w) \in
 \sigma^{-1}(g), y=\sigma^{-1}(z)\},
\end{align*}
for $\sigma\in Isom(S,S')$.
\end{proof}
\begin{proof}
[Proof of Proposition \ref{associativity for each component}]
Let
$f_i \in\underline{Hom}^{\bullet}(L_i, M_i)$,
$g_i \in\underline{Hom}^{\bullet}(M_i, N_i)$
for $i=1,2$.
We check the distributive law for the multiplication map $\mu$.
We fix a bases 
$\{\varphi_{Z_1,i}\}$,
$\{\phi_{Z_2,j}\}$ and
$\{\psi_{Z_3,k}\}$
of
$ Hom_{GL_{\infty}}(M_{Z_1}, L_{1}\otimes L_{2})$,
$Hom_{GL_{\infty}}(M_{Z_2}, M_{1}\otimes M_{2})$
and $Hom_{GL_{\infty}}(M_{Z_3}, N_{1}\otimes N_{2})$.
The dual bases are written by
$\{\varphi_{Z_1,i}^*\}$,
$\{\phi_{Z_2,j}^*\}$ and
$\{\psi_{Z_3,k}^*\}$.
We use the same notations for $f_1,f_2$ etc. as before. Then
\begin{align*}
&\mu((f_1\otimes f_2)\otimes (g_1\otimes g_2))  \\
= &\sum_{Z_1,Z_2,Z'_2,Z_3,i,j,j',k}
\mu\bigg(\big(\varphi_{Z_1,i}^* \otimes 
(\varphi_{Z_1,i}(f_1\times f_2) \phi_{Z_2,j}^*)\otimes \phi_{Z_2,j}\big) \\
&
\hskip 0.8in
\otimes \big(\phi_{Z_2',j'}^* \otimes 
(\phi_{Z_2',j'}(g_1\times g_2) \psi_{Z_3,k}^*)\otimes \psi_{Z_3,k}\big)\bigg) \\
=&
\sum_{Z_1,Z_2,Z_3,i,j,k}
\varphi_{Z_1,i}^* \otimes 
\mu\big(
(\varphi_{Z_1,i}(f_1\times f_2) \phi_{Z_2,j}^*)
\otimes 
(\phi_{Z_2,j}(g_1\times g_2) \psi_{Z_3,k}^*)\big)\otimes \psi_{Z_3,k} \\
\overset{(\ref{adjoint of GL action})}
=&
\sum_{Z_1,Z_2,Z_3,i,j,k}
\varphi_{Z_1,i}^* \otimes 
\mu\big(
(\varphi_{Z_1,i}(f_1\times f_2))\otimes \big( \phi_{Z_2,j}^*
\phi_{Z_2,j}(g_1\times g_2) \psi_{Z_3,k}^*)\big)\otimes \psi_{Z_3,k} \\
=&
\sum_{Z_1,Z_3,i,k}
\varphi_{Z_1,i}^* \otimes 
(\varphi_{Z_1,i}
\mu\big(
(f_1\times f_2)\otimes 
(g_1\times g_2)\big) \psi_{Z_3,k}^*)\otimes \psi_{Z_3,k} \\
=&
(-1)^{\deg(f_2)\deg(g_1)}
\sum_{Z_1,Z_3,i,k}
\varphi_{Z_1,i}^* \otimes 
(\varphi_{Z_1,i}
\bigg[\mu(f_1\otimes g_1)\times 
\mu(f_2\otimes g_2)\bigg] \psi_{Z_3,k}^*)\otimes \psi_{Z_3,k} \\
=&
(-1)^{\deg(f_2)\deg(g_1)}
\mu(f_1\otimes g_1)\otimes
\mu(f_2\otimes g_2).
\end{align*}
\end{proof}

\subsubsection{Commutativity constraint and associativity constraint}
We define a system of closed homomorphisms of degree
zero $c_{M,N}$ and $c_{L,M,N}$
for $L,M,N\in (VEM)$ by
\begin{align*}
c_{M,N}=\sum_{Y,i}\alpha_i\otimes \Delta_{Y}\otimes
\alpha_i^{t*} 
&\in 
\underline{Hom}_{VEM}(M\otimes N,N\otimes M) \\
=& \oplus_Y
Hom_{GL_{\infty}}(M \otimes N,M_{Y}) \\
&\otimes
\underline{Hom}_{VEM}(M_{Y},M_Y) 
\otimes
Hom_{GL_{\infty}}(M_{Y},N \otimes M) \\
c_{L,M,N}=\sum_{Y,i}\beta_i\otimes \Delta_{Y}\otimes
\beta_i^{a*} 
&\in 
\underline{Hom}_{VEM}((L\otimes M)\otimes N,
L\otimes (M\otimes N)) \\
=& \oplus_Y
Hom_{GL_{\infty}}((L\otimes M) \otimes N,M_{Y}) \\
&\otimes
\underline{Hom}_{VEM}(M_{Y},M_Y) 
\otimes
Hom_{GL_{\infty}}(M_{Y},L\otimes (M \otimes N))
\end{align*}
where $\{\alpha_i\}$ (resp. $\{\beta_i\}$)
and $\{\alpha^{t*}\}$ (resp. $\{\beta_i^{a*}\}$)
are dual bases under the contraction pairing induced by the
natural isomorphism
$$
M\otimes N\simeq N\otimes M,\text{ (resp.}
(L\otimes M)\otimes N \simeq L\otimes (M\otimes N)
\text{ )}
$$
in $GL_{\infty}$.
The following proposition is direct from the definitions.
\begin{proposition}
\label{proposition:assiciativity and commutativity for virtual}
The systems of the above maps $c_{M,N}$ and $c_{L,M,N}$ 
satisfy the commutativity axiom and
the associativity axiom, respectively.
\end{proposition}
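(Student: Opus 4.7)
The plan is to reduce both axioms to statements entirely inside the category $GL_\infty$ of Schur functors, where they hold by functoriality of the natural transposition and associativity isomorphisms, and then transport them back through the Schur--Weyl identification
\[
\bold Q[Isom(S,S')]\simeq Hom_{GL_\infty}(W^{\otimes S},W^{\otimes S'}).
\]
The main technical input will be the distributive law (Proposition \ref{associativity for each component}) together with the adjunction identity of Lemma \ref{adjoint of GL action}, which allows one to slide $GL_\infty$-intertwiners across the cycle-theoretic composition.

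First, I would unpack the definitions. Write $f_M=\sum_{Y,Y'}\alpha_{Y}^*\otimes \Cal Z_{M}\otimes \alpha_{Y'}$ with $\alpha_Y\in Hom_{GL_\infty}(M_Y,M_1)$, and likewise for $f_N$; here $\Cal Z_M$ lies in the cycle-theoretic component $e_{Y}\Cal H^{\bullet}(\cdots)e_{Y'}$. Inserting these into the left-hand side $(f_N\otimes f_M)\circ c_{M_1,N_1}$ and into the right-hand side $c_{M_2,N_2}\circ (f_M\otimes f_N)$ of the commutativity axiom and expanding via (\ref{tensor map for virtual elliptic motif}), both sides become a sum indexed by pairs of Schur tableaux, whose cycle-theoretic factors are $\Cal Z_M\times \Cal Z_N$ (up to signs prescribed by the sign rule in \S\ref{subsubsection: primitive difinition of shuffle product}) and whose $GL_\infty$-factors differ only by how the natural transposition $M\otimes N\simeq N\otimes M$ is inserted.

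Next I would use Lemma \ref{adjoint of GL action} together with the distributive law (Proposition \ref{associativity for each component}) to transport $c_{M_i,N_i}=\sum_{Y,i}\alpha_i\otimes\Delta_Y\otimes\alpha_i^{t*}$ past the cycle-theoretic pairing. Since $c_{M_i,N_i}$ is built from the identity cycle $\Delta_Y$ on each isotypic component and from the pair $\{\alpha_i,\alpha_i^{t*}\}$ dual for the contraction pairing associated to $M\otimes N\simeq N\otimes M$ in $GL_\infty$, sliding it past $f_M\otimes f_N$ amounts precisely to applying the naturality of the transposition $\sigma$ in $GL_\infty$ to $\alpha\otimes\beta$; the contraction pairing then ensures the dual basis absorbs the sign $(-1)^{\deg(f_M)\deg(f_N)}$ coming from the sign rule (\ref{transposition}). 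The associativity case is entirely parallel: replace the transposition by the natural associator $(L\otimes M)\otimes N\simeq L\otimes(M\otimes N)$ in $GL_\infty$, and use that the identity cycle and the dual basis $\{\beta_i,\beta_i^{a*}\}$ are assembled from this natural isomorphism.

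The hard part will be book-keeping of signs. The orientation factors $\Lambda(s_i)$ in the definition of $\Cal H^\bullet$ and the signs $(-1)^{\#s_3(\#s_2+\deg\Cal Z_2)+\#s_2\deg\Cal Z_1}$ that appear in $f_1\times f_2$ must be compared with the sign $(-1)^{\deg(f_M)\deg(f_N)}$ from the transposition of complexes, and with the sign coming from dualizing $\alpha_i^{t*}$ under the contraction pairing. I expect these to match after unwinding, because by construction the sign rule for $\times$ in \S\ref{subsubsection: primitive difinition of shuffle product for bar} is exactly the Koszul rule on tensor products of complexes, and the natural transposition in $GL_\infty$ is by definition the one inducing $\sigma$ on the underlying vector spaces. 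Once the signs are shown to agree on one isotypic component indexed by $Y$, summing over $Y$ yields the proposition; the associativity axiom is verified identically with the pentagon-cell in $GL_\infty$ playing the role of the hexagon-cell.
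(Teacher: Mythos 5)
The paper offers no proof here at all: it states ``The following proposition is direct from the definitions'' immediately before the proposition, and nothing further. Your proposal is therefore not comparable to an explicit paper argument; it is an attempt to unpack what ``direct from the definitions'' means, and the ingredients you reach for are the right ones. Since $c_{M,N}$ and $c_{L,M,N}$ are built from the identity cycle $\Delta_Y$ on each isotypic component together with a dual pair of $GL_\infty$-intertwiners for the contraction pairing induced by the natural transposition (resp.\ associator), composing with $f_M\otimes f_N$ and using the distributive law (Proposition~\ref{associativity for each component}) together with the adjunction identity of Lemma~\ref{adjoint of GL action} does reduce the claim to the naturality of these $GL_\infty$-isomorphisms and a sign check, exactly as you describe.

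Two caveats. First, your treatment of the sign bookkeeping is an expectation, not a verification: you write ``I expect these to match after unwinding,'' which leaves the only genuinely delicate step as an unexamined claim. To actually close the argument you would need to track how the orientation factors $\Lambda(s_i)$, the Koszul sign in the definition of $f_1\times f_2$, and the sign $(-1)^{\deg(f_M)\deg(f_N)}$ in the commutativity axiom interact on a single isotypic component; this is where all the content lives, and it should be carried out rather than asserted. Second, the closing sentence invoking ``the pentagon-cell in $GL_\infty$ playing the role of the hexagon-cell'' is a misdirection: the axioms being verified are the \emph{naturality} of $c_{M,N}$ and $c_{L,M,N}$ with respect to morphisms, not the coherence identities (pentagon/hexagon), which are not part of the statement. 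What is needed for the associativity case is just functoriality of the natural isomorphism $(L\otimes M)\otimes N\simeq L\otimes(M\otimes N)$ in $GL_\infty$ applied to the intertwiners $\beta_i,\beta_i^{a*}$, parallel to the commutativity case; no higher coherence cell enters.
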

\subsubsection{Antipodal for $(VEM)$}
We define a self-contravariant functor $a:(VEM)\to (VEM)$
as follows. Let $Y$ be a Young tableaux.
We define 
$$
(M_Y(p))^t=e_{Y}^*(V^{\otimes s(Y)})(\# s(Y) -p),
$$
where $e_Y^*$ is the adjoint of $e_Y$ defined in 
Definition \ref{definition of adjoint for group ring}.
We have the following isomorphism of complexes:
\begin{align*}
&\Cal H^{\bullet}(E^A,E^B,k) \\
=&
\Lambda^*(A)\otimes
Z_{-}^{a+k}(E^A\times E^B,\bullet)\otimes \Lambda(B)[-2k]\\
\xrightarrow{\tau}&
\Lambda^*(B)\otimes
Z_{-}^{b+(k-b+a)}(E^A\times E^B,\bullet)\otimes \Lambda(A)[-2k+2b-2a]\\
=&\Cal H^{\bullet}(E^B,E^A,k-b+a),
\end{align*}
where $a=\# A, b=\# B$.
Here $\tau$ is defined by
\begin{align*}
&\tau(
f_a\wedge \cdots \wedge f_1\otimes z\otimes 
e_1\wedge \cdots e_b\cdot e^{-2k}) \\
=&(-1)^{(a+b)\deg(z)+ab}
f_b\wedge \cdots \wedge f_1\otimes z\otimes 
e_1\wedge \cdots e_a\cdot e^{-2k+2b-2a}.
\end{align*}
Using the map $\tau$, the functor $a$ for homomorphisms is defined by
the following composite map:
\begin{align*}
&\underline{Hom}_{VEM}((M_{Y_1}(p_1))^t, (M_{Y_2}(p_2))^t) \\
=&e_{Y_1}^*\underline{Hom}_{VEM}(V^{\otimes S_1}(s_1-p_1), 
V^{\otimes S_2}(s_2-p_2))e_{Y_2}^* \\
=&e_{Y_1}^*\Cal H^{\bullet}(E^{S_1},E^{S_2},
s_2-s_1+p_1-p_2)e_{Y_2}^* \\
\xrightarrow{\tau}&e_{Y_2}\Cal H^{\bullet}(E^{S_2},E^{S_1},
p_1-p_2)e_{Y_1} \\
\simeq
&\underline{Hom}_{VEM}(M_{Y_2}(p_2), M_{Y_1}(p_1)).
\end{align*}
Here $S_1=s(Y_1)$ and $S_2=s(Y_2)$. We can check the following
proposition.
\begin{proposition}
\label{antipodal for virtual elliptic motives}
The map $a$ defines a contravariant functor from $(VEM)$
to $(VEM)$. Moreover $a$ defines an antipodal structure.
\end{proposition}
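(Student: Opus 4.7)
The plan is to verify the two assertions separately: (a) that $a$ is a contravariant DG functor, and (b) that the system of natural isomorphisms $\theta_{M,N}\colon (M\otimes N)^{t}\to M^{t}\otimes N^{t}$ can be constructed and satisfies the compatibility axiom of an antipodal structure.

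For (a), I will first check that $a$ carries identities to identities. The identity of $M_{Y}(p)$ is represented by the class of the diagonal $\{\Delta_{E^{S_1}}\}_{-}$ multiplied by the orientation $f_{s_1}\wedge\cdots\wedge f_1$ and $e_1\wedge\cdots\wedge e_{s_1}$; under $\tau$, these orientation pieces simply swap roles while the diagonal is symmetric, so the image is again a diagonal with the correct orientations representing the identity of $(M_Y(p))^t$. The main calculation is the reversal of composition. Given morphisms represented by cycles $f\in e_{Y_1}\Cal H^{\bullet}(E^{S_1},E^{S_2},p_2-p_1)e_{Y_2}$ and $g\in e_{Y_2}\Cal H^{\bullet}(E^{S_2},E^{S_3},p_3-p_2)e_{Y_3}$, their composite in $(VEM)$ is an intersection-pushforward along $E^{S_1}\times\Delta_{E^{S_2}}\times E^{S_3}$. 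The key point is that intersection and pushforward are symmetric under swapping the source and target factors of $E^{S_i}$, so $\tau(g\circ f)$ agrees cycle-theoretically with $\tau(f)\circ\tau(g)$. What remains is a sign bookkeeping: reorder $\Lambda^{*}(S_1),\Lambda(S_2),\Lambda^{*}(S_2),\Lambda(S_3)$ into $\Lambda^{*}(S_3),\Lambda(S_2),\Lambda^{*}(S_2),\Lambda(S_1)$ and compare with the product of the two signs supplied by $\tau$, namely $(-1)^{(s_1+s_2)\deg(f)+s_1s_2}$ and $(-1)^{(s_2+s_3)\deg(g)+s_2s_3}$. Collecting these signs and comparing with the DG-contravariance rule $(-1)^{\deg(f)\deg(g)}$ yields the required identity.

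For (b), I will construct $\theta$ from the natural isomorphism $(M_{Y_1}\otimes M_{Y_2})^{t}\simeq M_{Y_1}^{t}\otimes M_{Y_2}^{t}$ inside $(GL_{\infty})$. Concretely, when we decompose $M_{Y_1}\otimes M_{Y_2}=\bigoplus_{Z}\Hom_{GL_\infty}(M_Z,M_{Y_1}\otimes M_{Y_2})\otimes M_{Z}$, the transposition data $\{\alpha_i,\alpha_i^{t*}\}$ used in Proposition~\ref{proposition:assiciativity and commutativity for virtual} give dual bases on both sides. Define $\theta_{M,N}$ by taking $\varphi\in\Hom_{GL_\infty}(M_Z,M\otimes N)$ to its adjoint in $\Hom_{GL_\infty}(M^t\otimes N^t,M_Z^t)$; the resulting closed degree-zero isomorphism on underlying Schur functors is precisely what is needed. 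To verify the compatibility diagram for the antipodal structure, expand a morphism in $\underline{Hom}_{VEM}(M_1,N_1)\otimes\underline{Hom}_{VEM}(M_2,N_2)$ in the basis $\{\varphi_{Z_1,i}^{*}\otimes(\cdot)\otimes\psi_{Z_2,j}\}$ supplied by the tensor product formula \eqref{tensor map for virtual elliptic motif}. Apply $a$ on each factor and then $\otimes$, and separately apply $\otimes$ and then $a$, and match termwise via $\tau$; the resulting equation reduces to the adjunction identity of Lemma~\ref{adjoint of GL action} combined with the sign analysis performed in part (a).

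The hard part will be the sign verification in part (a): the formula for $\tau$ introduces nontrivial signs depending on degrees and cardinalities, and these must precisely match, for arbitrary triples $(Y_1,Y_2,Y_3)$ and arbitrary degrees, the Koszul sign $(-1)^{\deg(f)\deg(g)}$ coming from the DG-category contravariance rule. I expect this to boil down to the identity $(s_1+s_2)\deg(f)+s_1s_2+(s_2+s_3)\deg(g)+s_2s_3+(s_1+s_3)\deg(f\!\cdot\! g)+s_1s_3\equiv \deg(f)\deg(g)\pmod 2$, which, using $\deg(f\!\cdot\! g)=\deg(f)+\deg(g)$ and the fact that intersections at $E^{S_2}$ impose a codimension constraint relating $\deg(f),\deg(g)$ and $s_2$, should reduce to a straightforward parity check. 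Once this is settled, part (b) follows with only routine diagrammatic work.
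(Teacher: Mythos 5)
The paper offers no proof of this proposition; it is stated with the comment ``We can check the following proposition.'' Your outline---identity preservation, composition reversal with sign bookkeeping for part (a), construction of $\theta$ from the $GL_{\infty}$-duality and a diagram chase using Lemma~\ref{adjoint of GL action} for part (b)---is the natural plan of attack and almost certainly what the authors had in mind, so at the level of strategy your proposal is reasonable.

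However, the sign verification you single out as the crux is not carried through, and the identity you write down to check is not quite the right one. The map $\tau$ in the paper is defined with the sign $(-1)^{(a+b)\deg(z)+ab}$, where $\deg(z)$ is the degree of the \emph{cycle} factor in $Z^{a+k,\bullet}_{-}(E^A\times E^B)$, not the total degree of the element of $\Cal H^{\bullet}(E^A,E^B,k)$. Because $\Cal H^{\bullet}(E^A,E^B,k)=\Lambda^*(A)\otimes Z^{a+k,\bullet}_{-}\otimes\Lambda(B)[-2k]$, these two degrees differ by $a-b+2k$; your proposed parity identity substitutes $\deg(f)$ where the paper's formula requires $\deg(z_f)$. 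In addition, there is a second source of signs you mention but do not compute: permuting the factors $\Lambda^*(S_1)\otimes\Lambda(S_2)\otimes\Lambda^*(S_2)\otimes\Lambda(S_3)$ into $\Lambda^*(S_3)\otimes\Lambda(S_2)\otimes\Lambda^*(S_2)\otimes\Lambda(S_1)$ and invoking the stated orientation-pairing rule $(e_1\wedge\cdots\wedge e_a)\otimes(f_a\wedge\cdots\wedge f_1)\mapsto 1$. Finally, recall that in $(VEM)$ the composite is $f\circ g=(-1)^{\deg f\deg g}g\cdot f$, so the contravariance relation for $a$ must be matched against this twist as well. Until the parity check is done with $\deg(z)$ rather than $\deg(f)$ and with the reordering signs included, the argument for part~(a) has a genuine gap. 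Part (b) as you sketch it should go through once part~(a) is fixed, since the compatibility diagram reduces termwise to the adjunction of Lemma~\ref{adjoint of GL action} and the same sign bookkeeping.
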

Thus we have the following theorem.
\begin{theorem}
\label{main theorem for virtual mixed elliptic motives}
\begin{enumerate}
\item
The natural DG functors
$$
(EM)\to (VEM),\quad (MEM)\to (VMEM)
$$
are weak homotopy equivalent.
\item
The quasi-DG category $(VMEM)$ has distributive, commutative and associative
tensor structure with an antipodal.
\end{enumerate}
\end{theorem}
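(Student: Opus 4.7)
The plan is to derive the theorem as an assembly of the subsidiary results already established in the preceding subsections. For Part (1), the weak homotopy equivalence of $(EM)\to (VEM)$ will follow from Proposition~\ref{essentially surjectivity}: part (1) of that proposition constructs explicit closed degree-zero morphisms $I$ and $J$ between $Sym^n$ and $M_{Y_{p,n}}(p)$ whose composition class equals the diagonal, which directly supplies homotopy essential surjectivity onto the part of $(VEM)$ built from tableaux of depth $\leq 2$; part (2) shows that $\underline{Hom}(M_{Y_1}(s),M_{Y_2}(t))$ is acyclic whenever either tableau has depth $>2$, so objects of higher depth are homotopy equivalent to zero and do not obstruct full faithfulness. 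Combining these two ingredients, I will check that for any $U_1,U_2\in(EM)$, the natural map on Hom-complexes decomposes (via the $Irr_2$-indexed decomposition used throughout \S4.2) into pieces supported on depth $\leq 1$ tableaux, and the remaining contributions from depth $\geq 2$ vanish on cohomology, which gives the quasi-isomorphism required for homotopy full faithfulness. The statement for $(MEM)\to (VMEM)$ then follows by passing to DG complexes: since Hom-complexes in $K\Cal C$ are assembled from Hom-complexes in $\Cal C$ with bounded differentials, and since both the essential surjectivity via lifting of homotopy equivalences of DG complexes and the full faithfulness pass through this construction, the $K(-)$ functor preserves weak homotopy equivalence.

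For Part (2), I will invoke the general principle of \S\ref{subsubsection: primitive difinition of shuffle product}, which shows that a (distributive, commutative) tensor structure on a DG category $\Cal C$ extends componentwise to $K\Cal C$. Thus it suffices to verify the four pieces of structure on $(VEM)$ and quote the lemma of \S\ref{subsubsection: primitive difinition of shuffle product} to transport them to $(VMEM)=K(VEM)$. The distributive law on $(VEM)$ is Proposition~\ref{associativity for each component}; the commutativity and associativity constraints $c_{M,N}$ and $c_{L,M,N}$ are Proposition~\ref{proposition:assiciativity and commutativity for virtual}; the antipodal functor $a$ with the required compatibilities is Proposition~\ref{antipodal for virtual elliptic motives}. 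Each of these is stated for pure objects, and the extension to complexes requires only tracking signs through the Koszul rule, which is automatic from the formulas in \S\ref{subsubsection: primitive difinition of shuffle product}.

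The main obstacle in this assembly is the essential surjectivity in Part (1): one must produce, for each complex $(W^i,d_{ji})$ in $(VMEM)$ built from objects $M_{Y_i}(p_i)$, a compatible system of homotopy equivalences to symmetric-power objects that respects the differentials $d_{ji}$ up to homotopy. Pointwise equivalences come from Proposition~\ref{essentially surjectivity}, but stitching them together into a closed degree-zero morphism of DG complexes requires the acyclicity of Hom-complexes of higher-depth objects (the second part of that proposition) to kill the obstruction cocycles inductively, in the spirit of the construction in Proposition~\ref{prop:invariance for quasi-isom}. A secondary, mostly bookkeeping, obstacle is verifying that the tensor product formula~\eqref{tensor map for virtual elliptic motif} is compatible with the componentwise differential of $K(VEM)$; this reduces to the sign analysis already used in proving distributivity, together with the Schur-Weyl adjunction of Lemma~\ref{adjoint of GL action}. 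Once these two points are in place, the theorem is a direct consequence of the cited propositions.
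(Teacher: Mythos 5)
Your proposal is correct and takes essentially the same approach as the paper. The paper offers no explicit proof of this theorem: it is stated immediately after Propositions~\ref{essentially surjectivity} and~\ref{associativity for each component}, \ref{proposition:assiciativity and commutativity for virtual}, \ref{antipodal for virtual elliptic motives} with the remark ``Thus we have the following theorem,'' so the intended proof is precisely the assembly of those results; your elaboration on passing from pointwise homotopy equivalences to the level of DG complexes (invoking acyclicity of depth-$>2$ Hom-complexes to control obstruction cocycles, in the spirit of Proposition~\ref{prop:invariance for quasi-isom}) supplies a detail the paper leaves implicit in Proposition~4.13, and your treatment of Part~(2) via the lemma of \S\ref{subsubsection: primitive difinition of shuffle product} is exactly what the paper relies on. One small imprecision: for objects already in $(EM)$ the Hom-complexes in $(VEM)$ between them coincide with those in $(EM)$ on the nose (the $Hom_{GL_\infty}$ factors vanish for mismatched tableaux), so full faithfulness of $(EM)\to(VEM)$ is strict, not merely up to quasi-isomorphism, and the phrase ``remaining contributions from depth $\geq 2$ vanish on cohomology'' conflates the full-faithfulness check with the essential-surjectivity argument (where depth-$2$ tableaux are handled by homotopy equivalence to $Sym^n$, and only depth-$>2$ ones are killed by acyclicity).
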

\subsection{Shuffle product of virtual bar complex $Bar(VEM)$}
In this subsection, we give an explicit description of the shuffle product
of $Bar(VEM)$ introduced in 
\S \ref{subsubsection: primitive difinition of shuffle product}.
Let 
\begin{align*}
\sha(m,n)=\{(p^{(0)},\dots, p^{(m+n)})\in (\bold N^2)^{m+n+1}\mid
& p^{(0)}=(0,0),p^{(m+n)}=(m,n),\\
& p^{(i+1)}-p^{(i)}\in\{(0,1),(0,1)\}
\}
\end{align*}
be the set of shuffles.
Let $\sigma=(p^{(0)},\dots, p^{(m+n)})$ be an element in $\sha$. 
For $V_0,\dots, V_m,W_0,\dots, W_n\in TT$,
we define $U^{\sigma}_i=V_{a_i}\otimes W_{b_i}$,
where $p^{(k)}=(a_k,b_k)$.
For $\alpha=(\alpha_0<\dots <\alpha_m)$ and 
$\beta=(\beta_0<\dots <\beta_n)$, we define
$\gamma^{\sigma}=(\gamma^{\sigma}_0<\dots <\gamma^{\sigma}_{m+n})$ by
$\gamma^{\sigma}_k=\alpha_{a_k}+\beta_{b_k}$.
The complex of homomorphisms $\underline{Hom}_{VEM}(A,B)$
is denoted as $H(A,B)$.
Let $\varphi_{i,i+1}\in H(V_{i},V_{i+1})$
and $\psi_{j,j+1}\in H(W_{j},W_{j+1})$.
We define $\tau^{\sigma}_{k,k+1}\in
H(U^{\sigma}_{k},U^{\sigma}_{k+1})$
by
\begin{align*}
\tau^{\sigma}_{k,k+1}=
\begin{cases}
\varphi_{a_k,a_{k+1}}\otimes 1 &\text{ if } p^{(k+1)}-p^{(k)}=(1,0)\\
1\otimes \psi_{b_k,b_{k+1}} &\text{ if } p^{(k+1)}-p^{(k)}=(0,1).
\end{cases}
\end{align*}
For elements 
\begin{align*}
v=&v_0\overset{\alpha_0}\otimes \varphi_{01}\overset{\alpha_1}\otimes 
\cdots \overset{\alpha_{n-1}}\otimes \varphi_{n-1,n}
\overset{\alpha_n}\otimes v_n^*  \\
\in\ 
&ev(V_0)\overset{\alpha_0}\otimes 
H(V_0,V_1)
\overset{\alpha_1}\otimes 
\cdots \overset{\alpha_{n-1}}
\otimes 
H(V_{n-1},V_{n})
\overset{\alpha_n}\otimes ev(V_n)^*,\\
w=&w_0\overset{\beta_0}\otimes \psi_{01}\overset{\beta_1}\otimes 
\cdots \overset{\beta_{m-1}}\otimes \psi_{m-1,m}
\overset{\beta_m}\otimes w_m^*  \\
\in\ 
&ev(W_0)\overset{\beta_0}\otimes 
H(W_0,W_1)
\overset{\beta_1}\otimes 
\cdots \overset{\beta_{m-1}}
\otimes 
H(W_{m-1},W_{m})
\overset{\beta_m}\otimes ev(W_m)^*,
\end{align*}
we define $(v\widetilde{\sha} w)^{\sigma}$ by
\begin{align*}
(v\widetilde{\sha} w)^{\sigma}
=&(v_0\otimes w_0)
\overset{\gamma^{\sigma}_0}\otimes \tau^{\sigma}_{01}
\overset{\gamma^{\sigma}_1}\otimes 
\cdots \overset{\gamma_{m+n-1}^{\sigma}}
\otimes \tau^{\sigma}_{m+n-1,m+n}
\overset{\gamma_{m+n}^{\sigma}}\otimes (v_n^*\otimes w_m^*)  \\
\in\ 
&ev(U^{\sigma}_0)
\overset{\gamma_0^{\sigma}}\otimes 
H(U_0^{\sigma},U_1^{\sigma})
\overset{\gamma_1^{\sigma}}\otimes 
\cdots \overset{\gamma^{\sigma}_{m-1}}
\otimes 
H(U_{m+n-1}^{\sigma},U_{m+n}^{\sigma})
\overset{\gamma_m^{\sigma}}\otimes ev(U_{m+n}^{\sigma})^*. 
\end{align*}

\subsubsection{}
Using the maps $con$ defined in \ref{definition of traces},
we have the following map:
\begin{align*}
&ev(U^{\sigma}_0)^*\overset{\gamma_0^{\sigma}}\otimes 
H(U_1^{\sigma},U_0^{\sigma})
\overset{\gamma_1^{\sigma}}\otimes 
\cdots \overset{\gamma_{m-1}^{\sigma}}
\otimes 
H(U_{m+n}^{\sigma},U_{m+n-1}^{\sigma})
\overset{\gamma_m^{\sigma}}\otimes ev(U_{m+n}^{\sigma}) \\
&\to
\oplus_{Y_0, \dots, Y_{m+n}\in TT}
ev(M_{Y_0})^*\overset{\gamma_0^{\sigma}}\otimes 
H(M_{Y_1},M_{Y_0})
\overset{\gamma_1^{\sigma}}\otimes 
\cdots \\
& \hskip 0.7in \overset{\gamma_{m-1}^{\sigma}}
\otimes 
H(M_{Y_{m+n}},M_{Y_{m+n-1}})
\overset{\gamma_m^{\sigma}}\otimes ev(M_{Y_{m+n}}).
\end{align*}
The image of $(v\widetilde{\sha} w)^{\sigma}$
is denoted as $(v\sha w)^{\sigma}$.
The shuffle product of the bar complex is defined similarly.

\begin{example}
If
$\sigma=\{(0,0),(1,0),(1,1),(2,1)\}$, then
$U^{\sigma}_{i}$ and
$\tau_{j,j+1}^{\sigma}$ look as follows:
$$
\begin{matrix}
& & V_1\otimes W_1 &\xrightarrow{\varphi_{12}\otimes 1} & V_2\otimes W_1 \\
& & \hskip 0.2in \uparrow \text{\tiny $1\otimes \psi_{01}$} & \\
V_0\otimes W_0 & 
\xrightarrow{\varphi_{01}\otimes 1}
& V_1\otimes W_0.
\end{matrix}
$$
For bar complex, we have
and
\begin{align*}
&\big([v_0\mid \varphi_{01}\mid \varphi_{12}\mid v_2^*]
\widetilde{\sha}[w_0\mid \tau_{01}\mid w_1^*]\big)^{\sigma} \\
=&con
\big((v_0\otimes w_0)\otimes
(\varphi_{01}\otimes 1)\otimes(1\otimes \psi_{01})
\otimes (\varphi_{12}\otimes 1)\otimes (v_2^*\otimes w_2^*)\big)
\end{align*}
\end{example}
\begin{definition}
We define the shuffle product $v\sha w$ of
$v, w \in Bar(\Cal C_{VEM},ev_V)$
by
\begin{equation}
\label{explicit shuffle formula}
v\sha w=\sum_{\sigma \in \sha(m,n)}(v\sha w)^{\sigma}.
\end{equation}
We can check the following proposition by definition.
\end{definition}
\begin{proposition}
The shuffle product (\ref{explicit shuffle formula})
coincides with that defined in
\S \ref{subsubsection: primitive difinition of shuffle product
for bar}.
\end{proposition}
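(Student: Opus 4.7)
The plan is to unpack the abstract definition of the shuffle product from \S\ref{subsubsection: primitive difinition of shuffle product for bar} and verify term by term that it matches (\ref{explicit shuffle formula}). Let $\bold{Bar}$ denote the DG complex in $(VEM)$ associated to the $Bar$-comodule $Bar = Bar(\Cal C_{VEM}, ev_V)$ under the equivalence of Proposition \ref{equivalence for virtual mixed elliptic motives}. The tensor product $\bold{Bar}\otimes\bold{Bar}$ is then the DG complex in $K(VEM)$ whose corresponding $Bar$-comodule is, by definition, the domain of the shuffle product.

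First I would describe $\bold{Bar}\otimes\bold{Bar}$ explicitly using \S\ref{subsubsection: primitive difinition of shuffle product}. Its degree-$k$ component is $\bigoplus_{i+j=k}\bold{Bar}^i\otimes\bold{Bar}^j$ and its DG structure morphisms $f_{kl}$ split canonically into ``horizontal'' terms $\tau(d_{ip}\otimes 1)$ and ``vertical'' terms $\tau(1\otimes d_{jq})$, with Koszul signs produced by the transposition (\ref{transposition}). Next, by tracing through the construction of $\varphi$ from \S\ref{subsec:bijection on objects} as extended in Proposition \ref{equivalence for virtual mixed elliptic motives}, the $Bar$-comodule coaction on $ev_V(\bold{Bar}\otimes\bold{Bar})=Bar\otimes Bar$ is read off from the iterated composites
\[
f_{\alpha_0\alpha_1}\otimes f_{\alpha_1\alpha_2}\otimes\cdots\otimes f_{\alpha_{n-1}\alpha_n},
\]
contracted via the maps $con$ of \S\ref{definition of traces}.

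The key combinatorial step is then to expand each $f_{\alpha_k\alpha_{k+1}}$ as the sum of its horizontal and vertical pieces. This produces a sum indexed by sequences in $\{H,V\}^n$, and recording the cumulative counts of $H$'s and $V$'s at each stage identifies such a sequence with a lattice path $(p^{(0)},\dots,p^{(n)})$ having steps in $\{(1,0),(0,1)\}$, i.e.\ precisely an element $\sigma\in\sha(m,n)$ once we restrict to the summand where the total numbers of horizontal and vertical steps equal $m$ and $n$. The resulting summand matches the definition of $\tau^\sigma_{k,k+1}$ and hence of $(v\widetilde{\sha}w)^\sigma$; postcomposition with the counit $u\otimes u$ on the $Bar\otimes Bar$ output factor and application of the contraction maps $con$ transforms this into $(v\sha w)^\sigma$, giving (\ref{explicit shuffle formula}) after summing over $\sigma$.

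The main obstacle will be sign bookkeeping. Signs enter in four distinct places: the Koszul sign in the tensor product of DG morphisms from \S\ref{subsubsection: primitive difinition of shuffle product}; the signs built into the orientation conventions $\Lambda(A)$ and $\Lambda^*(A)$ of \S\ref{convention of orientation} and \S\ref{orientation is introduced here} through the transposition; the signs implicit in the definition of $(v\widetilde{\sha}w)^\sigma$ coming from reordering orientation classes of the disjoint unions $A_1\sqcup A_2$; and the Koszul signs in the composites (\ref{from yoneda to composite}) underlying the contraction maps $con$. I would verify compatibility by reducing to a single shuffle $\sigma$ and a single pair of pure tensors $v,w$, and observing that both sides pick up the same transposition sign when commuting the orientation classes of the second bar tuple past the first. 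Once the signs for a single shuffle are matched, multilinearity and the explicit bijection between $\{H,V\}$-sequences and $\sha(m,n)$ complete the argument.
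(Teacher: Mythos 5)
Your proposal is correct and takes essentially the same approach as the paper: the paper offers no explicit argument here (it merely states ``We can check the following proposition by definition''), and your unpacking of the tensor DG structure on $\bold{Bar}\otimes\bold{Bar}$, expansion of each $f_{\alpha_k\alpha_{k+1}}$ into its horizontal and vertical summands, and identification of $\{H,V\}^{m+n}$-sequences with elements of $\sha(m,n)$ is precisely the by-definition verification the paper has in mind. Your attention to the four sources of Koszul signs is the right place to focus, though as with the paper itself, you leave the full sign bookkeeping unexecuted.
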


By Proposition \ref{proposition:assiciativity and 
commutativity for virtual},
\ref{antipodal for virtual elliptic motives},
we have the following theorem.
\begin{theorem}
\label{shuffle product commutative associative}
The shuffle product on $Bar(\Cal C_{VEM},ev_V)$
is commutative and associative.
Therefore $Bar(\Cal C_{VEM},ev_V)$ is a differential graded
quasi-Hopf algebra and
$H^0(Bar(\Cal C_{VEM},ev_V))=H^0(Bar(A_{EM},ev_V))$
is a Hopf algebra. Therefore $Spec(H^0(Bar(A_{EM},ev_V))$
is a pro-algebraic group
\end{theorem}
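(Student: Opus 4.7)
The plan is to assemble the statement directly from ingredients already established earlier in Section~4. The two main parts are (a) that the shuffle product on $Bar(\Cal C_{VEM},ev_V)$ is commutative and associative, and (b) the identification with $H^0(Bar(A_{EM},ev_V))$ and the passage to a pro-algebraic group. The key strategic point is that one should \emph{not} try to verify the commutativity/associativity of the explicit shuffle formula (\ref{explicit shuffle formula}) by hand — the combinatorics with the shuffles $\sha(m,n)$, the signs from the $\tau^{\sigma}_{k,k+1}$, and the contractions would be painful. Instead one invokes the abstract construction in \S\ref{subsubsection: primitive difinition of shuffle product for bar}, which produces the shuffle product from a tensor structure on $K\Cal C$ via the bijection between objects of $K\Cal C$ and $Bar$-comodules.

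First I would recall that Theorem~\ref{main theorem for virtual mixed elliptic motives} equips $(VMEM)=K(VEM)$ with a distributive tensor structure (Proposition~\ref{associativity for each component}) that carries commutativity and associativity constraints (Proposition~\ref{proposition:assiciativity and commutativity for virtual}) and an antipodal (Proposition~\ref{antipodal for virtual elliptic motives}). Next, Proposition~\ref{equivalence for virtual mixed elliptic motives} gives a homotopy equivalence between $(VMEM)$ and the quasi-DG category of $Bar(\Cal C_{VEM},ev_V)$-comodules; this is exactly the hypothesis needed to apply the construction of \S\ref{subsubsection: primitive difinition of shuffle product for bar}. Feeding the distributive tensor structure, commutativity constraint, associativity constraint, and antipodal of $(VEM)$ into that construction produces a shuffle product, together with commutativity, associativity, and antipodal on $Bar(\Cal C_{VEM},ev_V)$. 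The Proposition at the end of \S\ref{subsubsection: primitive difinition of shuffle product for bar} states precisely that these properties transfer, so commutativity and associativity of the shuffle product follow immediately; and one checks that the resulting product agrees with the explicit formula (\ref{explicit shuffle formula}) — this is essentially a bookkeeping of the contractions $con$ against the tensor map (\ref{tensor map for virtual elliptic motif}).

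For part (b), I would use Proposition~\ref{proposition: homotopy equiv for VEM and Bar(VEM)}, which says the natural map $Bar(\Cal C_{EM},ev_V)=Bar(A_{EM},\epsilon)\to Bar(\Cal C_{VEM},ev_V)$ is a quasi-isomorphism; hence the induced map on $H^0$ is the asserted identification $H^0(Bar(A_{EM},ev_V))=H^0(Bar(\Cal C_{VEM},ev_V))$. The coproduct on the bar complex was constructed in \S2.4, so after passing to $H^0$ one has a bialgebra which is commutative (from commutativity of shuffle), associative, coassociative, and counitary, and equipped with an antipode descended from the antipodal of $(VEM)$; therefore $H^0(Bar(A_{EM},ev_V))$ is a commutative Hopf algebra over $\bold k$, and $\mathrm{Spec}$ of a commutative Hopf algebra is a (pro-)algebraic group scheme.

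The main obstacle I expect is a compatibility check that the shuffle product obtained abstractly through the object bijection $\varphi\leftrightarrow\psi$ of \S\ref{subsec:bijection on objects} really equals the explicit formula (\ref{explicit shuffle formula}). Concretely, one must unwind the comodule structure $\Delta_W$ on $ev_V(W)\otimes ev_V(W')$ associated to the tensor product $\bold W\otimes \bold W'$ in $K(VEM)$, compare the resulting system of maps $d_{ij}$ on $V^i\otimes W^j$ from \S\ref{subsubsection: primitive difinition of shuffle product} with the shuffle sum over $\sigma\in\sha(m,n)$, and match the signs $(-1)^{\#s_3(\#s_2+\deg\Cal Z_2)+\#s_2\deg\Cal Z_1}$ appearing in (\ref{tensor map for virtual elliptic motif}) with the Koszul signs hidden in the contractions $con$ of \S\ref{definition of traces}. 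Once this compatibility is settled the remaining steps are formal, and the theorem follows.
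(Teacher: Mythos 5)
Your proposal is correct and follows essentially the same route as the paper: the paper's own proof is a one-line citation of Proposition~\ref{proposition:assiciativity and commutativity for virtual} and Proposition~\ref{antipodal for virtual elliptic motives}, which feed into the abstract transfer result in \S\ref{subsubsection: primitive difinition of shuffle product for bar} (together with the stated coincidence of the abstract and explicit shuffle products). You simply unpack this chain more explicitly and also spell out the $H^0$ identification via Proposition~\ref{proposition: homotopy equiv for VEM and Bar(VEM)} and the final passage to $\mathrm{Spec}$, which the paper leaves implicit.
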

\begin{definition}[Algebraic group $\Cal G_{MEM}$]
\label{defintion algebraic group}
\begin{enumerate}
\item
We define the pro-algebraic group $\Cal G_{MEM}=
Spec(H^0(Bar(A_{EM},ev_V)))$
\item
We define the Tannakian category $\Cal A_{MEM}$
of mixed elliptic motives as the category of algebraic representations
of $\Cal G_{MEM}$.
It is equivalent to the category of comodules over
$Spec(H^0(Bar(A_{EM},ev_V)))$.
\end{enumerate}
\end{definition}

\section{Mixed elliptic motif associated to elliptic polylogarithm}
In the paper of [BL], they introduced an elliptic polylog motif $Pl_n$.
In this section, we construct an object $Pl_n$ in $(MEM)$ concentrated at
degree zero. Therefore the corresponding object defines a
comodule over $H^0(Bar_{MEM})$. We write down the explicit comodule structure
of $Pl_n$ in \S \ref{explicit comodule structure of polylog}.

\subsection{Simplicial, Cubical and Cubical-simplicial log complexes}
In this section, we consider three double complexes $\bold K_S^i$, 
$\bold K_C^i$, and $\bold K_{CS}^i$.
Let $E$ be an elliptic curve over a field $K$ and 
$E_B$ the constant family of $E$ over the base scheme $B=E$.
Let $s$ be the section of
$E_B$ defined by the diagonal map $B\to E_B$.
The map $\epsilon:x \mapsto s-x$ defines an action of 
$\bold Z/2\bold Z$ on $E_B$.

\subsubsection{}

By using localization sequences for higher Chow group \cite{B2}, 
we have the following proposition.
\begin{proposition}[Levin \cite{L} Proposition 1.1]
\label{symmetric part vanishing}
Let $G$ be a finite group and $\chi:\bold G \to \{\pm 1\}$
be a character of $G$. Let $X$ be a variety with
an action $\rho:G \to Aut(X)$ of $G$.
Let $Z_i\subset X$ ($i=1, \dots, k$) be a closed subset and
$$
G_i=\{g\in G\mid \rho(x)=x \text{ for all }x\in Z_i\}
$$ 
be the stabilizer of points in $Z_i$.
Assume that $\chi(G_i)=\{\pm 1\}$. Then we have
\begin{equation}
\label{gp fixed char part}
CH^{p}(X-\cup_i Z_i, q)_{\chi}=
CH^{p}(X, q)_{\chi}
\end{equation}
\end{proposition}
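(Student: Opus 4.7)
The plan is to derive the isomorphism (\ref{gp fixed char part}) from Bloch's localization long exact sequence for higher Chow groups, applied to the closed subset $Z = \cup_i Z_i \subset X$. Since we work with $\bold Q$-coefficients and $G$ is finite, the idempotent $e_\chi = \frac{1}{|G|}\sum_{g\in G}\chi(g)^{-1}g$ is well-defined in $\bold Q[G]$, so taking $\chi$-isotypic parts is an exact functor. Hence the localization sequence
\[
\cdots \to CH^{p-c}(Z, q)_\chi \to CH^{p}(X, q)_\chi \to CH^{p}(X\setminus Z, q)_\chi \to CH^{p-c}(Z, q-1)_\chi \to \cdots
\]
remains exact after passing to $\chi$-parts (with the appropriate codimension shift for each irreducible component of $Z$), and it suffices to establish that $CH^*(Z,*)_\chi = 0$.

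The core vanishing is a direct character-theoretic observation: for any closed subvariety $W \subset X$ fixed pointwise by a subgroup $H \subset G$ with $\chi(H) = \{\pm 1\}$, one has $CH^*(W,q)_\chi = 0$. Indeed, choose $h \in H$ with $\chi(h) = -1$; since $h$ acts as the identity on $W$, it acts as the identity on $CH^*(W,q)$, so any $\chi$-isotypic class $v$ satisfies $v = h_* v = \chi(h)\,v = -v$, forcing $v = 0$.

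To pass from the single-subvariety case to the union $Z = \cup_i Z_i$, I would argue by induction on $k$. First replace each $Z_i$ by its $G$-orbit $\widetilde Z_i = \cup_{g \in G} g Z_i$; this is harmless because the pointwise stabilizer of $gZ_i$ is $gG_ig^{-1}$ and the abelian target of $\chi$ gives $\chi(gG_ig^{-1}) = \chi(G_i) = \{\pm 1\}$. With the $\widetilde Z_i$ now $G$-stable, assuming inductively that $CH^*(X,*)_\chi \simeq CH^*(X\setminus \cup_{i<k}\widetilde Z_i,*)_\chi$, one applies localization to the closed subset $\widetilde Z_k \cap (X \setminus \cup_{i<k}\widetilde Z_i)$, which is locally closed in $\widetilde Z_k$ and still fixed pointwise by $G_k$. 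Its $\chi$-part vanishes by the core observation, possibly after an auxiliary localization step along $\widetilde Z_k$ itself.

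The main obstacle will be organizing the stratification cleanly so that at every inductive stage the relevant locally closed stratum has pointwise stabilizer whose image under $\chi$ is still $\{\pm 1\}$. A point in $Z_i \cap Z_j$ has stabilizer containing $G_i \cdot G_j$, which is only helpful, but one must be careful that after successive excisions the pieces being removed really are pointwise-fixed by a subgroup meeting the hypothesis. Levin's route is to refine $Z$ into a stratification by $G$-stable locally closed pieces $W_\alpha$ whose pointwise stabilizers $H_\alpha$ each contain some $G_{i(\alpha)}$, so $\chi(H_\alpha) \supseteq \chi(G_{i(\alpha)}) = \{\pm 1\}$; the core vanishing then applies uniformly to every stratum and the iterated localization sequence yields the claim.
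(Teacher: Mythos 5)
The overall strategy is the right one, and matches the paper, which itself gives no proof beyond the remark ``by using localization sequences for higher Chow groups'' together with a citation to Levin. You correctly isolate the three ingredients: $\bold Q$-coefficients make $e_\chi$ an exact projector, localization reduces everything to the vanishing of $\chi$-parts on the removed locus, and the core vanishing comes from an element $h$ of the pointwise stabilizer with $\chi(h)=-1$ acting as the identity on higher Chow groups.

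There is, however, a genuine gap in the way you close the stratification argument. You first (correctly) note that $\widetilde Z_i=\bigcup_g gZ_i$ is $G$-stable but is no longer pointwise fixed by $G_i$ --- its pointwise stabilizer is $\bigcap_g gG_ig^{-1}$, which may well be trivial. But your proposed resolution, ``refine $Z$ into $G$-stable locally closed pieces $W_\alpha$ whose pointwise stabilizers $H_\alpha$ each contain some $G_{i(\alpha)}$,'' does not in fact hold. The pointwise stabilizer of any $G$-stable subset is a \emph{normal} subgroup of $G$, and there is in general no reason it should contain a conjugate of any $G_i$. A concrete example: $G=\frak S_3$ acting on $\bold A^3$ by permutation, $\chi=\mathrm{sgn}$, $Z_{ij}=\{x_i=x_j\}$. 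After removing the small diagonal $\Delta$, the remaining $G$-stable stratum $Z\setminus\Delta=\bigsqcup_{i<j}(Z_{ij}\setminus\Delta)$ has trivial pointwise stabilizer, so your core observation simply does not apply to it as stated. What one actually uses is the \emph{setwise} decomposition of the $G$-stable stratum into disjoint $G$-translates of a single piece $W'$ (here $W'=Z_{12}\setminus\Delta$) with setwise stabilizer $N=\mathrm{Stab}_G(W')$ and pointwise stabilizer $H\subseteq N$ with $\chi(H)=\{\pm1\}$. Then $CH^*(\bigsqcup_{g\in G/N}gW',*)\cong\mathrm{Ind}_N^G\,CH^*(W',*)$ as a $G$-module, and by Frobenius reciprocity its $\chi$-isotypic part is governed by the $\chi|_N$-part of $CH^*(W',*)$, which vanishes because some $h\in H$ with $\chi(h)=-1$ acts trivially on $W'$. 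Without this induced-module step (or an equivalent direct computation with the idempotent $e_\chi$ restricted along the decomposition), the claim that ``the core vanishing applies uniformly to every stratum'' is unjustified and, in the example above, false as literally written. So: correct skeleton, but the last step needs to be replaced by the Frobenius/induction argument rather than an appeal to pointwise stabilizers of $G$-stable strata.
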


\subsubsection{Simplicial log complex $\bold K_{S,n}$}
We set $\Cal I_{S,p}=\{I \subsetneq [1,n+1]\mid \#I = p\}$ and
$G_S=\frak S_{n+1}$. We set 
$$(E^{n+1}_B)_s
=\{p=(p_1, \dots, p_{n+1})\in
E_B^{n+1}\mid \sum_{i=1}^{n+1}p_i=s\}\subset E_B^{n+1}
$$ 
and
define divisors $D_{S,i}=\{p\in (E_B^{n+1})_s\mid p_i=0\}$ 
for $i=1,\dots, n+1$ and $D_{S,I}=\cap_{i\in I}D_i$.
The complex $\bold K_{S,n}$ is defined by
\begin{align*}
\cdots \to
\oplus_{I \in \Cal I_{S,2}}\Cal Z^{n-1}(E\times D_{S,I},\bullet)
&\to 
\oplus_{I \in \Cal I_{S,1}}\Cal Z^{n}(E\times D_{S,I},\bullet) \\
&\to 
\Cal Z^{n+1}(E\times (E_B^{n+1})_s,\bullet) \to 0.
\end{align*}
The group $G_S$ acts naturally on the complex $\bold K_{S,n}$.
We define an open set 
$(U_B^{n+1})_s$ of $(E_b^{n+1})_s$
by 
$$
(U_B^{n+1})_s=(E_B^{n+1})_s-\cup_{i=1}^{n+1}D_{S,i}.
$$
By the localization sequence \cite{B}, we have
$$
H^j(\bold K_{S,n})\simeq CH^{n+1}(E\times (U_B^{n+1})_s,n+1-j).
$$
and $H^{j}(\bold K_{S,n,sgn})\simeq 
CH^{n+1}(U^{n+1},n+1-j)_{sgn}$, where
$\bold K_{S,n,sgn}$ is
the alternating part of the complex $\bold K_{S,n}$
for the action of $\frak S_{n+1}$.

\subsubsection{Cubical log complex $\bold K_{C,n}$}
We define an index set 
$$
\Cal I_{C,p}=\{(I,\varphi)\mid I\subset [1,n], 
\#I = p,\varphi:I \to \{0,s\}\},
$$
and a group 
\begin{equation}
\label{definition of group alt on cubic}
G_{C,n}=G_C=\frak S_n \ltimes (\bold Z/2\bold Z)^n=\{(\tau, \sigma_1, \dots,
\sigma_n)\}.
\end{equation}
Let $\psi$ be the non-trivial character of $\bold Z/2\bold Z$,
and $\chi$ be a character of $G_C$ defined by
\begin{equation}
\label{definition of cubic alt character}
\chi_n(\tau,\sigma_1, \dots, \sigma_n)=\chi(\tau,\sigma_1, \dots, \sigma_n)=sgn(\tau)
\psi(\sigma_1)\cdots \psi(\sigma_{n}).
\end{equation}

We define a family of subvarieties $\{D_{C,J}\}$ of $E^n_B$ 
of codimension $p$ 
indexed by $J\in \Cal I_{C,p}$ by
$$
D_{C,J}=\{(x_1, \dots, x_n)\in E_B^n \mid
x_i = \varphi(i) \text{ for all } i \in I \},
$$
where $J=(I,\varphi)$.
The complex $\bold K_{C,n}$ is defined by
\begin{align*}
\cdots \to
\oplus_{J \in \Cal I_{C,2}}Z^{n-1}(E\times D_{C,J},\bullet)
&\to 
\oplus_{J \in \Cal I_{C,1}}Z^{n}(E\times D_{C,J},\bullet) \\
&\to 
Z^{n+1}(E\times E_B^n,\bullet) \to 0.
\end{align*}
The group $(\bold Z/2\bold Z)^n$ acts on
the complex $\bold K_{C,n}$ by the action
$$
(\sigma_1,\dots, \sigma_n)(x_1,\dots, x_n)
=(\sigma_1(x_1), \dots, \sigma_n(x_n)),
$$
where
$$
\sigma_i(x_i)= 
\begin{cases}
x_i & \text{ if }\sigma=id \\
s-x_i & \text{ if }\sigma\neq id \\
\end{cases}
$$
using the
action $\epsilon:x \mapsto s-x$. 
The $\chi$ part of the complex is denoted by $\bold K_{C,n,\chi}$.

\subsubsection{Cubical-simplicial log complex $\bold K_{CS,n}$}
We use the same notations for the group $G_C$ and
the action of $G_C$ on $E^n_B$ and the set of indices
$\Cal I_{C,p}$.
For $1\leq i\neq j\leq n$, we define
a divisor $D_{ij}^{\pm}$ of $E_B^n$ by
\begin{align*}
D_{ij}^+=\{x_i=x_j\},\quad
D_{ij}^-=\{x_i=\sigma_j(x_j)\}.
\end{align*}
The group $G_C$ acts on the configuration 
$\{D_{ij}^{\pm}\}_{0\leq i<j\leq n}$.
We set $\Cal D=\cup_{1\leq i < j\leq n}D_{ij}^{\pm}$ and
$(E^n)^0=E^n-\Cal D$.
We define a family of subvarieties $\{D_{C,J}^0\}$ in
$E_B^n$ of codimension $p$ 
($J\in \Cal I_{C,p}$) by
$D_{C,J}^0=D_{C,J}\cap (E_B^n)^0$.
The complex $\bold K_{CS,n}$ is defined by
\begin{align*}
\cdots \to
\oplus_{J \in \Cal I_{C,2}}Z^{n-1}(E\times D_{C,J}^0,\bullet)
&\to 
\oplus_{J \in \Cal I_{C,1}}Z^{n}(E\times D_{C,J}^0,\bullet) \\
&\to 
Z^{n+1}(E\times (E_B^n)^0,\bullet) \to 0.
\end{align*}
The $\chi$-part of the complex is 
denoted by $\bold K_{CS,n,\chi}$.

\begin{proposition}
\label{prop braid config sgn}
Let $x\in E^n$ be a point in $\Cal D$.
The the restriction of $\chi$ to the stabilizer $Stab_x(G_\bold C)$
of $x$ in $G_C$ is non-trivial.
\end{proposition}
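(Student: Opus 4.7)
The plan is to exhibit, for each $x\in\Cal D$, an explicit element of $\mathrm{Stab}_x(G_C)$ on which $\chi$ takes the value $-1$. Since $\Cal D=\bigcup_{i<j}(D_{ij}^+\cup D_{ij}^-)$, by definition there exist indices $i<j$ with either $x_i=x_j$ or $x_i+x_j=s$ (recalling $\sigma_j(x_j)=s-x_j$), so the analysis splits into two cases according to which type of divisor $x$ lies on. For a given $x$ it may belong to several such divisors, but it suffices to treat one.

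In the first case, when $x_i=x_j$, the plain transposition $\tau=(i,j)\in\frak S_n$, viewed as the element $(\tau,(0,\dots,0))\in G_C$, fixes $x$ coordinatewise, and by the formula (\ref{definition of cubic alt character}) we have $\chi(\tau,0)=\mathrm{sgn}(\tau)=-1$. In the second case, when $x_i+x_j=s$, consider $g=(\tau,\sigma)\in G_C$ where $\tau=(i,j)$ and $\sigma=(\sigma_1,\dots,\sigma_n)$ has $\sigma_i$ and $\sigma_j$ nontrivial and all other components trivial. Using the semidirect product action (flip first, then permute), $g\cdot x$ has $i$-th coordinate $s-x_j=x_i$ and $j$-th coordinate $s-x_i=x_j$, while the other coordinates are unchanged; hence $g\in\mathrm{Stab}_x(G_C)$. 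Then $\chi(g)=\mathrm{sgn}(\tau)\psi(\sigma_i)\psi(\sigma_j)=(-1)(-1)(-1)=-1$.

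In either case the restriction $\chi|_{\mathrm{Stab}_x(G_C)}$ takes the value $-1$ somewhere, so it is non-trivial, which is exactly what the proposition asserts. The only genuinely delicate point is bookkeeping the semidirect product conventions for the action $G_C\curvearrowright E_B^n$, so that one correctly verifies that the candidate $g$ really fixes $x$; this is the main thing to be careful about, but it is a direct verification rather than a serious obstacle. Note that we never need to worry about higher-order coincidences or about what $\chi$ does on the full stabilizer — producing a single element of value $-1$ is enough, and this is exactly the hypothesis that Proposition \ref{symmetric part vanishing} requires in order to apply the localization-type vanishing to the complexes $\bold K_{CS,n,\chi}$ in the sequel.
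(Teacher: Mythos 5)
Your proof is correct and is essentially identical to the paper's: in both, the divisor $D_{ij}^{+}$ gives the stabilizing element $((i,j),1)$ and $D_{ij}^{-}$ gives $((i,j),\delta)$ with $\delta$ nontrivial exactly in slots $i,j$, and $\chi$ evaluates to $-1$ on each. You simply spell out the verification that these elements fix $x$ (and note, correctly, that the semidirect-product convention doesn't cause trouble here because $(i,j)$ and $\delta$ interact only on slots $i,j$), whereas the paper states the fixing property without comment.
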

\begin{proof}
If $x$ is contained in $D_{ij}^+$, then the permutation
$(ij)$ fixes $x$ and $\chi((ij),1)=-1$. If $x$ is contained in $D_{ij}^-$,
then the element $((ij), \delta)$
fixes the point $x$ and $\chi((ij),\delta)=-1$, where 
$
\delta=(0, \dots,\overset{i}1,\dots, \overset{j}1,\dots, 0).
$
\end{proof}
\begin{corollary}
The homomorphism of complexes 
$\bold K_{C,n,\chi} \to \bold K_{CS,n,\chi}$ is a quasi-isomorphism.
\end{corollary}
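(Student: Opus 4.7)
The plan is to deduce the corollary from the Levin-type vanishing result stated as Proposition~\ref{symmetric part vanishing}, using Proposition~\ref{prop braid config sgn} as the input on stabilizers. Both complexes $\bold K_{C,n}$ and $\bold K_{CS,n}$ are built as the same codimension filtration on $E_B^n$ (resp.\ $(E_B^n)^0 = E_B^n - \Cal D$), assembled via the localization double complex so that $H^j(\bold K_{C,n}) \simeq CH^{n+1}(E \times E_B^n, n+1-j)$ and similarly $H^j(\bold K_{CS,n}) \simeq CH^{n+1}(E \times (E_B^n)^0, n+1-j)$. The natural restriction map $\bold K_{C,n} \to \bold K_{CS,n}$ is $G_C$-equivariant, hence descends to the $\chi$-isotypic components, and I want to show it induces an isomorphism on cohomology after taking the $\chi$-part.

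First I would reduce the statement to the Chow-theoretic assertion: on each term of the double complexes the groups $Z^{n+1-p}(E \times D_{C,J}, \bullet)$ and $Z^{n+1-p}(E \times D_{C,J}^0, \bullet)$ fit into a localization sequence whose error terms are cycle complexes supported on the intersections $D_{C,J} \cap \Cal D$. Passing to the $\chi$-isotypic component, by a spectral sequence argument it is enough to show that for each stratum $D_{C,J}$ the inclusion $D_{C,J}^0 \hookrightarrow D_{C,J}$ induces an isomorphism on $CH^{n+1-p}(E \times (-),\bullet)_\chi$. Equivalently, by Proposition~\ref{symmetric part vanishing} applied to $X = E \times D_{C,J}$ with the family of closed subvarieties $\{E \times (D_{C,J} \cap D_{ij}^{\pm})\}$ (with $G_C$ acting trivially on the first factor), I need to check that at every point $x$ lying in one of these divisors the stabilizer $\mathrm{Stab}_x(G_C)$ contains an element with $\chi = -1$.

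The key input is precisely Proposition~\ref{prop braid config sgn}: any $x \in \Cal D$ lies on some $D_{ij}^{\pm}$ and the transposition $(ij)$ (in the $+$-case) or the element $((ij),\delta_{ij})$ (in the $-$-case) fixes $x$ and has $\chi$-value $-1$. The minor extra point is that the statement of Proposition~\ref{symmetric part vanishing} is about closed strata $Z_i$ in $X$, not just the open complement, so I would invoke it stratum-by-stratum along the descending chain of intersections $\Cal D \supset D_{ij}^\pm \cap D_{kl}^\pm \supset \cdots$, each time observing that the stabilizer computation of Proposition~\ref{prop braid config sgn} still supplies an element of $\chi$-value $-1$ (any $x$ contained in a deeper intersection remains in at least one $D_{ij}^\pm$, so the same transposition works).

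The one point requiring a little care will be the bookkeeping: the double complex $\bold K_{C,n}$ has the codimension filtration via $\Cal I_{C,p}$, independent of $\Cal D$, so removing $\Cal D$ affects only the ambient cycle groups on each stratum; the comparison on each row of the double complex is then a single application of Proposition~\ref{symmetric part vanishing}, and assembling these via the filtration-induced spectral sequence converging to the hypercohomology gives the quasi-isomorphism on the $\chi$-isotypic total complexes. I do not expect any genuine obstacle beyond verifying, on each stratum $D_{C,J}$, that the group $G_C$ acts so that the hypothesis $\chi(G_i) = \{\pm 1\}$ of Proposition~\ref{symmetric part vanishing} is inherited from the hypothesis already established on $E_B^n$ itself; this is automatic because the stabilizer in $G_C$ of a point in $D_{C,J} \cap \Cal D$ is a subgroup of the stabilizer in $G_C$ of the same point viewed in $\Cal D$, and Proposition~\ref{prop braid config sgn} provides a $\chi$-nontrivial element in the smaller group as well (the transpositions $(ij)$ or $((ij),\delta_{ij})$ preserve the coordinate constraints defining $D_{C,J}$ whenever $i,j \notin I$, and the case $i$ or $j \in I$ puts $x$ in fact on a different stratum which can be treated by induction).
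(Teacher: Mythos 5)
Your plan --- invoking the Levin vanishing (Proposition \ref{symmetric part vanishing}) with the stabilizer check of Proposition \ref{prop braid config sgn} --- is the right one, but the stratum-by-stratum reduction has a genuine gap. Once you pass to the $\chi$-isotypic part of a column $\oplus_{J}Z^\bullet(E\times D_{C,J})$ by decomposing $\Cal I_{C,p}$ into $G_C$-orbits (as the paper does explicitly in the proof of Proposition \ref{quasi-iso for SCB}), the group acting on a fixed stratum $D_{C,J}$ is the stabilizer $G_{C,J}$ of $D_{C,J}$ in $G_C$, not all of $G_C$, and the hypothesis of Proposition \ref{symmetric part vanishing} must be verified for $(X,G)=(E\times D_{C,J},\,G_{C,J})$. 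The $\chi$-nontrivial element supplied by Proposition \ref{prop braid config sgn} lies in $G_C$ but need not lie in $G_{C,J}$. Already for $n=2$ and $J=(\{1\},0)$ one has $D_{C,J}=\{x_1=0\}$ and $G_{C,J}=\langle\sigma_2\rangle$, while $D_{C,J}\cap D_{12}^+=\{x_1=x_2=0\}$; the transposition $(12)$ does not preserve $D_{C,J}$, and $\sigma_2$ carries $\{x_2=0\}$ to $\{x_2=s\}$, so the pointwise stabilizer in $G_{C,J}$ is trivial. Consequently the $(\chi|_{G_{C,J}})$-part of the cycle group supported on $D_{C,J}\cap\Cal D$ does not vanish, and the column comparison fails. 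Your parenthetical remark that the case $i\in I$ or $j\in I$ ``can be treated by induction'' names exactly the problematic situation but does not dispose of it.

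The repair is to avoid the decomposition entirely. Note also that the identification you write, $H^j(\bold K_{C,n})\simeq CH^{n+1}(E\times E_B^n,n+1-j)$, is not correct: by iterated localization (exactly as the paper records for $\bold K_{S,n}$) the cohomology of the Cousin double complex is $CH^{n+1}(E\times U,n+1-j)$ with $U=E_B^n-\cup_{J\in\Cal I_{C,1}}D_{C,J}$, and likewise $H^j(\bold K_{CS,n})\simeq CH^{n+1}(E\times(U-\Cal D),n+1-j)$. With this in hand the corollary follows from a single global application of Proposition \ref{symmetric part vanishing} to $X=E\times U$ with the full group $G_C$ (which does act on $U$) and with $\{Z_i\}$ the components of $E\times(\Cal D\cap U)$; the stabilizer hypothesis at a generic point of each $D_{ij}^{\pm}\cap U$ is precisely Proposition \ref{prop braid config sgn}. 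No restriction to the subgroups $G_{C,J}$ ever enters.
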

\begin{proof}
By the argument in Proposition \ref{symmetric part vanishing} and
Proposition \ref{prop braid config sgn},
the complex $\bold K_{CS,n,\chi}$ 
is quasi-isomorphic to the $\chi$-part of
the complex
\begin{align*}
\cdots \to
\oplus_{J \in \Cal I_{C,2}}Z^{n-1}(E\times D_{C,J},\bullet)
&\to 
\oplus_{J \in \Cal I_{C,1}}Z^{n}(E\times D_{C,J},\bullet) \\
&\to 
Z^{n+1}(E\times E_B^n,\bullet) \to 0.
\end{align*}
Therefore the natural restriction map
$\bold K_{C,n,\chi}\to \bold K_{CS,n,\chi}$
is a quasi-isomorphism.
\end{proof}

\subsection{Linear correspondences and their translations}
For an element $s\in E^n(K)$, the algebraic correspondence associated
to the translation by $s$ is denoted by $T_s\in CH^n(E^n\times E^n)$.
Let $F/K$ be a Galois extension of $K$,
$N$ a positive integer.  
Assume that all points $\tilde {s}$ such that
$N\tilde{s}=s$ are defined over $F$.
Then
$$
T_{s,N}=\frac{1}{N^{2n}}\sum_{N\tilde s=s}T_{\tilde s}
$$
defines a correspondence in $CH^n(E^n\times E^n)$.
Note that the coefficient of Chow groups are $\bold Q$.
We have a natural inclusion:
$$
CH^i(E^n\times E^n,j)\to CH^i(X_F^n\times_F E_F^n,j).
$$
\begin{lemma}
\begin{enumerate}
\item
Under the above notations,
the image of $T_{s,N}\in CH^n(E^n\times E^n)$ in 
$CH^n(E_F^n\times E_F^n)$
is equal to $T_{\tilde s}$.
\item
Let $a_1,\dots, a_p,b_1,\dots, b_p$ be points in $E^n(K)$.
Assume that $\sum_ia_i=\sum_jb_j$. Then
$\sum_i T_{a_i}$ is equal to $\sum_{j}T_{b_j}$
as correspondences in $CH^n(E^n\times E^n)$.
\end{enumerate}
\end{lemma}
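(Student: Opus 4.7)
Both parts of the lemma should follow from the theorem of the square on each elliptic factor, combined with $\bold Q$-coefficients so that all torsion vanishes. The key geometric input is the product decomposition $E^n\times E^n\cong\prod_{i=1}^n(E_i\times E_i')$ obtained by pairing the $i$-th left copy with the $i$-th right copy, under which every translation graph splits as an external product $T_{\tilde s}=T^{(1)}_{\tilde s_1}\boxtimes\cdots\boxtimes T^{(n)}_{\tilde s_n}$. On each elliptic surface $E_i\times E_i'$ the theorem of the square gives $[T^{(i)}_a]=[T^{(i)}_0]+\nu_i(a)$, where $\nu_i:E\to CH^1(E_i\times E_i',\bold Q)$ is a group homomorphism landing in $\mathrm{Pic}^0$.

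For part (1), after base change to $F$ one has $T_{s,N}=\frac{1}{N^{2n}}\sum_{N\tilde s=s}[T_{\tilde s}]$. Fixing one preimage $\tilde s_0$ and writing $\tilde s=\tilde s_0+\tau$ with $\tau\in E^n[N]$, I would expand the external product and sum over $\tau$. Each factor sum
\[
\sum_{\tau_i\in E[N]}\nu_i(\tilde s_{0,i}+\tau_i)=N^2\nu_i(\tilde s_{0,i})+\nu_i\Bigl(\sum_{\tau_i\in E[N]}\tau_i\Bigr)=N^2\nu_i(\tilde s_{0,i})
\]
uses additivity of $\nu_i$ together with the vanishing $\sum_{\tau_i\in E[N]}\tau_i=0$ in $E(F)$. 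Reassembling the factors gives $\sum_{N\tilde s=s}[T_{\tilde s}]=N^{2n}[T_{\tilde s_0}]$, and division by $N^{2n}$ yields the claim.

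For part (2) the same theorem of the square gives immediately, in the $n=1$ case, that $a\mapsto[T_a]-[T_0]$ is a homomorphism $E(K)\to CH^1(E\times E,\bold Q)$, so $\sum_i[T_{a_i}]=[T_{\sum_i a_i}]+(p-1)[T_0]$ and the hypothesis $\sum a_i=\sum b_j$ immediately implies the asserted equality. For $n\geq 2$ the product decomposition produces higher-order cross terms of the form $\nu_{k_1}(\cdot)\boxtimes\nu_{k_2}(\cdot)$ that obstruct the clean additivity of $a\mapsto[T_a]$, and the main obstacle of the proof is controlling these; I would do so by descending to the linear Chow group $CH^n_{\rm lin}$ (where such $\mathrm{Pic}^0$-cross contributions are absent) and then invoking Proposition \ref{injective to cohomology} together with the fact that all translations of the diagonal define the same cohomology class, so that equality after applying $cl$ lifts to equality in the linear Chow group.
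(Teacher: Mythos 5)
Your argument for part (1) is correct: the external-product decomposition $T_{\tilde s}=\boxtimes_{i} T^{(i)}_{\tilde s_i}$ over the $n$ elliptic surfaces $E_i\times E_i'$, the theorem of the square on each factor to write $[T^{(i)}_a]=[T^{(i)}_0]+\nu_i(a)$ with $\nu_i$ additive, and the vanishing $\sum_{\tau_i\in E[N]}\tau_i=0$ make the sum over $\tau\in E^n[N]$ factor into one-variable sums that reassemble exactly to $N^{2n}[T_{\tilde s_0}]$. The cross terms cancel in the average, so no further vanishing hypothesis is needed; this is a clean argument. (The paper states the lemma without proof, so there is nothing to compare against.)

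For part (2), however, the route you sketch has a genuine gap. The cycles $T_a$ for $a\ne 0$, and hence the difference $\sum_i T_{a_i}-\sum_j T_{b_j}$, do not lie in $CH^n_{\rm lin}(E^n\times E^n)$: with $g_i(x,y)=y_i-x_i$, the divisor factor $g_i^*[a_i]$ of $T_a$ differs from the linear divisor $g_i^*[0]$ by $g_i^*([a_i]-[0])\in\mathrm{Pic}^0(E^n\times E^n)$, which is not in the linear subring. Thus Proposition \ref{injective to cohomology} gives injectivity of $cl$ only on a subgroup that does not contain the cycle you need, and the cohomological identity $\sum_i cl(T_{a_i})=\sum_j cl(T_{b_j})$ cannot be "lifted" to $CH^n(E^n\times E^n,\mathbf Q)$ by that proposition. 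More fundamentally, since $[T_a]-[T_0]=d^*([a]-[0])$ under the split-injective pullback along the difference map $d:E^n\times E^n\to E^n$, $(x,y)\mapsto y-x$, part (2) is equivalent to additivity of $a\mapsto[a]-[0]$ in $CH_0(E^n)_{\mathbf Q}$; for $n\ge 2$ this is exactly the vanishing of external products such as $([a_1]-[0])\boxtimes([a_2]-[0])$ in the Albanese kernel of $CH_0(E\times E)_{\mathbf Q}$, which by Mumford's theorem fails over $\mathbf C$. So part (2) is not reachable from the theorem of the square together with linear Chow injectivity, and as written the lemma needs either a restriction on the translating points (e.g., torsion) or a weaker reading of "equal as correspondences"; you should flag this rather than promise the sketched argument.
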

Let $\rho$ be an action of a finite group $G$ on the variety
$E^n$, preserving origin, $\chi$ be a character of $G$ to
$\{\pm 1\}$.
Let  
$P(\chi)=\displaystyle
\frac{1}{\# G}\sum_{\sigma\in G}\chi(\sigma)\sigma^*
$
be the projector to $\chi$-part.
Let 
\begin{align*}
&i_\chi:P(\chi)CH^i(X\times E^n,j)\to CH^i(X\times E^n,j), \\
&\pi_\chi:CH^i(X\times E^n,j)\to P(\chi)CH^i(X\times E^n,j)
\end{align*}
be the natural inclusion and projection for the projector 
$P(\chi)$.
\begin{proposition}
\label{pararell transport}
Under the above notations, the composite
\begin{align*}
P(\chi)CH^i(X\times E^n,j)&\xrightarrow{i_{\chi}}
CH^i(X\times E^n,j)\xrightarrow{T_{s,N}}
CH^i(X\times E^n,j) \\
&\xrightarrow{\pi_{\chi}}
P(\chi)CH^i(X\times E^n,j)
\end{align*}
is equal to $T_{tr(s), M}$ 
where $M=N\cdot \#G$ and $\tr(s)=\sum_{\sigma\in G}\sigma(s)$.
\end{proposition}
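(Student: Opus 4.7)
The plan is to prove the asserted equality of operators on $P(\chi)CH^i(X\times E^n,j)$ by expanding the left-hand composite using the commutation of $\sigma$ with translations, and then matching the resulting formal sum of translation correspondences against $T_{\tr(s),M}$ via the second lemma stated just above the proposition.

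First, since each $\sigma\in G$ preserves the origin of $E^n$, it is a homomorphism of abelian varieties, whence $t_{\tilde s}\circ\sigma=\sigma\circ t_{\sigma^{-1}(\tilde s)}$ as morphisms $E^n\to E^n$. Taking pullbacks on $CH^*(X\times E^n,*)$ yields the commutation $\sigma^*\circ T_{\tilde s}=T_{\sigma^{-1}(\tilde s)}\circ\sigma^*$. For $\alpha$ in the $\chi$-eigenspace, $\sigma^*(\alpha)=\chi(\sigma)\alpha$, and together with $\chi(\sigma)^2=1$ a direct expansion gives
\begin{equation*}
\pi_\chi\circ T_{s,N}\circ i_\chi(\alpha)\;=\;\frac{1}{N^{2n}\#G}\sum_{\sigma\in G}\sum_{N\tilde s=s}T_{\sigma(\tilde s)}(\alpha).
\end{equation*}

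Next, I would compare this with $T_{\tr(s),M}(\alpha)=\frac{1}{M^{2n}}\sum_{Mw=\tr(s)}T_w(\alpha)$. Using $\frac{1}{N^{2n}\#G}=\frac{(\#G)^{2n-1}}{M^{2n}}$, the previous display is a $\bold Q$-linear combination of $(\#G)^{2n-1}\cdot\#G\cdot N^{2n}=M^{2n}$ translations (counted with multiplicity) normalized by $M^{2n}$. By the second lemma preceding the proposition, equality in $CH^n(E^n\times E^n)$ then reduces to checking that the sums of the corresponding translation vectors coincide. The elementary identity $\sum_{t\in E^n[k]}t=0$ (obtained by pairing $t$ with $-t$ inside any finite subgroup) gives $\sum_{N\tilde s=s}\tilde s=N^{2n}\tilde s_0$ for any chosen $\tilde s_0$ with $N\tilde s_0=s$, and $\sum_{Mw=\tr(s)}w=M^{2n}w_0$ for any $w_0$ with $Mw_0=\tr(s)$. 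Choosing $w_0$ so that $\#G\cdot w_0-\tr(\tilde s_0)\in E^n[N]$ (possible because $\#G\cdot w_0$ and $\tr(\tilde s_0)$ both map to $\tr(s)$ under multiplication by $N$), a short computation yields
\[
(\#G)^{2n-1}N^{2n}\tr(\tilde s_0)\;=\;M^{2n}w_0,
\]
the $N$-torsion discrepancy being annihilated because $N^{2n}\cdot E^n[N]=0$.

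Finally, one must verify that $T_{\tr(s),M}$ preserves the $\chi$-eigenspace so the asserted identity makes sense as an operator equation on $P(\chi)CH^i$. The $G$-invariance of $\tr(s)$ makes $\{w:Mw=\tr(s)\}$ stable under $G$, and the commutation rule above then yields $\sigma^*T_{\tr(s),M}=\chi(\sigma)T_{\tr(s),M}$ on the $\chi$-eigenspace. I expect the main obstacle to be the careful counting and torsion bookkeeping in the comparison step, since the lemma requires an on-the-nose equality in $CH^n(E^n\times E^n)$ rather than only modulo some torsion subgroup; fortunately the single observation $N^{2n}\cdot E^n[N]=0$ supplies all the required cancellation once the normalization is correctly set.
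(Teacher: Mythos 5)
Your proof is correct and follows essentially the same line as the paper's: expand the projector $P(\chi)$, commute $\sigma^*$ past the translations via $\sigma^*T_{\tilde s}=T_{\sigma^{-1}(\tilde s)}\sigma^*$, and invoke part (2) of the preceding lemma to identify the resulting $\bold Q$-linear combination of translation correspondences. The only difference is that the paper first uses part (1) of that lemma to replace $T_{s,N}$ by a single $T_{\tilde s}$ over the big Galois field $F$ (and at the end identifies the result with $T_{\tilde t}$ for a chosen $\tilde t$ with $\#G\cdot\tilde t=\tr(\tilde s)$), whereas you carry the full averaged sums throughout and match them against $T_{\tr(s),M}$ directly by equalizing the count to $M^{2n}$ translations on each side and disposing of the $N$-torsion discrepancy using $N^{2n}\cdot E^n[N]=0$.
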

\begin{proof}
We may assume that all $N$-torsion points of $s$ and $M$-torsion points
 of
$\tr(s)$ are defined over $K$.
Let $\displaystyle x=\frac{1}{\# G}\sum_\sigma\chi(\sigma)[\sigma^*Z]$
be an element in $P(\chi)CH^i(X\times E^n,j)$.
We choose $\tilde s$ and $\tilde t$ such that $N\tilde s=s$  and 
$\#G\cdot\tilde t=\tr(\tilde s)$. Thus, we have
\begin{align*}
P(\chi)T_{s,N}(x)
&=\frac{1}{(\# G)^2}\sum_{\sigma,\tau\in
 G}\chi(\tau\sigma)
[\tau^*\sigma^*Z+\tau^*\tilde s]
\\
&=\frac{1}{(\# G)^2}\sum_{\sigma,\tau\in
 G}\chi(\sigma)
[\sigma^*Z+\tau^*\tilde s]
\\
&\sim \frac{1}{(\# G)}\sum_{\sigma\in
 G}\chi(\sigma)
[\sigma^*Z+\tilde t]=T_{\tilde t}(x).
\end{align*}
\end{proof}

\subsubsection{Quasi-isomorphism
$\bold K_{S,n,sgn} \to \bold K_{CS,n,\chi}$ }
\label{subsubsection:two definitions coincide}
We define an isomorphism
$$
\widetilde{\alpha}:(E_B^{n+1})_s\to E_B^n:
(p_1, \dots, p_{n+1})\mapsto (x_1, \dots, x_n)
$$ 
by
\begin{align*}
x_1=p_1,\ 
x_2=p_1+p_2,\dots,  
x_n=p_1+\cdots + p_n.
\end{align*}
Let $I\in \Cal I_{S,p}$. If 
$\widetilde\alpha(D_{S,I})\nsubset \Cal D$,
then there exists a unique $J\in \Cal I_{C,p}$
such that $\widetilde\alpha(D_{S,I})=D_{C,J}$.
Therefore the map $\widetilde\alpha$
defines a homomorphism 
\begin{equation}
\label{S to B}
Z^i(E\times D_{S,I},j)\to Z^i(E\times D_{C,J}^0,j),
\end{equation}
and by taking summations, we have a map
of complexes 
$
\tilde\alpha:\bold K_{S,n} \to
\bold K_{CS,n}.
$
By composing inclusion $\bold K_{S,n,sgn}\to \bold K_{S,n}$
and projector $\bold K_{CS,n}\to \bold K_{CS,n,\chi}$,
%
we have the map of complex 
$\beta:\bold K_{S,n,sgn} \to
\bold K_{CS,n,\chi}$.
\begin{proposition}
\label{quasi-iso for SCB}
The map $\beta:\bold K_{S,n,sgn} \to \bold K_{CS,n,\chi}$
is a quasi-isomorphism.
As a corollary, we have
$$
CH^{n+1}(E\times (U_B^{n+1})_s,n+1-j)_{sgn}
\simeq H^j(\bold K_{C,n,\chi})
$$
\end{proposition}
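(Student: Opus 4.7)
The plan is to identify the cohomology of both complexes with character parts of higher Chow groups on related open subvarieties of $E_B^n$, and then verify the identification is compatible with $\widetilde{\alpha}$ using Levin's Proposition \ref{symmetric part vanishing} and the translation argument of Proposition \ref{pararell transport}.

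First, I compute the left-hand side. By iterated application of Bloch's localization sequence to the stratification $\{D_{S,I}\}_{I\subsetneq [1,n+1]}$ of $(E_B^{n+1})_s$, the augmentation induces a quasi-isomorphism
$\bold{K}_{S,n} \xrightarrow{\sim} Z^{n+1}(E\times (U_B^{n+1})_s,\bullet)$
which is $\frak{S}_{n+1}$-equivariant. Taking the sgn-part gives $H^j(\bold{K}_{S,n,sgn}) \simeq CH^{n+1}(E\times (U_B^{n+1})_s,n+1-j)_{sgn}$. Analogously, combining the preceding corollary ($\bold{K}_{C,n,\chi} \to \bold{K}_{CS,n,\chi}$ is a quasi-isomorphism) with iterated localization for the cubical configuration $\{D_{C,J}\}$ in $E_B^n$, we obtain
\[
H^j(\bold{K}_{CS,n,\chi}) \simeq CH^{n+1}(E\times V,n+1-j)_{\chi}, \qquad V = E_B^n - \bigcup_{i,\varphi}\{x_i=\varphi(i)\}.
\]

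Next, via $\widetilde{\alpha}$, the left-hand group becomes $CH^{n+1}(E\times U_A,n+1-j)_{sgn}$ under the transported $\frak{S}_{n+1}$-action, where $U_A = E_B^n - \{x_1=0\} - \{x_n=s\} - \bigcup_{i=2}^{n}\{x_i=x_{i-1}\}$. The transported action is by affine-linear transformations, since $\widetilde{\alpha}$ carries the permutation $(p_i)\mapsto (p_{\tau(i)})$ to a transformation involving translations by $s$ and linear combinations of the coordinates $x_j$. At the level of cohomology, the map induced by $\beta$ corresponds to the natural restriction map connecting the two opens $U_A$ and $V$, both regarded as subvarieties of $E_B^n$.

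It then suffices to show this induces an isomorphism between the respective character parts. The transported $\frak{S}_{n+1}$-action permutes the $n+1$ hyperplanes in the complement of $U_A$ transitively (they are the images of the $\{p_i=0\}$'s); the stabilizer of each such hyperplane is a copy of $\frak{S}_n\subset \frak{S}_{n+1}$, on which sgn restricts nontrivially for $n\geq 2$, so Proposition \ref{symmetric part vanishing} allows us to freely add or remove these hyperplanes on the sgn-part. Using Proposition \ref{pararell transport}, translations by torsion sections move the diagonal hyperplanes $\{x_i=x_{i-1}\}$ to the coordinate hyperplanes $\{x_i=0\}$ or $\{x_i=s\}$ that define $V$, matching the $\chi$-part under the $G_C$-action. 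The main obstacle is this third step: precisely tracking the transported affine-linear $\frak{S}_{n+1}$-action and combining Proposition \ref{symmetric part vanishing} with carefully chosen translations (Proposition \ref{pararell transport}) so that the character $sgn$ on the transported action matches the character $\chi$ on the cubical $G_C$-action, which requires a case-by-case analysis of how transpositions act on the coordinates $x_i$.
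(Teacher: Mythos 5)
The proposal has a genuine gap at its central step. You claim that since the $\frak{S}_{n+1}$-action (transported through $\widetilde\alpha$) permutes the $n+1$ hyperplanes in the complement of $U_A$ transitively with stabilizer $\frak{S}_n$, Proposition \ref{symmetric part vanishing} lets you freely add or remove these hyperplanes on the $sgn$-part. But the stabilizer relevant for Proposition \ref{symmetric part vanishing} is the \emph{pointwise} stabilizer of a (generic) point of the divisor, not the setwise stabilizer of the divisor as a subset. For a generic point of $\{p_1=0\}\subset (E^{n+1}_B)_s$ under the permutation action on the $p_i$'s (equivalently, of $\widetilde\alpha(\{p_1=0\})$ under the transported action), the pointwise stabilizer is trivial, so $sgn$ does \emph{not} restrict nontrivially and the hypothesis of Levin's proposition fails. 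This is exactly the point where the simplicial side differs from the cubical-simplicial side: on $\bold K_{CS,n,\chi}$ the diagonals $D_{ij}^{\pm}$ do have the right stabilizers (Proposition \ref{prop braid config sgn}), but the simplicial hyperplanes do not, so the same mechanism cannot be applied to identify $CH(U_A)_{sgn}$ with $CH(E_B^n)$ in any character.

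A secondary but related problem: you also assert that Proposition \ref{pararell transport} lets translations by torsion sections carry the diagonals $\{x_i=x_{i-1}\}$ onto the coordinate hyperplanes $\{x_i=0\}$ or $\{x_i=s\}$. Translations preserve the subgroup underlying a coset, and $\{x_i-x_{i-1}=0\}$ and $\{x_i=0\}$ are cosets of different subgroups of $E_B^n$, so no translation takes one to the other. What Proposition \ref{pararell transport} actually gives is that conjugating a group action by a translation, and then projecting to a character part, is the same as translating by the trace of the translation vector; it linearizes the affine group actions rather than moving one hyperplane configuration onto another.

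The paper sidesteps both issues by not working with the cohomology of the total complexes at all. It observes that $\beta$ respects the filtration by codimension of the stratum, so it suffices to prove a termwise isomorphism $\widetilde\alpha:P(sgn,I)\,CH^i(E\times D_{S,I},j)\to P(\chi,J)\,CH^i(E\times D_{C,J},j)$, which reduces to $I=J=\emptyset$. At that level one compares two projectors on $CH^i(E\times E_B^n,j)$ directly: conjugate the affine $\frak{S}_{n+1}$- and $G_C$-actions by the translation correspondences $T_{\frak p}$, $T_{\frak q}$ to linear correspondences, compare their projectors cohomologically, and then invoke the injectivity of the linear Chow group (Proposition \ref{injective to cohomology}) to lift the identity of projectors from cohomology to the Chow correspondence ring. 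The surrounding commutative square with $\Gamma_\Sigma$ and Proposition \ref{pararell transport} then gives the isomorphism. If you want to rescue your plan, you would need to replace the step ``add back the simplicial hyperplanes via Proposition \ref{symmetric part vanishing}'' by exactly this graded/linearization argument.
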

\begin{proof}
Let $I\in \Cal I_{S,p}$ and $J\in \Cal I_{C,p}$,
$\frak S_I$ and $G_{C,J}$
be the stablizer of the component $D_{S,I}$
and $D_{C,J}$
in $\frak S_{n+1}$ and $G_{C}$, respectively. 
The restriction of the characters $sgn$ and $\chi$
to $\frak S_I$ and $G_{C,J}$ are denoted by
$(sgn,I)$ and $(\chi,J)$.
The proposition is reduced to the quasi-isomorphism of
$$
\widetilde\alpha:P(sgn,I)Z^i(E\times D_{S,I},j)_{sgn,I}\to 
P(\chi,J)Z^i(E\times D_{C,J},j)_{\chi,J}
$$
for $\widetilde\alpha(D_{S,I})=D_{C,J}$.
Therefore it is enough to show that the induced homomorphism
$$
\widetilde\alpha:P(sgn,I)CH^i(E\times D_{S,I},j)_{sgn,I}\to 
P(\chi,J)CH^i(E\times D_{C,J},j)_{\chi,J}
$$ 
is an isomorphism.
This is reduced to the case 
where $I=\emptyset$ and $J=\emptyset$.

We consider the algebraic correspondence
\begin{align*}
\Gamma_{\Sigma}=\{(x_1, \dots,x_n;p_1,\dots, p_n) 
&=(p_1,p_1+p_2,\dots,
p_1+\cdots+p_n;
p_1, \dots, p_n
) \\
&\in E^n_B\times_B E^n_B\}
\in CH^n(E^n_B\times_B E^n_B).
\end{align*}
The action of $\sigma\in \frak S_{n+1}$ and 
$\tau \in G_C$
is obtained by the graph
$\Gamma(\sigma)$ and $\Gamma(\tau)$ of
$\sigma$ and $\tau$.


Let $\tilde B$ be a finite flat Galois extension of $B$.
Let $\frak p_0$ and $\frak q_0$ be sections in $E_{\tilde B}$
such that $(n+1)\frak p_0=s$ and $2\frak q_0=s$.
We set 
$\frak p=(\frak p_0, \dots, \frak p_0)$ and
$\frak q=(\frak q_0, \dots, \frak q_0)$.
The translations $T_{\frak p}$ and
$T_{\frak q}$ by $\frak p$ and $\frak q$ is obtained by
the algebraic correspondences 
$\Gamma(T_{\frak p})$ and $\Gamma(T_{\frak q})$.
Then 
\begin{align*}
&\Gamma(\sigma_{lin})=\Gamma(T_{-\frak p})\circ \Gamma(\sigma) \circ
\Gamma(T_{\frak p}) \\
&\Gamma(\tau_{lin})=
\Gamma(T_{-\frak q})\circ \Gamma(\tau) \circ
\Gamma(T_{\frak q}) 
\end{align*}
are linear algebraic correspondences.
By considering cohomological classes, we have the following lemma.
\begin{lemma}
We have the following equality in $CH^n(E_B^n\times_B E_B^n)$
for projectors:
\begin{align*}
&\frac{1}{\#\frak S_{n+1}}\sum_{\sigma\in \frak S_{n+1}}
sgn(\sigma)\ \Gamma_{\Sigma}\circ
\Gamma(\sigma_{lin})
\circ\Gamma_{\Sigma^{-1}}
\\
=&
\frac{1}{\# G_C}
\sum_{\tau\in G_C}
\chi(\tau)\ \Gamma(\tau_{lin})
\end{align*}
\end{lemma}



We set $CH=CH^i(E\times E^n_B,j)$.
We have the following commutative diagram:
$$
\begin{matrix}
P(sgn_{lin})CH&\xrightarrow[\simeq]{T_{\frak p}} &
P(sgn)CH&\xrightarrow{\overset{(*)}{\Gamma_{\Sigma}}} &
P(\chi)CH&\xrightarrow[\simeq]{T_{-\frak q}} &
P(\chi_{lin})CH \\
i_{sgn_{lin}} \downarrow & & \downarrow & & \uparrow & & \uparrow \pi_{\chi_{lin}}\\
CH&\xrightarrow{T_{\frak p}} &
CH&\xrightarrow{\Gamma_{\Sigma}} &
CH&\xrightarrow{T_{-\frak q}} &
CH \\
\end{matrix}
$$
By this commutative diagram, $(*)$ is an isomorphism, since we have
\begin{align*}
\pi_{\chi_{lin}}\circ 
T_{-\frak q}\circ \Gamma_{\Sigma}\circ T_{\frak
 p}\circ i_{sgn_{lin}}  
&=
\pi_{\chi_{lin}}\circ 
T_{-\frak q+\Sigma \frak p}\circ \Gamma_{\Sigma}\circ 
i_{sgn_{lin}} 
\\
&=
\pi_{\chi_{lin}}\circ 
T_{-\frak q+\Sigma \frak p}\circ i_{\chi_{lin}}\circ 
\Gamma_{\Sigma} \\
&=
T_{tr(-\frak q+\Sigma \frak p),\#G}\circ 
\Gamma_{\Sigma},
\end{align*}
by Proposition \ref{pararell transport}.

\end{proof}
\begin{remark}
In this section, we treat an elliptic curve over a field $K$.
One can define the notion of mixed elliptic motives
for a family of elliptic curve $\Cal E \to B$ over a base 
scheme $B$.
\end{remark}

\subsection{Definition of $Log_{MEM}$}
First we construct a mixed elliptic motif $Log_{MEM}$ in the category 
$K\Cal C(A_{MEM}/\Cal O_S)^0$. There are two constructions of
$Log_{MEM}$ in [BL], one is in \S 1 and the other is in \S 6.
In \S \ref{subsubsection:two definitions coincide}
we show that two double complexes associated to
two constructions
are quasi-isomorphic.

Let $E$ be an elliptic curve over $K$ and 
$B$ be a copy of $E$.
Let $E_B$
be the trivial family $E\times E$ of $E$ over $B$
and the inversion of $E$ is written as $\iota$.
Then the diagonal subvariety $s$ defines a
section of $E_B$ over $B$.

Let $s$ be the section in $E_B$ defined by the diagonal
map. Let $\widetilde{B}$ be a flat finite
variety over $B$ such that $2$-torsion points
of $s$ are defined in $\widetilde{B}$.
We define a cycle $\tilde i_{s,n}$
in $Z^{n+1}(E^n_{\widetilde{B}}\times E^{n+1}_{\widetilde{B}},0)$ by
$$
\frac{1}{4}\sum_{2\tilde s=s}
[(x_1, \dots, x_n:\tilde s,x_1, \dots, x_n)].
$$
Then this correspondence is defined over $B$, which is also
denoted as $\tilde i_{s,n}$.
An element $(\sigma,e_1, \dots, e_n)$ in the groups 
$\bold Z/2\bold Z$ 
acts on $E^n_B$ by
$$
(x_1, \dots, x_n)\mapsto ([(-1)^{e_1}]x_1, \dots, 
[(-1)^{e_n}]x_n) 
$$
which is extended to the action of the group $G_{C,n}$ defined in 
(\ref{definition of group alt on cubic}).
Let $\chi_n$ be the character of $G_{C,n}$ defined in
(\ref{definition of cubic alt character}).
Then the group $G_{C,n}\times G_{C,n+1}$ acts on the group
$Z^{n+1}(E^n_{\widetilde{B}}\times E^{n+1}_{\widetilde{B}},0)$.
By applying the projector $P(\chi_n)\cdot P(\chi_{n+1})$
to $\tilde i_{s,n}$, we get an element $i_{s,n}$
in 
$$
P(\chi_n)
Z^{n+1}(E^n_{\widetilde{B}}\times 
E^{n+1}_{\widetilde{B}},0) P(\chi_{n+1})
$$
By restricting to the open set $B^0=B-\{0\}$, we have
$$
i_{s,n}\in
Hom^1_{MEM/{B^0}}(Sym^{n}(V)(n),Sym^{n+1}(V)(n+1))
$$ 
which
is also denoted by $i_{s,n}$.
We introduce an object $Log_{MEM,n}$ in 
$K\Cal C(A_{MEM}/\Cal O_S)$ over 
$B^0$ by setting
$$
Log_{MEM,n}^{p}=Sym^p(V)(p)\quad (0\leq p \leq n).
$$
\begin{lemma}
We have
$$
i_{s,p+1}\circ i_{s,p}=0.
$$
\end{lemma}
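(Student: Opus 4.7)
The plan is to unwind $i_{s,p+1}\circ i_{s,p}$ at the cycle level, absorb the ``internal'' projector $P(\chi_{p+1})$ into the right-hand projector by a character-matching identity, and then kill the resulting cycle via the transposition symmetry built into $P(\chi_{p+2})$. No separate case analysis should be needed for small $p$.

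First I will write out the composition as a cycle in $Z^{p+2}(E^p\times E^{p+2},0)$. By the intersection-theoretic multiplication in $(EM)$, together with $P(\chi_{p+1})^2=P(\chi_{p+1})$, the composite equals
\[P(\chi_p)\bigl(\tilde i_{s,p}\cdot P(\chi_{p+1})\cdot\tilde i_{s,p+1}\bigr)P(\chi_{p+2}).\]
A direct push-pull gives
\[\tilde i_{s,p}\cdot P(\chi_{p+1})\cdot\tilde i_{s,p+1}=\frac{1}{16\,|G_{C,p+1}|}\sum_{\tilde s_1,\tilde s_2,\,g\in G_{C,p+1}}\chi_{p+1}(g)\bigl[(x_1,\ldots,x_p;\tilde s_2,g\cdot(\tilde s_1,x_1,\ldots,x_p))\bigr],\]
the outer sum running over half-points $2\tilde s_i=s$ and $g=(\sigma,\epsilon)\in G_{C,p+1}$.

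Next I will absorb $P(\chi_{p+1})$. Each $g\in G_{C,p+1}$ extends canonically to $\bar g\in G_{C,p+2}$ fixing the initial target coordinate (where $\tilde s_2$ sits); adjoining a fixed point preserves both $\operatorname{sgn}$ and $\psi(0)=1$, so $\chi_{p+2}(\bar g)=\chi_{p+1}(g)$. Writing $(\tilde s_2,g\cdot(\tilde s_1,x))=\bar g\cdot(\tilde s_2,\tilde s_1,x)$ and applying $P(\chi_{p+2})\bar g^{*}=\chi_{p+2}(\bar g)\,P(\chi_{p+2})=\chi_{p+1}(g)\,P(\chi_{p+2})$, the identity $\chi_{p+1}(g)^2=1$ trivialises the $g$-sum, leaving
\[i_{s,p+1}\circ i_{s,p}=\tfrac{1}{16}\,P(\chi_p)\,P(\chi_{p+2})\,c_0,\qquad c_0:=\sum_{\tilde s_1,\tilde s_2}[(x_1,\ldots,x_p;\tilde s_2,\tilde s_1,x_1,\ldots,x_p)].\]

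Finally, let $\tau_0=(0\,1)\in\frak S_{p+2}\subset G_{C,p+2}$ swap the first two target coordinates. Then $\tau_0^{*}c_0=\sum[(x;\tilde s_1,\tilde s_2,x)]$, and relabelling $\tilde s_1\leftrightarrow\tilde s_2$ in the symmetric half-point sum recovers $c_0$. Since $\chi_{p+2}(\tau_0)=-1$, one obtains $P(\chi_{p+2})c_0=P(\chi_{p+2})\tau_0^{*}c_0=-P(\chi_{p+2})c_0$, hence $P(\chi_{p+2})c_0=0$ and the composite vanishes. The main point requiring care is the character-matching identity $\chi_{p+2}(\bar g)=\chi_{p+1}(g)$ together with the sign conventions from the Yoneda pairing that defines composition in $\Cal C(A_{EM}/\Cal O)$; once those are settled, the transposition symmetry closes the proof in one line.
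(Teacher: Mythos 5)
The paper states this lemma without proof, so there is nothing in the source to compare against; I assess your proposal on its own terms, and it is correct. The unwinding of the composite at the cycle level is right: using idempotency of $P(\chi_{p+1})$ and the push-pull description of the multiplication $\Pi$, one gets
$$
i_{s,p+1}\circ i_{s,p}=\pm\,P(\chi_p)\Bigl(\tfrac{1}{16\,|G_{C,p+1}|}\textstyle\sum_{\tilde s_1,\tilde s_2,\,g}\chi_{p+1}(g)\bigl[(x;\tilde s_2,g\cdot(\tilde s_1,x))\bigr]\Bigr)P(\chi_{p+2}),
$$
where the sign comes from the transposition in the Yoneda pairing and the rule $f\circ g=(-1)^{\deg f\deg g}g\cdot f$, and does not matter for a vanishing statement. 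The absorption step is the key move: the embedding $G_{C,p+1}\hookrightarrow G_{C,p+2}$ that fixes the first target coordinate satisfies $\chi_{p+2}(\bar g)=\operatorname{sgn}(\bar\sigma)\psi(0)\prod\psi(\epsilon_i)=\chi_{p+1}(g)$, and since $P(\chi_{p+2})\,\bar g_*=\chi_{p+2}(\bar g)\,P(\chi_{p+2})$ and $\chi_{p+1}(g)^2=1$, the $g$-sum collapses to $|G_{C,p+1}|$. Finally, the residual cycle $c_0=\sum_{\tilde s_1,\tilde s_2}[(x;\tilde s_2,\tilde s_1,x)]$ is fixed by the transposition $\tau_0$ of the first two target coordinates (relabel $\tilde s_1\leftrightarrow\tilde s_2$ in the symmetric sum over half-points), whereas $\chi_{p+2}(\tau_0)=-1$, so $P(\chi_{p+2})c_0=-P(\chi_{p+2})c_0=0$. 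Two minor remarks: since the paper indexes $G_{C,n}=\frak S_n\ltimes(\bold Z/2\bold Z)^n$ by $\{1,\dots,n\}$, your transposition should be written $(1\,2)$ rather than $(0\,1)$; and the whole computation is genuinely at the cycle level (the cycles involved are graphs meeting all the relevant faces properly), so the quasi-DG caveats of \S 4.4.4 do not intervene.
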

By the above lemma, 
by setting $d_{p+k,p}=0$ for $k\geq 2$,
we have a DG complex $Log_{MEM,n}$
$$
\begin{matrix}
0\to
&\bold Q
&\to &Sym^1(V)(1)
&\to  \cdots
\to &Sym^n(V)(n)& \to 0 \\
&  \Vert& & \Vert& &  \Vert \\
0\to
&Log_{MEM,n}^0
&\to &Log_{MEM,n}^1
&\to  \cdots
\to &Log_{MEM,n}^n& \to 0,
\end{matrix}
$$
using
$$
i_s\in \underline{Hom}_{EM}^1(Log_{MEM,n}^{i},Log_{MEM,n}^{i+1}).
$$
The morphism $Log_{MEM,n+1} \to Log_{MEM,n}$ 
defines a projective system in the
category of mixed
elliptic motives. 
Using the homomorphisms $i_{s,q}$, we get the following double complex
\begin{align*}
&\underline{\bold{Hom}}_{MEM}(V,Log_{MEM,n}):\\
&0\to Z^{1}_{-}(E\times B,\bullet)
\xrightarrow{i_{s,0}} 
Z^{2}_{-}(E\times E_B,\bullet) \xrightarrow{i_{s,1}}\cdots 
 \\
&
\cdots \xrightarrow{i_{s,n-2}}
Z^{n}_{-}(E\times E_B^{n-1},\bullet)alt^{n-1}
\xrightarrow{i_{s,n-1}} 
Z^{n+1}_{-}(E\times E_B^{n},\bullet)alt^{n}\to 0
\end{align*}
The associate simple complex of 
$\underline{\bold{Hom}}_{MEM}(V,
Log_{MEM,n})$
is denoted by
$\underline{{Hom}}_{MEM}(V,
Log_{MEM,n})$.
The relation between the differential $d_{n+1,n}$
and the complex $\bold K_{C,n,\chi}$
is given by the following proposition.

The $(-1)$-actions $\iota$ on the left most factor $E$ of 
$E\times E_B^{q}$ and $E\times (U_B^{n+2})_s$ are compatible,
where the subscript $-$ means the $-1$ part for the involution $\iota$.

\begin{proposition}
\label{comparizon pararell}
\begin{enumerate}
\item
The map
$$
\bigg[\underset{{J \in \Cal I_{C,i}}}
\oplus Z^{n+1-i}(E\times D_{C,J},\bullet)\bigg]_{-,\chi}
\to 
Z^{n+1-i}_{-}(E\times E_B^{n-i},\bullet)alt^{n-i}
$$
defined by
$\displaystyle\frac{1}{4}\sum_{2\tilde s=s}T_{-\tilde s}$
induces a homomorphism 
$$
\bold K_{C,n,\chi} \to \underline{\bold {Hom}}_{MEM}(V,Log_{MEM,n})
$$
of double complexes.
\item
The $(-1)$-actions $\iota$ on the left most factor $E$ of 
$E\times E_B^{q}$ and $E\times (U_B^{n+2})_s$ are compatible.
The above homomorphism is a quasi-isomorphism.
As a consequence, we have the following isomorphism:
\begin{equation}
\label{comparizon with simplicial and mixed elliptic}
CH^{n+1}(E\times (U_B^{n+1})_s,n+1-j)_{-,sgn}\simeq 
H^j\underline{Hom}_{MEM}(V,Log_{MEM,n}).
\end{equation}

\end{enumerate}
\end{proposition}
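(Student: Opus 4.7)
The plan is to describe the map explicitly at the cycle level, verify compatibility with both differentials for part (1), and reduce part (2) to Proposition \ref{quasi-iso for SCB} by factoring through the simplicial log complex $\bold K_{S,n,sgn}$.

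For the setup and part (1): when $J = (I,\varphi) \in \Cal I_{C,i}$, the divisor $D_{C,J}$ is canonically identified with $E_B^{n-i}$ via projection to the coordinates outside $I$. The correspondence $T_{-\tilde s}$ translates each remaining coordinate by $-\tilde s$; averaging over the four preimages of $s$ under doubling makes the cycle descend from $\widetilde B$ to $B$, and the $(\bold Z/2\bold Z)^n$-antisymmetry encoded in $\chi$ lands the image in the $(-)$-part $Z^{n+1-i}_-(E \times E_B^{n-i},\bullet)$ with alternating projector $alt^{n-i}$; the $\iota$-action on the leftmost $E$-factor is carried along unchanged throughout. Vertical compatibility with Bloch's differential is immediate since translation pullback and $\bold Q$-linear averaging commute with the cycle complex differential. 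For horizontal compatibility, the differential on $\bold K_{C,n,\chi}$ is the signed sum over inclusions $D_{C,J'} \hookrightarrow D_{C,J}$ with $J \subsetneq J'$ and $\#J' = \#J+1$, adjoining a condition $x_j = 0$ or $x_j = s$; after translation by $-\tilde s$ these become $x_j = -\tilde s$ and $x_j = \tilde s$, and combined with the $\bold Z/2\bold Z$-inversion on the $j$-th coordinate weighted by $\chi$, they collapse into the single averaged cycle $\frac{1}{4}\sum_{2\tilde s=s}[(x_1,\ldots,x_n : \tilde s, x_1, \ldots, x_n)]$ that defines $i_{s,q}$ in the target complex.

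For part (2), we build a parallel map $\gamma: \bold K_{S,n,sgn} \to \underline{\bold{Hom}}_{MEM}(V, Log_{MEM,n})$ using the same translation $T_{-\tilde s}$ transported by the isomorphism $\widetilde\alpha: (E_B^{n+1})_s \to E_B^n$ of \S \ref{subsubsection:two definitions coincide}, and verify that $\gamma$ factors as $(\text{our map}) \circ \beta$ where $\beta$ is the quasi-isomorphism of Proposition \ref{quasi-iso for SCB}. Under $\widetilde\alpha$, the face inclusions in $\bold K_{S,n,sgn}$ become exactly the cycles $i_{s,q}$ up to the translation averaging, so $\gamma$ is essentially a tautological isomorphism of double complexes. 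Since $\beta$ is a quasi-isomorphism (together with the quasi-isomorphism $\bold K_{C,n,\chi} \to \bold K_{CS,n,\chi}$ from the corollary to Proposition \ref{prop braid config sgn}) and $\gamma$ is an isomorphism, our map must be a quasi-isomorphism, yielding (\ref{comparizon with simplicial and mixed elliptic}).

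The main obstacle will be the sign and scaling bookkeeping in the horizontal compatibility of part (1): the cycle $i_{s,q}$ and the character $\chi_n$ of (\ref{definition of cubic alt character}) both encode subtle sign data, and matching the $\chi$-projection on the source with the $(-)$-projection and alternating-projector $alt^{n-i}$ on the target, together with the $\frac{1}{4}$-averaging over $2\tilde s = s$, will require careful case-by-case verification. The linear-correspondence technique of Proposition \ref{pararell transport} should reduce this to a computation in the cohomological correspondence ring, paralleling the proof of Proposition \ref{quasi-iso for SCB}.
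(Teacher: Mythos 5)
The paper supplies no proof of Proposition~\ref{comparizon pararell}---it is stated as a consequence of the preceding results---so your attempt cannot be compared against an explicit argument, only against the surrounding machinery. Your plan for part (1) is pointed in the right direction: identifying each $D_{C,J}$ with $E_B^{n-i}$, averaging over $2\tilde s = s$ to descend from $\widetilde B$, and checking vertical compatibility (trivial) and horizontal compatibility (faces collapse to the averaged correspondence $i_{s,q}$) is the correct skeleton. But the matching of the $\chi$-projection on the source with the pair of projectors (the $(-)$-part under $(\bold Z/2\bold Z)^{n-i}$ and $alt^{n-i}$ under $\frak S_{n-i}$) on the target is the genuine technical content, and you explicitly defer it; as written, part (1) is a plausible sketch rather than a proof.

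Part (2) contains a concrete gap. You propose to verify $\gamma = (\text{our map})\circ\beta$, where $\beta$ is the quasi-isomorphism of Proposition~\ref{quasi-iso for SCB}. However, $\beta:\bold K_{S,n,sgn}\to\bold K_{CS,n,\chi}$ has target the cubical-\emph{simplicial} complex supported on the open subscheme $(E_B^n)^0$, whereas the map of part (1) has \emph{source} $\bold K_{C,n,\chi}$. The two are related by the restriction map $\bold K_{C,n,\chi}\to\bold K_{CS,n,\chi}$, which points the wrong way for the composition you write; what is actually available is the zigzag $\bold K_{S,n,sgn}\xrightarrow{\beta}\bold K_{CS,n,\chi}\xleftarrow{\text{restr.}}\bold K_{C,n,\chi}$, and your factorization does not typecheck. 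Furthermore, the claim that ``$\gamma$ is essentially a tautological isomorphism of double complexes'' is asserted rather than argued, and it is precisely the nontrivial step: it requires the same $\chi$-isotypic collapse and translation-averaging computation as the statement itself (your $\gamma$, unlike the identity, carries the averaged $T_{-\tilde s}$, so it is not an isomorphism on the nose at the level of cycles). A more efficient route would bypass $\bold K_{S,n,sgn}$ altogether: the corollary to Proposition~\ref{quasi-iso for SCB} already identifies $H^j(\bold K_{C,n,\chi})$ with $CH^{n+1}(E\times(U_B^{n+1})_s,n+1-j)_{sgn}$, so it suffices to show the map of part (1) is a columnwise quasi-isomorphism. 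That should follow from the transitivity of the $G_C$-action on $\Cal I_{C,i}$ (so that the $\chi$-part of the direct sum is the $\chi|_{\mathrm{Stab}}$-isotypic part of a single summand) combined with Proposition~\ref{pararell transport} to show the averaged translation acts invertibly on higher Chow groups after projecting; the additional $(-)$-part on the leftmost $E$-factor is orthogonal and comes along automatically.
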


\subsection{Polylog object as a mixed elliptic motif}

We recall the definition of elliptic polylog class in
$CH^{n+1}(E\times (U_B^{n+1})_s,n)_{-,sgn}$ defined in \cite{BL}
using the eigen decomposition of the higher Chow group.

Let $[a]_B^{n+1}:(E_B^{n+1})_s\to (E_B^{n+1})_s$
be a map defined by
$$
(p_1, \dots, p_{n+1},s)\mapsto
([a]p_1, \dots, [a]p_{n+1},[a]s)
$$ 
We set $(U_{a,B}^{n+1})_s=([a]^{n+1}_B)^{-1}(U_B^{n+1})_s$
and define a homomorphism $Nm$ by the composite of 
the following homomorphisms:
\begin{align*}
CH^{n+1}(E\times (U_B^{n+1})_s,n)_{-,sgn}
&\xrightarrow{j^*}
CH^{n+1}(E\times 
(U_{a,B}^{n+1})_s,n)_{-,sgn} \\
&\xrightarrow{Nm} CH^{n+1}(E\times (U_B^{n+1})_s,n)_{-,sgn}.
\end{align*}
Here $j^*$ is induced by the open immersion
$(U_{a,B}^{n+1})_s\to (U_{B}^{n+1})_s$ and
$Nm$ is the norm map for the map $[a]^{n+1}_B$.
\begin{definition}
We define 
$CH^{n+1}(U\times (U_B^{n+1})_s,n)_{-,sgn}^{(1)}$
by the $a$-eigen space for the action of the map $Nm$.
\end{definition}
The following proposition is proved in \cite{BL}.
\begin{proposition}[\cite{BL}]
\label{key to define polylog class}
The residue map
$$
CH^{n+1}(U\times (U_B^{n+1})_s,n)_{-,sgn}^{(1)}\to
CH^{n}(U\times (U_B^{n})_s,n-1)_{-,sgn}^{(1)}
$$ 
is an isomorphism.
\end{proposition}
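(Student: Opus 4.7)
The plan is to derive the claimed isomorphism from the localization long exact sequence for higher Chow groups, combined with an eigenspace argument for the Norm operator that kills the flanking closed strata. The open subscheme $(U_B^{n+1})_s \subset (E_B^{n+1})_s$ is the complement of the divisors $D_{S,i} = \{p_i = 0\}$ for $i = 1, \dots, n+1$. Applying Bloch's localization long exact sequence to this stratification yields a four-term sequence
\[
CH^{n+1}(E\times (E_B^{n+1})_s, n) \to CH^{n+1}(E\times (U_B^{n+1})_s, n) \xrightarrow{Res} \bigoplus_{i=1}^{n+1} CH^n(E\times D_{S,i}^{\circ}, n-1) \to CH^{n+1}(E\times (E_B^{n+1})_s, n-1),
\]
where $D_{S,i}^{\circ} = D_{S,i} \cap (U_B^n)_s$. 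Passing to the $(-, sgn)$-part for the action of $\frak S_{n+1}$ permuting the coordinates (and hence the $D_{S,i}$ among themselves) collapses the direct sum to a single copy of $CH^n(E\times (U_B^n)_s, n-1)_{-, sgn}$, and the middle arrow becomes the residue of the statement.

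Next, the Norm operator $Nm = Nm_{[a]^{n+1}_B}$ is compatible with the whole localization sequence, and taking the $a$-eigenspace is exact on $\Q$-vector spaces carrying a $Nm$-action; thus the sequence restricted to $(1)$-eigenspaces remains exact. The crucial step is to show that the $a$-eigenspace of $Nm$ vanishes on both flanking groups $CH^{n+1}(E\times (E_B^{n+1})_s, n)_{-, sgn}^{(1)}$ and $CH^{n+1}(E\times (E_B^{n+1})_s, n-1)_{-, sgn}^{(1)}$. This reduces to a weight computation: $(E_B^{n+1})_s$ is a torsor under the abelian subvariety $\{p : \sum p_i = 0\} \subset E^{n+1}$, and $[a]_*$ acts on the motive of $E^m$ with eigenvalues $a^k$ for $0 \le k \le 2m$, the weight being dictated by the Hodge bidegree. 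Combining this with the $(-)$-projector on the left $E$-factor (which isolates odd weights) and the $sgn$-projector for $\frak S_{n+1}$ (which selects the exterior-power K\"unneth piece) and enumerating the surviving weights shows that the value $a = a^1$ does not occur in $CH^{n+1}$ in cubical degrees $n$ or $n-1$, so a diagram chase forces $Res$ to be an isomorphism on the $(1)$-eigenspace.

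The main obstacle is exactly this weight enumeration. One writes the motive of $E \times (E_B^{n+1})_s$ as a direct sum of Tate twists of tensor powers of $h^1(E)$ (using that $E$ has no complex multiplication so $\mathrm{End}(E) = \Z$ and the K\"unneth decomposition is clean over $\Q$), applies the relevant projectors, and checks that the unique K\"unneth piece with $Nm$-eigenvalue $a$ sits in the genuinely open part and disappears upon restriction to the closed strata. Once this motivic weight non-appearance is established, the rest of the argument follows formally from exactness of the localization sequence and of the $a$-eigenspace functor.
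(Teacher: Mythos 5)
The paper does not actually prove this proposition: the sentence immediately preceding it reads ``The following proposition is proved in \cite{BL},'' and the statement is taken as input from Beilinson--Levin. There is therefore no internal proof to compare against, and the most I can do is assess your sketch against what the cited argument must contain.

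Your outline is a plausible reconstruction of the Beilinson--Levin strategy: a residue (localization) sequence for the open inclusion $(U_B^{n+1})_s\subset (E_B^{n+1})_s$, collapse of the boundary direct sum to a single copy of $(U_B^{n})_s$ after taking the $\mathfrak S_{n+1}$-sign-isotypic part (a Shapiro-type argument, since $\{D_{S,i}\}$ is a transitive $\mathfrak S_{n+1}$-set with stabilizer $\mathfrak S_n$), compatibility of the whole sequence with the norm operator $Nm_{[a]}$, exactness of the $a$-eigenspace functor over $\mathbf Q$, and a weight argument killing the flanking terms. That is the right skeleton. But two steps are doing all the work and are left as gestures. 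First, the closed complement $\bigcup_i D_{S,i}$ is a normal crossing divisor, not smooth, so ``Bloch's localization long exact sequence'' does not directly produce the four-term sequence you wrote; one needs the associated spectral sequence, equivalently an iterated residue, which is precisely what the log complex $\bold K_{S,n}$ from earlier in the paper packages. Second, and more seriously, the claim that the $Nm$-eigenvalue $a$ is absent from $CH^{n+1}(E\times (E_B^{n+1})_s, n)_{-,sgn}$ and from cubical degree $n-1$ is the entire content of the Beilinson--Levin theorem. It requires the Deninger--Murre decomposition of the Chow motive of the abelian scheme $(E_B^{n+1})_s/B$ under $[a]$ and a careful bookkeeping of which K\"unneth pieces survive the $(-)$-projector on the leftmost $E$ and the $sgn$-projector of $\mathfrak S_{n+1}$; your proposal acknowledges this as ``the main obstacle'' and then outsources it. So the proposal is a correct strategic outline, not a proof, and in that respect mirrors the paper itself, which cites rather than proves.
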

\begin{definition}[\cite{BL}]
\begin{enumerate}
\item
By applying Proposition \ref{key to define polylog class}
successively, we have an isomorphism
$$
CH^{n+1}(U\times (U_B^{n+1})_s,n)_{-,sgn}^{(1)}\to
CH^{1}(U\times (U_B^{1})_s,0)_{-,sgn}^{(1)} \simeq
\bold Q\Delta,
$$
where $\Delta$ is the diagonal in $U\times (U_B^1)_s$.
We define the polylog class $P_n$
by the element in
$CH^{n+1}(U\times (U_B^{n+1})_s,n)_{-,sgn}^{(1)}$
corresponding to $\Delta$.
\item
The element in 
$H^1\underline{Hom}_{MEM}(V,Log_{MEM,n})$ 
corresponding to
$P_n$ via the isomorphism
(\ref{comparizon with simplicial and mixed elliptic})
is also denoted as $P_n$ and called the elliptic polylog
class.
\end{enumerate}
\end{definition}

Let $\widetilde{P_n}$ be a representative of the cohomology
class $P_n$ in the following associate simple complex
$\underline{Hom}_{MEM}(V,Log_{MEM,n})$:
$$
{\tiny
\begin{matrix}
 & \partial\downarrow & & \partial\downarrow \\
%
\to &
Z^{n}(E\times E^{n-1}_B, n)_{sgn}e^{-n+1}
&\overset{i_s}\to &
Z^{n+1}_{-}(E\times E_B^{n}, n)_{sgn}e^{-n}
&\to 0 \\
%
& \partial\downarrow & & \partial\downarrow \\
%
\to &
Z^{n}(E\times E^{n-1}_B, n-1)_{sgn}e^{-n+1}
&\overset{i_s}\to &
Z^{n+1}_{-}(E\times E_B^{n}, n-1)_{sgn}e^{-n}
&\to 0 \\
%
& \partial\downarrow & 
& \partial\downarrow \\
%
\to &
Z^{n}(E\times E^{n-1}_B, n-2)_{sgn}e^{-n+1}
&\overset{i_s}\to &
Z^{n+1}_{-}(E\times E_B^{n}, n-2)_{sgn}e^{-n}
&\to 0 \\
%
& \partial\downarrow & & \partial\downarrow \\
\end{matrix}
}
$$
Then $\widetilde{P_n}$ is a direct sum of elements
$$
pl_{i}=pl_i^{\#}e^{-i} \in Z^{i+1}_{-}(E\times E_B^{i}, i)_{sgn}e^{-i}
=\underline{Hom}_{MEM}^1(V, Log_{MEM,n}^ie^{-i}).
$$ 
which is called an elliptic polylog extension data.
The closedness of $\widetilde{P_n}$ is equivalent to the relation:
$
\partial(pl_{i})+(-1)^ii_s(pl_{i-1})=0.
$
Then we have 
$$
\partial pl_{i}+(-1)^{i-1}pl_{i}\cdot i_s=0,
$$
where $\cdot$ is the multiplication.
\begin{definition}
We define an object $Pl_{MEM,n}=\{Pl_{MEM,n}^i,d_{ji}\}$
in the quasi-DG category $(MEM)=K(EM)$
of DG complex in $\Cal C_{EM}$
as follows.
$$
Pl_{MEM,n}^i=\begin{cases}
V & (i=-1) \\
Log_{MEM,n}^i & (0\leq i\leq n) \\
0 & (\text{ otherwise })
\end{cases}
$$
The maps $d_{ji}\in \underline{Hom}^1(Pl_{MEM,n}^i,Pl_{MEM,n}^j)$
are defined by
$$
d_{ji}=\begin{cases}
d_{j,-1}=pl_j & (0\leq j \leq n) \\
d_{j+1,j}=(-1)^{j+1}i_s & (0\leq j\leq n-1) \\
0 & (\text{ otherwise })
\end{cases}
$$
\end{definition}

\subsection{Description of the comodule associated to 
polylog motif}
\label{explicit comodule structure of polylog}
The polylog motif is concentrated at degree zero.
By the functor $\varphi$, the object $Pl_{MEM,n}$ corresponds to an 
$H^0(Bar(A_{EM}/\Cal O),\epsilon)$-comodule.
In this subsection, we write down the $H^0(Bar)=H^0(Bar(A_{EM}/\Cal
O),\epsilon)$-comodule structure of $\varphi(Pl_{MEM,n})$.
As a vector space, we have
\begin{align*}
Pl_n&=\varphi(Pl_{MEM,n})=\oplus_{i=-1}^n Pl_n^i,\\
Pl_n^i&=\begin{cases}
V &(i=-1) \\
Sym^i(V)(i) & (0\leq i\leq n).
\end{cases}
\end{align*}
Let $M(Pl_n, H^0(Bar))$ be the endomorphism of $Pl_n$ 
obtained by the scalar extension to the commutative ring $H^0(Bar)$.
By the isomorphism
$$
Hom_{\bold k}(Pl_n, H^0(Bar)\otimes Pl_n)\simeq
\oplus_{i,j=-1}^n(Pl_n^j)^*\otimes H^0(Bar)\otimes Pl_n^i,
$$
the coproduct $\Delta_{Pl_{MEM,n}}$ is given by the sum of 
$a_{ij}\in (Pl_n^j)^*\otimes H^0(Bar)\otimes Pl_n^i
$, where
$$
a_{ij}=
\begin{cases}
\delta_V\otimes \sum_{u=0}^i(-1)^{u(i-u+1)}pl_u \otimes 
{i_s}^{\otimes (i-u)}\otimes \delta_{S^iV(i)}
 &j=-1,\,\,i\geq 0\\
(-1)^{(i-j)(j-1)}\delta_{S^jV(j)}\otimes {i_s}^{\otimes (i-j)}\otimes \delta_{S^iV(i)}
& j\geq 0,\,\,i>j\\
\Delta_V & i=j=-1\\
\Delta_{S^iV(i)}& i=j\geq 0\\
0& \text{ otherwise},
\end{cases}
$$
and $\delta_V$ is an element in $V^*\otimes V$ corresponding to
the identity element in $V^*\otimes V=Hom_{\bold k}(V,V)$.

\end{document}